\newcommand{\eqmathbox}[2][N]{\eqmakebox[#1]{$\displaystyle#2$}}
\DeclareMathOperator{\conv}{\mathrm{conv}}
\DeclareMathOperator{\diam}{\mathrm{diam}}
\DeclareMathOperator{\ev}{\mathrm{ev}}
\DeclareMathOperator{\id}{\mathrm{id}}
\DeclareMathOperator{\aff}{\mathrm{Aff}}
\DeclareMathOperator{\rank}{\mathrm{rank}}
\DeclareMathOperator{\prob}{\mathcal{M}}
\DeclareMathOperator{\lip}{\mathrm{Lip}_1}
\DeclareMathOperator{\lipo}{\mathrm{Lip}}
\DeclareMathOperator{\emb}{\mathrm{Emb}}
\DeclareMathOperator{\Span}{\mathrm{span}}
\DeclareMathOperator{\sgn}{sgn}
\DeclareMathOperator{\Var}{Var}
\DeclareMathOperator{\Angle}{\widetilde\measuredangle}
\newcommand{\nn}{\mathbb{N}}
\newcommand{\zz}{\mathbb{Z}}
\newcommand{\rr}{\mathbb{R}}
\newcommand{\cc}{\mathbb{C}}
\newcommand{\pp}{\mathbb{P}}
\newcommand{\ee}{\mathbb{E}}
\newcommand{\cs}{\mathrm{C}^*}
\newcommand{\js}{\mathcal{Z}}
\newcommand{\ws}{\text{weak}^*}
\newcommand{\eps}{\varepsilon}
\newcommand{\lp}{d_{LP}}
\newcommand{\mm}{\mathbb{M}}
\newcommand{\sphere}{\mathbb{S}}
\newcommand{\ball}{\mathbb{B}}
\newcommand{\lyre}{\mathbf{Lyre}}
\newcommand{\alex}{\mathbf{Alex}}
\newcommand{\Euc}{\mathrm{Euc}}
\newcommand{\fp}{\mathfrak{p}}
\newcommand{\fq}{\mathfrak{q}}
\newcommand{\ssubset}{\subset\joinrel\subset}
\newenvironment{items}{\begin{list} {$\boldsymbol{\cdot}$} {\setlength{\leftmargin}{0.5cm}}}{\end{list}}
\newtheorem {theorem}{Theorem}[section]
\newtheorem {lemma}[theorem]{Lemma}
\newtheorem {proposition}[theorem]{Proposition}
\newtheorem {corollary}[theorem]{Corollary}
\newtheorem {thm}{Theorem}
\theoremstyle {definition}
\newtheorem {example}[theorem]{Example}
\newtheorem {definition}[theorem]{Definition}
\newtheorem {remark}[theorem]{Remark}
\newtheorem {question}[theorem]{Question}
\newtheorem* {notation}{Notation}
\numberwithin{equation}{section}
\title{Tracially lyriform $\mathrm{C}^*$-algebras}
\author[B.~Jacelon]{Bhishan Jacelon}
\address[B.~Jacelon]{
Institute of Mathematics of the Czech Academy of Sciences\\ \v{Z}itn\'{a} 25\\115 67 Prague 1\\Czech Republic}
\email{\href{mailto:jacelon@math.cas.cz}{jacelon@math.cas.cz}}
\subjclass[2020]{Primary 46L35; Secondary 53C23, 28A33, 49Q22}
\keywords{Quantum metric spaces, $\cs$-classification, Alexandrov spaces, metric geometry, optimal transport.}
\begin{document}

\begin{abstract} 
Quantum metric Choquet simplices are special kinds of compact quantum metric spaces designed for distance measurement in and around the category of stably finite Elliott-classifiable $\mathrm{C}^*$-algebras. The primary objective of this article is to introduce versions of these structures for which the associated tracial metrics need not be induced by Lipschitz seminorms and may induce strictly stronger topologies than the weak$^*$-topology. The resulting category of `tracially lyriform $\mathrm{C}^*$-algebras' behaves well with respect to sequential inductive limits and accommodates the full family of classical $\mathfrak{p}$-Wasserstein metrics on probability spaces, including $\mathfrak{p}=\infty$. Examples of projectionless, classifiable tracial Wasserstein spaces are built as noncommutative spaces of observables of certain compact length spaces, including: fractals like the Sierpi\'nski gasket, the Sierpi\'nski carpet and the Menger sponge; finite-dimensional Alexandrov spaces with two-sided curvature bounds; and metric spaces like simplicial spheres and balls that are Lipschitz equivalent to Riemannian. These simplicial structures are used as building blocks that furnish arbitrary simple inductive limits of prime dimension drop algebras with tracial lyriform structure. Appealing to optimal transport theory, we study the geometry and statistics of the spaces of embeddings of these building blocks and their limits into suitable classifiable $\mathrm{C}^*$-algebras like the Jiang--Su algebra $\mathcal{Z}$ or the universal UHF algebra $\mathcal{Q}$. 
\end{abstract}

\maketitle

\section*{Introduction} \label{section:intro}

This article is part of an ongoing investigation \cite{Jacelon:2014aa,Jacelon:2021wa,Jacelon:2021vc,Jacelon:2022wr,Jacelon:2024aa} into the metric geometry of trace spaces. The main goals of the present work are to:
\begin{enumerate}[(I)]
\item \label{goal:a} develop a unified theory of tracial metrics that need not be induced by Lipschitz seminorms and whose topologies may be finer than the $\ws$-topology;
\item \label{goal:c} explore the role of metric curvature in tracial geometry;
\item \label{goal:b} use optimal transport theory to describe the geometry and statistics of embedding spaces associated with classifiable $\cs$-algebras.
\end{enumerate}

The title of the article is our response to Goal~\ref{goal:a}. Consisting of a $\cs$-algebra $A$ whose trace space $T(A)$ is equipped with a metric $\rho$ that is reasonably compatible with the structure of $T(A)$ as a $\ws$-compact, convex set, \emph{tracially lyriform $\cs$-algebras} are generalisations of the quantum metric Choquet simplices introduced in \cite{Jacelon:2024aa}. As highlighted above, the difference here is that we no longer require $\rho$ to be $\ws$-continuous or to be induced by a seminorm. For example, continuity of $\rho$ is weakened to lower semicontinuity, and convexity to effective quasiconvexity. See Definition~\ref{def:lyriform}. This more flexible setup grants us access not just to the full family of classical Wasserstein metrics (defined in Section~\ref{section:wass} and discussed below), but also to a category $\lyre$ whose morphisms are tracially nonexpansive $^*$-monomorphisms and which admits limits of suitably bounded inductive sequences (see Section~\ref{section:limits}). 

As in \cite{Jacelon:2024aa}, our most easily accessible examples are associated with Bauer simplices, or equivalently, spaces of Borel probability measures $\prob(X)$ over compact metric spaces $(X,\rho)$. In the context of \cite{Jacelon:2024aa}, there is a canonical way of extending the metric on $X\cong\partial_e(\prob(X))$ to $\prob(X)$, namely via the \emph{Monge--Kantorovich} or \emph{$1$-Wasserstein} formula
\[
W_1(\mu,\nu) = \sup\left\{\left|\int_Xf\,d\mu - \int_Xf\,d\nu\right| \mid f\colon X\to\rr \:\text{is $1$-Lipschitz}\right\}.
\]
While noncommutative geometers may be most familiar with $W_1$ (via work of Connes \cite{Connes:1989aa} and Rieffel \cite{Rieffel:1999aa}), the theory that we develop in the present article is designed to accommodate not just $W_1$ but also $W_\fp=W_{\fp,\rho}$, for any $\fp\in[1,\infty]$. These \emph{$\fp$-Wasserstein} metrics \eqref{eqn:wp} play important roles in geometric measure theory, and desirable geometric features in classical and noncommutative probability spaces are often tied to other values of $\fp$ than $\fp=1$. Famously, Biane and Voiculescu \cite{Biane:2001aa}, Sturm \cite{Sturm:2006aa,Sturm:2006ab}, Lott and Villani \cite{Lott:2009aa} and Carlen and Maas \cite{Carlen:2014aa} focus on $W_2$ in their geometric analyses of Wasserstein spaces. On the other hand, the most relevant distance for the geometry of embedding spaces of stably finite classifiable $\cs$-algebras is $W_\infty$ \eqref{eqn:winf1}. Unlike the other Wasserstein metrics and the \emph{L\'{e}vy--Prokhorov metric} $d_{LP}$ defined in \eqref{eqn:lp1}, all of which are dominated by $W_\infty$, the distinguishing feature of the $\infty$-Wasserstein distance is that its topology is much finer than the $\ws$-topology (see Remark~\ref{rem:compact}). In fact, we will come to see the $\infty$-Wasserstein space $(\prob(X),W_{\infty,\rho})$ as a special instance of a $\lyre$-limit (see Example~\ref{ex:wlim}).

One point that should be emphasised is that by `embedding space' what we really mean is the set of \emph{approximate unitary equivalence classes} of unital $^*$-monomorphisms $A\to B$, which is what are classified in  \cite{Carrion:wz}. The domains $A$ considered in this article are either subhomogeneous or `classifiable', which we use as an abbreviation for `unital, simple, separable, nuclear, $\js$-stable and satisfying the universal coefficient theorem (UCT)'. As usual, $\js$ denotes the Jiang--Su algebra, constructed in \cite{Jiang:1999hb}. Throughout, we only consider \emph{unital} $\cs$-algebras. The codomains $B$ are also required to be classifiable, and in addition be either real rank zero or monotracial, assuming in the latter case that $K_1(A)=0$ as well (see Theorem~\ref{thm:orbits} and its corollaries). Developing a geometric picture beyond these restricted codomains would necessarily involve the Hausdorffised unitary algebraic $K_1$ component of the classifying invariant (see \cite[Section~2.2]{Carrion:wz}). Progress in that direction would be very welcome.

Another point that should be clarified is that the Wasserstein distances that are the focus of this article are of the classical variety. The \emph{free} Wasserstein metrics and related distances appearing, for instance, in \cite{Biane:2001aa,Ioana:2024aa,Anshu:2025aa}, are important sources of fine topologies on trace spaces and are certainly desirable as examples of tracial lyriform structure. In fact, it is shown in \cite{Anshu:2025aa} that Wang's universal quantum permutation group $C(S_n^+)$ is indeed tracially lyriform when equipped with the \emph{free Hamming distances} introduced in that article (see Theorem~\ref{thm:free}).

In order to develop a collection of examples that we call \emph{tracial Wasserstein spaces}, and in service of Goal~\ref{goal:c}, we take a new look at \cite[Theorem 4.4]{Jacelon:2022wr}. This theorem explains how to construct projectionless, classifiable $\cs$-algebraic quantum metric Bauer simplices $\js_X$ observing compact, connected Riemannian manifolds $(X,\rho)$. In more familiar terms, $\js_X$ is a simple, separable, unital $\cs$-algebra that has finite nuclear dimension and satisfies the UCT, whose extreme tracial boundary $\partial_e(T(\js_X))$ is homeomorphic to $X$, and in which tracially $\rho$-Lipschitz elements are dense. The metric $\rho$ is the \emph{intrinsic} metric associated with the Riemannian structure, meaning that the distance between two points is defined to be the length of a shortest path between them. In Section~\ref{section:qmcs}, we clarify that our use of the Riemannian structure is indeed purely metric, and we adapt the theorem to include certain compact \emph{Alexandrov spaces}. These are intrinsic length spaces whose metric curvature is bounded below (see Definitions~~\ref{def:length},~~\ref{def:geodesic}~and~\ref{def:alexandrov}), and are noteworthy for providing Gromov--Hausdorff limits of suitably bounded sequences of Riemannian manifolds (see Example~\ref{example:riemannianlimits}). Under the additional imposition of \emph{upper} curvature bounds, we get the following generalisation of \cite[Theorem 4.4]{Jacelon:2022wr} (see Theorem~\ref{thm:alexandrov} and Corollary~\ref{cor:qwass}).
 
\begin{thm} \label{thm:a}
Let $(X,\rho)$ be a compact, finite-dimensional Alexandrov space with lower and upper curvature bounds, and let $\fp\in[1,\infty]$. Suppose that either the boundary of $X$ is empty or that $X$ is contractible and the upper curvature bound is nonpositive. Then, there exists a classifiable, projectionless tracial $\fp$-Wasserstein space $(\js_X,W_{\fp})$ observing $(X,\rho)$.
\end{thm}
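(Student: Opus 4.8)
The plan is to reduce Theorem~\ref{thm:a} to the purely metric version of \cite[Theorem~4.4]{Jacelon:2022wr} developed in Section~\ref{section:qmcs}, and then to upgrade the intrinsic tracial metric to $W_{\fp}$ using optimal transport. The curvature hypotheses enter only in the reduction step and, when $\fp=\infty$, in recognising the resulting tracial metric as lyriform rather than merely a quantum metric in the sense of \cite{Jacelon:2024aa}.

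The first step is to identify the local structure of $(X,\rho)$. By the structure theory of spaces with bilaterally bounded curvature (Berestovskii, Nikolaev), a compact, finite-dimensional, boundaryless Alexandrov space with lower \emph{and} upper curvature bounds is, metrically, a $C^{1,\alpha}$ Riemannian manifold: its intrinsic distance $\rho$ is exactly that of a continuous Riemannian tensor. Since the construction behind \cite[Theorem~4.4]{Jacelon:2022wr} uses the Riemannian structure purely metrically---this is precisely what Section~\ref{section:qmcs} clarifies---it applies verbatim to such a space. In the second alternative, $X$ contractible with nonpositive upper bound, the same structure theory applied to $\mathrm{int}(X)$ together with contractibility identifies $(X,\rho)$ with a ``ball-type'' length space of the kind already treated among the simplicial balls and spheres of the paper, and the nonpositive bound guarantees the good behaviour of geodesics (uniqueness, convexity) needed downstream.

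With the reduction in place, I would invoke the metric version of \cite[Theorem~4.4]{Jacelon:2022wr} in the boundaryless case, or its ball-type analogue in the other case, to produce a projectionless, classifiable $\cs$-algebra $\js_X=\varinjlim(A_n,\varphi_n)$ with $\partial_e(T(\js_X))\cong X$ and with tracially $\rho$-Lipschitz elements dense; here each $A_n$ is a subhomogeneous building block whose extreme tracial boundary is a simplicial approximation $X_n$ of $X$ and which incorporates a prime dimension-drop so as to force $\js$-stability of the limit, the $\varphi_n$ are tracially nonexpansive $^*$-monomorphisms, and simplicity, nuclearity and the UCT are checked as in \cite{Jacelon:2022wr}; in the ball case, contractibility is what lets the building blocks be taken projectionless with the prescribed ($\js$-like) $K$-theory. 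It remains to identify the tracial metric. By Kantorovich duality, the affine functions on $T(\js_X)=\prob(X)$ coming from tracially $\rho$-Lipschitz elements are exactly the $W_{1,\rho}$-Lipschitz ones, and since $W_{1,\rho}\le W_{\fp,\rho}\le W_{\infty,\rho}$ they remain Lipschitz for $W_{\fp,\rho}$ with the same constant; arranging the $X_n$ and $\varphi_n$ so that $(\prob(X_n),W_{\fp})$ approximates $(\prob(X),W_{\fp})$ compatibly with the inductive system then shows that $W_{\fp,\rho}$ is the tracial metric observed by $\js_X$. For $\fp<\infty$, $W_{\fp,\rho}$ is $\ws$-continuous and convex and the axioms of \cite{Jacelon:2024aa} hold directly; for $\fp=\infty$ I would instead verify the lyriform axioms of Definition~\ref{def:lyriform}---lower semicontinuity of $W_\infty$, together with effective quasiconvexity coming from the geodesic convexity of $(\prob(X),W_\infty)$, which holds because $X$ is a length space and the lower bound makes displacement interpolation well behaved---and present $\js_X$ itself as the $\lyre$-limit of Example~\ref{ex:wlim}, so that the strictly finer $W_\infty$-topology is recovered as an increasing limit of the finite-stage metrics.

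I expect the main obstacle to lie in the case $\fp=\infty$, in balancing the two competing demands on the connecting maps $\varphi_n$: they must be spread out enough to force simplicity of $\js_X$, yet tracially nonexpansive for $W_\infty$, a far more rigid requirement than nonexpansiveness for $W_1$ or $\lp$. Carrying this out while keeping the building blocks projectionless over a triangulated ball, and reconciling the inductive-limit bookkeeping with the fact that $W_\infty$ does not generate the $\ws$-topology, is exactly the tension that the lyriform formalism is designed to absorb; getting the metric estimates on the $X_n$ to line up with the inductive-limit structure is where the real work lies.
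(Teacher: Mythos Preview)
Your proposal has the right skeleton for the boundaryless case (Nikolaev's $C^{1,\alpha}$ Riemannian structure feeds into the metric version of \cite[Theorem~4.4]{Jacelon:2022wr}), but there are two genuine gaps and one substantial overcomplication.

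First, the contractible/nonpositive case is not handled by identifying $X$ with a ``ball-type'' space. The paper's argument is different and sharper: Cartan--Hadamard gives \emph{uniqueness} of shortest paths between any two points, and the Alexandrov \emph{lower} bound ensures geodesics do not branch. Together these force the concatenation of shortest paths $x_0\to y'$ and $y'\to x_1$ to intersect only at $y'$; one then perturbs inside a single bi-Lipschitz Euclidean chart near $y'$ (which exists on an open dense set) to get a bi-Lipschitz path, and Theorem~\ref{thm:models} applies. You invoke geodesic uniqueness from the upper bound but omit the non-branching ingredient from the lower bound, without which nothing prevents the two geodesics from overlapping on a segment.

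Second, you misread the building blocks: in Theorem~\ref{thm:models} the $A_n$ are generalised dimension drop algebras \emph{over $X$ itself}, not over simplicial approximations $X_n$. Consequently there is no ``arranging $(\prob(X_n),W_\fp)$ to approximate $(\prob(X),W_\fp)$'' to be done.

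Third, and most importantly, the passage from the quantum metric Bauer simplex $(\js_X,L_\rho)$ to the tracial $\fp$-Wasserstein space $(\js_X,W_\fp)$ is essentially free, and your worry about $\fp=\infty$ is misplaced. Proposition~\ref{prop:wass} shows that for \emph{every} $\fp\in[1,\infty]$ (and for $d_{LP}$) the set of tracially Lipschitz elements of $C(X)$ is exactly $\lipo(X,\rho)$, and that each $W_\fp$ is effectively quasiconvex and $\ws$-lower semicontinuous. So once Theorem~\ref{thm:alexandrov} delivers dense $\rho$-Lipschitz elements in $\js_X$, all of the lyriform axioms for $W_\fp$ on $T(\js_X)\cong\prob(X)$ are verified at once by Proposition~\ref{prop:wass}; this is Proposition~\ref{prop:inherit} and Corollary~\ref{cor:qwass}. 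There is no need for the connecting maps $\varphi_n$ to be $W_\infty$-nonexpansive for Theorem~\ref{thm:a}, and no $\lyre$-limit argument is required here---that machinery (Example~\ref{ex:models}) is for later sections.
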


Actually, we can observe more than Alexandrov spaces. Our more general version of \cite[Theorem 4.4]{Jacelon:2022wr} also covers fractals like the Sierpi\'nski gasket, the Sierpi\'nski carpet and the Menger sponge (see Theorem~\ref{thm:models} and Example~\ref{ex:fractals}). Furthermore, our results in the Bauer setting are not exclusive to $\fp$-Wasserstein metrics. In Section~\ref{section:wass}, we identify abstract metric properties shared by $\{W_\fp\}_{p\in[1,\infty)}$ and $d_{LP}$, yielding the notion of a \emph{quasi-Wasserstein space} (see Definition~\ref{def:quasiwass}). By design, these structures are conducive to tracial Gromov--Hausdorff analysis in $\lyre$. In particular, the \emph{quantum intertwining gap} $\gamma_q$ (introduced in \cite{Jacelon:2024aa} and recalled in Definition~\ref{def:quig}) is a complete, separable quasimetric on the set of tracial isomorphism classes of quasi-Wasserstein spaces (see Theorem~\ref{thm:quigwass}). Restricting our attention to quasi-Wasserstein spaces that have prescribed $K$-theory and are classifiable and \emph{$K$-connected} (meaning that there is a unique state on the ordered $K_0$-group), classification \cite{Carrion:wz} implies that $\gamma_q$ is a quasimetric on $\lyre$-isomorphism classes (see Corollary~\ref{cor:quigwass}). In other words, the quantum intertwining gap between such objects is zero precisely when there is an isometric isomorphism of trace spaces that is induced by a $^*$-isomorphism between the observing $\cs$-algebras. Further assuming, as in Corollary~\ref{cor:contractible}, that these classifiable quasi-Wasserstein spaces have point-like ordered $K$-theory \eqref{eqn:pointlike} and are observing suitably bounded, contractible Alexandrov spaces of nonpositive curvature, $\gamma_q$ is in fact complete at the $\cs$-algebraic level.
 
\begin{thm} \label{thm:b}
For $n\in\nn$, $D,v>0$ and $k\le\kappa\le0$, let $\alex_0^n(D,k,\kappa,v)$ denote the collection of $n$-dimensional compact, contractible Alexandrov spaces $(X,\rho)$ of diameter at most $D$, curvature at least $k$ and at most $\kappa$, and such that the $n$-dimensional Hausdorff measure of $X$ is at least $v$. Then, for every $\fp\in[1,\infty)$, $\gamma_q$ is a complete $2$-quasimetric on the set of $\lyre$-isomorphism classes of classifiable tracial $\fp$-Wasserstein spaces $(A_X,W_{\fp,\rho})$ with point-like ordered $K$-theory observing objects $(X,\rho)$ in $\alex_0^n(D,k,\kappa,v)$.
\end{thm}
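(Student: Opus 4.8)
The plan is to reduce the statement to a Gromov--Hausdorff compactness theorem for the class $\alex_0^n(D,k,\kappa,v)$ of underlying metric spaces. That $\gamma_q$ is a separated quasimetric on the indicated set of $\lyre$-isomorphism classes, with the weak triangle inequality having the stated constant, is Corollary~\ref{cor:quigwass}: point-like ordered $K$-theory forces $K$-connectedness, so classification applies and $\gamma_q$ separates points. Thus only completeness remains. Note first why $\fp<\infty$ is essential: for finite $\fp$ the objects have $\ws$-metrisable trace simplices and, crucially, the $\fp$-Wasserstein-ification $(X,\rho)\mapsto(\prob(X),W_{\fp,\rho})$ is continuous for the Gromov--Hausdorff topology on compact spaces of bounded diameter; this fails completely at $\fp=\infty$, where the natural limit is instead a $\lyre$-limit of the kind appearing in Example~\ref{ex:wlim} (cf.\ Remark~\ref{rem:compact}).

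Given a $\gamma_q$-Cauchy sequence $\big((A_{X_i},W_{\fp,\rho_i})\big)_{i\in\nn}$ of classifiable tracial $\fp$-Wasserstein spaces with point-like ordered $K$-theory, each observing some $(X_i,\rho_i)\in\alex_0^n(D,k,\kappa,v)$, the first step is to push the Cauchy condition down to the base spaces. By classification each $A_{X_i}$ is determined up to isomorphism by its ordered $K$-theory together with the simplex $\prob(X_i)$, and one has a two-sided comparison — recorded in the analysis underpinning Definition~\ref{def:quig} and Corollary~\ref{cor:quigwass} — between $\gamma_q$ on Bauer-type objects and the Gromov--Hausdorff distance of the metric spaces realised as their extreme tracial boundaries. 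Concretely, a $\gamma_q$-small intertwining yields an almost-affine almost-isometric correspondence of the simplices $\prob(X_i)$, which restricts (using that $W_{\fp}$ exposes the Dirac masses with a quantitative modulus) to an almost-isometric correspondence of the extreme points $X_i\cong\partial_e\prob(X_i)$; conversely a coupling realising a Gromov--Hausdorff correspondence of $X_i$ and $X_j$ transports $\prob(X_i)$ to $\prob(X_j)$ nearly affine-isometrically in $W_{\fp}$. Hence $\big((X_i,\rho_i)\big)_i$ is Gromov--Hausdorff Cauchy.

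The heart of the argument is that $\alex_0^n(D,k,\kappa,v)$ is Gromov--Hausdorff compact, so $X_i\to X$ for some $(X,\rho)\in\alex_0^n(D,k,\kappa,v)$. Gromov's precompactness theorem, using only $\mathrm{curv}\ge k$ and $\diam\le D$, produces a subsequential limit $X$ that is a compact Alexandrov space with $\mathrm{curv}\ge k$ and $\diam X\le D$. The lower bound $v$ on the $n$-dimensional Hausdorff measure rules out collapse, so $X$ is genuinely $n$-dimensional (otherwise $\mathcal{H}^n(X_i)\to 0$), and for such a non-collapsing sequence Perelman's stability theorem gives that $X_i$ is homeomorphic to $X$ for all large $i$; in particular $X$ is contractible. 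Finally the upper curvature bound persists: for non-collapsing sequences with two-sided bounds the limit again has $\mathrm{curv}\le\kappa$, which one sees either from the comparison-angle description of CAT($\kappa$) passing to Gromov--Hausdorff limits or via the fact that Alexandrov spaces of bounded curvature are $C^{1,\alpha}$-Riemannian (Berestovskii--Nikolaev) together with a Cheeger--Gromov-type compactness under the volume, diameter and curvature bounds. Thus $X\in\alex_0^n(D,k,\kappa,v)$, and uniqueness of Gromov--Hausdorff limits makes the whole sequence converge. I expect this simultaneous preservation of dimension, contractibility and the upper curvature bound along a non-collapsing Gromov--Hausdorff limit to be the main obstacle — the step that leans hardest on the metric-geometry literature; the defining hypotheses of $\alex_0^n(D,k,\kappa,v)$ are precisely what is needed to make the class closed.

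It remains to promote the convergence back upstairs. Since $X$ is contractible with nonpositive upper curvature bound, Theorem~\ref{thm:a} furnishes a classifiable, projectionless tracial $\fp$-Wasserstein space $(A_X,W_{\fp,\rho})$ observing $(X,\rho)$; contractibility gives $K^0(X)=\zz$ and $K^1(X)=0$, so $A_X$ has point-like ordered $K$-theory and belongs to the set under consideration. By the Gromov--Hausdorff continuity of $\fp$-Wasserstein-ification and the comparison of the previous paragraph (now in the easy direction, using classification to realise the near-isomorphism of trace simplices by a near-isomorphism of the observing $\cs$-algebras), $\gamma_q\big((A_{X_i},W_{\fp,\rho_i}),(A_X,W_{\fp,\rho})\big)\to 0$. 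A Cauchy sequence with a convergent subsequence converges (the weak triangle inequality is enough), so the original sequence $\gamma_q$-converges within the set, which is the asserted completeness.
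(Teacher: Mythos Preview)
Your overall architecture matches the paper's (Corollary~\ref{cor:contractible}): reduce to Gromov--Hausdorff closedness of $\alex_0^n(D,k,\kappa,v)$, invoke Gromov precompactness, the Hausdorff-measure bound against collapse, Perelman stability for contractibility, preservation of the upper curvature bound, then observe the limit via Theorem~\ref{thm:alexandrov} and read off point-like $K$-theory from contractibility via Proposition~\ref{prop:ktheory}.

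The step that does not work as you wrote it is the passage from $\gamma_q$-Cauchy to $d_{GH}$-Cauchy. You claim an affine $\eps$-isometry $\prob(X_i)\to\prob(X_j)$ ``restricts (using that $W_\fp$ exposes the Dirac masses with a quantitative modulus) to an almost-isometric correspondence of the extreme points''. But there is no such modulus: $\delta_x$ is arbitrarily $W_\fp$-close to non-extreme measures like $(1-t)\delta_x+t\delta_y$, so an affine near-isometry need not carry Diracs near Diracs, and having an approximate inverse does not fix this. The paper packages this step into the proof of Theorem~\ref{thm:quigwass}: from an $\eps$-isometric $\eps$-surjective affine map one builds an \emph{admissible} metric on $X_i\sqcup X_j$ in which the $W_1$-Hausdorff distance between $\prob(X_i)$ and $\prob(X_j)$ is small (this is where the uniform lower bound $K_\Omega\cdot\Omega\ge W_1$ enters), and then \cite[Proposition~4.8]{Rieffel:2004aa} converts that into a bound on $d_{GH}(X_i,X_j)$. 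The converse direction, lifting $d_{GH}$-convergence back to $\gamma_q$-convergence, is likewise handled there via uniform effective quasiconvexity and Yannelis--Prabhakar selection. You should invoke Theorem~\ref{thm:quigwass} directly rather than sketch an alternative mechanism.

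One smaller point on the upper curvature bound: the local condition in Definition~\ref{def:alexandrov} does not obviously pass to $d_{GH}$-limits. The paper first uses Cartan--Hadamard (Remark~\ref{rem:hadamard}) to upgrade each contractible $X_i$ to a global $\mathrm{CAT}(\kappa)$ space, and it is \emph{that} global condition which survives in the limit by \cite[Corollary~II.3.10]{Bridson:1999aa}.
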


Like Theorem~\ref{thm:a}, whose proof relies on an important theorem of Nikolaev \cite[Theorem 10.10.13]{Burago:2001aa}, the proof of Theorem~\ref{thm:b} uses a deep result in the theory of metric geometry, namely, Perelman's stability theorem \cite{Kapovitch:2007aa}. More specifically, Nikolaev tells us that Alexandrov spaces with two-sided curvature bounds can be equipped with Riemannian structure (possibly with a reduced degree of smoothness and with a possibly nonsmooth boundary) and Perelman tells us that a Gromov--Hausdorff-convergent sequence of (isometry classes of) $n$-dimensional Alexandrov spaces with a uniform lower bound on curvature must eventually be topologically constant, provided that the limit is also $n$-dimensional. The role of the Hausdorff measure hypothesis appearing in Theorem~\ref{thm:b} is to ensure that there is indeed no dimensional collapse in the limit. 

Sticking with point-like ordered $K$-theory, but moving beyond tracially lyriform $\cs$-algebras of `Bauer type' (in the sense of Definition~\ref{def:compact}), we show in Section~\ref{section:limits} how to view arbitrary simple inductive limits of prime dimension drop algebras as $\lyre$-limits of sequences of tracial Wasserstein spaces observing  spheres or balls.

\begin{thm} \label{thm:c}
Let $A$ be a simple $\cs$-algebra that is isomorphic to an inductive limit of prime dimension drop algebras. Then, there are inductive systems of classifiable tracial Wasserstein spaces $((\js_{\sphere^{2k-1}},W_{1,\rho_k}),\varphi_k)_{k\in\nn}$ and $((\js_{\ball^{2k}},W_{1,\omega_k}),\psi_k)_{k\in\nn}$ in $\lyre$ such that $(\varinjlim (\js_{\sphere^{2k-1}},\varphi_k),W_{1,\rho})$ and $(\varinjlim (\js_{\ball^{2k}},\psi_k),W_{1,\omega})$ provide isomorphic tracially lyriform structures on $A$, that is, there is a $^*$-isomorphism $\varinjlim (\js_{\sphere^{2k-1}},\varphi_k) \to \varinjlim (\js_{\ball^{2k}},\psi_k) \cong A$ that induces an isometric isomorphism of trace spaces with respect to the limit metrics $W_{1,\omega}$ and $W_{1,\rho}$.
\end{thm}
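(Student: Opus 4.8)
The plan is to reduce the statement, via classification, to a construction of two inductive systems of Bauer simplices with a prescribed inverse limit, and then to identify the two limit metrics by threading both systems through a single interleaved one. First I would pin down the target: since $A$ is a simple inductive limit of prime dimension drop algebras it is classifiable, with point-like ordered $K$-theory---$K_0(A)=\zz$ with standard order and unit $1$, $K_1(A)=0$, trivial pairing, and $T(A)=:\Delta$ an arbitrary metrizable Choquet simplex. By Theorem~\ref{thm:orbits}, Corollary~\ref{cor:quigwass} and classification \cite{Carrion:wz}, it then suffices, for each of the two families of building blocks, to produce an inductive sequence in $\lyre$ whose $\lyre$-limit (Section~\ref{section:limits}) is a classifiable tracially lyriform $\cs$-algebra with this invariant, and to arrange that the two limit metrics agree.

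For the construction, fix a presentation $\Delta=\varprojlim(\Delta_{m_k},\sigma_k)$ as an inverse limit of finite-dimensional simplices along affine surjections, with $m_k$ increasing slowly. Take building blocks $\js_{\sphere^{2k-1}}$ as supplied by Theorem~\ref{thm:a}, with $T(\js_{\sphere^{2k-1}})=\prob(\sphere^{2k-1})$ the Bauer simplex, each $\sphere^{2k-1}$ carrying a metric $\rho_k$ that is Lipschitz equivalent to Riemannian---a round metric will do, $\sphere^{2k-1}$ being a two-sidedly curvature-bounded Alexandrov space with empty boundary; for the ball family one puts a flat metric $\omega_k$ on $\ball^{2k}$ and invokes the ``contractible, nonpositive upper curvature bound'' alternative of Theorem~\ref{thm:a}. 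Pick points $x^{(k)}_0,\dots,x^{(k)}_{m_k}\in\sphere^{2k-1}$, let $\iota_k\colon\Delta_{m_k}\hookrightarrow\prob(\sphere^{2k-1})$ be the affine embedding $e_j\mapsto\delta_{x^{(k)}_j}$, and choose continuous $h_k\colon\sphere^{2k+1}\to\iota_k(\Delta_{m_k})$ with $h_k(x^{(k+1)}_j)=\iota_k(\sigma_k(e_j))$ for every $j$. The barycentric extensions $\bar h_k\colon\prob(\sphere^{2k+1})\to\prob(\sphere^{2k-1})$ then satisfy $\bar h_k\circ\iota_{k+1}=\iota_k\circ\sigma_k$ and have image inside $\iota_k(\Delta_{m_k})$, so $(s_k)_k\mapsto(\iota_k^{-1}(s_k))_k$ defines an affine $\ws$-homeomorphism $\varprojlim(\prob(\sphere^{2k-1}),\bar h_k)\cong\varprojlim(\Delta_{m_k},\sigma_k)=\Delta$. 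Using the existence part of the classification of $^*$-homomorphisms \cite{Carrion:wz}---where the only obligation, compatibility of traces with states on $K_0$, is vacuous because $K_0$ of the building blocks is $\zz$ with unit $1$, whose state space is a point---I would realize each $\bar h_k$ as the trace map of a unital $^*$-homomorphism $\varphi_k\colon\js_{\sphere^{2k-1}}\to\js_{\sphere^{2k+1}}$, with a standard choice of $K$-theory maps ensuring the colimit has $K$-theory $(\zz,0)$; each $\varphi_k$ is injective since its domain is simple, and, since $\lipo(\bar h_k)=\lipo(h_k)$ for the $W_1$-metrics, it is tracially nonexpansive whenever $h_k$ is $1$-Lipschitz. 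The $\lyre$-limit $\varinjlim(\js_{\sphere^{2k-1}},\varphi_k)$ is then unital, separable, nuclear, $\js$-stable, UCT, and simple (any closed ideal $J$ satisfies $J=\varinjlim(J\cap\js_{\sphere^{2k-1}})$, with $J\cap\js_{\sphere^{2k-1}}\in\{0,\js_{\sphere^{2k-1}}\}$ by simplicity and $J\cap\js_{\sphere^{2k-1}}=\js_{\sphere^{2k-1}}$ forcing $1\in J$), hence classifiable, with $K_0=\zz$, $K_1=0$ and trace simplex $\Delta$; by classification it is isomorphic to $A$, and $W_{1,\rho}$ supplies the required tracial lyriform structure. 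The ball family is handled verbatim.

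To match the two structures, rather than compare the limit metrics $W_{1,\rho}$ and $W_{1,\omega}$ on $\Delta$ directly, I would run the construction once on the interleaved sequence $\js_{\sphere^1}\to\js_{\ball^2}\to\js_{\sphere^3}\to\js_{\ball^4}\to\cdots$, now threading the finite-simplex skeleton through all of $\prob(\sphere^1),\prob(\ball^2),\prob(\sphere^3),\dots$, so that the full inverse limit is again $\Delta$ and all connecting maps are tracially nonexpansive $^*$-monomorphisms. Its $\lyre$-limit $C$ is classifiable with the invariant of $A$, so $C\cong A$. The odd-sphere terms form a cofinal subsequence whose composite connecting maps $\varphi_k$ have $\lyre$-limit $C$; the even-ball terms likewise give $\psi_k$ with $\lyre$-limit $C$. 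Since the $\lyre$-limit of a sequence coincides with that of any cofinal subsequence (Section~\ref{section:limits}), both $W_{1,\rho}$ and $W_{1,\omega}$ equal the limit metric of the interleaved sequence on $T(C)$; transporting along $C\cong A$, the identity map of $A$ is then the sought $^*$-isomorphism inducing an isometric isomorphism of trace spaces.

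I expect the main obstacle to lie in the construction itself: making the maps $h_k$ genuinely $1$-Lipschitz while keeping the inductive sequence ``suitably bounded'' enough for its $\lyre$-limit to exist. The tension is that the connecting maps contract dimension---from measures on $\sphere^{2k+1}$ to measures on $\sphere^{2k-1}$---whereas the inverse limit must encode a possibly infinite-dimensional simplex, so naively rescaling the metrics $\rho_k,\omega_k$ to absorb Lipschitz constants forces their diameters to blow up. This is where the geometric latitude of the building blocks must be spent: $\rho_k$ and $\omega_k$ may be taken to be arbitrary metrics Lipschitz equivalent to Riemannian (Theorem~\ref{thm:models}); the configurations $x^{(k)}_j$ can be placed so that $\iota_k(\Delta_{m_k})$ is a small, well-spread simplex inside $\prob(\sphere^{2k-1})$; and the use of measure-valued maps $h_k$ (rather than point maps) leaves enough slack to interpolate between the prescribed vertex images with Lipschitz constant at most $1$---with the radii of the configurations chosen inductively, after the relevant skeleton data has been fixed, so as to keep diameters bounded, or at worst to yield an extended limit metric, which is still admissible since tracial lyriform structure (Definition~\ref{def:lyriform}) demands only lower semicontinuity and effective quasiconvexity, not finiteness or compatibility with the $\ws$-topology. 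Checking that this can be carried through, and that the limit genuinely lies in $\lyre$, is the technical heart of the proof.
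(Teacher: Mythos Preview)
Your overall architecture is right---Lazar--Lindenstrauss, realisation of the connecting affine maps by classification, then matching the two limits---but the proof is incomplete at exactly the point you flag, and your proposed fallback is not available. Definition~\ref{def:lyriform}(1) requires $\diam(T(A),\rho)<\infty$, and Proposition~\ref{prop:colimits} needs $\sup_k\diam(T(A_k),\rho_k)<\infty$ for the $\lyre$-limit to exist; so ``extended limit metric'' is not an option. You therefore genuinely must produce $1$-Lipschitz connecting maps with uniformly bounded diameters, and your sketch (``geometric latitude'', measure-valued $h_k$, shrinking configurations) does not explain how to do this. With round metrics there is no evident reason the barycentric extensions $\bar h_k$ should be $1$-Lipschitz for $W_1$, and rescaling to fix this forces the diameters to diverge, as you note.

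The paper resolves this by choosing the metrics $\rho_k,\omega_k$ \emph{after} fixing the Lazar--Lindenstrauss data, and tailoring them to that data: $\sphere^{k-1}$ is realised as the circumscribed sphere of $\Delta_k$, $\omega_k$ is the pullback of the $\ell_1$-metric via a homeomorphism $\widehat\pi_k\colon\ball^k\to\Delta_k$, and $\rho_k$ is the intrinsic metric induced by the $\ell_1$-pullback on the $(k-1)$-skeleton. Two short lemmas then do the work: Lemma~\ref{lemma:nsimplex} identifies $W_1$ over an equidistant finite space with a scaled $\ell_1$-metric, so the affine inclusions $\iota_k,\widehat\iota_k$ are isometric; and Lemma~\ref{lemma:lipschitzextension} shows that $1$-Lipschitz boundary maps extend uniquely to $1$-Lipschitz affine maps on $W_1$-spaces. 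Since the Lazar--Lindenstrauss maps $\eta_k$ are $\ell_1$-nonexpansive (Blackadar's observation), the entire intertwining diagram---$\pi_k,\iota_k,\kappa_k,\zeta_k,\lambda_k,\theta_k$---consists of $1$-Lipschitz maps, and the diameters are bounded by $2$. This simultaneously gives the isometry between the sphere and ball limits (both are isometric to $(T(A),\delta_\infty)$ via the explicit diagram), so your interleaving trick, while sound, is unnecessary.
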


The proof of Theorem~\ref{thm:c} is based on the Lazar--Lindenstrauss theorem \cite[Theorem 5.2]{Lazar:1971kx}, which shows that every metrisable Choquet simplex (in particular, the trace space $T(A)$) is affinely homeomorphic to an inverse limit of finite-dimensional simplices $\Delta_k$.  In Theorem~\ref{thm:arachnid}, we construct an intertwining between such a sequence and sequences of spheres $\sphere^{2k-1}$ or balls $\ball^{2k}$, with the metrics $\rho_k$ and $\omega_k$ then defined to be the intrinsic metrics obtained by pulling back the $\ell_1$-metrics on the simplices $\Delta_k$. For this reason, we refer to these metric balls or spheres as \emph{simplicial}. Together with the $K$-theory computation Corollary~\ref{cor:ballsandspheres}, the connecting maps between the classifiable $\cs$-algebras $\js_{\sphere^{2k-1}}$ or $\js_{\ball^{2k}}$ are then obtained by classification \cite{Carrion:wz}, which also provides the isomorphisms $\varinjlim (\js_{\sphere^{2k-1}},\varphi_k) \cong A \cong \varinjlim (\js_{\ball^{2k}},\psi_k)$.  

We use simplicial spheres and balls because they are covered by Theorem~\ref{thm:models} and they support Goal~\ref{goal:b}. More specifically, these length spaces exhibit optimal continuous transport in the sense developed in \cite{Jacelon:2021wa,Jacelon:2021vc} and recalled in Section~\ref{subsection:transport}. Definition~\ref{def:transport} proposes a noncommutative extension of the theory to the setting of tracial Wasserstein spaces, and Theorem~\ref{thm:transport} establishes the optimal transport property for generalised dimension drop algebras over simplicial spheres and balls, which are the building blocks used in Theorem~\ref{thm:a} to construct $(\js_{\sphere^{2k-1}},\rho_k)$ and $(\js_{\ball^{2k}},\omega_k)$. These building blocks are equipped with canonical \emph{nuclei} \eqref{eqn:drmat}, which are compact sets of tracially Lipschitz elements relative to which we can measure distances between unitary orbits of embeddings (see Definition~\ref{def:unitarydistance}). These nuclei fit together to provide $\sigma$-compact \emph{cores} $\mathcal{L}_r(\js_X)$ for $X=\sphere^{2k-1}$ or $\ball^{2k}$ (see \eqref{eqn:unioncore}), and corresponding cores for $A$ under the construction of Theorem~\ref{thm:c}. The conclusion presented in Corollary~\ref{cor:spiderorbits} is that, for certain codomains, the unitary distance relative to these cores can be computed in terms of the pro-$W_\infty$ metrics $\overrightarrow{W}_{\infty,\omega}$ and $\overrightarrow{W}_{\infty,\rho}$ defined in \eqref{eqn:omegapro} and \eqref{eqn:rhopro}.

\begin{thm} \label{thm:d}
Let $A$ be a tracially lyriform $\cs$-algebra constructed as in Theorem~\ref{thm:c}, and let $\mathfrak{ball}(A)$ and $\mathfrak{sphere}(A)$ be the associated cores. Let $B$ be a simple, separable, unital, nuclear, $\js$-stable $\cs$-algebra with a unique trace, and let $\varphi,\psi\colon A\to B$ be unital $^*$-homomorphisms. Then,
\[
d_U(\varphi,\psi)|_{\mathfrak{ball}(A)} = \overrightarrow{W}_{\infty,\omega}(\varphi,\psi).
\]
If in addition $B$ has real rank zero, then
\[
d_U(\varphi,\psi)|_{\mathfrak{sphere}(A)} = \overrightarrow{W}_{\infty,\rho}(\varphi,\psi).
\]
\end{thm}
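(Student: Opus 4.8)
The plan is to reduce the two displayed identities to a single identity for each building block $\js_{\ball^{2k}}$ or $\js_{\sphere^{2k-1}}$ and then assemble along the inductive limit of Theorem~\ref{thm:c}.

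\textbf{Reduction to the building blocks.} By the construction underlying Theorem~\ref{thm:c} (and the intertwining of Theorem~\ref{thm:arachnid}), the cores $\mathfrak{ball}(A)$ and $\mathfrak{sphere}(A)$ are the $\sigma$-compact unions \eqref{eqn:unioncore}, carried along the limit maps $\iota_k$, of the canonical nuclei \eqref{eqn:drmat} of the building blocks $\js_{\ball^{2k}}$ and $\js_{\sphere^{2k-1}}$, with the connecting maps $\psi_k$, $\varphi_k$ sending the stage-$k$ nucleus into a set approximated by the stage-$(k+1)$ one. I would first isolate a lemma: for such an exhaustion, and for unital $^*$-homomorphisms into a $\js$-stable $B$, the unitary distance relative to the core equals the supremum over $k$ of the unitary distances, relative to the stage-$k$ nucleus, of the corestrictions $\varphi\iota_k,\psi\iota_k\colon\js_{X_k}\to B$; the point of this lemma is that a single sequence of unitaries in $B$ can be assembled witnessing all stages simultaneously, which follows from $\js$-stability together with a one-sided Elliott-type diagonalisation. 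Dually, by \eqref{eqn:omegapro} and \eqref{eqn:rhopro} the pro-$W_\infty$ distances $\overrightarrow{W}_{\infty,\omega}(\varphi,\psi)$ and $\overrightarrow{W}_{\infty,\rho}(\varphi,\psi)$ are, respectively, the suprema over $k$ of $W_{\infty,\omega_k}$ and $W_{\infty,\rho_k}$ evaluated on the components in $\prob(\ball^{2k})$, resp.\ $\prob(\sphere^{2k-1})$, of the traces $\varphi^{*}\tau_B,\psi^{*}\tau_B\in T(A)=\varprojlim_k\prob(X_k)$, where $\tau_B$ is the unique trace of $B$. It therefore suffices to prove, for a fixed building block $\js_X$ with $X=\ball^{2k}$ (no extra hypothesis on $B$) or $X=\sphere^{2k-1}$ (with $B$ of real rank zero), and unital $^*$-homomorphisms $\bar\varphi,\bar\psi\colon\js_X\to B$, the identity
\[
d_U(\bar\varphi,\bar\psi)\big|_{\mathcal{N}_X}\;=\;W_{\infty,\rho_X}\big(\bar\varphi^{*}\tau_B,\,\bar\psi^{*}\tau_B\big),
\]
where $\mathcal{N}_X$ denotes the canonical nucleus \eqref{eqn:drmat} and $\rho_X$ the intrinsic metric of $X$ (that is, $\omega_k$, resp.\ $\rho_k$).

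\textbf{The inequality $\le$.} Here I would invoke the optimal continuous transport property of generalised dimension drop algebras over simplicial spheres and balls (Theorem~\ref{thm:transport}, in the noncommutative formulation of Definition~\ref{def:transport}). Given $\eps>0$, choose a transport of $\bar\varphi^{*}\tau_B$ onto $\bar\psi^{*}\tau_B$ whose maximal displacement is within $\eps$ of $W_{\infty,\rho_X}(\bar\varphi^{*}\tau_B,\bar\psi^{*}\tau_B)$; since $\rho_X$ is a length metric, this transport is realised by moving mass along geodesics of $X$ of length at most $W_{\infty,\rho_X}(\bar\varphi^{*}\tau_B,\bar\psi^{*}\tau_B)+\eps$. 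The transport property turns such a geodesic transport into a norm-continuous path of unitaries in $B$ conjugating $\bar\varphi$ to $\bar\psi$ up to an error on $\mathcal{N}_X$ bounded by the maximal displacement, hence by $W_{\infty,\rho_X}(\bar\varphi^{*}\tau_B,\bar\psi^{*}\tau_B)+\eps$; passing from a path of $^*$-homomorphisms to an honest unitary of $B$ uses the relevant case of Theorem~\ref{thm:orbits}, and it is exactly at this point that $K_1(\js_{\ball^{2k}})=0$ settles the ball case while real rank zero of $B$ is what absorbs the nontrivial $K_1(\js_{\sphere^{2k-1}})$ in the sphere case. Letting $\eps\to0$ gives $\le$.

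\textbf{The inequality $\ge$.} The elements of $\mathcal{N}_X$ are tracially $\rho_X$-Lipschitz, so an $\eps$-conjugation of $\bar\varphi$ to $\bar\psi$ on $\mathcal{N}_X$ already forces $W_{1,\rho_X}(\bar\varphi^{*}\tau_B,\bar\psi^{*}\tau_B)\le\eps$ on testing against $\tau_B$; the real work is to upgrade $W_1$ to $W_\infty$. For this I would use that $\mathcal{N}_X$, as defined in \eqref{eqn:drmat}, is designed to be rich enough to detect $W_\infty$: it contains matrix amplifications of the Lipschitz data implementing the continuous-transport characterisation of $W_{\infty,\rho_X}$ on the finite-dimensional simplicial space $X$, so that a uniformly small conjugation error forces a uniformly small displacement of the transported measure, not merely a small average displacement. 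Equivalently, the bound can be read off the optimality half of Theorem~\ref{thm:transport}: the transport exhibited above is cost-minimising, so no unitary of $B$ beats $W_{\infty,\rho_X}(\bar\varphi^{*}\tau_B,\bar\psi^{*}\tau_B)$ on $\mathcal{N}_X$. Taking the infimum over unitaries yields $\ge$; combining the two inequalities with the reduction step proves the theorem.

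\textbf{Main obstacle.} The crux is the $\ge$ direction at the building-block level: verifying that the canonical nucleus is large enough that the unitary-orbit distance against it sees the full $\infty$-Wasserstein distance rather than merely the $1$-Wasserstein distance. This will require the explicit structure of \eqref{eqn:drmat} hand in hand with the finite-dimensional length-space geometry of $\ball^{2k}$ and $\sphere^{2k-1}$ and the continuous-transport description of $W_\infty$; the subtle point is propagating a bound on the conjugation error to a \emph{uniform} (rather than averaged) bound on the displacement of optimal transport. A secondary technicality is the simultaneous-witness claim in the reduction step — assembling the stagewise unitaries into one sequence in $B$ without the error growing past the supremum over $k$ — which should be routine given $\js$-stability and an Elliott-style diagonalisation but must be arranged with care.
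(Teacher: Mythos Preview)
Your reduction strategy is broadly right, but three preliminary corrections. First, the paper treats the reduction step as immediate: since $\mathfrak{ball}(A)$ is the increasing union over $k$ of the images of the cores $\mathcal{L}_r(\js_{\ball^{2k}})$, the paper simply writes $d_U|_{\mathfrak{ball}(A)}=\sup_k d_U|_{\mathcal{L}_r(\js_{\ball^{2k}})}$ and then invokes Corollary~\ref{cor:dimdroporbits}; no $\js$-stability or Elliott-type diagonalisation is used. Second, the reduction must go one level deeper than you indicate: the nucleus \eqref{eqn:drmat} is attached to a generalised dimension drop algebra $X_{p,q}$, not to the simple algebra $\js_X$, and the core $\mathcal{L}_r(\js_X)$ is itself a further union \eqref{eqn:unioncore} of modified nuclei of such $X_{p_n,q_n}$; the building-block identity $d_U=W_\infty$ is Theorem~\ref{thm:orbits}, stated and proved at that level, with Corollary~\ref{cor:dimdroporbits} then assembling these for $\js_X$. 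Third, in the sphere case, real rank zero of $B$ is not what arranges the agreement $K_1(\varphi|_{\js_{\sphere^{2k-1}}})=K_1(\psi|_{\js_{\sphere^{2k-1}}})$: that holds because the connecting maps in Theorem~\ref{thm:arachnid} are chosen $K_1$-trivial, so $K_1(A)=0$ and both restrictions vanish on $K_1$. Real rank zero is needed already in Theorem~\ref{thm:orbits} for a separate reason (Hausdorffised algebraic $K_1$).

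The genuine gap is the $\ge$ direction at the building-block level. Neither of your suggestions works. The ``optimality half of Theorem~\ref{thm:transport}'' is the lower bound in \eqref{eqn:transport2} for a transport automorphism $\alpha$ of the \emph{domain} $X_{p,q}$; it says nothing about what unitaries of the \emph{codomain} $B$ can or cannot achieve, so it does not bound $d_U$ from below. And ``$\mathcal{N}_X$ contains matrix amplifications detecting $W_\infty$'' is not an argument as it stands. The paper's device is much simpler and is the key idea you are missing: the central embedding $\iota\colon C(X)=X_{1,1}\hookrightarrow X_{p,q}$ sends $\mathcal{D}_r(C(X))$ into $\mathcal{D}_r(X_{p,q})$ and induces the identity on traces, so
\[
d_U(\varphi,\psi)\big|_{\mathcal{D}_r(X_{p,q})}\ \ge\ d_U(\varphi\circ\iota,\psi\circ\iota)\big|_{\mathcal{D}_r(C(X))}\ \ge\ W_\infty(\varphi\circ\iota,\psi\circ\iota)\ =\ W_\infty(\varphi,\psi),
\]
the middle inequality being the commutative case established in \cite{Jacelon:2021wa,Jacelon:2021vc}.
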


At the conclusion of the paper in Section~\ref{subsection:random}, we approach the intersection of Goals~\ref{goal:c}~and~\ref{goal:b} and begin investigating the probabilistic structure of the embedding spaces $\emb(\js_{\ball^k},B)/\sim_{au}$ and $\emb(\js_{\sphere^{k-1}},B)/\sim_{au}$. Here, we assume that $B$ is a monotracial classifiable $\cs$-algebra that is also real rank zero and $K_1$-trivial in the case of spheres $\sphere^{k-1}$. Examples to keep in mind are the Jiang--Su algebra $\js$ or the universal uniformly hyperfinite (UHF) algebra $\mathcal{Q}$, as appropriate. By classification \cite[Theorem B]{Carrion:wz} and Corollary~\ref{cor:dimdroporbits}, these metric spaces are isometrically isomorphic to the pro-$W_\infty$ spaces $(\prob(\ball^k),\overrightarrow{W}_\infty)$ and $(\prob(\sphere^{k-1}),\overrightarrow{W}_\infty)$, respectively. To make sense of the probability structure, we generate random (approximate unitary equivalence classes of) extremal embeddings $\varphi_x$ by randomly selecting an extremal trace $x$ according to a fixed faithful measure $\mu$ (see Section~\ref{subsubsection:extremal}), and random embeddings $\varphi_{\mu_n}$ associated with an empirical measure $\mu_n=\frac{1}{n}\sum_{i=1}^n\delta_{x_i}$ (see Section~\ref{subsubsection:empirical}). For highly curved spaces like high-dimensional spheres (with $\mu$ the usual volume measure), or more broadly, for suitable measured length spaces $(X,\rho,\mu)$ whose $\infty$-Ricci curvature in the sense of \cite{Lott:2009aa,Sturm:2006aa,Sturm:2006ab} is bounded below, we can estimate the tracial variance of core elements $f\in\mathcal{L}_r(\js_X)$ (see \eqref{eqn:variance}). For the empirical embeddings $\varphi_{\mu_n}$, we get a large deviations estimate \eqref{eqn:dusanov} via a deep result in probability theory called Sanov's theorem.

\begin{thm} \label{thm:e}
Let $\sphere^{k-1}$ be an odd Euclidean unit sphere, for some natural number $k\ge3$, and let $\mu$ denote the Haar measure on $\sphere^{k-1}$. Let $B$ be a simple, separable, unital, nuclear, $\js$-stable $\cs$-algebra that has a unique trace , real rank zero and trivial $K_1$. Then, the tracial variance of $\mu$-random extremal embeddings $\varphi_x \colon \js_{\sphere^{k-1}} \to B$ relative to the core $\mathcal{L}(\js_{\sphere^{k-1}})$ is bounded by
\[
\sup_{f\in\mathcal{L}(\js_{\sphere^{k-1}})}\Var\tau_B\circ\varphi_x(f) \le \frac{1}{k-2}.
\]
For empirical embeddings $\varphi_{\mu_n}\colon \js_{\sphere^{k-1}} \to B$, the core $\mathcal{L}_r(\js_{\sphere^{k-1}})$ is with high probability tracially concentrated near the mean. More precisely, for $\eps>0$ and large $n\in\nn$,
\[
\pp_\mu\left(\sup_{f\in\mathcal{L}(\js_{\sphere^{k-1}})}|\tau_B\circ\varphi_{\mu_n}(f)  - \tau_B\circ\varphi_\mu(f)| \le \eps\right) \ge 1 - \exp\left(-\frac{n\eps^2}{\pi^2}\right).
\]
\end{thm}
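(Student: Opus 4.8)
To prove Theorem~\ref{thm:e}, the plan is to treat the two displayed estimates separately, reducing the variance bound to the Poincar\'e inequality for positively curved spaces and the empirical bound to Sanov's theorem. For the variance bound, I would first pass to Lipschitz functions on the sphere: a core element $f\in\mathcal{L}(\js_{\sphere^{k-1}})$ is tracially $1$-Lipschitz, hence determines a $\rho$-$1$-Lipschitz function $\hat f\in C(\sphere^{k-1})$ via $\hat f(x)=x(f)$ on $\partial_e(T(\js_{\sphere^{k-1}}))\cong\sphere^{k-1}$. As $B$ is monotracial the extremal embedding satisfies $\tau_B\circ\varphi_x=x$, so $\tau_B\circ\varphi_x(f)=\hat f(x)$ and the quantity to estimate is $\Var_\mu(\hat f)$ with $x$ distributed according to $\mu$. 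Now $\rho$ is the intrinsic great-circle metric of the round unit sphere, whose Ricci curvature is $(k-2)g$; hence $(\sphere^{k-1},\rho,\mu)$ satisfies the curvature--dimension condition $\mathrm{CD}(k-2,\infty)$ and therefore the Bakry--\'Emery/Lott--Sturm--Villani Poincar\'e inequality $\Var_\mu(g)\le\frac1{k-2}\int_{\sphere^{k-1}}|\nabla g|^2\,d\mu$ (see \cite{Lott:2009aa,Sturm:2006aa,Sturm:2006ab}). Applied to $\hat f$, which is differentiable $\mu$-almost everywhere with $|\nabla\hat f|\le1$ by Rademacher's theorem, this gives $\Var_\mu(\hat f)\le\frac1{k-2}$, and taking the supremum over the core finishes the first part.

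For the empirical bound, write $\mu_n=\frac1n\sum_{i=1}^n\delta_{x_i}$ with the $x_i$ independent and $\mu$-distributed. Since $\varphi_{\mu_n}$ and $\varphi_\mu$ induce the traces $\mu_n$ and $\mu$ on $\js_{\sphere^{k-1}}$, evaluation on a core element $f$ gives $\tau_B\circ\varphi_{\mu_n}(f)-\tau_B\circ\varphi_\mu(f)=\int\hat f\,d(\mu_n-\mu)$, so Kantorovich--Rubinstein duality yields
\[
\sup_{f\in\mathcal{L}(\js_{\sphere^{k-1}})}\bigl|\tau_B\circ\varphi_{\mu_n}(f)-\tau_B\circ\varphi_\mu(f)\bigr|\le W_{1,\rho}(\mu_n,\mu),
\]
and it is enough to show $\pp_\mu(W_{1,\rho}(\mu_n,\mu)\ge\eps)\le\exp(-n\eps^2/\pi^2)$ for all large $n$. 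Here I would invoke Sanov's theorem: since $W_{1,\rho}$ metrises the weak$^*$-topology on $\prob(\sphere^{k-1})$, the set $C_\eps=\{\nu:W_{1,\rho}(\nu,\mu)\ge\eps\}$ is weak$^*$-closed, so the large-deviations upper bound gives $\limsup_n\tfrac1n\log\pp_\mu(\mu_n\in C_\eps)\le-\inf_{\nu\in C_\eps}H(\nu\mid\mu)$, where $H(\cdot\mid\mu)$ is relative entropy. Combining the maximal-coupling estimate $W_{1,\rho}(\nu,\mu)\le\diam(\sphere^{k-1},\rho)\,\|\nu-\mu\|_{\mathrm{TV}}=\pi\|\nu-\mu\|_{\mathrm{TV}}$ with the Csisz\'ar--Kullback--Pinsker inequality $\|\nu-\mu\|_{\mathrm{TV}}^2\le\tfrac12H(\nu\mid\mu)$ shows $H(\nu\mid\mu)\ge\tfrac2{\pi^2}W_{1,\rho}(\nu,\mu)^2$, so $\inf_{\nu\in C_\eps}H(\nu\mid\mu)\ge 2\eps^2/\pi^2$. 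Hence $\pp_\mu(W_{1,\rho}(\mu_n,\mu)\ge\eps)\le\exp(-n\eps^2/\pi^2)$ for all large $n$, and passing to complements gives the claimed inequality.

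The dictionary between core elements and $1$-Lipschitz functions, together with the identity $\tau_B\circ\varphi_x=x$, is already part of the construction of these building blocks, so the substantive inputs are the positive-curvature Poincar\'e inequality for the first part and Sanov's theorem for the second. The point that requires care --- and the reason the second statement is only asymptotic in $n$ --- is that Sanov's theorem controls the exponential rate only in the limit, so one must surrender part of the true rate to absorb the subexponential corrections; the factor $2$ supplied by Pinsker's inequality is precisely the slack that lets the clean constant $\pi^2=\diam(\sphere^{k-1},\rho)^2$ survive. (A faster rate, with $\pi^2$ replaced by $2/(k-2)$, would instead follow from Talagrand's $T_2$ transportation inequality on $(\sphere^{k-1},\rho,\mu)$, but is not needed here.)
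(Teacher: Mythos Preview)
Your proposal is correct and follows essentially the same route as the paper: the variance bound via the Lott--Villani--Sturm Poincar\'e inequality with $K=k-2$, and the empirical bound via Sanov's theorem combined with the entropy--$W_1$ inequality $H(\nu\mid\mu)\ge 2(W_1(\nu,\mu)/\diam)^2$ (which the paper quotes from \cite{Gibbs:2002aa} and you derive explicitly via total variation and Pinsker). Your remark about the factor $2$ being the slack absorbed to pass from the limsup rate to the clean constant $\pi^2$ is exactly the point, and makes explicit what the paper leaves implicit.
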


By the spherical isoperimetric inequality, we also get an estimate of large tracial deviation \eqref{eqn:mean} for random ($\sim_{au}$-equivalence classes of) extremal embeddings $\js_{\sphere^{2k-1}} \to B$. Another famous consequence of spherical isoperimetry is Dvoretzky's theorem \cite{Dvoretzky:1961aa}, which shows that the unit sphere $(\sphere_E,\rho_{E})$ of a real normed space $E$ admits high-dimensional approximately Euclidean sections provided that the dimension of $E$ is sufficiently large. We apply this in Theorem~\ref{thm:dvoretzky} to show that the intrinsic tracial Wasserstein spaces $(\js_{\sphere_E},W_{\fp,\rho_E})$ and $(\js_{\ball_E},W_{\fp,\rho_E})$ admit approximately tracially isometric embeddings into tracial $\fp$-Wasserstein spaces observing high-dimensional \emph{Euclidean} spheres or balls. These sorts of measure concentration phenomena and their implications for the asymptotic geometry of embedding spaces were essential motivations for the present work.

This article is also inspired by spiders. As Barth writes in \cite[Chapter VIII]{Barth:2002aa}, ``The world in which spiders live is a world full of vibrations, and their vibration sense is correspondingly well developed.'' Arachnids detect these vibrations via slit sense organs embedded in their exoskeletons. In contrast to others in the class Arachnida, like scorpions, whip spiders and whip scorpions (see \cite[p.41]{Barth:2002aa}), spiders are unique in the tendency of slit organs near their leg joints to be clustered into closely spaced parallel arrangements called \emph{lyriform organs}. One such arrangement is displayed in Figure~\ref{fig:orbweaver}, which shows a female cross orb-weaver (\emph{Araneus diadematus}) and a selection of her anatomy (cf.\ \cite[Fig.~1]{Miller:2022aa}). The picture explains the terminology: `lyriform' means `lyre shaped'. The different lyriform organs distributed across the spider's legs play exteroceptive or proprioceptive roles, that is, they give information about the spider's environment and her positioning within it. That is also the purpose of lyriform structure in the $\cs$-world. An inductive limit of Wasserstein spheres has the function of a tracial exoskeleton, with the lyriform structure providing geometric information about the way that the underlying $\cs$-algebra is embedded in its surroundings.

\begin{figure}[!htbp]
\centering
\includegraphics[width=0.9\textwidth]{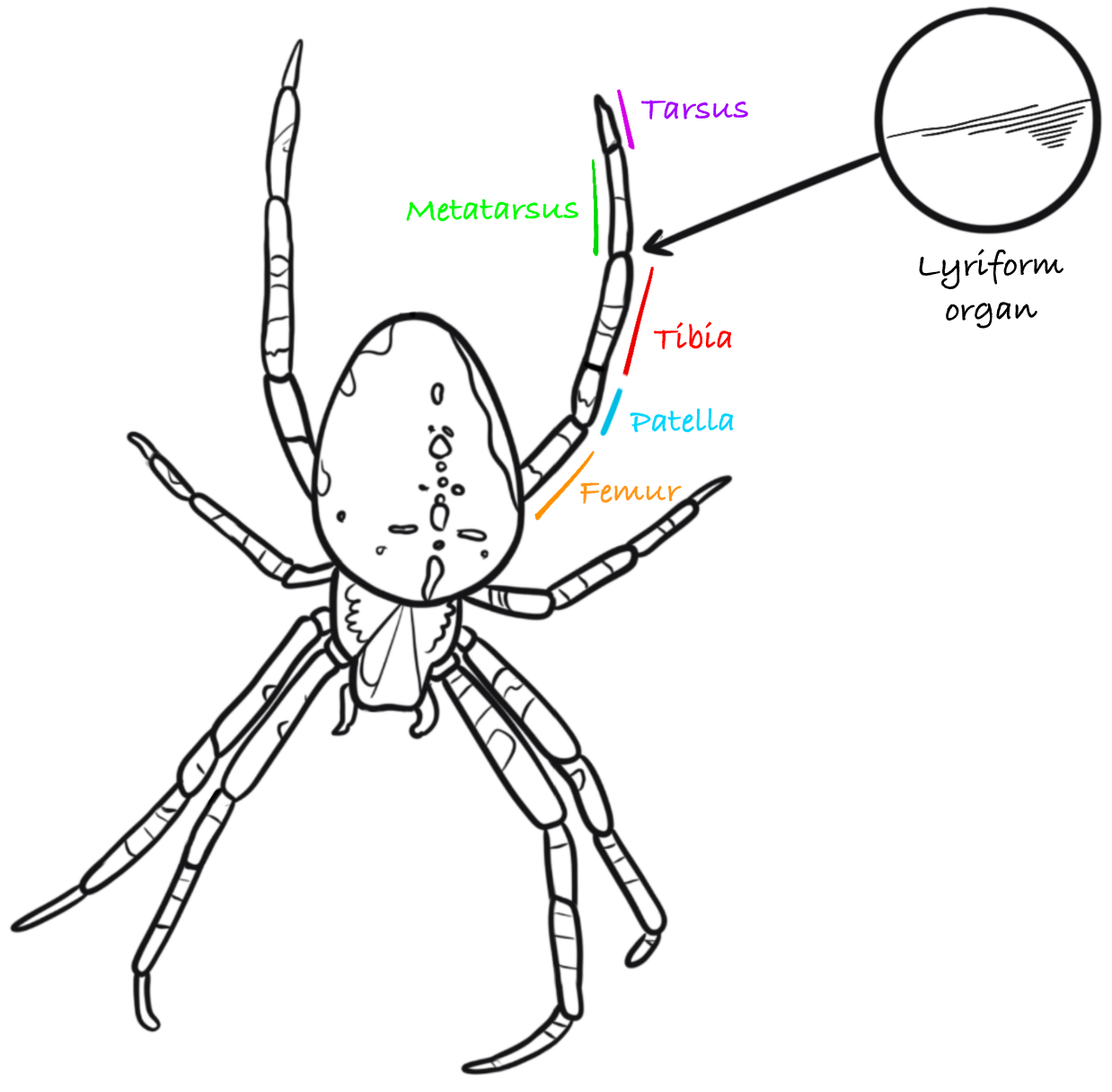}
\caption{Lyriform organ on the lateral side of the distal end of the tibia in \emph{Araneus diadematus}.} \label{fig:orbweaver}
\end{figure}

\subsection*{Acknowledgements} This work is dedicated to the orb-weaver who lived outside my bedroom window in the summer of 2020, and to my dear friend Galina Starchevus, who has captured this spider's likeness in Figure~\ref{fig:orbweaver}.

\subsection*{Organisation}

In Section~\ref{section:qmcs}, we recall the definitions of quantum metric Choquet and Bauer simplices and identify the metric property used to build projectionless classifiable models $\js_X$ observing compact path-connected metric spaces $(X,\rho)$ (see Theorem~\ref{thm:models}). Our list of examples is expanded to include certain fractals (Example~\ref{ex:fractals}) and certain Alexandrov spaces (Theorem~\ref{thm:alexandrov}). We show in Proposition~\ref{prop:ktheory} that the models $\js_{\sphere^{2k-1}}$ and $\js_{\ball^{k}}$ have suitable $K$-theory to be used as building blocks in later inductive limit constructions. Section~\ref{section:lyriform} introduces the notion of a tracially lyriform $\cs$-algebra (Definition~\ref{def:lyriform}), explains the relationship with quantum metric Choquet simplices (Theorem~\ref{thm:compact}) and describes examples arising from quantum groups (Theorem~\ref{thm:free}) and topological dynamics (Theorem~\ref{thm:dynamics}). Section~\ref{section:wass} focuses on our primary examples, what we refer to as tracial $\fp$-Wasserstein spaces (Definition~\ref{def:qwass}) and quasi-Wasserstein spaces (Definition~\ref{def:quasiwass}), and we analyse the collections of isometry classes of these spaces via the quantum intertwining gap (see Definition~\ref{def:quig}, Theorem~\ref{thm:quigwass}, Corollary~\ref{cor:quigwass} and Corollary~\ref{cor:contractible}). Section~\ref{section:limits} is devoted to the category $\lyre$ of tracially lyriform $\cs$-algebras (Definition~\ref{def:category}) and inductive limits within this category (Proposition~\ref{prop:colimits}). In Section~\ref{section:arachnid}, we construct simple inductive limits of prime dimension drop algebras as $\lyre$-limits of tracial Wasserstein spaces over simplicial spheres and balls (Theorem~\ref{thm:arachnid}). Finally, in Section~\ref{section:geostats} we use optimal transport theory (Definition~\ref{def:transport} and Theorem~\ref{thm:transport}) to compute distances between unitary orbits of embeddings (Theorem~\ref{thm:orbits}, Corollary~\ref{cor:dimdroporbits} and Corollary~\ref{cor:spiderorbits}) and to analyse the tracial central tendency of random embeddings (Section~\ref{subsection:random}).

\begin{notation} \label{notation}
We use the following notation and terminology throughout the article.

\begin{items}
\item The set of nonzero positive integers is denoted by $\nn$.
\item The notation $X\ssubset Y$ means that $X$ is a finite subset of $Y$.
\item If $A$ is a $\cs$-algebra, then $A_{sa}$ denotes the set of self-adjoint elements of $A$ and $A_+$ denotes the set of positive elements of $A$.
\item A \emph{trace} on a unital $\cs$-algebra $A$ means a \emph{tracial state}, that is, a positive linear functional $\tau\colon A\to\cc$ with $\|\tau\|=1=\tau(1)$ that satisfies the trace identity $\tau(uau^*)=\tau(a)$ for every $a\in A$ and unitary $u\in A$. We write $T(A)$ for the set of traces on $A$ and refer to it as the \emph{trace space of $A$}. The trace space is a Choquet simplex when equipped with the $\ws$-topology (that is, the topology of pointwise convergence on elements of $A$).
\item A $\cs$-algebra $A$ is said to be \emph{classifiable} if it is unital, simple, separable, nuclear, $\js$-stable and satisfies the UCT.
\item If $A$ and $B$ are unital $\cs$-algebras, then $\emb(A,B)$ denotes the set of unital embeddings (that is, injective $^*$-homomorphisms) $A\to B$. Two $^*$-homomorphisms $\varphi,\psi\colon A\to B$ are said to be \emph{approximately unitarily equivalent}, written $\varphi\sim_{au}\psi$, if for every $\varepsilon>0$ and every finite set $F\subseteq A$ there is a unitary $u\in B$ such that $\|\varphi(a)-u\psi(a)u^*\|<\varepsilon$ for every $a\in F$. If $A$ is separable, this is equivalent to the existence of a sequence $(u_n)_{n\in\nn}$ of unitaries in $B$ such that $\varphi(a)=\lim_{n\to\infty}u_n\psi(a)u_n^*$ for every $a\in A$. We frequently  rely on both the existence \cite[Corollary C and Theorem 9.9]{Carrion:wz} and uniqueness \cite[Theorem B]{Carrion:wz} aspects of classification to lift compatible morphisms between ordered $K$-theory $(K_0(\cdot),K_0(\cdot)_+,[1_{(\cdot)}],K_1(\cdot))$ and traces $T(\cdot)$ to unital embeddings, uniquely up to approximate unitary equivalence in situations where the total invariant $\underline{K}T_u$ developed in \cite{Carrion:wz} reduces to ordered $K$-theory and traces.
\item A stably finite $\cs$-algebra $A$ is said to be \emph{$K$-connected} if the ordered $K_0$-group of the minimal unitisation of $A$ admits a unique state.
\item For a compact Hausdorff space $X$, we identify $T(C(X))$ with the space $\prob(X)$ of Borel probability measures on $X$. Given $\mu\in\prob(X)$ and a continuous map $h\colon X\to X$, $h_*\mu$ denotes the pushforward measure $\mu\circ h^{-1}$.
\item Let $(X,\rho)$ be a metric space (implicitly nonempty throughout the article). For $K>0$, $\mathrm{Lip}_K(X,\rho)$ denotes the $K$-Lipschitz functions $X\to\rr$, that is,
\[
\qquad \quad \mathrm{Lip}_K(X,\rho) = \{f\colon X\to\rr \mid |f(x)-f(y)|\le K\cdot\rho(x,y) \text{ for every } x,y \in X\}.
\]
The set of Lipschitz functions $X\to\rr$ is $\lipo(X,\rho) = \bigcup_{K=1}^\infty \mathrm{Lip}_K(X,\rho)$. If the metric $\rho$ is understood from context, we may write $\mathrm{Lip}_K(X)$ or $\lipo(X)$. The \emph{diameter} of $(X,\rho)$ is $\diam(X,\rho) = \sup_{x,y\in X}\rho(x,y)$ and the \emph{radius} is $r_{(X,\rho)} = \diam(X,\rho)/2$. For $Y\subseteq X$, $Y_\eps$ denotes its \emph{$\eps$-neighbourhood}, that is, $Y_\eps = \bigcup\limits_{x\in Y} B_\eps(x)$, where $B_\eps(x)$ is the open ball of radius $\eps$ centred at $x\in X$.
\item If $K$ is a compact, convex subset of a real topological vector space, then $\aff(K)$ denotes the space of of continuous affine maps $K\to\rr$ (where `affine' means that finite convex combinations are preserved). When $K=T(A)$ and $a\in A$, $\widehat{a}$ denotes the element $\tau\mapsto\tau(a)$ of $\aff(T(A))$.
\end{items}
\end{notation}

\section{Quantum metric Choquet simplices} \label{section:qmcs}

The Cuntz--Pedersen quotient of a unital $\cs$-algebra $A$ is the Banach space $A^q=A_{sa}/A_0$,
where
\begin{equation} \label{eqn:a0}
A_0 = \{a\in A_{sa} \mid \tau(a)=0 \text{ for every } \tau\in T(A)\}.
\end{equation}
It is an ordered vector space whose positive cone $A^q_+$ is the image $q(A_+)$ of the positive cone of $A$ under the quotient map $q\colon A_{sa} \to A_+$. The definition of a quantum metric Choquet simplex is framed for those $A$ for which $A^q$ is \emph{tracially ordered}, meaning that $T(A)\ne\emptyset$ and
\[
A^q_+ = \{a+A_0 \mid a\in A_{sa},\: \tau(a)\ge0 \text{ for every } \tau\in T(A)\}.
\]
This property holds for algebraically simple $\cs$-algebras $A$, as well as certain continuous-trace $\cs$-algebras including those of the form $M_n(C(X))$ and subalgebras like generalised dimension drop algebras (see \cite[Proposition 2.7]{Jacelon:2024aa}, Definition~\ref{def:dimdrop} and Corollary~\ref{cor:wasslyre}). It ensures that $A_q$ is a complete order unit space (in fact, isomorphic to $\aff(T(A))$), and that the order-unit norm coincides with the quotient norm (see \cite[Proposition 2.8]{Jacelon:2024aa}).

\begin{definition} \label{def:qmcs}
Let $A$ be a separable, unital $\cs$-algebra and $L\colon A\to[0,\infty]$ a self-adjoint, lower semicontinuous, densely finite seminorm. We say that $(A,L)$ is a \emph{$\cs$-algebraic quantum metric Choquet simplex} if
\begin{enumerate}[(1)]
\item $A^q$ is tracially ordered;
\item $\ker L = \Span_\cc(\{1\}\cup A_0) = \{a \in A \mid \widehat{a} \in \cc1\}$;
\item the metric
\begin{equation} \label{eqn:rho}
\rho_L(\sigma,\tau) = \sup\{|\sigma(a)-\tau(a)| \mid a\in A,\:L(a)\le 1\}
\end{equation}
induces the $\ws$-topology on the trace space $T(A)$.
\end{enumerate}
We may refer to $(A,L)$ as \emph{classifiable} or \emph{projectionless} (and so on) if these adjectives apply to $A$.
\end{definition}

Note that, by definition, $(A,L)$ is a $\cs$-algebraic quantum metric Choquet simplex if and only if $(A^q,L)$ is a compact quantum metric space (with the convention that the seminorm $L$ is allowed to take the value $\infty$).

\begin{definition} \label{def:qmbs}
Let $(A,L)$ be a $\cs$-algebraic quantum metric Choquet simplex. Then, we call $(A,L)$ a \emph{$\cs$-algebraic  quantum metric Bauer simplex} if in addition $\partial_e(T(A))$ is compact and is equipped with a $\ws$-compatible metric $\rho$ for which
\begin{equation} \label{eqn:L}
L(a) = L_\rho\left(\widehat{a}|_{\partial_e(T(A))}\right) := \sup\left\{\frac{|\sigma(a)-\tau(a)|}{\rho(\sigma,\tau)} \mid \sigma\ne \tau \in \partial_e(T(A))\right\}.
\end{equation}
In this case, we may say that $(A,L_\rho)$ \emph{is associated with} or \emph{observes} the metric space $(\partial_e(T(A)),\rho)$.
\end{definition}

The latter terminology reflects the fact that the typical starting point in the construction of a $\cs$-algebraic quantum metric Bauer simplex is a compact metric space $(X,\rho)$, with $(A,L_\rho)$ serving as a noncommutative space of observables.

A major source of examples of $\cs$-algebraic quantum metric Bauer simplices $(A,L)$ is provided by \cite[Theorem 4.4]{Jacelon:2022wr}, which in particular asserts that every compact, connected Riemannian manifold equipped with its intrinsic metric $\rho$ is realisable as $(\partial_e(T(A)),\rho_L)$  for some $A$ that is classifiable and projectionless. The proof of this result presented in \cite[Theorem 4.4]{Jacelon:2022wr} is a bit murkier than it should be, since the intention was to fit the considered examples (Riemannian manifolds) into a broader class of geometric structures called \emph{length spaces}, or more specifically \emph{Alexandrov spaces}, which are complete length spaces whose curvature is bounded below 
(see Definition~\ref{def:length} and Definition~\ref{def:alexandrov}).

Let us take this opportunity to unmuddy the waters. First, let us clarify the property of a compact, connected metric space $(X,\rho)$ that is used in the proof of \cite[Theorem 4.4]{Jacelon:2022wr} to produce a projectionless model $(\js_X,L_\rho)$.

\begin{definition} \label{def:dimdrop}
Let $X$ be a connected compact Hausdorff space. A \emph{generalised dimension drop algebra over $X$} is a $\cs$-algebra of the form
\[
X_{p,q} = \{f\in C(X,M_p\otimes M_q) \mid f(x_0)\in M_p\otimes1_q,f(x_1)\in 1_p\otimes M_q\}
\]
for some integers $p,q\in\nn$ and distinct points $x_0,x_1\in X$. If $p$ and $q$ are coprime, then $X_{p,q}$ is said to be \emph{prime}.
\end{definition}

Note that $X_{p,q}$ is separable, unital, nuclear and satisfies the UCT, and is also projectionless if it is prime. These spaces are used as building blocks in the construction of $\js_X$ as an inductive limit in which Lipschitz elements in finite stages are mapped to tracially Lipschitz elements in the limit.

\begin{theorem}[\cite{Jacelon:2022wr}] \label{thm:models}
Let $(X,\rho)$ be a connected compact metric space with the following property: there exist distinct points $x_0,x_1\in X$ such that, for every $y\in X$ and every $\eps>0$, there is a bi-Lipschitz path from $x_0$ to $x_1$ that passes within $\eps$ of $y$. Then, there exists a projectionless, classifiable $\cs$-algebraic quantum metric Bauer simplex $(\js_X,L_\rho)$ observing $(X,\rho)$. 
\end{theorem}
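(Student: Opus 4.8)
The plan is to realise $\js_X$ as a sequential inductive limit $\varinjlim(A_n,\varphi_n)$ of prime generalised dimension drop algebras over $X$, following \cite[Theorem~4.4]{Jacelon:2022wr} but isolating the role of the bi-Lipschitz path hypothesis. Each building block $A_n=X_{p_n,q_n}$, with distinguished points $x_0,x_1$ as in the hypothesis, is separable, unital, nuclear and satisfies the UCT; being prime it is projectionless, and a fibrewise analysis of its traces (as in \cite[Proposition~2.7]{Jacelon:2024aa}) identifies $T(A_n)$ with $\prob(X)$, the point evaluations realising $\partial_e(T(A_n))=X$, so that $A_n^q$ is tracially ordered and equals $\aff(\prob(X))\cong C(X)$. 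On $A_n$ I put the seminorm $L_n(f)=L_\rho(\widehat f|_X)$, i.e.\ the $\rho$-Lipschitz constant of $x\mapsto\tau_{p_nq_n}(f(x))$. A Stone--Weierstrass argument (Lipschitz functions are dense in $C(X)$ for any compact metric space, since $x\mapsto\rho(x,z)$ separates points) together with cut-off functions near $x_0,x_1$ that restore the dimension-drop conditions shows that $\{L_n<\infty\}$ is dense in $A_n$.

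The heart of the matter is the choice of connecting maps. Each $\varphi_n$ is a unitary conjugate of a diagonal point-evaluation map
\[
f\ \longmapsto\ u_n\cdot\mathrm{diag}\!\bigl(f\circ\lambda_{n,1},\,\dots,\,f\circ\lambda_{n,k_n}\bigr)\cdot u_n^{\,*},
\]
in which the eigenvalue maps $\lambda_{n,i}\colon X\to X$ are built from the bi-Lipschitz paths from $x_0$ to $x_1$ supplied by hypothesis, precomposed with fixed surjective Lipschitz functions $X\to[0,1]$ (the connectedness of $X$ makes such surjections available). The combinatorial data $(p_n,q_n,k_n,\lambda_{n,i},u_n)$ must be calibrated so that, simultaneously: (i) $p_{n+1},q_{n+1}$ are coprime multiples of $p_n,q_n$ and $\varphi_n$ maps $A_n$ into $A_{n+1}$ respecting the dimension-drop conditions at $x_0,x_1$; (ii) arranging, as $n$ ranges over $\nn$, that every $y\in X$ lies within $1/n$ of some $\lambda_{n,i}(X)$, the support of any nonzero element of $A_m$ spreads under iterated connecting maps to all of $X$, while the dimension-drop quotients at $x_0,x_1$ do not persist in the limit --- this gives simplicity of $\js_X$; (iii) the $\lambda_{n,i}$ are Lipschitz, so $\varphi_n$ carries $\{L_n<\infty\}$ into $\{L_{n+1}<\infty\}$ (with possibly growing constant), so that tracially $\rho$-Lipschitz elements are dense in $\js_X$; and (iv) the aggregate ``mixing weight'' carried by the $\lambda_{n,i}$ that move points far is summably small, so that $\varphi_n^\ast\to\mathrm{id}$ summably on $\prob(X)$ and hence $T(\js_X)=\varprojlim(\prob(X),\varphi_n^\ast)$ is affinely homeomorphic to $\prob(X)$, with $\partial_e(T(\js_X))=X$. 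Requirement (iv) is compatible with (ii) because fullness of a support is insensitive to the block sizes carrying it.

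With the limit in hand, $\js_X$ is unital, simple, separable, nuclear and satisfies the UCT by inheritance. It has finite nuclear dimension --- either because the $A_n$ have uniformly bounded nuclear dimension, or, more robustly, by factoring each $\varphi_n$ through a direct sum of dimension drop intervals and matrix algebras (a $1$-dimensional NCCW complex, via $f\circ\lambda_{n,i}=(f\circ\gamma)\circ t$) and interleaving --- hence $\js_X$ is $\js$-stable and therefore classifiable, and primeness of the building blocks is propagated to show that $\js_X$ is projectionless. Finally $(\js_X,L_\rho)$ is a $\cs$-algebraic quantum metric Choquet simplex: $\js_X^q\cong C(X)$ is tracially ordered (simplicity), $\ker L_\rho=\{a:\widehat a\in\cc1\}$, and $L_\rho$ is self-adjoint, lower semicontinuous and densely finite by (iii); moreover $\rho_{L_\rho}$ restricts to $\rho$ on $X=\partial_e(T(\js_X))$, so it coincides with the Monge--Kantorovich metric $W_{1,\rho}$ on $\prob(X)$ and induces the $\ws$-topology (Arzel\`a--Ascoli), and since $\partial_e(T(\js_X))$ is compact and $L_\rho$ has the form \eqref{eqn:L}, the simplex is Bauer and observes $(X,\rho)$. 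The single genuine obstacle is the calibration (i)--(iv): this is the Jiang--Su-type combinatorics behind \cite{Jacelon:2022wr}, where the spreading and quotient-destroying behaviour of the eigenvalue maps needed for simplicity must be reconciled with preservation of the dimension-drop structure, of the extreme tracial boundary, and of the Lipschitz seminorm.
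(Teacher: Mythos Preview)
The paper does not give its own proof of this statement: Theorem~\ref{thm:models} is stated with attribution to \cite{Jacelon:2022wr} and is used as a black box (the surrounding discussion only clarifies which metric hypothesis on $(X,\rho)$ is actually needed). Your sketch is a faithful outline of the construction in that reference --- prime generalised dimension drop building blocks $X_{p_n,q_n}$, diagonal connecting maps whose eigenvalue maps are bi-Lipschitz paths precomposed with Lipschitz surjections $X\to[0,1]$, and the Jiang--Su-type calibration (i)--(iv) --- and you correctly locate the role of the bi-Lipschitz hypothesis in making the eigenvalue maps Lipschitz while still allowing the support-spreading needed for simplicity. Your remark that classifiability comes from factoring the connecting maps through interval algebras (rather than from bounded nuclear dimension of the $A_n$, which could fail if $\dim X=\infty$) is the right observation, and it is consistent with how Example~\ref{ex:models} later revisits the same inductive system.
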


Now let us see how to broaden our collection of examples.

\begin{definition} \label{def:length}
A metric space $(X,\rho)$ is called a \emph{length space} if $\rho$ is \emph{intrinsic}, that is,
\begin{equation} \label{eqn:rect}
\rho(x,y) = \inf\{\ell(\gamma) \mid \gamma \text{ is a rectifiable path from $x$ to $y$}\}.
\end{equation}
Here, a path $\gamma$ from $x$ to $y$ (that is, a continuous function $\gamma$ from some interval $[a,b]$ to $X$ with $\gamma(a)=x$ and $\gamma(y)=b$) is \emph{rectifiable} if the length
\begin{equation} \label{eqn:length}
\ell(\gamma) = \sup\left\{ \sum_{i=1}^{N} \rho(\gamma(t_{i-1}),\gamma(t_i)) \mid N\in\nn,\: a = t_0 \le t_1 \le \dots \le t_N = b\right\}
\end{equation}
of $\gamma$ is finite. In other words, $(X,\rho)$ is a length space if and only if, given $x,y\in X$ and $\eps>0$, there is a path from $x$ to $y$ whose length is at most $\rho(x,y)+\eps$.
\end{definition}

\begin{definition} \label{def:geodesic}
A \emph{shortest path} is a continuous function $\gamma \colon [a,b] \to X$ whose length is minimal among all paths joining the same endpoints $\gamma(a)$ and $\gamma(b)$. When \emph{parameterised by arc length}, a shortest path $\gamma$ in a length space is defined on $[0,\ell(\gamma)]$ and satisfies
\[
\rho(\gamma(s),\gamma(t)) = \ell(\gamma|_{[s,t]}) = |s-t|
\]
for every $s$ and $t$ in this interval.  A length space is \emph{strictly intrinsic} if any two points can be joined by a shortest path.
\end{definition}

\begin{remark} \label{rem:intrinsic}
Every complete, locally compact length space $(X,\rho)$ is strictly intrinsic (see \cite[Theorem 2.5.23]{Burago:2001aa}). The Hopf--Rinow theorem for locally compact length spaces (see \cite[Theorem 2.5.28]{Burago:2001aa}, which is a generalisation of the corresponding result \cite[Section 5.3]{Carmo:1976um} for Riemannian manifolds) in fact characterises completeness in terms of the extendability of \emph{geodesics} (curves that \emph{locally} are shortest paths).
\end{remark}

\begin{example} \label{ex:normed}
If $(E,\|\cdot\|_E)$ is a real normed space and $\rho_E$ is the associated metric $\rho_E(x,y)=\|x-y\|_E$, then $(E,\rho_E)$ is a strictly intrinsic length space. Indeed, $t\mapsto(1-t)x+ty$ is a shortest path from $x\in E$ to $y\in E$. This is in fact the \emph{unique} shortest path if (and only if) the unit ball of $E$ is strictly convex, meaning that $\|(1-t)v+tw\|<1$ for every $t\in(0,1)$ and every pair of distinct norm-one vectors $v,w\in E$ (see \cite[Proposition I.1.6]{Bridson:1999aa}). So, uniqueness holds in $\ell_p(\rr)$ for every $p\in(1,\infty)$ but nor for $p\in\{1,\infty\}$. These observations apply equally well to compact, convex subsets of $(E,\rho_E)$, which also satisfy the hypotheses of Theorem~\ref{thm:models} and therefore can be observed via classifiable, projectionless $\cs$-algebraic quantum metric Bauer simplices in the sense of Definition~\ref{def:qmbs}.
\end{example}

\begin{example} \label{ex:intrinsic}
Every metric space $(X,\rho)$ admits an associated intrinsic metric $\widehat{\rho}$ defined via \eqref{eqn:rect}. Note that the length of a rectifiable curve is the same whether one inputs $\rho$ or $\widehat{\rho}$ on the right-hand side of \eqref{eqn:length}, and consequently, $\widehat{\widehat{\rho}}=\widehat{\rho}$ (see \cite[Proposition 2.3.12]{Burago:2001aa}). For example, in Section~\ref{section:arachnid} we identify the sphere $\sphere^{k-1}$ with the $(k-1)$-skeleton of a face of the $(k+1)$-dimensional $\ell_1$-ball, and equip it with the associated intrinsic metric. This metric is Lipschitz equivalent to the one induced by the Euclidean norm $\|\cdot\|_2$ (that is, to the usual round metric on the sphere), so in particular is complete and therefore strictly intrinsic (cf.\ Remark~\ref{rem:intrinsic}). Moreover, since round spheres satisfy the hypotheses of Theorem~\ref{thm:models}, so do these `simplicial' spheres, which can therefore also be observed via classifiable, projectionless $\cs$-algebraic quantum metric Bauer simplices.
\end{example}

There are various equivalent ways of defining curvature bounds for length spaces (see \cite[Chapter 4]{Burago:2001aa}), which in the Riemannian setting correspond to bounds on the sectional curvature (see \cite[Chapter 6]{Burago:2001aa}). In Definition~\ref{def:alexandrov}, we recall the `monotonicity condition' \cite[Definition 4.3.1]{Burago:2001aa}. What these definitions typically involve is comparison between triangles formed from shortest paths in the space and corresponding reference triangles in standard `model spaces'. These latter are two-dimensional Riemannian manifolds $(\mm_k,\rho_k)$ of constant curvature $k$, namely the Euclidean sphere of radius $\frac{1}{\sqrt k}$ (if $k>0$), the Euclidean plane (if $k=0$) and the hyperbolic plane with its metric scaled by $\frac{1}{\sqrt{-k}}$ (if $k<0$). Note that the diameter of $\mm_k$ is
\[
R_k = \begin{cases}
\frac{\pi}{\sqrt k} & \text{if $k>0$}\\
\infty & \text{if $k\le 0$}
\end{cases}
\]
and that whenever $x,y,z$ are distinct elements of a length space that are sufficiently close together (meaning that $\max\{\rho(x,y),\rho(y,z),\rho(z,x)\}<R_k$ and $\rho(x,y)+\rho(y,z)+\rho(z,x)<2R_k$), then there is a (unique, up to isometry) triangle in $\mm_k$ whose side lengths are $\rho(x,y)$, $\rho(y,z)$ and $\rho(z,x)$. 

\begin{definition} \label{def:alexandrov}
Let $(X,\rho)$ be a connected, strictly intrinsic length space and let $k\in\rr$. For distinct points $x,y,z\in X$, the \emph{comparison angle $\Angle_k(x,y,z)$ relative to $\mm_k$} is the angle $\measuredangle_k(\bar x, \bar y, \bar z)$, where $\bar x, \bar y, \bar z \in \mm_k$ form a triangle whose side lengths are $\rho_k(\bar x,\bar y)=\rho(x,y)$, $\rho_k(\bar y,\bar z)=\rho(y,z)$ and $\rho_k(\bar z,\bar x)=\rho(z,x)$. (By convention, the variable in the middle indicates the vertex of the angle being measured.) We say that $(X,\rho)$ has \emph{curvature at least $k$} (respectively, \emph{curvature at most $k$}) if every point in $X$ has a neighbourhood $U$ such that, for any two shortest paths $\alpha$ and $\beta$ contained in $U$ and starting from the same point $\alpha(0)=\beta(0)=p\in X$, the function
\begin{equation} \label{eqn:angle}
\theta_{\alpha,\beta}(s,t) = \Angle_k(\alpha(s),p,\beta(t))
\end{equation}
is nonincreasing (respectively, nondecreasing) in $s$ when $t$ is kept fixed and vice versa. If $(X,\rho)$ has curvature at least $k$ for some $k\in\rr$, then it is called an \emph{Alexandrov space}.
\end{definition}

\begin{example} \label{example:riemannianlimits}
By Gromov's theorem (see \cite[Theorem 10.7.2]{Burago:2001aa}), for every $k\in\rr$, $n\in\nn$ and $D>0$, if $(X_n,\rho_n)$ is a Gromov--Hausdorff-convergent sequence of compact, connected Riemannian manifolds of sectional curvature at least $k$, dimension at most $n$ and diameter at most $D$, then the limit space is an Alexandrov space of curvature at least $k$, dimension at most $n$ and diameter at most $D$.
\end{example}

\begin{example} \label{ex:fractals}
Fractals like the Sierpi\'nski carpet $\mathcal{F}^2\subseteq\rr^2$, the Menger sponge $\mathcal{F}^3\subseteq\rr^3$ and their higher-dimensional analogues $\mathcal{F}^n\subseteq\rr^n$ (studied, for example, in \cite{Berkove:2020aa}), as well as the Sierpi\'nski gasket $\mathcal{G}\subseteq\rr^2$, are compact, connected, strictly intrinsic length spaces when equipped with the intrinsic metrics induced by the ambient Euclidean distance.  A slick way of seeing that they are \emph{not} Alexandrov spaces is to observe that in each case, the Hausdorff dimension $\dim_H$ is an element of $\rr\setminus\zz$, which by \cite[Theorem 10.8.2]{Burago:2001aa} is not possible for Alexandrov spaces. Indeed, the Moran--Hutchinson theorem for iterated function systems (see \cite[Theorem 5.3(1)]{Hutchinson:1981aa}) shows that
\[
\dim_H\mathcal{G}=\frac{\log3}{\log2} \quad\text{and}\quad \dim_H\mathcal{F}^n=\frac{\log((n+2)2^{n-1})}{\log3}.
\]
That said, we can still verify the hypotheses of Theorem~\ref{thm:models}. In each case, the intrinsic metric is Lipschitz equivalent to the Euclidean one (see \cite[Lemma 2.12]{Barlow:1998aa} for  $\mathcal{G}$ and \cite[Theorem 4.5]{Berkove:2020aa} for $\mathcal{F}^n$). Each space is also approximable by its increasing sequence of finite-stage $1$-skeletons $X_k$, $k\in\nn$. See Figure~\ref{fig:sierpinski}. We fix diametrically opposite points $x_0,x_1$ in the zeroth stage (that is, points in the cube $[0,1]^n$ in the case of $\mathcal{F}^n$ or the preliminary equilateral triangle in the case of $\mathcal{G}$). Given an arbitrary point $y$ and an $\eps>0$, we find an integer $k$ and $y'\in X_k$ whose distance to $y$ is $<\eps$. By concatenating coordinatewise increasing paths from $x_0$ to $y'$ and $y'$ to $x_1$ in the finite graph $X_k$, we can find a simple, piecewise linear bi-Lipschitz path from $x_0$ to $x_1$ that passes through $y'$.  We deduce the existence of classifiable, projectionless $\cs$-algebraic quantum metric Bauer simplices observing $\mathcal{G}$ and each $\mathcal{F}^n$.
\begin{figure}[!htbp]
\centering
\includegraphics[width=0.45\textwidth]{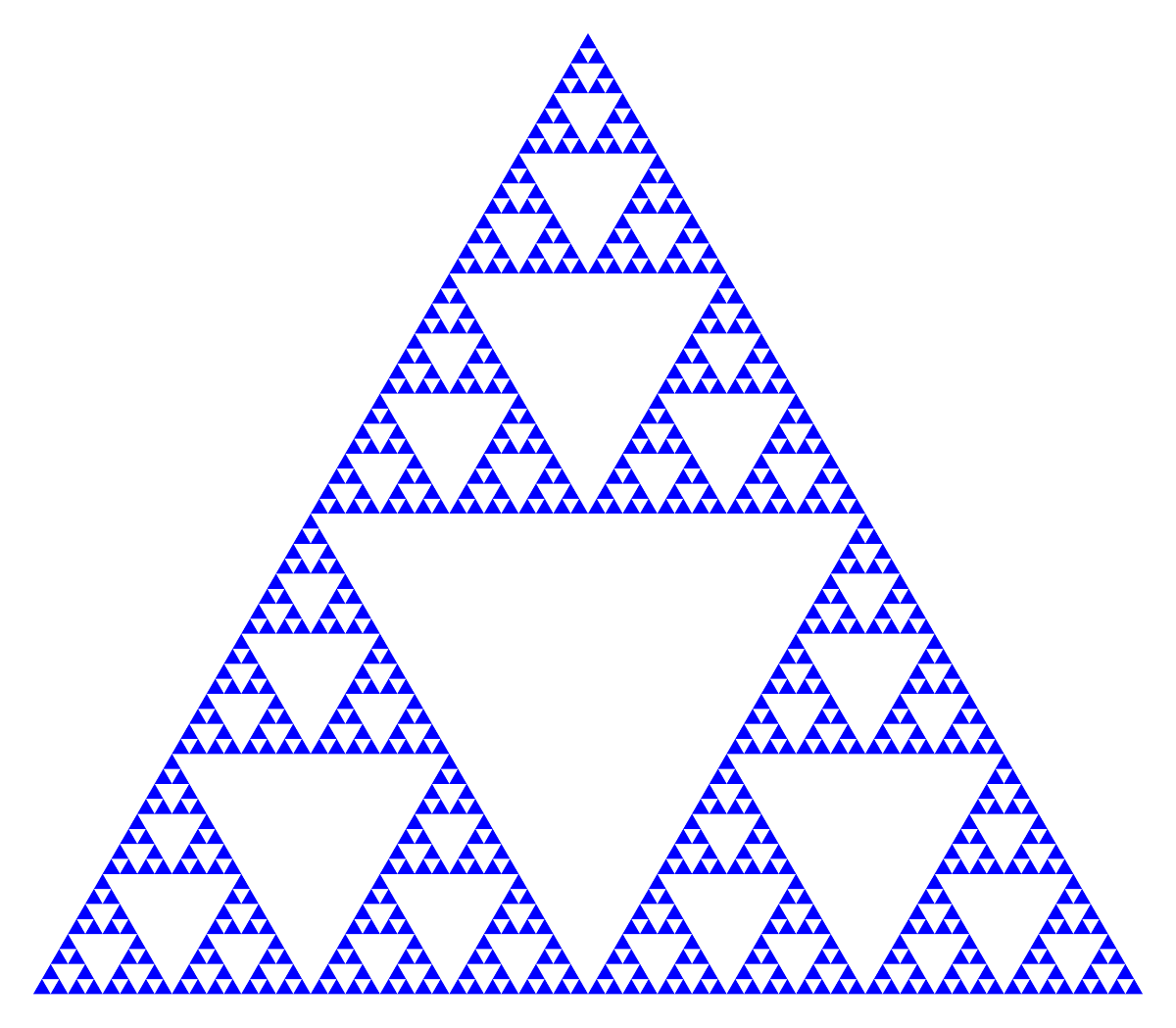} \includegraphics[width=0.4\textwidth]{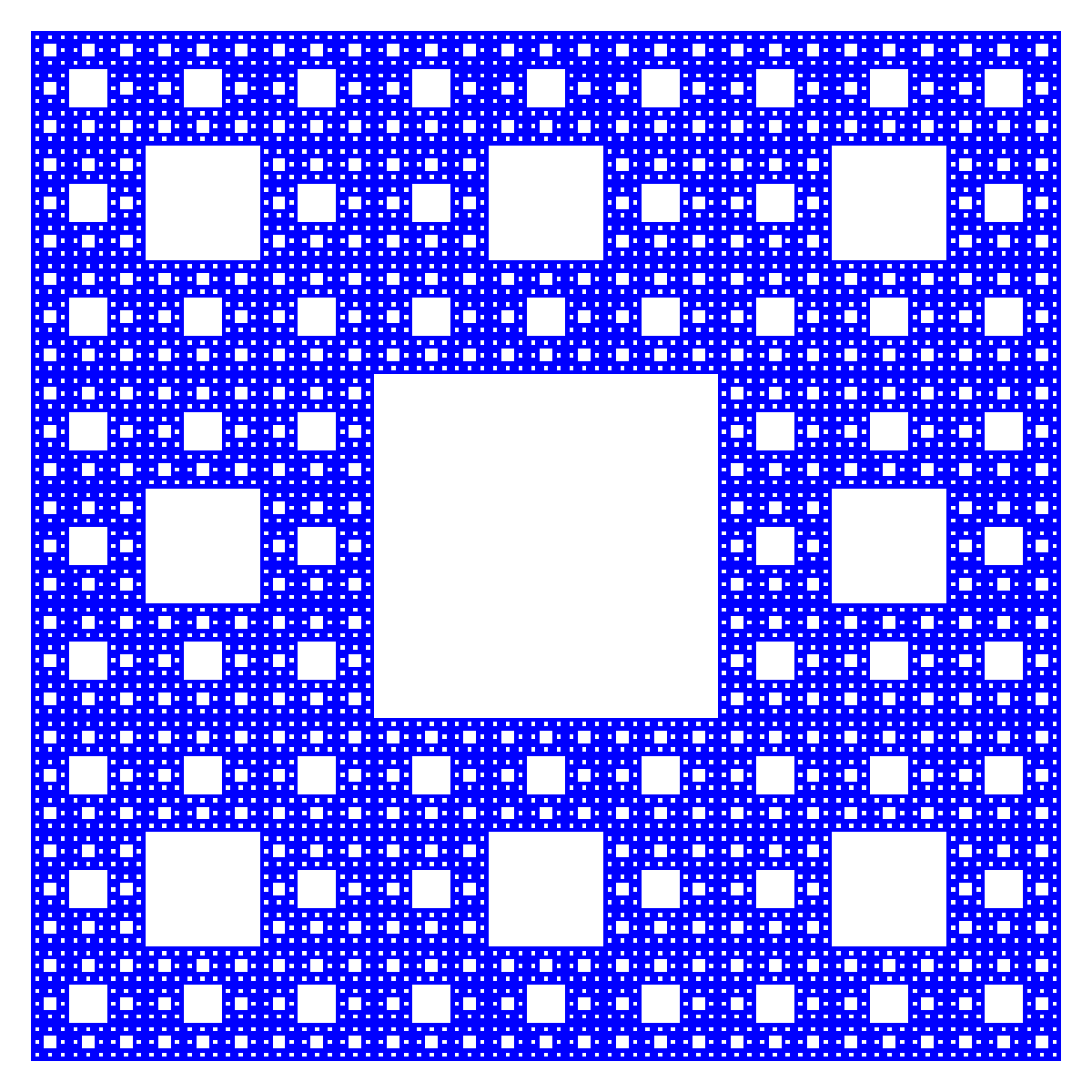}
\caption{The Sierpi\'nski gasket $\mathcal{G}$ (left) and the Sierpi\'nski carpet $\mathcal{F}^2$ (right).} \label{fig:sierpinski}
\end{figure}
\end{example}

Compact Alexandrov spaces of finite dimension have almost the right structure to apply Theorem~\ref{thm:models}. Here, `dimension' typically means the Hausdorff dimension, but as mentioned in \cite[Chapter 10]{Burago:2001aa}, all known notions of dimension, including the Lebesgue covering dimension, are equivalent for Alexandrov spaces. Such a space $X$ is somewhat close to being a manifold, and indeed contains an open dense set $U$ whose points are all contained in neighbourhoods bi-Lipschitz equivalent to open subsets of Euclidean space (see \cite[Theorem 10.8.3]{Burago:2001aa}). In Definition~\ref{def:angles}, we recall the definitions of spaces of directions $\Sigma_p(X)$ and associated tangent cones $K_p(X)$, which are generalisations of tangent spaces of manifolds. For points $p\in U$, $\Sigma_p(X)$ is isometrically isomorphic to the Euclidean sphere $\sphere^{n-1}$ and $K_p(X)$ is isometrically isomorphic to the Euclidean space $\rr^{n}$, where $\dim X=n>1$. (If $n=1$, then $X$ is an interval or circle and $\Sigma_p(X)$ may consist of one or two points.) What remains true for an arbitrary $p\in X$ is that $K_p(X)$ is an $n$-dimensional Alexandrov space of nonnegative curvature, and $\Sigma_p(X)$ is a compact $(n-1)$-dimensional Alexandrov space of curvature $\ge1$ (see \cite[Proposition 10.9.1, Corollary 10.9.5 and Corollary 10.9.6]{Burago:2001aa}). Ergo, a common technique in the theory of Alexandrov spaces is induction on dimension; see Definition~\ref{def:angles}, which provides an inductive definition of the boundary of $X$. In Theorem~\ref{thm:alexandrov}, we will assume that this boundary is empty, and also that the curvature of $X$ is bounded both below \emph{and above}. A deep result of Nikolaev \cite[Theorem 10.10.13]{Burago:2001aa} will then allow us to adapt the proof of \cite[Theorem 4.4]{Jacelon:2022wr}.

\begin{definition} \label{def:angles}
Let $(X,\rho)$ be an Alexandrov space. If $\alpha$ and $\beta$ are shortest paths in $X$ starting from the same point $\alpha(0)=\beta(0)=p\in X$ and $\theta_{\alpha,\beta}$ is the function defined in \eqref{eqn:angle}, then the limit
\[
\measuredangle(\alpha,\beta) := \lim_{s,t\to0}\theta_{\alpha,\beta}(s,t)
\]
exists (cf.\ \cite[Proposition 4.3.2]{Burago:2001aa}), is independent of the lower bound $k$ (see the discussion after \cite[Definition 4.6.2]{Burago:2001aa}) and is called the \emph{angle between $\alpha$ and $\beta$}. If the angle is zero, then $\alpha$ and $\beta$ are said to have the \emph{same direction at $p$}. The \emph{space of directions at $p$}, denoted $\Sigma_p(X)$, is the completion with respect to $\measuredangle$ of the collection of equivalence classes of shortest paths starting at $p$ (where two paths are equivalent if they have the same direction at $p$). The \emph{tangent cone at $p$}, denoted $K_p(X)$, is the Euclidean cone over $\Sigma_p(X)$. It consists of an origin $o$ and pairs $(\xi,r)$ with $\xi\in\Sigma_p(X)$ and $r\in(0,\infty)$. The distance between $o$ and $(\xi,r)$ is $r$, and the distance $\rho_{K_p}(a_1,a_2)$ between $a_1=(\xi_1,r_1)$ and $a_2=(\xi_2,r_2)$ is defined so that $\Angle_0(a_1,o,a_2)=\measuredangle(\xi_1,\xi_2)$, or equivalently by the cosine law,
\[
\rho_{K_p}((\xi_1,r_1),(\xi_2,r_2)) = (r_1^2+r_2^2-2r_1r_2\cos\measuredangle(\xi_1,\xi_2))^\frac{1}{2}.
\]
Suppose that $\dim X=n\in\nn$. The \emph{boundary} of $X$ is defined inductively: the boundary of a one-dimensional space (which is necessarily a segment, ray, line or circle) is its topological boundary, and for $n>1$, the boundary of $X$ is the set of points $p\in X$ for which the boundary of $\Sigma_p(X)$ is nonempty.
\end{definition}

\begin{theorem} \label{thm:alexandrov}
Let $(X,\rho)$ be a compact Alexandrov space whose topological dimension is finite and whose curvature is bounded above as well as below. Suppose that either the boundary of $X$ is empty or that $X$ is contractible and the upper curvature bound is at most zero. Then, there exists a classifiable, projectionless $\cs$-algebraic quantum metric Bauer simplex $(\js_X,L_\rho)$ observing $(X,\rho)$. The same is true of $(X,\tilde\rho)$ for any metric $\tilde\rho$ on $X$ that is Lipschitz equivalent to $\rho$.
\end{theorem}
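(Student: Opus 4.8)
The plan is to reduce the theorem to Theorem~\ref{thm:models}: I would exhibit distinct points $x_0,x_1\in X$ such that every $y\in X$ can be approached arbitrarily closely by a bi-Lipschitz path from $x_0$ to $x_1$, and then let that theorem supply $(\js_X,L_\rho)$ with all the asserted properties. The essential geometric input is Nikolaev's theorem \cite[Theorem 10.10.13]{Burago:2001aa}, which equips a finite-dimensional Alexandrov space with two-sided curvature bounds with a Riemannian structure --- of possibly reduced smoothness ($C^{1,\alpha}$ for all $\alpha<1$) and, when there is a boundary, possibly nonsmooth --- whose length metric is the given $\rho$. Under the hypothesis that either $\partial X=\emptyset$ or that $X$ is contractible with nonpositive upper curvature bound, I would argue that this realises $X$ as a (low-regularity) Riemannian manifold, closed in the first case and with boundary in the second; in the boundary case one may instead prefer to pass to the double $DX$ of $X$ across $\partial X$, where contractibility forces $\pi_1(DX)=1$ and where, together with the curvature hypotheses, one can control the geometry well enough to apply Nikolaev's theorem to $DX$ directly and then restrict.

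Granting that $X$ is a Riemannian manifold (possibly with nonsmooth boundary), the hypothesis of Theorem~\ref{thm:models} can be verified by soft arguments. If $\dim X=1$, then $X$ is a closed arc or a circle; in the first case take $x_0,x_1$ to be its endpoints, so that the arc itself --- an isometric image of an interval --- is a bi-Lipschitz path through every $y$, and in the second case take any two distinct points, noting that for each $y$ one of the two complementary sub-arcs (each a bi-Lipschitz geodesic) passes through $y$. If $\dim X=n\ge2$, fix any distinct $x_0,x_1\in X$; given $y\in X$ and $\eps>0$, pick $y'$ with $\rho(y,y')<\eps$ and, since $X$ is a connected manifold of dimension at least $2$, choose a smooth embedded arc $\gamma$ running from $x_0$ through $y'$ to $x_1$. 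A smooth embedded compact arc in a Riemannian manifold is bi-Lipschitz onto its image --- the Lipschitz constant of the arc-length parametrisation is its length, and the reverse estimate follows from injectivity by a compactness argument --- so $\gamma$ is the required bi-Lipschitz path, and since it passes through $y'$ it passes within $\eps$ of $y$. Theorem~\ref{thm:models} then produces the classifiable, projectionless $\cs$-algebraic quantum metric Bauer simplex $(\js_X,L_\rho)$ observing $(X,\rho)$.

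The final sentence is immediate once one notes that the hypothesis of Theorem~\ref{thm:models} is preserved under replacing $\rho$ by a Lipschitz-equivalent metric $\tilde\rho$: the space remains compact and connected, a $\rho$-bi-Lipschitz path is $\tilde\rho$-bi-Lipschitz with a controlled constant, and a path lying within $\eps/C$ of $y$ in $\rho$ lies within $\eps$ of $y$ in $\tilde\rho$ (taking $\tilde\rho\le C\rho$); so Theorem~\ref{thm:models} applies verbatim to $(X,\tilde\rho)$.

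I expect the main obstacle to be the boundary case --- confirming that the hypotheses (contractibility and the nonpositive upper curvature bound) let one deploy the appropriate form of Nikolaev's theorem, whether directly for manifolds with nonsmooth boundary or via the double $DX$, since this is exactly where the subtle behaviour at the boundary (respectively, at the gluing locus) enters. Once the Riemannian-manifold structure is in hand, the path construction and the Lipschitz-equivalence remark are routine, and the empty-boundary case in particular is a direct application of Nikolaev's theorem followed by the manifold-topological argument above.
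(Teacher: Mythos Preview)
Your overall strategy---reduce to Theorem~\ref{thm:models} via Nikolaev's theorem---is exactly what the paper does, and your treatment of the boundaryless case is adequate (the paper constructs the bi-Lipschitz path more explicitly by concatenating shortest paths and locally perturbing away intersections in charts, but your appeal to smooth embedded arcs in a connected manifold of dimension $\ge2$ is a valid shortcut). The final Lipschitz-equivalence claim is also fine; the paper phrases it slightly differently (Lipschitz-equivalent metrics share the same set of tracially Lipschitz elements, so the \emph{same} $\js_X$ works), but your version is equivalent.

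The genuine gap is in the boundary case, and it is precisely where you flag uncertainty. Doubling is a red herring: while Perelman's doubling theorem preserves the \emph{lower} curvature bound, there is no reason the \emph{upper} bound survives across the gluing locus (think of doubling a flat triangle---the result is a sphere with cone points of positive concentrated curvature), so you cannot invoke Nikolaev on $DX$. The paper does not use Nikolaev at all in this case. Instead it combines two purely metric facts: (i) the Cartan--Hadamard theorem for simply connected spaces of curvature $\le 0$, which gives a \emph{unique} shortest path between any two points; and (ii) non-branching of geodesics, which follows from the lower curvature bound alone. Given $y$ and $\eps>0$, one uses the general Alexandrov structure theory (the open dense set of manifold points, \cite[Theorem 10.8.3]{Burago:2001aa}) to find a nearby $y'$ lying in a bi-Lipschitz Euclidean chart, concatenates the unique shortest paths from $x_0$ and $x_1$ to $y'$, and observes that uniqueness plus non-branching force $y'$ to be their only intersection point. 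A local perturbation inside the chart at $y'$ then makes the concatenation bi-Lipschitz. This is the missing idea you need to replace the doubling argument.
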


\begin{proof}
If $X$ is one dimensional, then no further assumptions are necessary: $X$ is isometrically isomorphic to either an interval or a circle, and these cases are covered by \cite[Example 4.4.4]{Jacelon:2021vc} and \cite[Theorem 4.4]{Jacelon:2022wr}. So we will assume that $n:=\dim X\ge2$.

First suppose that $X$ is boundaryless. By Nikolaev's theorem \cite[Theorem 10.10.13]{Burago:2001aa}, $X$ possesses a $C^3$-smooth atlas in whose charts $\rho$ can be defined by a Riemannian metric tensor whose coefficients $g_{ij}$ are in $C^{1,\alpha}$ for every $\alpha<1$ (that is, each $g_{ij}$ is differentiable and its derivative is H\"{o}lder continuous with exponent $\alpha$, which is not quite sufficient to guarantee local uniqueness of geodesics but that does not matter for us). We can use this structure to verify the conditions of Theorem~\ref{thm:models}, proceeding exactly as in the proof of \cite[Theorem 4.4]{Jacelon:2022wr}. There, $X$ is assumed to be a smooth Riemannian manifold, but the present degree of smoothness is enough.

Fix distinct points $x_0,x_1\in X$. We will show that, for an arbitrary $y\in X\setminus\{x_0,x_1\}$, there is a bi-Lipschitz path from $x_0$ to $x_1$ that passes through $y$. The idea is to concatenate shortest paths $\gamma_1$ from $x_0$ to $y$ and $\gamma_2$ from $x_1$ to $y$. To ensure that this concatenation is bi-Lipschitz, we need to ensure that $\gamma_1$ and $\gamma_2$ do not intersect away from $y$ and that they approach $y$ in different directions (so that the angle at $y$ is nonzero). Let $\varphi\colon U \to \rr^n$ be a $C^3$ (hence bi-Lipschitz) chart with $y \in U$, and let $B=B_r(\varphi(y))$ be an open ball whose closure is contained in $\varphi(U)$. If $n\ge3$, then we can remove any points of intersection in $X\setminus\varphi^{-1}(B)$ by locally perturbing $\gamma_2$ so that it jumps over $\gamma_1$. If $n=2$, we go around $\gamma_1$ instead of over it; in \cite{Jacelon:2022wr}, this is done inside of a tubular neighbourhood containing $\gamma_1$, but we can do it locally by covering $\gamma_1$ by finitely many charts. With this done, we in particular know that $\gamma_1$ and $\gamma_2$ enter the closure of $\varphi^{-1}(B)$ at different points. Our final adjustment, and the completion of the argument, is to linearly connect the images of these points to $\varphi(y)$ in $B$ and then pull back to $X$ via $\varphi$. By Theorem~\ref{thm:models}, we can find the required $(\js_X,L_\rho)$.

Next, let us assume that $X$ is contractible (hence simply connected) and of curvature at most $\kappa\le0$ (and at least $k\le\kappa$). The Cartan--Hadamard theorem \cite[Theorem 9.2.2]{Burago:2001aa} says that in this case, any two points can be connected by a \emph{unique} shortest path. Fix distinct points $x_0,x_1,y\in X$. Let $\eps>0$. As discussed before Definition~\ref{def:angles}, we can find $y'\in X\setminus\{x_0,x_1\}$ that is $\eps$-close to $y$ and is contained in a neighbourhood $U$ that is bi-Lipschitz equivalent to an open subset of $\rr^n$. We once again concatenate shortest paths $\gamma_1$ from $x_0$ to $y'$ and $\gamma_2$ from $x_1$ to $y'$. By geodesic uniqueness, if $\gamma_1$ and $\gamma_2$ intersect away from $y'$, then they would have to agree on a segment. But geodesics in spaces of curvature at least $k$ do not branch (\cite[Exercise 10.1.2]{Burago:2001aa}), so this is not possible since $x_0\ne x_1$. It follows that $y'$ is the only intersection point of $\gamma_1$ and $\gamma_2$. Exactly as above, we can then perturb within $U$ to secure a bi-Lipschitz path and apply Theorem~\ref{thm:models}.

The final claim made in the theorem follows because Lipschitz equivalent metrics share the same set of tracially Lipschitz elements in $\js_X$.
\end{proof}

\begin{remark} \label{rem:hadamard}
Being a compact, contractible Alexandrov space of nonpositive curvature is a somewhat restrictive set of conditions. Examples include compact, convex subsets of Euclidean or hyperbolic space, or more generally of complete Riemannian manifolds of nonpositive sectional curvature (equipped with their inherited intrinsic metrics). In general, a consequence of the Cartan--Hadamard theorem (see \cite[Theorem 9.2.9]{Burago:2001aa}) is that a complete, simply connected length space of curvature at most $\kappa\le0$ is $CAT(\kappa)$, that is, has curvature at most $\kappa$ `in the large', meaning that the open neighbourhoods $U$ appearing in Definition~\ref{def:alexandrov} can be taken to be the whole space. This will become an important observation in the proof of Corollary~\ref{cor:contractible}.
\end{remark}

In Section~\ref{section:arachnid}, we use models $\js_X$ observing metric balls $\ball^k$ or odd metric spheres $\sphere^{2k-1}$ as building blocks within the category of tracially lyriform $\cs$-algebras (defined in Section~\ref{section:limits}). For this task , we need to know the $K$-theory of $\js_{\ball^k}$ and $\js_{\sphere^{2k-1}}$.

\begin{proposition} \label{prop:ktheory}
Let $X_{p,q}$ be a generalised dimension drop algebra associated with a pair of coprime integers $p,q\in\nn$, a connected compact Hausdorff space $X$ and distinct points $x_0,x_1\in X$. Let $\rank$ denote the unique state on the ordered abelian group $(K_0(C(X)),K_0(C(X))_+,[1_{C(X)}])$, which sends the class of a projection over $X$ to its constant fibrewise rank. Then,
\[
K_0(X_{p,q})\cong \rank^{-1}(pq\zz) \quad\text{and}\quad K_1(X_{p,q}) \cong K_1(C(X)) \cong K^1(X).
\]
In particular:
\begin{enumerate}[(1)]
\item \label{it:mv1} if $X$ is contractible, then $(K_0(X_{p,q}),K_1(X_{p,q})) \cong (\zz,0)$;
\item \label{it:mv2} if $n\in\nn$ is odd, then $(K_0((\sphere^n)_{p,q}),K_1((\sphere^n)_{p,q})) \cong (\zz,\zz)$.
\end{enumerate}
In both cases, $K_0$ is generated by the class of the unit $1_{X_{p,q}}$.
\end{proposition}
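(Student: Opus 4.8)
The plan is to present $X_{p,q}$ as a pullback of familiar algebras and run the associated Mayer--Vietoris sequence. Write $B=C(X,M_p\otimes M_q)$, $C=(M_p\otimes1_q)\oplus(1_p\otimes M_q)$ and $D=(M_p\otimes M_q)\oplus(M_p\otimes M_q)$; then $X_{p,q}$ is exactly the pullback of the evaluation map $\pi\colon B\to D$, $f\mapsto(f(x_0),f(x_1))$, along the inclusion $\iota\colon C\hookrightarrow D$. The point that makes Mayer--Vietoris available is that $\pi$ is surjective: $X$ is compact Hausdorff, hence normal, so by Urysohn's lemma there is $g\in C(X,[0,1])$ with $g(x_0)=0$ and $g(x_1)=1$, and then $(1-g)a+gb$ realises any prescribed pair $(a,b)\in D$.

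Surjectivity of $\pi$ gives a six-term exact sequence relating $K_*(X_{p,q})$, $K_*(B)\oplus K_*(C)$ and $K_*(D)$, with the middle horizontal maps of the form $\pi_*-\iota_*$. The inputs are $K_0(B)\cong K_0(C(X))$ and $K_1(B)\cong K_1(C(X))\cong K^1(X)$ (using $M_{pq}$-stability of $K$-theory), together with $K_*(C)=K_*(D)=(\zz^2,0)$. Since $K_1(C)=K_1(D)=0$, the sequence collapses to
\[
0\to K_0(X_{p,q})\to K_0(C(X))\oplus\zz^2\xrightarrow{\ \pi_*-\iota_*\ }\zz^2\xrightarrow{\ \delta\ }K_1(X_{p,q})\to K^1(X)\to0 .
\]

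I would then compute the two maps. Under $K_0(B)\cong K_0(C(X))$, the homomorphism $\pi_*$ is $\beta\mapsto(\rank\beta,\rank\beta)$ --- a projection over the \emph{connected} space $X$ has the same fibre rank at $x_0$ and $x_1$ --- so $\operatorname{im}\pi_*$ is the diagonal $\Delta\cong\zz$; and $\iota_*=\operatorname{diag}(q,p)$, since a minimal projection of $M_p\otimes1_q$ (respectively $1_p\otimes M_q$) has rank $q$ (respectively $p$) in $M_p\otimes M_q$. Thus $\operatorname{im}(\pi_*-\iota_*)=\Delta+(q\zz\oplus p\zz)$, which equals $\zz^2$ because $\gcd(p,q)=1$. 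Hence $\pi_*-\iota_*$ is onto, so $\delta=0$ and $K_1(X_{p,q})\cong K^1(X)$, while $K_0(X_{p,q})\cong\ker(\pi_*-\iota_*)$. Projecting this kernel to the $K_0(C(X))$-coordinate is injective (as $\iota_*$ is injective) with image $\{\beta\mid\pi_*(\beta)\in\operatorname{im}\iota_*\}$; and $\pi_*(\beta)\in\Delta\cap(q\zz\oplus p\zz)$, which is $pq\zz\cdot(1,1)$ again by coprimality, i.e.\ $\rank\beta\in pq\zz$. Identifying $\Delta\cong\zz$ and $\pi_*$ with $\rank$, this gives $K_0(X_{p,q})\cong\rank^{-1}(pq\zz)$.

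For the two special cases, $\rank$ is an isomorphism $K_0(C(X))\cong\zz$ when $X$ is contractible or $X=\sphere^n$ with $n$ odd, and $K^1(X)$ is then $0$ or $\zz$ respectively; so $\rank^{-1}(pq\zz)=pq\cdot K_0(C(X))\cong\zz$ in both, generated by $pq[1_{C(X)}]$, to which $[1_{X_{p,q}}]=[1_X\otimes1_{pq}]$ corresponds. I expect the only real difficulty to be bookkeeping: keeping the Morita identification $K_0(C(X,M_{pq}))\cong K_0(C(X))$ compatible with evaluation-and-rank, and tracking that $\gcd(p,q)=1$ is used in two separate places --- for surjectivity of $\pi_*-\iota_*$ (hence $\delta=0$ and the value of $K_1$) and for the identity $\Delta\cap(q\zz\oplus p\zz)=pq\zz$ (hence the value of $K_0$).
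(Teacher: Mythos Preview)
Your proof is correct and follows essentially the same approach as the paper: both present $X_{p,q}$ as a pullback over $M_{pq}\oplus M_{pq}$, invoke the Mayer--Vietoris six-term sequence, and use coprimality (via B\'ezout) to show surjectivity of the middle map, yielding $K_1(X_{p,q})\cong K^1(X)$ and $K_0(X_{p,q})\cong\rank^{-1}(pq\zz)$. The only cosmetic differences are that the paper cites Tietze rather than Urysohn for surjectivity of evaluation, and writes the middle map directly as $(x,y,z)\mapsto(\rank x-qy,\rank x-pz)$ rather than separating it into $\pi_*$ and $\iota_*$.
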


\begin{proof}
The $\cs$-algebra $X_{p,q}$ can be written as a pullback
\[
\begin{tikzcd}
X_{p,q} \arrow[r,"\pi_1"] \arrow[d,"\pi_2"] & C(X,M_{pq}) \arrow[d,"\ev=\ev_{x_0}\oplus\ev_{x_1}"]\\
M_{p} \oplus M_{q} \arrow[r,"\varphi=\varphi_1\oplus\varphi_2"] & M_{pq} \oplus M_{pq}
\end{tikzcd}
\]
with $\varphi_0(a,b)=(a\otimes1_q,0)$ and $\varphi_1(a,b)=(0,1_p\otimes b)$ for $(a,b)\in M_{p} \oplus M_{q}$. It follows from the Tietze extension theorem (applied to the entries of $M_{pq}\cong M_p\otimes M_q$) that the evaluation map $\ev=\ev_{x_0}\oplus\ev_{x_1}$ is surjective. We can therefore apply the operator $K$-theoretic Mayer--Vietoris theorem (see \cite{Hilgert:1986aa}) to obtain the six-term exact sequence
\[
\begin{tikzcd}
K_0(X_{p,q}) \arrow[r,"(\pi_1)_*\oplus(\pi_2)_*"] & K_0(C(X,M_{pq})) \oplus K_0(M_{p} \oplus M_{q}) \arrow[r,"\ev_*-\varphi_*"] & K_0(M_{pq} \oplus M_{pq})\arrow[d,"\exp"]\\
K_1(M_{pq} \oplus M_{pq}) \arrow[u] & K_1(C(X,M_{pq})) \oplus K_1(M_{p} \oplus M_{q}) \arrow[l,"\ev_*-\varphi_*"] & K_1(X_{p,q}) \arrow[l,"(\pi_1)_*\oplus(\pi_2)_*"]
\end{tikzcd}
\]
which reduces to
\[
\begin{tikzcd}[column sep = large]
K_0(X_{p,q}) \arrow[hookrightarrow,r,"(\pi_1)_*\oplus(\pi_2)_*"] & K^0(X) \oplus \zz^2 \arrow[r,"\ev_*-\varphi_*"] & \zz^2 \arrow[r,"\exp"] & K_1(X_{p,q}) \arrow[twoheadrightarrow,r] & K^1(X).
\end{tikzcd}
\]
The map $\ev_*-\varphi_*$ sends $(x,y,z)\in K^0(X) \oplus \zz^2$ to $(\rank(x)-qy,\rank(x)-pz)$. Since $p$ and $q$ are coprime, it therefore follows from B\'{e}zout's lemma and surjectivity of $\rank$ that $\ev_*-\varphi_*$ is surjective. We then have from exactness that $\exp=0$ and $K_1(X_{p,q}) \cong K^1(X)$. As for $K_0$, we have
\begin{align*}
K_0(X_{p,q}) &\cong \ker (\ev_*-\varphi_*)\\
&\cong \{(x,y,z) \in K^0(X)\oplus\zz^2 \mid (\exists m\in\zz) \rank(x)=mpq,y=mp,z=mq\}\\
&\cong \rank^{-1}(pq\zz).
\end{align*}
If $X$ is contractible, then $(K_0(C(X)),K_1(C(X)))\cong(\zz,0)$, giving \eqref{it:mv1}. For odd $n$, we have $(K_0(C(\sphere^n)),K_1(C(\sphere^n)))\cong(\zz,\zz)$, which gives \eqref{it:mv2}. In both cases, $K_0(X_{p,q})$ is generated by $[1]=\rank^{-1}(\{pq\})$.
\end{proof}

Theorem~\ref{thm:alexandrov} and Proposition~\ref{prop:ktheory} allow us to observe balls and spheres while also keeping track of $K$-theory (see Corollary~\ref{cor:ballsandspheres}). In general, Alexandrov or even Riemannian spheres can have surprising properties like everywhere-negative Ricci curvature (see \cite{Lohkamp:1994aa}). These cases are certainly of interest, for example to the central endeavour of \cite{Jacelon:2024aa}, as they correspond to the isometry group of the manifold being finite (see \cite[Theorem D]{Lohkamp:1994aa} and \cite[Example 5.9]{Jacelon:2024aa}). But in the present article, we would like to imagine our spheres as boundaries of compact convex sets. For the model building in Section~\ref{section:arachnid}, we will in fact use \emph{simplicial} spheres, whose metrics are induced from the $\ell_1$-norm (see Theorem~\ref{thm:arachnid} and Definition~\ref{def:simplicial}). These simplicial spaces are not Alexandrov, but since their metrics are equivalent to Euclidean, they are still covered by Theorem~\ref{thm:alexandrov}.

\begin{corollary} \label{cor:ballsandspheres}
Let $k\in\nn$ and let $(\ball^{k},\omega_k)$ and $(\sphere^{2k-1},\rho_k)$ be metric spaces Lipschitz equivalent to the usual Euclidean ball and odd Euclidean sphere, respectively. Then, there exist classifiable, projectionless $\cs$-algebraic quantum metric Bauer simplices $(\js_{\sphere^{2k-1}},L_{\rho_k})$ and $(\js_{\ball^{k}},L_{\omega_k})$ observing $(\ball^k,\omega_k)$ and $(\sphere^{2k-1},\rho_k)$ such that
\[
(K_0(\js_{\ball^{k}}),K_0(\js_{\ball^{k}})_+,[1_{\js_{\ball^{k}}}],K_1(\js_{\ball^{k}})) \cong (\zz,\nn,1,0)
\]
and
\[
(K_0(\js_{\sphere^{2k-1}}),K_0(\js_{\sphere^{2k-1}})_+,[1_{\js_{\sphere^{2k-1}}}]) \cong (\zz,\nn,1).
\]
\end{corollary}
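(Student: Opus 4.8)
The plan is to derive existence from Theorem~\ref{thm:alexandrov} and to obtain the $K$-theory from Proposition~\ref{prop:ktheory} together with continuity of $K$-theory along the inductive limit presentation of $\js_X$.

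For existence, note first that the round sphere $\sphere^{2k-1}$, with its usual intrinsic metric, is a compact, boundaryless, finite-dimensional Alexandrov space of constant curvature $1$, so its curvature is bounded above as well as below; Theorem~\ref{thm:alexandrov} therefore produces a classifiable, projectionless $\cs$-algebraic quantum metric Bauer simplex $(\js_{\sphere^{2k-1}},L_\rho)$ observing it. Similarly, the closed Euclidean ball $\ball^{k}$ is a compact convex subset of $\rr^{k}$ (cf.\ Example~\ref{ex:normed} and Remark~\ref{rem:hadamard}), hence a compact, contractible, finite-dimensional Alexandrov space of constant curvature $0$, in particular with upper curvature bound at most $0$; although its boundary $\sphere^{k-1}$ is nonempty, contractibility places us in the second hypothesis of Theorem~\ref{thm:alexandrov}, which yields a classifiable, projectionless $(\js_{\ball^{k}},L_\omega)$ observing $(\ball^{k},\omega)$ for the Euclidean intrinsic metric $\omega$. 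The final clause of Theorem~\ref{thm:alexandrov} then lets us replace these metrics by any Lipschitz equivalent ones, in particular by $\rho_k$ and $\omega_k$, without altering the underlying $\cs$-algebras $\js_{\sphere^{2k-1}}$ and $\js_{\ball^{k}}$ (this is exactly the observation recorded in the last line of the proof of Theorem~\ref{thm:alexandrov}).

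It remains to compute the $K$-theory, and for this we use that the $\cs$-algebra $\js_X$ furnished by Theorem~\ref{thm:models} is, by construction, a sequential inductive limit $\varinjlim(X_{p_n,q_n},\phi_n)$ of prime generalised dimension drop algebras over $X$ with unital connecting maps (the structure recalled before Definition~\ref{def:dimdrop}). Apply Proposition~\ref{prop:ktheory} at each stage: for $X=\ball^{k}$ contractible, part~\eqref{it:mv1} gives $K_0(X_{p_n,q_n})\cong\zz$ generated by $[1_{X_{p_n,q_n}}]$ and $K_1(X_{p_n,q_n})\cong K^1(\ball^{k})=0$; for $X=\sphere^{2k-1}$ with $2k-1$ odd, part~\eqref{it:mv2} gives $K_0(X_{p_n,q_n})\cong\zz$, again generated by $[1_{X_{p_n,q_n}}]$. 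In both cases $K^0(X)\cong\zz$, so the state $\rank$ is the identity and the identification $K_0(X_{p_n,q_n})\cong\rank^{-1}(p_nq_n\zz)$ shows that the ordered group $(K_0(X_{p_n,q_n}),K_0(X_{p_n,q_n})_+,[1_{X_{p_n,q_n}}])$ is isomorphic to $(\zz,\nn,1)$. Since the $\phi_n$ are unital, $(\phi_n)_*$ carries $[1_{X_{p_n,q_n}}]$ to $[1_{X_{p_{n+1},q_{n+1}}}]$ and is thus the identity map on $(\zz,\nn,1)$ under these identifications. Continuity of $K$-theory for inductive limits now yields $(K_0(\js_X),K_0(\js_X)_+,[1_{\js_X}])\cong(\zz,\nn,1)$ with $[1_{\js_X}]$ the generator, and $K_1(\js_{\ball^{k}})=\varinjlim 0=0$, which is the assertion of the corollary.

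The one point requiring care is the input invoked at the start of the previous paragraph: that the model $\js_X$ produced by the proof of Theorem~\ref{thm:models} (equivalently of \cite[Theorem 4.4]{Jacelon:2022wr}) genuinely admits a presentation as an inductive limit of prime generalised dimension drop algebras over $X$ with unital connecting maps, so that the whole $K$-theory computation collapses to an inductive limit of copies of $(\zz,\nn,1)$ along identity maps. Once that is granted, the remaining order-theoretic bookkeeping is routine, the essential observation being that $\rank$ is the identity on $K^0$ for contractible spaces and for odd spheres.
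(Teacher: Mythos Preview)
Your proof is correct and follows precisely the route the paper intends: existence via Theorem~\ref{thm:alexandrov} (using the boundaryless case for $\sphere^{2k-1}$ and the contractible nonpositive-curvature case for $\ball^k$, then passing to Lipschitz equivalent metrics), and $K$-theory via Proposition~\ref{prop:ktheory} applied to each building block $X_{p_n,q_n}$ together with continuity of $K$-theory along the inductive limit. The paper does not spell out the details, but your argument is exactly what is implicit in the sentence preceding the corollary; your caution in the final paragraph about the inductive-limit presentation of $\js_X$ is unnecessary, as this is precisely the construction described before Theorem~\ref{thm:models} and revisited in Example~\ref{ex:models}.
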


\begin{remark} \label{rem:dimdrop}
By classification, $\js_{\ball^{k}}$ is isomorphic to a simple inductive limit of prime dimension drop algebras. Viewed another way, we have shown how to equip such an inductive limit $\varinjlim \mathbb{I}_{p_n,q_n}$ with quantum metric Bauer structure whenever the extreme boundary of its trace space is homeomorphic to a ball $\ball^{k}$ (including the Hilbert cube $\ball^{\infty}$). By intertwining with suitable sequences $\js_{\ball^{2k}}$ and $\js_{\sphere^{2k-1}}$ (and with the help of classification), we will see in Section~\ref{section:arachnid} that an arbitrary simple limit $\varinjlim \mathbb{I}_{p_n,q_n}$ can be furnished with the `lyriform' tracial metric structure that we develop in Section~\ref{section:lyriform}.
\end{remark}

\section{Tracially lyriform \texorpdfstring{$\cs$}{C*}-algebras} \label{section:lyriform}

To arrive at the notion of a tracially lyriform $\cs$-algebra, we shift focus from the Lipschitz seminorm on $A$ to the associated metric $\rho$ on $T(A)$, and we correspondingly weaken the definition of a $\cs$-algebraic quantum metric Choquet simplex by:
\begin{items}
\item no longer requiring that $\rho$ should induce the $\ws$-topology;
\item withdrawing the demand that $\rho$ be midpoint-balanced;
\item relaxing convexity of $\rho$ to effective quasiconvexity.
\end{items}
Here, $\rho$ is \emph{convex} if
\begin{equation} \label{eqn:convex}
\rho\left(\sum_{i=1}^n\lambda_i\sigma_i,\sum_{i=1}^n\lambda_i\tau_i\right) \le \sum_{i=1}^n\lambda_i\rho(\sigma_i,\tau_i)
\end{equation}
for every natural number $n$, traces $\sigma_1,\dots,\sigma_n,\tau_1,\dots\tau_n\in T(A)$ and real numbers $\lambda_1,\dots,\lambda_n\in[0,1]$ with $\sum_{i=1}^n\lambda_i=1$. It is \emph{midpoint-balanced} if for every $\sigma,\tau,\sigma',\tau' \in T(A)$,
\begin{equation} \label{eqn:midpt}
\frac{\sigma+\tau'}{2} = \frac{\sigma'+\tau}{2} \implies\rho(\sigma,\tau)=\rho(\sigma',\tau').
\end{equation}
These features hold if $\rho=\rho_L$ is induced from a seminorm $L$ via \eqref{eqn:rho}, and they distinguish $W_1$ from the other Wasserstein metrics $W_\fp$ on probability spaces (see Section~\ref{section:wass}). These higher $\fp$-Wasserstein metrics are at least effectively quasiconvex \eqref{eqn:qc}, a property that is intended to represent a useful degree of compatibility between the metric $\rho$ and the convex set $T(A)$.

\begin{definition} \label{def:lyriform}
Let $A$ be a separable, unital $\cs$-algebra. Suppose that the trace space $T(A)$ is equipped with a metric $\rho$ such that:
\begin{enumerate}[(1)]
\item the diameter of the metric space $(T(A),\rho)$ is finite;
\item $\rho$ is lower semicontinuous with respect to the product topology on $T(A)\times T(A)$ induced by the $\ws$-topology on $T(A)$;
\item $\rho$ is \emph{effectively quasiconvex}, meaning that there exists an increasing function $h\colon [0,\diam(T(A),\rho)] \to [0,\infty)$ such that $\lim_{x\to0}h(x)=0$ and
\begin{equation} \label{eqn:qc}
\rho\left(\sum_{i=1}^n\lambda_i\sigma_i,\sum_{i=1}^n\lambda_i\tau_i\right) \le h(\max\{\rho(\sigma_i,\tau_i) \mid i=1,\dots,n\})
\end{equation}
for every $n\in\nn$, $\sigma_1,\dots,\sigma_n,\tau_1,\dots\tau_n\in T(A)$ and $\lambda_1,\dots,\lambda_n\in[0,1]$ with $\sum_{i=1}^n\lambda_i=1$;
\item tracially $\rho$-Lipschitz elements are dense in $A$, that is, the associated seminorm $L_\rho \colon A \to [0,\infty]$ defined by
\[
L_\rho(a) = \sup\left\{\frac{|\sigma(a)-\tau(a)|}{\rho(\sigma,\tau)} \mid \sigma\ne \tau \in T(A)\right\}
\]
is finite on a norm-dense subset of $A$.
\end{enumerate}
Then we say that $(A,\rho)$ is a \emph{tracially lyriform $\cs$-algebra}. In this case, a \emph{core} for $(A,\rho)$ is a norm-bounded, $\sigma$-compact subset $\mathcal{L}(A)$ of $A_{sa}$ such that
\begin{equation} \label{eqn:core}
\overline{\mathcal{L}(A) + \rr1 + A_0}^{\|\cdot\|} = \{a\in A_{sa} \mid L_\rho(a)\le 1\}
\end{equation}
where $A_0$ is as in \eqref{eqn:a0}. If a core is norm-compact, we call it a \emph{nucleus}.
\end{definition}

The following proposition tells us that cores always exist and indicates these structures' significance.

\begin{proposition} \label{prop:core}
The following hold for a tracially lyriform $\cs$-algebra $(A,\rho)$.
\begin{enumerate}[(1)]
\item \label{it:core0} The topology induced by $\rho$ is stronger than the $\ws$-topology on $T(A)$ (that is, $\rho$-convergent sequences are also $\ws$-convergent).
\item \label{it:core1} $(A,\rho)$ admits a core.
\item \label{it:core2} If $B$ is a unital $\cs$-algebra and $\varphi,\psi \colon A \to B$ are unital $^*$-homomorphisms, then $T(\varphi) = T(\psi) \colon T(B) \to T(A)$ if and only if $\varphi$ and $\psi$ tracially agree on a(ny) core $\mathcal{L}(A)$ for $(A,\rho)$, that is, if and only if $\tau(\varphi(a)) = \tau(\psi(a))$ for every $a\in \mathcal{L}(A)$ and $\tau \in T(B)$.
\end{enumerate}
\end{proposition}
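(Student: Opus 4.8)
The plan is to establish the three claims in sequence, with (1) feeding into (2), and (2) plus the definition of a core feeding into (3).

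For \eqref{it:core0}, I would argue by contradiction: suppose $\tau_n \to \tau$ in $\rho$ but not in the $\ws$-topology. Then there is some $a \in A$ and $\eps>0$ with $|\tau_n(a) - \tau(a)| \ge \eps$ along a subnet. It suffices to treat $a$ self-adjoint. Since tracially $\rho$-Lipschitz elements are dense (condition (4) of Definition~\ref{def:lyriform}), pick $b \in A_{sa}$ with $L_\rho(b)<\infty$ and $\|a-b\|<\eps/4$; then $|\tau_n(b)-\tau(b)| \ge \eps/2$ eventually, but $|\tau_n(b)-\tau(b)| \le L_\rho(b)\,\rho(\tau_n,\tau) \to 0$, a contradiction. (One should also note that the quasimetric/metric $\rho$ need only be handled on sequences here, so working with subsequences is harmless.) The main subtlety is that $T(A)$ with the $\ws$-topology is metrizable since $A$ is separable, so sequential statements suffice to compare the topologies; I would invoke this explicitly.

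For \eqref{it:core1}, I would build a core by hand from a countable dense set. Let $S = \{a \in A_{sa} \mid L_\rho(a) \le 1\}$; this is norm-closed (by lower semicontinuity of $\rho$ the seminorm $L_\rho$ is itself lower semicontinuous) and contains $\rr 1 + A_0$. Using the order-unit space structure $A^q \cong \aff(T(A))$ and the identification of the order-unit norm with the quotient norm (recalled before Definition~\ref{def:qmcs}), I would observe that the image of $S$ in $A^q$ has finite diameter: indeed $\|a + A_0\|_{A^q} = \sup_{\tau}|\tau(a)| - \inf_\tau \tau(a)$ up to a harmless additive constant, which is bounded by $\diam(T(A),\rho)$ when $L_\rho(a)\le 1$. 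So $S$ is contained in a set of the form $K + \rr 1 + A_0$ where $K$ is a norm-bounded subset of $A_{sa}$. Now take a countable norm-dense subset $\{a_k\}$ of (a suitable bounded truncation of) $S$, and let $\mathcal{L}(A) = \bigcup_k \{a_1,\dots,a_k\}$ — a countable, hence $\sigma$-compact, norm-bounded set — and check that $\overline{\mathcal{L}(A) + \rr 1 + A_0} = S$. The inclusion $\subseteq$ is clear since $S$ is closed and translation-invariant under $\rr1 + A_0$; the reverse inclusion holds because $\{a_k\}$ was chosen dense in $S$ modulo $\rr1 + A_0$. The one genuine point requiring care is arranging norm-boundedness of $\mathcal{L}(A)$: since elements of $S$ are only determined modulo $A_0$, one must first replace each element by a bounded representative of its class, which is possible precisely because $A^q$ is a complete order-unit space with the order-unit norm realized by genuine elements of $A_{sa}$. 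I expect this boundedness bookkeeping to be the main obstacle in part \eqref{it:core1}.

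For \eqref{it:core2}, the forward direction is immediate: if $T(\varphi)=T(\psi)$ then for all $\tau \in T(B)$ and $a \in A$ we have $\tau(\varphi(a)) = (T(\varphi)\tau)(a) = (T(\psi)\tau)(a) = \tau(\psi(a))$, in particular on any core. For the converse, suppose $\tau(\varphi(a)) = \tau(\psi(a))$ for all $a \in \mathcal{L}(A)$ and $\tau \in T(B)$. Both sides are $\rr$-linear and continuous in $a$, so equality extends to $\overline{\Span_\rr \mathcal{L}(A)}$, and since $\varphi, \psi$ are unital and $\ast$-homomorphisms map $A_0$ into $B_0$ (as $A_0$ is determined by traces and $T(\varphi)$ sends traces to traces), equality also holds on $\rr 1 + A_0$. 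By \eqref{eqn:core}, the closed span generated this way is at least $\{a \in A_{sa} \mid L_\rho(a)\le 1\}$; but since tracially $\rho$-Lipschitz elements are dense and $L_\rho$ is a seminorm, the $\rr$-linear span of $\{a \in A_{sa} \mid L_\rho(a)\le 1\}$ is norm-dense in $A_{sa}$, hence $\tau \circ \varphi = \tau \circ \psi$ on all of $A_{sa}$ and therefore on $A$. Thus $T(\varphi)\tau = T(\psi)\tau$ for every $\tau \in T(B)$, i.e.\ $T(\varphi) = T(\psi)$. The only thing to double-check is that $\Span_\rr$ of the $L_\rho$-unit ball is dense: this follows because any $a \in A_{sa}$ with $L_\rho(a) < \infty$ lies in $\rr_{>0}\cdot\{L_\rho \le 1\}$ after scaling, and such $a$ are dense by hypothesis (4).
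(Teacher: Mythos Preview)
Your proof is correct and follows essentially the same approach as the paper's: density of tracially Lipschitz elements for \eqref{it:core0}, a countable-dense-set construction in the quotient for \eqref{it:core1}, and the core equation \eqref{eqn:core} plus density for \eqref{it:core2}. One small caveat in \eqref{it:core1}: the isomorphism $A^q \cong \aff(T(A))$ that you invoke requires tracial ordering, which is \emph{not} assumed in Definition~\ref{def:lyriform}; the paper avoids this by working directly with the map $A_{sa}\to\aff(T(A))$ and citing \cite[Proposition~2.1]{Carrion:wz} to lift bounded affine functions to bounded self-adjoint elements---this is exactly the ``boundedness bookkeeping'' you flagged, and it goes through without tracial ordering.
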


\begin{proof}
Dense finiteness of $L_\rho$ implies \eqref{it:core0} and, in conjunction with \eqref{eqn:core}, also \eqref{it:core2}. More precisely, if $\tau_i\to\tau$ is a $\rho$-convergent sequence in $T(A)$ and $b\in A$ is such that $L_\rho(b)\le K$ for some $K\in(0,\infty)$, then $|\tau_i(b)-\tau(b)| \le K \rho(\tau_i,\tau)$ for every $i$, whence $\tau_i(b)\to\tau(b)$. Since such elements $b$ are dense in $A$, it follows that $\tau_i(a)\to\tau(a)$ for every $a\in A$, which means that $\tau_i\to\tau$ in the $\ws$-topology. For unital $^*$-homomorphisms $\varphi,\psi \colon A \to B$ and $\tau \in T(B)$, if $\tau(\varphi(a)) = \tau(\psi(a))$ for every $a\in \mathcal{L}(A)$, then \eqref{eqn:core} implies that $\tau(\varphi(a)) = \tau(\psi(a))$ for every $a\in A_{sa}$ with $L_\rho(a)\le 1$. Since these latter elements densely span $A$, it follows that $T(\varphi) = T(\psi)$.

Finally, \eqref{it:core1} follows from separability of $A$. Indeed, the image of $\{a\in A_{sa} \mid L_\rho(a)\le 1\}$ in the quotient $A^q=A_{sa}/A_0$ is also separable (with respect to the quotient norm $\|\cdot\|_q$), so admits a countable dense subset $\{x_n\}_{n=1}^\infty$. Write $\|\cdot\|_\infty$ for the uniform norm on the set of continuous functions $(T(A),\rho) \to \rr$. Let $r=r_{(T(A),\rho)}<\infty$. For every $n\in\nn$, the image of $x_n$ as a function $T(A) \to \rr$ is contained in an interval of length at most $2r$; taking $\lambda_n$ to be the midpoint of this interval and setting $y_n=x_n-\lambda_n$, we have $\|y_n\|_\infty\le r$. By \cite[Proposition 2.1]{Carrion:wz}, we can find preimages $(a_n)_{n\in\nn} \subseteq A_{sa}$ of $(y_n)_{n\in\nn}$, such that $\|a_n\|<r+1$. For every $n\in\nn$, let $F_n=\{a_1,\dots,a_n\}$, and then define $\mathcal{L}(A)=\bigcup_{n\in\nn}F_n$. Then, $\mathcal{L}(A)$ is a $\sigma$-finite (hence $\sigma$-compact) subset of the $(r+1)$-ball of $A_{sa}$. Let $\eps>0$ and let $a\in A_{sa}$ with $L_\rho(a)\le 1$. By density of $\{x_n\}_{n=1}^\infty$, there exists $n\in\nn$ such that $\|q(a)-x_n\|_q < \frac{\eps}{2}$, and then by definition of the quotient norm, there exists $a_0\in A_0$ such that
\begin{align*}
\|a-(a_n+\lambda_n\cdot1+a_0)\| &< \|q(a)-q(a_n+\lambda_n\cdot1)\|_q +\frac{\eps}{2}\\
&= \|q(a)-x_n\|_q +\frac{\eps}{2}\\
&< \eps,
\end{align*}
so \eqref{eqn:core} holds.
\end{proof}

\begin{definition} \label{def:compact}
We say that a tracially lyriform $\cs$-algebra $(A,\rho)$ is \emph{compact} or \emph{complete} or \emph{strictly intrinsic} if these properties hold for $(T(A),\rho)$. If $(A,\rho)$ is compact, then we say it is \emph{of Bauer type} if $\partial_e(T(A))$ is a $\rho$-closed subset of $T(A)$.
\end{definition}

We will see examples of compact or complete strictly intrinsic tracially lyriform $\cs$-algebras in Section~\ref{section:wass}. For now, we note that $(A,\rho)$ is compact precisely when $\rho$ induces the $\ws$-topology on the trace space. In particular, a compact tracially lyriform $\cs$-algebra $(A,\rho)$ is of Bauer type if and only if $T(A)$ is a Bauer simplex with respect to the $\ws$-topology. Furthermore, we can characterise those tracially lyriform $\cs$-algebras that are in fact $\cs$-algebraic quantum metric Choquet simplices. 

\begin{theorem} \label{thm:compact}
Consider the following properties of a tracially lyriform $\cs$-algebra $(A,\rho)$ for which the Cuntz--Pedersen quotient $A^q$ is tracially ordered:
\begin{enumerate}[(1)]
\item \label{it:ch2} $(A,L_\rho)$ is a $\cs$-algebraic quantum metric Choquet simplex with $\rho_{L_\rho}=\rho$;
\item \label{it:ch3} $\rho$ induces the $\ws$-topology on $T(A)$;
\item \label{it:ch1} $(T(A),\rho)$ is compact;
\item \label{it:ch4} $(A,\rho)$ admits a nucleus $\mathcal{L}(A)$ such that, for some $r>0$,
\[
q(\mathcal{L}(A)) = \{x\in A^q \mid L_\rho(x)\le 1,\|x\|_q\le r\}.
\]
\end{enumerate}
Then,
\[
\eqref{it:ch2} \implies \eqref{it:ch3} \iff \eqref{it:ch1} \implies \eqref{it:ch4}.
\]
Conversely, $\eqref{it:ch4} \implies \eqref{it:ch3}$ if $\rho_{L_\rho}=\rho$, and $\eqref{it:ch3} \implies \eqref{it:ch2}$ if and only if $\rho$ is convex and midpoint-balanced.
\end{theorem}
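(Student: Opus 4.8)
My plan is to run through the listed implications using throughout the identification of $A^q$ with $\aff(T(A))$ as ordered Banach spaces, which is available here because $A^q$ is tracially ordered (see the discussion after Definition~\ref{def:qmcs}); under it, $\|\cdot\|_q$ is the supremum norm and, for $x\in A^q$, $L_\rho(x)$ is the Lipschitz constant of $x$ viewed as an affine function on $(T(A),\rho)$. The routine implications come first. The implication $\eqref{it:ch2}\implies\eqref{it:ch3}$ is precisely axiom~(3) of Definition~\ref{def:qmcs} together with $\rho_{L_\rho}=\rho$. For $\eqref{it:ch3}\iff\eqref{it:ch1}$: if $\rho$ induces the $\ws$-topology then $(T(A),\rho)$ is $\ws$-compact, hence compact; conversely, by Proposition~\ref{prop:core}\eqref{it:core0} the identity $(T(A),\rho)\to(T(A),\ws)$ is a continuous bijection onto a Hausdorff space, so is a homeomorphism once its domain is compact. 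For $\eqref{it:ch1}\implies\eqref{it:ch4}$ I would take $r=r_{(T(A),\rho)}$: by Arzel\`{a}--Ascoli the family $K:=\{x\in A^q\mid L_\rho(x)\le1,\ \|x\|_q\le r\}$ of $1$-Lipschitz affine functions of sup-norm $\le r$ on the compact space $(T(A),\rho)$ is norm-compact in $A^q$, so choosing a continuous (Bartle--Graves) section $s\colon A^q\to A_{sa}$ of $q$ and setting $\mathcal{L}(A):=s(K)$ gives a norm-compact subset of $A_{sa}$ with $q(\mathcal{L}(A))=K$. Since a $1$-Lipschitz affine function differs from a constant by one of sup-norm $\le r$, we get $\{x\in A^q\mid L_\rho(x)\le1\}=K+\rr1$, so $\mathcal{L}(A)+\rr1+A_0=q^{-1}(K+\rr1)=\{a\in A_{sa}\mid L_\rho(a)\le1\}$ is already norm-closed; hence $\mathcal{L}(A)$ is a nucleus witnessing~\eqref{it:ch4}.

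For $\eqref{it:ch4}\implies\eqref{it:ch3}$ under the hypothesis $\rho_{L_\rho}=\rho$, the nucleus of~\eqref{it:ch4} still yields $\{x\in A^q\mid L_\rho(x)\le1\}=q(\mathcal{L}(A))+\rr1$, so (constants cancelling in differences) $\rho(\sigma,\tau)=\rho_{L_\rho}(\sigma,\tau)=\sup\{|\sigma(x)-\tau(x)|\mid x\in q(\mathcal{L}(A))\}$ for all $\sigma,\tau\in T(A)$. As $q(\mathcal{L}(A))$ is norm-compact in $A^q$ and traces restrict to a norm-equicontinuous family of functionals on $A^q$, any $\ws$-convergent sequence $\tau_n\to\tau$ converges uniformly on $q(\mathcal{L}(A))$, so $\rho(\tau_n,\tau)\to0$; combined with Proposition~\ref{prop:core}\eqref{it:core0} this gives~\eqref{it:ch3}.

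The substantive part is $\eqref{it:ch3}\implies\eqref{it:ch2}$. If~\eqref{it:ch2} holds then $\rho=\rho_{L_\rho}$, and every $\rho_L$ of the form \eqref{eqn:rho} is convex and midpoint-balanced by linearity of traces, so~\eqref{it:ch2} forces $\rho$ to have both properties. Conversely, assume~\eqref{it:ch3} together with convexity and midpoint-balancedness of $\rho$, and check the axioms of Definition~\ref{def:qmcs}: $A^q$ is tracially ordered by hypothesis; $\ker L_\rho=\Span_\cc(\{1\}\cup A_0)=\{a\mid\widehat a\in\cc1\}$ holds automatically, since $L_\rho(a)=0$ exactly when $\widehat a$ is constant on $T(A)$; and, granted~\eqref{it:ch3}, axiom~(3) reduces to $\rho_{L_\rho}=\rho$. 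Here $\rho_{L_\rho}\le\rho$ is immediate. For $\rho_{L_\rho}\ge\rho$ I would fix distinct $\sigma,\tau$ and argue as follows. Midpoint-balancedness makes $\rho(\alpha,\beta)$ a well-defined function $\tilde N$ of the difference $\alpha-\beta$ alone on the convex, symmetric set $\Delta:=T(A)-T(A)$, which spans a real vector space $V$; convexity of $\rho$, combined with the triangle inequality along the segment from $\sigma$ to $\tau$ (which promotes the inequality that convexity supplies to an equality), makes $\tilde N$ positively homogeneous and subadditive on $\Delta$, hence the restriction of a seminorm $p$ on $V$ (for instance $p(v)=\inf\{\sum_i\tilde N(\delta_i)\mid v=\sum_i\delta_i,\ \delta_i\in\Delta\}$). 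A Hahn--Banach extension of $c(\sigma-\tau)\mapsto c\,\rho(\sigma,\tau)$ dominated by $p$ provides a linear $\varphi\colon V\to\rr$ with $\varphi(\sigma-\tau)=\rho(\sigma,\tau)$ and $|\varphi|\le p$. Fixing $\omega_0\in T(A)$, the function $g(\omega):=\varphi(\omega-\omega_0)$ is affine and $1$-Lipschitz for $\rho$, hence $\ws$-continuous by~\eqref{it:ch3}, so $g\in\aff(T(A))\cong A^q$; lifting $g$ to $a\in A_{sa}$ with $\widehat a=g$ gives $L_\rho(a)\le1$ and $\sigma(a)-\tau(a)=\varphi(\sigma-\tau)=\rho(\sigma,\tau)$, so $\rho_{L_\rho}(\sigma,\tau)\ge\rho(\sigma,\tau)$. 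This completes~\eqref{it:ch2}.

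The hard part will be this last argument: producing, from a metric $\rho$ known only to induce the $\ws$-topology and to be convex and midpoint-balanced, enough affine $\rho$-Lipschitz functions to recover $\rho$ through the dual formula \eqref{eqn:rho}. The delicate points are the positive homogeneity of $\tilde N$ on $\Delta$ --- which has to be extracted from convexity of $\rho$ via the equality case of the triangle inequality on the segment joining $\sigma$ and $\tau$, convexity alone giving only $\le$ --- and the fact that the Hahn--Banach functional yields a genuinely $\ws$-continuous (not merely $\rho$-continuous) affine function, which is exactly the point at which~\eqref{it:ch3} is used. By comparison, the continuous section in $\eqref{it:ch1}\implies\eqref{it:ch4}$ and the compactness/equicontinuity bookkeeping in $\eqref{it:ch4}\implies\eqref{it:ch3}$ are routine.
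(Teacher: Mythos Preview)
Your argument is correct and follows the same logical skeleton as the paper's, but where the paper dispatches the three nontrivial implications by citation---$\eqref{it:ch1}\Rightarrow\eqref{it:ch4}$ via \cite[Theorem~2.12]{Jacelon:2024aa} (itself based on \cite[Theorem~1.9]{Rieffel:1998ww}), $\eqref{it:ch4}\Rightarrow\eqref{it:ch3}$ via the proof of \cite[Theorem~1.8]{Rieffel:1998ww}, and $\eqref{it:ch3}\Rightarrow\eqref{it:ch2}$ via \cite[Theorems~9.8 and~9.11]{Rieffel:1999aa}---you unpack each of these directly. Your Arzel\`a--Ascoli plus Bartle--Graves construction for the nucleus, and your compactness/equicontinuity argument for $\eqref{it:ch4}\Rightarrow\eqref{it:ch3}$, are precisely the content of the cited Rieffel results. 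Your Hahn--Banach sketch for $\eqref{it:ch3}\Rightarrow\eqref{it:ch2}$ is likewise the standard route and essentially what Rieffel does; the step you correctly flag as delicate (extending $\tilde N$ from $\Delta$ to a seminorm $p$ on $V$ with $p|_\Delta=\tilde N$, not merely $p|_\Delta\le\tilde N$) is exactly the substance of \cite[Theorem~9.8]{Rieffel:1999aa} and does require the two-sided homogeneity argument you outline.

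One small correction: in $\eqref{it:ch4}\Rightarrow\eqref{it:ch3}$ you assert $\{x\in A^q\mid L_\rho(x)\le1\}=q(\mathcal{L}(A))+\rr1$, but the hypothesis does not give $r\ge r_{(T(A),\rho)}$, so your justification (``a $1$-Lipschitz function differs from a constant by one of sup-norm $\le r$'') does not apply. The equality nevertheless holds: the core condition \eqref{eqn:core} gives $\{x\mid L_\rho(x)\le1\}\subseteq\overline{q(\mathcal{L}(A))+\rr1}$, and since $q(\mathcal{L}(A))$ is compact, $q(\mathcal{L}(A))+\rr1$ is already closed. With this adjustment your argument goes through unchanged.
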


\begin{proof}
$\eqref{it:ch2}\iff\eqref{it:ch3}$: The implication $\eqref{it:ch2}\implies\eqref{it:ch3}$ holds by definition. Since $(A,L_\rho)$ is a $\cs$-algebraic quantum metric Choquet simplex if and only if $(A^q,L_\rho)$ is a compact quantum metric space, the reverse implication under the assumption that $\rho$ is convex and midpoint-balanced (necessary conditions for it to be induced by a Lipschitz seminorm) is given by \cite[Theorem 9.11 and Theorem 9.8]{Rieffel:1999aa}. 

$\eqref{it:ch3}\iff\eqref{it:ch1}$: If $\rho$ induces the $\ws$-topology, then $(T(A),\rho)$ is compact because $T(A)$ is $\ws$-compact. Conversely, if $(T(A),\rho)$ is compact, then, by Proposition~\ref{prop:core}\eqref{it:core0}, the identity map is a continuous bijection from $(T(A),\rho)$ to $(T(A),\ws)$, and is therefore a homeomorphism since $(T(A),\rho)$ is compact and $(T(A),\ws)$ is Hausdorff.

$\eqref{it:ch3}\iff\eqref{it:ch4}$: The existence of a suitable nucleus, assuming \eqref{it:ch3}, follows from \cite[Theorem 2.12]{Jacelon:2024aa}, which is based on \cite[Theorem 1.9]{Rieffel:1998ww}. Conversely, if such a nucleus exists and $\rho_{L_\rho}=\rho$, then $\rho$ induces the $\ws$-topology on $T(A)$ as in the proof of \cite[Theorem 1.8]{Rieffel:1998ww}.
\end{proof}

Most of the tracially lyriform $\cs$-algebras that we encounter in this article are commutative in origin. Indeed, the starting point in the construction of the quantum metric Bauer simplices of Section~\ref{section:qmcs} (generalised to the tracial Wasserstein spaces of Section~\ref{section:wass} and used as building blocks in Section~\ref{section:arachnid}) is a compact metric space. For a truly noncommutative example, we turn to \cite{Anshu:2025aa}.

The underlying $\cs$-algebra is the quantum permutation group $C(S_n^+)$ introduced in \cite{Wang:1998aa}. Anshu, Jekel and Landry equip the trace space $T(C(S_n^+))$ with metrics called \emph{free Hamming distances}, based on the Biane--Voiculescu notion of a tracial coupling. Such a coupling between traces $\tau_1$ and $\tau_2$ on a $\cs$-algebra $A$ means a tuple $(M,\tau,\alpha_1,\alpha_2)$, where $(M,\tau)$ is a tracial von Neumann algebra and, for $i=1,2$, $\alpha_i\colon A\to M$ is a $^*$-homomorphism with $\tau\circ\alpha_i=\tau_i$. The two versions of free Hamming distance $W_{H,\wedge}(\tau_1,\tau_2)$  and $W_{H,1}(\tau_1,\tau_2)$ are defined in \cite[Definitions 1.2 and 1.3]{Anshu:2025aa} as infima over all tracial couplings of expressions involving $(M,\tau,\alpha_1,\alpha_2)$ and the canonical generating projections $\{u_{i,j}\}_{1\le i,j\le n}$ of $C(S_n^+)$, namely:
\begin{equation} \label{eqn:hammingwedge}
W_{H,\wedge}(\tau_1,\tau_2) = \inf_{(M,\tau,\alpha_1,\alpha_2)} \left[1-\frac{1}{n}\sum_{i,j=1}^n\tau(\alpha_1(u_{i,j}) \wedge\alpha_2(u_{i,j}))\right]
\end{equation}
and
\begin{equation} \label{eqn:hamming1}
W_{H,1}(\tau_1,\tau_2) = \inf_{(M,\tau,\alpha_1,\alpha_2)} \frac{1}{n}\sum_{i,j=1}^n\frac{1}{2}\tau(|\alpha_1(u_{i,j}) - \alpha_2(u_{i,j})|).
\end{equation}

\begin{theorem}[\cite{Anshu:2025aa}] \label{thm:free}
$(C(S_n^+),W_{H,\wedge})$ and $(C(S_n^+),W_{H,1})$ are tracially lyriform $\cs$-algebras.
\end{theorem}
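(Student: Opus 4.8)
The plan is to check the four conditions of Definition~\ref{def:lyriform} for $A=C(S_n^+)$ and $\rho\in\{W_{H,\wedge},W_{H,1}\}$, taking the metric axioms for $\rho$ (symmetry, positivity, and the triangle inequality, the last obtained by amalgamating tracial couplings) from \cite{Anshu:2025aa}. Separability and unitality of $C(S_n^+)$ are clear from its presentation as the universal $\cs$-algebra of a magic unitary $(u_{ij})_{i,j=1}^n$. Throughout I write a tracial coupling of $\sigma,\tau\in T(C(S_n^+))$ as $(M,\mu,\alpha_1,\alpha_2)$, with $(M,\mu)$ a tracial von Neumann algebra and $\mu\circ\alpha_1=\sigma$, $\mu\circ\alpha_2=\tau$. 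Condition~(1) is then immediate: every such coupling has cost at most $1$ --- for $W_{H,\wedge}$ because $\mu(\alpha_1(u_{ij})\wedge\alpha_2(u_{ij}))\ge0$, and for $W_{H,1}$ because $\tfrac12\mu|\alpha_1(u_{ij})-\alpha_2(u_{ij})|\le\tfrac12\mu(\alpha_1(u_{ij})+\alpha_2(u_{ij}))$ together with $\sum_{i,j}u_{ij}=n1$ --- and at least one coupling always exists (e.g.\ the tensor product of the two GNS representations), so $\diam(T(C(S_n^+)),\rho)\le1$.

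For condition~(3) I would prove the stronger statement that $\rho$ is convex in the sense of \eqref{eqn:convex}, so that effective quasiconvexity holds with the identity function. Given tracial couplings $(M_i,\mu_i,\alpha_1^{(i)},\alpha_2^{(i)})$ of pairs $(\sigma_i,\tau_i)$, $i=1,\dots,m$, and weights $\lambda_i\in(0,1]$ with $\sum_i\lambda_i=1$, the block tuple $\big(\bigoplus_iM_i,\ \sum_i\lambda_i\mu_i,\ \bigoplus_i\alpha_1^{(i)},\ \bigoplus_i\alpha_2^{(i)}\big)$ is a tracial coupling of $\big(\sum_i\lambda_i\sigma_i,\sum_i\lambda_i\tau_i\big)$. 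Since both $p\wedge q$ and $|p-q|$ are computed coordinatewise in a direct sum, the cost of this coupling equals $\sum_i\lambda_i$ times the respective costs of the pieces, and choosing the pieces nearly optimal yields $\rho\big(\sum_i\lambda_i\sigma_i,\sum_i\lambda_i\tau_i\big)\le\sum_i\lambda_i\rho(\sigma_i,\tau_i)$.

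For condition~(4) I would use a word-length estimate. For any coupling $(M,\mu,\alpha_1,\alpha_2)$ of $\sigma,\tau$ and any length-$k$ monomial $w=u_{i_1j_1}\cdots u_{i_kj_k}$, a telescoping sum and the contractivity of products of projections give
\[
|\sigma(w)-\tau(w)|=|\mu(\alpha_1(w)-\alpha_2(w))|\le\sum_{l=1}^k\mu\big|\alpha_1(u_{i_lj_l})-\alpha_2(u_{i_lj_l})\big|.
\]
Using $\mu|p-q|\le\mu(p)+\mu(q)-2\mu(p\wedge q)$ (the operator analogue of $\mu(A\triangle B)=\mu(A\cup B)-\mu(A\cap B)$, which follows from the two-projections structure theorem) together with $\sum_{i,j}u_{ij}=n1$ shows $\sum_{i,j}\mu|\alpha_1(u_{ij})-\alpha_2(u_{ij})|\le2n\cdot\mathrm{cost}(M,\mu,\alpha_1,\alpha_2)$ in \emph{either} normalisation; hence each term above is at most $\sum_{i,j}\mu|\alpha_1(u_{ij})-\alpha_2(u_{ij})|\le2n\cdot\mathrm{cost}(\cdots)$, so $|\sigma(w)-\tau(w)|\le2nk\cdot\mathrm{cost}(\cdots)$. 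Infimising over couplings yields $L_\rho(w)\le2nk$, hence $L_\rho(P)\le2n\sum_w|c_w|\deg(w)<\infty$ for every $^*$-polynomial $P=\sum_wc_ww$ in the generators; as these are norm-dense in $C(S_n^+)$, condition~(4) follows (and, as a by-product, the separation axiom $\rho(\sigma,\tau)=0\Rightarrow\sigma=\tau$).

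The main obstacle is condition~(2), lower semicontinuity of $\rho$ on $T(C(S_n^+))\times T(C(S_n^+))$ for the $\ws$-product topology, which I would attack by an ultraproduct argument. Given $\ws$-convergent sequences $\sigma_k\to\sigma$, $\tau_k\to\tau$, pass to a subsequence realising $\liminf_k\rho(\sigma_k,\tau_k)$, choose couplings $C_k=(M_k,\mu_k,\alpha_1^{(k)},\alpha_2^{(k)})$ of $(\sigma_k,\tau_k)$ with $\mathrm{cost}(C_k)\le\rho(\sigma_k,\tau_k)+1/k$, fix a free ultrafilter $\omega$ on $\nn$, and form the tracial (Ocneanu) ultrapower $(M_\omega,\mu_\omega)=\prod_\omega(M_k,\mu_k)$. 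The magic relations, being finitely many norm-continuous $^*$-polynomial identities, survive in $M_\omega$, so the universal property of $C(S_n^+)$ yields unital $^*$-homomorphisms $\alpha_i^\omega\colon C(S_n^+)\to M_\omega$ with $\alpha_i^\omega(u_{jl})=[\alpha_i^{(k)}(u_{jl})]_\omega$; since $^*$-moments pass to $\omega$-limits and monomials densely span, $\mu_\omega\circ\alpha_1^\omega=\sigma$ and $\mu_\omega\circ\alpha_2^\omega=\tau$ (by $\ws$-convergence), so $C_\omega=(M_\omega,\mu_\omega,\alpha_1^\omega,\alpha_2^\omega)$ is a coupling of $(\sigma,\tau)$. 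For $W_{H,1}$: since $\mu|p-q|=\mu\big(\sqrt{(p-q)^2}\big)$ and $\sqrt{\cdot}$ is a uniform limit of polynomials on $[0,1]\supseteq\mathrm{sp}((p-q)^2)$, the cost functional is $\ws$-continuous along couplings and $\mathrm{cost}(C_\omega)=\lim_\omega\mathrm{cost}(C_k)$. For $W_{H,\wedge}$: the crucial fact is that $\mu(p\wedge q)=\inf_m\mu\big((pqp)^m\big)$, because $(pqp)^m\searrow p\wedge q$ in the strong operator topology and $\mu$ is normal; consequently, using that $(pqp)^m\ge p\wedge q$ for every $m$,
\[
\mu_\omega\big(\alpha_1^\omega(u_{ij})\wedge\alpha_2^\omega(u_{ij})\big)=\inf_m\lim_\omega\mu_k\big((\alpha_1^{(k)}(u_{ij})\alpha_2^{(k)}(u_{ij})\alpha_1^{(k)}(u_{ij}))^m\big)\ \ge\ \lim_\omega\mu_k\big(\alpha_1^{(k)}(u_{ij})\wedge\alpha_2^{(k)}(u_{ij})\big),
\]
so $\mathrm{cost}(C_\omega)\le\lim_\omega\mathrm{cost}(C_k)$. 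In either case $\rho(\sigma,\tau)\le\mathrm{cost}(C_\omega)\le\liminf_k\rho(\sigma_k,\tau_k)$, as required. The delicate points I anticipate are the interchange of $\inf_m$ with the $\omega$-limit in the $\wedge$-case (handled by the pointwise inequality just displayed) and the bookkeeping showing that $M_\omega$ really is a tracial von Neumann algebra carrying the coupling $C_\omega$; an alternative avoiding ultrapowers would be to reduce first, following \cite{Anshu:2025aa}, to couplings into a single separably-acting tracial von Neumann algebra such as the hyperfinite $\mathrm{II}_1$ factor, and then to argue by $\|\cdot\|_2$-compactness of the relevant sets of $^*$-homomorphisms.
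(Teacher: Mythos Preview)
Your argument is correct. For conditions (1) and (3) of Definition~\ref{def:lyriform} you do exactly what the paper does: bound the diameter by $1$ from the definitions, and prove convexity via the direct-sum coupling $\big(\bigoplus_i M_i,\sum_i\lambda_i\mu_i,\bigoplus_i\alpha_1^{(i)},\bigoplus_i\alpha_2^{(i)}\big)$. (The paper invokes attainment of the infimum from \cite[Theorem 1.4(3)]{Anshu:2025aa} to choose \emph{exactly} optimal pieces, whereas you use nearly optimal ones; your version is marginally more elementary and avoids that citation.)

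The difference is in conditions (2) and (4). The paper does not prove these at all: it simply cites \cite[Theorem 1.4]{Anshu:2025aa} for lower semicontinuity and \cite[Proposition 1.7]{Anshu:2025aa} for density of Lipschitz elements. You instead supply self-contained arguments. Your word-length estimate for (4) is clean and correct (the telescoping plus tracial H\"older step $|\mu(axb)|=|\mu((ba)x)|\le\|ba\|\,\mu(|x|)$ gives exactly the bound you claim, and the identity $\mu|p-q|=\mu(p)+\mu(q)-2\mu(p\wedge q)$ follows from $|p-q|=p\vee q-p\wedge q$). Your ultraproduct argument for (2) is also correct; the key inequality $(pqp)^m\ge p\wedge q$ handles the discontinuity of $\wedge$ just as you say, and the $W_{H,1}$ case is genuinely easier because $|\cdot|$ passes through the ultrapower via polynomial approximation. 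What your route buys is independence from the internal machinery of \cite{Anshu:2025aa}; what the paper's route buys is brevity, since the theorem is after all attributed to that reference.
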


\begin{proof}
We must verify the conditions of Definition~\ref{def:lyriform}. It follows from their definitions \eqref{eqn:hammingwedge} and \eqref {eqn:hamming1} that $\diam(T(C(S_n^+)),W_{H,\wedge})$ and $\diam(T(C(S_n^+)),W_{H,1})$ are at most $1$. Both expressions are also convex, so they satisfy \eqref{eqn:qc} with $h$ equal to the identity map. To see this, let $m\in\nn$, $\sigma_1,\dots,\sigma_m,\tau_1,\dots\tau_m\in T(C(S_n^+))$ and $\lambda_1,\dots,\lambda_m\in[0,1]$ with $\sum_{l=1}^m\lambda_l=1$. By \cite[Theorem 1.4(3)]{Anshu:2025aa}, for every $l$, the infimum defining $W_{H,\wedge}(\sigma_l,\tau_l)$ is achieved by a tracial coupling $(M_l,\theta_l,\alpha_{l,1},\alpha_{l,2})$ of $\sigma_l$ and $\tau_l$ (and similarly for $W_{H,1}(\sigma_l,\tau_l)$). Let $M=M_1\oplus\dots\oplus M_m$, and define the trace $\tau\in T(M)$ by $\tau(a_1, \dots, a_m)=\lambda_1\theta_1(a_1)+\dots+\lambda_m\theta_m(a_m)$ and the $^*$-homomorphisms $\alpha_1,\alpha_2 \colon C(S_n^+) \to M$ by $\alpha_k = \alpha_{1,k}\oplus\dots\oplus\alpha_{m,k}$ for $k=1,2$. Then, $\tau \circ \alpha_1 = \sum_{l=1}^m \lambda_l\sigma_l$ and $\tau \circ \alpha_2 = \sum_{l=1}^m \lambda_l\tau_l$, so $(M,\tau,\alpha_1,\alpha_2)$ is a tracial coupling of $\sum_{l=1}^m \lambda_l\sigma_l$ and $\sum_{l=1}^m \lambda_l\tau_l$. It follows that
\begin{align*}
W_{H,\wedge}\left(\sum_{l=1}^m \lambda_l\sigma_l,\sum_{l=1}^m \lambda_l\tau_l\right) &\le 1-\frac{1}{n}\sum_{i,j=1}^n\tau(\alpha_1(u_{i,j}) \wedge\alpha_2(u_{i,j}))\\
&= 1-\frac{1}{n}\sum_{i,j=1}^n\sum_{l=1}^m\lambda_l\theta_l(\alpha_{l,1}(u_{i,j}) \wedge\alpha_{l,2}(u_{i,j}))\\
&= \sum_{l=1}^m\lambda_l \left(1-\frac{1}{n}\sum_{i,j=1}^n \theta_l(\alpha_{l,1}(u_{i,j}) \wedge\alpha_{l,2}(u_{i,j})) \right)\\
&= \sum_{l=1}^m\lambda_l W_{H,\wedge}(\sigma_l,\tau_l)\\
&\le \max_{1\le l\le m} W_{H,\wedge}(\sigma_l,\tau_l).
\end{align*}
A similar argument, using the fact that
\[
\tau(|\alpha_1(u_{i,j})-\alpha_2(u_{i,j})|) = \sum_{l=1}^m\lambda_l \theta_l(|\alpha_{l,1}(u_{i,j})-\alpha_{l,2}(u_{i,j})|),
\]
shows convexity of $W_{H,1}$. Finally, lower semicontinuity and density of Lipschitz elements for both distances are provided by \cite[Theorem 1.4]{Anshu:2025aa} and \cite[Proposition 1.7]{Anshu:2025aa}, respectively.
\end{proof}

We end this section with an example from topological dynamics. Let $(X,\rho)$ be a compact metric space and $T\colon X\to X$ a homeomorphism. The article \cite{Babel:2025aa} introduces a metric $\bar\rho$ on the space $\prob_T(X)$ of $T$-invariant measures on $X$ as an extension of Ornstein's $d$-bar metric (which is defined for shifts). Its definition is
\begin{equation} \label{eqn:rhobar}
\bar\rho(\mu,\nu) = \inf_{\pi\in J(\mu,\nu)} \int_{X\times X} \rho(x,y)d\pi(x,y)
\end{equation}
where $\pi\in J(\mu,\nu)$ if and only if it is a \emph{joining} of $\mu$ and $\nu$, that is, a $T\times T$-invariant measure that satisfies $(p_1)_*\pi=\mu$ and $(p_2)_*\pi=\nu$ (where $p_1,p_2\colon X\times X \to X$ are the coordinate projections). As observed in \cite{Babel:2025aa}, $\pi\in J(\mu,\nu)$ is a convex, $\ws$-closed subset of $\prob_{T\times T}(X\times X)$, and the infimum in \eqref{eqn:rhobar} is attained, in fact at an ergodic joining $\pi$.

\begin{theorem} \label{thm:dynamics}
If the topological dynamical system $(X,T)$ is free, then $(C(X)\rtimes_T\zz,\bar\rho)$ is a tracially lyriform $\cs$-algebra.
\end{theorem}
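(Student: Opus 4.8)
The plan is to verify the four conditions of Definition~\ref{def:lyriform} for $A=C(X)\rtimes_T\zz$ and $\rho=\bar\rho$, the main preliminary step being the identification of $(T(A),\ws)$ with $(\prob_T(X),\ws)$. Write $E\colon A\to C(X)$ for the canonical conditional expectation, so that $E(fu^n)=0$ for $f\in C(X)$ and $n\ne0$, where $u$ is the canonical unitary. Every $\mu\in\prob_T(X)$ gives a trace $\tau_\mu:=\mu\circ E$, and $\mu\mapsto\tau_\mu$ is a $\ws$-homeomorphism onto its image (continuity both ways is checked on the dense $^*$-subalgebra of finite sums $\sum_n f_nu^n$). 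I would then use freeness to show this map is onto: applying the trace identity to the $C(X)$-bimodule element $fu^n$ gives $\tau\big(f\,(h-h\circ T^{-n})\,u^n\big)=0$ for all $f,h\in C(X)$ and all $n\ne0$; since $T^{-n}$ is fixed-point free, a compactness argument yields finitely many real $h_j\in C(X)$ with $\psi:=\sum_j(h_j-h_j\circ T^{-n})^2$ strictly positive, hence invertible, and substituting $f=g\psi^{-1}(h_j-h_j\circ T^{-n})$ and summing over $j$ shows $\tau(gu^n)=0$ for every $g\in C(X)$. Thus $\tau=\tau_\mu$ with $\mu=\tau|_{C(X)}\in\prob_T(X)$, so $T(A)\cong\prob_T(X)$ with the $\ws$-topology corresponding to the weak$^*$-topology of measures. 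This identification (standard for free actions, and genuinely false without freeness, as the case $X=\{\mathrm{pt}\}$ shows) is the step where the hypothesis is essential.

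With this in hand, conditions (1)--(3) are routine optimal-transport facts, using that $\rho\colon X\times X\to\rr$ is continuous and $X\times X$ is compact metric. For (1), any product measure lies in $J(\mu,\nu)$, so $\diam(T(A),\bar\rho)\le\diam(X,\rho)<\infty$; that $\bar\rho$ is a genuine metric is part of \cite{Babel:2025aa} (positivity because a zero-cost joining is supported on the diagonal). For (2), given weak$^*$-convergent $\mu_k\to\mu$, $\nu_k\to\nu$, I would pass to a subsequence realising $\liminf_k\bar\rho(\mu_k,\nu_k)$, choose optimal joinings $\pi_k\in J(\mu_k,\nu_k)$ (attained, as recalled before the statement), extract a weak$^*$-limit $\pi$ by compactness of $\prob(X\times X)$, note that the marginal constraints and $T\times T$-invariance survive in the limit (continuity of the coordinate projections and of $T\times T$), so $\pi\in J(\mu,\nu)$, and conclude $\bar\rho(\mu,\nu)\le\int\rho\,d\pi=\lim_k\int\rho\,d\pi_k=\liminf_k\bar\rho(\mu_k,\nu_k)$; metrisability of $(T(A),\ws)$ makes sequential lower semicontinuity suffice. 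For (3) one takes $h=\mathrm{id}$: if $\pi_i\in J(\sigma_i,\tau_i)$ are optimal and $\sum_i\lambda_i=1$, then $\sum_i\lambda_i\pi_i\in J(\sum_i\lambda_i\sigma_i,\sum_i\lambda_i\tau_i)$, whence
\[
\bar\rho\Big(\sum_i\lambda_i\sigma_i,\sum_i\lambda_i\tau_i\Big)\le\sum_i\lambda_i\,\bar\rho(\sigma_i,\tau_i)\le\max_i\bar\rho(\sigma_i,\tau_i),
\]
so $\bar\rho$ is in fact convex.

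The real content is condition (4), and here the crossed-product structure makes it short. Since $\widehat{fu^n}=0$ for every $f\in C(X)$ and $n\ne0$ (because $E$ annihilates these elements and every trace factors through $E$), we get $L_{\bar\rho}(fu^n)=0$ for all such $fu^n$. And for a $K$-Lipschitz $f\in C(X)$, any joining $\pi\in J(\sigma,\tau)$ is in particular a coupling, so $|\sigma(f)-\tau(f)|=\big|\int(f(x)-f(y))\,d\pi\big|\le K\int\rho\,d\pi$, and infimising over $\pi$ gives $L_{\bar\rho}(f)\le K$. As $L_{\bar\rho}$ is a $[0,\infty]$-valued seminorm, its finiteness domain is a linear subspace containing every $fu^n$ with $n\ne0$ together with every Lipschitz $f\in C(X)$; since Lipschitz functions are dense in $C(X)$ by Stone--Weierstrass and finite sums $\sum_n f_nu^n$ are dense in $A$, this subspace is norm-dense, establishing (4).

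I expect the only delicate point to be the trace identification of the first paragraph: it is the one place freeness is used, and the bimodule/trace-identity manipulation together with the invertibility-via-compactness trick is the part requiring care. Everything after that is soft, the usually-awkward density condition (4) collapsing here because the conditional expectation $E$ kills all off-diagonal Fourier coefficients, while conditions (1)--(3) are the standard weak$^*$-compactness and convexity properties of joining costs.
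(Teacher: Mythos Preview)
Your proposal is correct and follows essentially the same route as the paper's proof: both verify the four conditions of Definition~\ref{def:lyriform} by (i) identifying $T(A)\cong\prob_T(X)$ via the conditional expectation, (ii) bounding the diameter by $\diam(X,\rho)$, (iii) establishing lower semicontinuity by extracting a weak$^*$-limit of optimal joinings, (iv) proving convexity via $\sum_i\lambda_i\pi_i\in J(\sum_i\lambda_i\mu_i,\sum_i\lambda_i\nu_i)$, and (v) obtaining density of Lipschitz elements from the fact that traces kill $fu^n$ for $n\ne0$ while $\rho$-Lipschitz functions in $C(X)$ are $\bar\rho$-Lipschitz (the paper phrases this last step as $W_1\le\bar\rho$, you as a direct coupling estimate---same content). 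The only notable difference is that you spell out the trace identification in detail, whereas the paper simply cites \cite[Theorem~11.1.22]{Giordano:2018wp}.
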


\begin{proof}
Here, we are using the fact that $T(C(X)\rtimes_T\zz)\cong \prob_T(X)$ for free actions (see \cite[Theorem 11.1.22]{Giordano:2018wp}). More precisely, every $\mu\in\prob_T(X)$ uniquely extends to a trace $\tau_\mu$ on $C(X)\rtimes_T\zz$ via the standard conditional expectation $C(X)\rtimes_T\zz \to C(X)$ (so in particular, every trace is of this form). On finite sums $a=\sum_{n\in\zz}a_nu^n$, with $a_n\in C(X)$ and $u$ denoting the unitary implementing the action $T$, we have $\tau_\mu(a) = \int_X a_0\,d\mu$. If $a_0\in\lip(X,\rho)$, then for every $\mu,\nu\in\prob_T(X)$ we have
\[
|\tau_\mu(a) - \tau_\nu(a)| = \left|\int_Xa_0\,d\mu - \int_Xa_0\,d\nu\right| \le W_1(\mu,\nu) \le \bar\rho(\mu,\nu)
\]
where $W_1(\mu,\nu)$ is the $1$-Wasserstein distance \eqref{eqn:w1}, which as discussed before \cite[Remark 7.1]{Babel:2025aa} is dominated by $\bar\rho$. It follows that $\bar\rho$-Lipschitz elements are dense in $C(X)\rtimes_T\zz$. By definition \eqref{eqn:rhobar}, the diameter of $(\prob_T(X),\bar\rho)$ is at most the diameter of $(X,\rho)$. To see that $\bar\rho$ is convex, it suffices to observe that $\sum_{i=1}^n\lambda_i\pi_i\in J(\sum_{i=1}^n\lambda_i\mu_i,\sum_{i=1}^n\lambda_i\nu_i)$ whenever $\mu_i,\nu_i\in\prob_T(X)$, $\pi_i\in J(\mu_i,\nu_i)$ and $\lambda_i\in[0,1]$ with $\sum_{i=1}^n\lambda_i=1$. Choosing the $\pi_i$ to witness the infima defining $\bar\rho(\mu_i,\nu_i)$ then gives
\[
\bar\rho\left(\sum_{i=1}^n\lambda_i\mu_i,\sum_{i=1}^n\lambda_i\nu_i\right) \le \sum_{i=1}^n\lambda_i\int_{X\times X} \rho(x,y)d\pi_i(x,y) = \sum_{i=1}^n\lambda_i\bar\rho(\mu_i,\nu_i).
\]
To see that $\bar\rho$ is $\ws$-lower semicontinuous, suppose that $((\mu_i,\nu_i))$ is a sequence in $\prob_T(X)\times\prob_T(X)$ that $\ws$-converges to $(\mu,\nu)$ and such that, for some $K\ge0$, $\bar\rho(\mu_i,\nu_i)\le K$ for every $i$. For each $i$, let $\pi_i$ be a joining of $\mu_i$ and $\nu_i$ attaining the infimum in the definition of $\bar\rho(\mu_i,\nu_i)$. By passing to a subsequence, we may assume that the sequence $(\pi_i)$ also $\ws$-converges to some $T\times T$-invariant measure $\pi$. By continuity of the coordinate maps, $\pi$ is a joining of $\mu$ and $\nu$. It follows that
\begin{align*}
\bar\rho(\mu,\nu) \le \int_{X\times X} \rho(x,y)d\pi(x,y) = \lim_{i\to\infty} \int_{X\times X} \rho(x,y)d\pi_i(x,y) =  \lim_{i\to\infty} \bar\rho(\mu_i,\nu_i) \le K
\end{align*}
which means that $\bar\rho$ is lower semicontinuous.
\end{proof}

\section{Tracial Wasserstein spaces} \label{section:wass}

The space $\prob(X) \cong T(C(X))$ of Borel probability measures on a compact metric space $(X,\rho)$ can be equipped with a variety of distances (see \cite{Gibbs:2002aa}). The $\fp$-Wasserstein metrics $W_\fp=W_{\fp,\rho}$, $\fp\in[1,\infty]$, are of particular interest to us as they provide $C(X)$ with tracial lyriform structure. Let us recall their definitions and some of their basic properties. 
(For a discussion of the historical accuracy of the terminology, see the bibliographical notes appended to \cite[Chapter 6]{Villani:2009aa}.) For $\fp\in[1,\infty)$, the $\fp$-Wasserstein distance between $\mu,\nu\in\prob(X)$ is
\begin{equation} \label{eqn:wp}
W_\fp(\mu,\nu) = \inf_{\pi\in\Pi(\mu,\nu)} \left(\int_{X\times X} \rho(x,y)^pd\pi(x,y)\right)^\frac{1}{\fp}
\end{equation}
where $\Pi(\mu,\nu)$ denotes the set of Borel probability measures on $X\times X$ with marginals $\mu$ and $\nu$ (that is, with $p_1,p_2\colon X\times X \to X$ the two coordinate projections, $\pi\in\Pi(\mu,\nu)$ if and only if $(p_1)_*\pi=\mu$ and $(p_2)_*\pi=\nu$). The $1$-Wasserstein distance admits the dual Kantorovich--Rubinstein formulation
\begin{equation} \label{eqn:w1}
W_1(\mu,\nu) = \sup\left\{\left|\int_Xf\,d\mu - \int_Xf\,d\nu\right| \mid f\in\lip(X,\rho)\right\}.
\end{equation}
The $\infty$-Wasserstein distance $W_\infty$ between $\mu,\nu\in\prob(X)$ is
\begin{equation} \label{eqn:winf1}
W_\infty(\mu,\nu) = \inf\{r>0 \mid(\forall\, U\subseteq X\text{ Borel}) \:\mu(U) \le \nu(U_r)\}.
\end{equation}
It dominates the \emph{L\'{e}vy--Prokhorov metric}
\begin{equation} \label{eqn:lp1}
\lp(\mu,\nu) = \inf\{r>0 \mid(\forall\, U\subseteq X\text{ Borel})\: \mu(U) \le \nu(U_r) + r\}.
\end{equation}
Note that `Borel' can be replaced by `open' or by `closed' in both \eqref{eqn:winf1} and \eqref{eqn:lp1}. Unlike $\lp$, the $W_\infty$-topology is in general \emph{stronger} than the $\ws$-topology on $\mathcal{M}(X)$ (see Remark~\ref{rem:compact}), but $W_\infty$ is indeed $\ws$-lower semicontinuous (a consequence of \eqref{eqn:winf2} below). We summarise the various interrelationships between these distances in the following proposition.

\begin{proposition} \label{prop:wass}
Let $(X,\rho)$ be a nonempty compact metric space. The Wasserstein metrics on $\prob(X)$ all agree with $\rho$ on $X \cong \partial_e\prob(X)$ and have diameter equal to $\diam(X,\rho)$, and the same is true of $\lp$ if $\diam(X,\rho)\le1$. All of these distances are effectively quasiconvex and all share the same set of Lipschitz elements within $C(X)$, namely, $\lipo(X,\rho)$. All except $W_\infty$ induce the $\ws$-topology on $\prob(X)$. For $1\le \fp \le \fq < \infty$, the Wasserstein metrics  $W_1$, $W_\fp$ and $W_{\fq}$ are related via
\begin{equation} \label{eqn:wincrease}
W_\fp(\mu,\nu) \le W_{\fq}(\mu,\nu) \le \diam(X,\rho)^{1-\frac{1}{\fq}} \cdot W_1(\mu,\nu)^\frac{1}{\fq}.
\end{equation}
The $\infty$-Wasserstein distance $W_\infty$ is related to the other Wasserstein metrics via
\begin{equation} \label{eqn:winf2}
W_\infty(\mu,\nu) = \lim_{\fp\to\infty}W_\fp(\mu,\nu).
\end{equation}
The $1$-Wasserstein distance is related to the L\'{e}vy--Prokhorov metric via
\begin{equation} \label{eqn:lp2}
\lp(\mu,\nu)^2 \le W_1(\mu,\nu) \le (\diam(X,\rho)+1) \cdot \lp(\mu,\nu).
\end{equation}
\end{proposition}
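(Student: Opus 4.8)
The plan is to reduce every assertion to the classical theory of $W_1$, by first establishing the comparison inequalities \eqref{eqn:wincrease}, \eqref{eqn:winf2} and \eqref{eqn:lp2} and then exploiting them. I would start on the extreme boundary: for distinct $x,y\in X$ the only coupling of $\delta_x$ and $\delta_y$ is $\delta_{(x,y)}$, so $W_\fp(\delta_x,\delta_y)=\rho(x,y)$ when $\fp<\infty$; testing \eqref{eqn:winf1} and \eqref{eqn:lp1} against $U=\{x\}$ gives $W_\infty(\delta_x,\delta_y)=\rho(x,y)$ and $\lp(\delta_x,\delta_y)=\min\{\rho(x,y),1\}$. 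Hence all the Wasserstein metrics restrict to $\rho$ on $X\cong\partial_e\prob(X)$, and so does $\lp$ once $\diam(X,\rho)\le1$; this yields the lower bound $\ge\diam(X,\rho)$ for each diameter. The matching upper bounds are immediate: $\rho\le\diam(X,\rho)$ pointwise forces $\int\rho^\fp\,d\pi\le\diam(X,\rho)^\fp$ for every $\pi\in\Pi(\mu,\nu)$, while for $\lp$ and $W_\infty$ any $r>\diam(X,\rho)$ makes $U_r=X$ for nonempty $U$ (and for $W_\infty$ one can alternatively wait for \eqref{eqn:winf2}).

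For \eqref{eqn:wincrease}: $W_\fp\le W_\fq$ is the power-mean inequality $\|\rho\|_{L^\fp(\pi)}\le\|\rho\|_{L^\fq(\pi)}$ on the probability space $(X\times X,\pi)$ followed by taking infima, and the second inequality follows from $\int\rho^\fq\,d\pi=\int\rho^{\fq-1}\cdot\rho\,d\pi\le\diam(X,\rho)^{\fq-1}\int\rho\,d\pi$ evaluated at a $W_1$-optimal $\pi$, which exists since $\Pi(\mu,\nu)$ is $\ws$-compact and $\pi\mapsto\int\rho\,d\pi$ is $\ws$-continuous. For \eqref{eqn:winf2}: by the previous step $\fp\mapsto W_\fp(\mu,\nu)$ is nondecreasing and bounded by $\diam(X,\rho)$, so $L:=\lim_{\fp\to\infty}W_\fp(\mu,\nu)$ exists; using the transport-plan description $W_\infty(\mu,\nu)=\min\{\|\rho\|_{L^\infty(\pi)}\mid\pi\in\Pi(\mu,\nu)\}$, a standard equivalent form of \eqref{eqn:winf1}, we get $W_\fp\le W_\infty$ and hence $L\le W_\infty$; conversely, taking $W_\fp$-optimal plans $\pi_\fp$ and a $\ws$-convergent subsequence with limit $\pi\in\Pi(\mu,\nu)$, for fixed $q<\infty$ and $\fp\ge q$ we have $\|\rho\|_{L^q(\pi_\fp)}\le\|\rho\|_{L^\fp(\pi_\fp)}=W_\fp\le L$, so letting $\fp\to\infty$ (using $\ws$-continuity of $\sigma\mapsto\int\rho^q\,d\sigma$) gives $\|\rho\|_{L^q(\pi)}\le L$, and then $q\to\infty$ gives $\|\rho\|_{L^\infty(\pi)}\le L$, whence $W_\infty\le L$. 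For \eqref{eqn:lp2}: the lower bound comes from applying \eqref{eqn:w1} to the $(1/\alpha)$-Lipschitz bump $x\mapsto\max\{0,1-\rho(x,U)/\alpha\}$ with $\alpha=\sqrt{W_1(\mu,\nu)}$, giving $\mu(U)\le\nu(U_\alpha)+W_1(\mu,\nu)/\alpha=\nu(U_\alpha)+\alpha$ for every closed $U$; the upper bound follows by writing $\int f\,d\mu-\int f\,d\nu$ (for $f\in\lip(X,\rho)$, normalised to take values in $[0,\diam(X,\rho)]$) as a layer-cake integral and using $\{f>t\}_\alpha\subseteq\{f>t-\alpha\}$, or one can simply quote \cite[Theorem 2]{Gibbs:2002aa}.

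Everything else now follows from the comparison inequalities and the fact that $W_1$ metrises the $\ws$-topology on $\prob(X)$ for compact $X$ (classical; e.g.\ \cite[Chapter 6]{Villani:2009aa}). Effective quasiconvexity holds with $h=\id$: for $\fp<\infty$, $\sum_i\lambda_i\pi_i\in\Pi(\sum_i\lambda_i\mu_i,\sum_i\lambda_i\nu_i)$ yields $W_\fp(\sum_i\lambda_i\mu_i,\sum_i\lambda_i\nu_i)\le(\sum_i\lambda_i W_\fp(\mu_i,\nu_i)^\fp)^{1/\fp}\le\max_i W_\fp(\mu_i,\nu_i)$; the same averaging applies verbatim to \eqref{eqn:lp1}; and the case $\fp=\infty$ is obtained by passing to the limit via \eqref{eqn:winf2}. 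For the Lipschitz elements: if $a\in C(X)$ has $L_\rho(a)<\infty$ for one of these distances, then restricting to Dirac measures and invoking the first paragraph shows $a$ is $\rho$-Lipschitz on $X$ (for $\lp$ one first gets this only on pairs at distance $\le1$ and then uses boundedness of $a$ on the compact space $X$), so $a\in\lipo(X,\rho)$; conversely $a\in\mathrm{Lip}_K(X,\rho)$ gives $|\int a\,d\mu-\int a\,d\nu|\le K\,W_1(\mu,\nu)$ by \eqref{eqn:w1}, and since $W_1$ is dominated by every $W_\fp$ and by $(\diam(X,\rho)+1)\lp$, we get $L_\rho(a)<\infty$ for each distance. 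Finally, \eqref{eqn:wincrease} shows $W_\fp\to0\iff W_1\to0$ for $\fp\in[1,\infty)$ and \eqref{eqn:lp2} shows $\lp\to0\iff W_1\to0$, so $W_\fp$ ($\fp<\infty$) and $\lp$ all induce the $\ws$-topology, the opposite behaviour of $W_\infty$ being Remark~\ref{rem:compact}. The part I expect to cost the most care is \eqref{eqn:winf2}: one needs the identification of $W_\infty$ with the least $L^\infty$-norm of a transport plan, together with the compactness of optimal plans and the iterated limit ($\fp\to\infty$, then $q\to\infty$); the remaining items are bookkeeping, resting on Kantorovich--Rubinstein duality \eqref{eqn:w1} and the elementary convexity of the sets $\Pi(\mu,\nu)$.
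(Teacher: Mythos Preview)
Your argument is correct and structurally close to the paper's: the treatment of Dirac measures, the diameter computation, the derivation of the second half of \eqref{eqn:wincrease} via a $W_1$-optimal plan, and the Lipschitz-element analysis all match. The differences are that the paper outsources \eqref{eqn:winf2} and \eqref{eqn:lp2} to \cite{Givens:1984to} and \cite{Gibbs:2002aa} where you give self-contained arguments, and---more substantively---your route to effective quasiconvexity of $W_\fp$ is sharper. You average optimal plans directly: if $\pi_i\in\Pi(\mu_i,\nu_i)$ is $W_\fp$-optimal then $\sum_i\lambda_i\pi_i\in\Pi(\sum_i\lambda_i\mu_i,\sum_i\lambda_i\nu_i)$, yielding $W_\fp(\sum_i\lambda_i\mu_i,\sum_i\lambda_i\nu_i)^\fp\le\sum_i\lambda_iW_\fp(\mu_i,\nu_i)^\fp\le\max_iW_\fp(\mu_i,\nu_i)^\fp$, i.e.\ quasiconvexity with $h=\id$. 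The paper instead deduces effective quasiconvexity of $W_\fp$ from convexity of $W_1$ together with the bound $W_\fp\le\diam(X,\rho)^{1-1/\fp}W_1^{1/\fp}$, which only gives $h(x)=\diam(X,\rho)^{1-1/\fp}x^{1/\fp}\ge x$. Your observation that $W_\fp$ is in fact quasiconvex (not merely effectively so) is a genuine improvement and simplifies the downstream uniform-quasiconvexity statements in Definition~\ref{def:quasiwass} and Proposition~\ref{prop:colimits}. One small remark: your parenthetical caveat about $\lp$ when $\diam(X,\rho)>1$ is unnecessary, since $\lp(\delta_x,\delta_y)=\min\{\rho(x,y),1\}\le\rho(x,y)$ already gives $\rho$-Lipschitz directly.
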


\begin{proof}
To see that $W_\fp$ induces the $\ws$-topology whenever $\fp\in[1,\infty)$, see \cite[Proposition 4]{Givens:1984to}.  For $\fp\in[1,\infty)$ and $x,y\in X$, the equality $W_\fp(\delta_x,\delta_y)=\rho(x,y)$ holds because $\delta_{(x,y)}$ is the unique element of the set $\Pi(\delta_x,\delta_y)$ appearing in \eqref{eqn:wp}. It is easily verified from their definitions \eqref{eqn:winf1} and \eqref{eqn:lp1} that $W_\infty$ also agrees with $\rho$ on $X$, and that
\[
\lp(\delta_x,\delta_y) = \min\{\rho(x,y),1\} \le \rho(x,y).
\]
This implies the claim about the diameter of $\prob(X)$. It also follows that elements of $C(X)$ that are tracially $1$-Lipschitz relative to any of these distances must be elements of $\lip(X)$. Conversely, by \eqref{eqn:w1}, the first half of \eqref{eqn:wincrease} (which can be found in \cite[Proposition 3(2)]{Givens:1984to}) and \eqref{eqn:winf2} (which can be found in \cite[Proposition 3(3)]{Givens:1984to}), elements of $\lip(X)$ are $1$-Lipschitz relative to any of the $W_\fp$. By \eqref{eqn:lp2} (which can be found in \cite[Theorem 2]{Gibbs:2002aa}), elements of $\lip(X)$ are also $(\diam(X,\rho)+1)$-Lipschitz relative to $\lp$. In summary, the set of tracially Lipschitz elements of $C(X)$ is the same in all cases, namely $\lipo(X)$.

For the remaining inequality, the second half of \eqref{eqn:wincrease}, fix $\fp\in[1,\infty)$ and write $\delta=\diam(X,\rho)$. Let $\mu,\nu\in\prob(X)$ and let $\pi$ be a $W_1$-optimal transference plan between $\mu$ and $\nu$, that is, $\pi\in\Pi(\mu,\nu)$ with $W_1(\mu,\nu) = \int_{X\times X} d(x,y)d\pi(x,y)$. Then,
\begin{align*}
W_\fp(\mu,\nu) &\le \left(\int_{X\times X} d(x,y)^pd\pi(x,y)\right)^\frac{1}{\fp}\\
&= \delta\left(\int_{X\times X} \left(\frac{d(x,y)}{\delta}\right)^pd\pi(x,y)\right)^\frac{1}{\fp}\\
&\le \delta\left(\int_{X\times X} \frac{d(x,y)}{\delta}d\pi(x,y)\right)^\frac{1}{\fp}\\
&= \delta^{1-\frac{1}{\fp}}W_1(\mu,\nu)^\frac{1}{\fp}.
\end{align*}
Finally, since $W_1$ is convex, it now follows that $W_\fp$ is effectively quasiconvex with $h(x)=\diam(X,\rho)^{1-\frac{1}{\fp}}x^\frac{1}{\fp}$. Both $\lp$ and $W_\infty$ are quasiconvex (that is, effectively quasiconvex with $h(x)=x$). Let us verify this for $\lp$, and the same argument will work for $W_\infty$. Fix $n\in\nn$, $\mu_1,\dots,\mu_n,\nu_1,\dots\nu_n\in \prob(X)$ and $\lambda_1,\dots,\lambda_n\in[0,1]$ with $\sum_{i=1}^n\lambda_i=1$. Let $U\subseteq X$ be Borel and let $r \ge\max\{\lp(\mu_i,\nu_i) \mid i=1,\dots,n\}$. Then,
\begin{align*}
(\sum_{i=1}^n\lambda_i\mu_i)(U_r)+r = \sum_{i=1}^n\lambda_i(\mu_i(U_r)+r)
\ge \sum_{i=1}^n\lambda_i\nu_i(U)
= (\sum_{i=1}^n\lambda_i\nu_i)(U)
\end{align*}
so $r \ge \lp\left(\sum_{i=1}^n\lambda_i\mu_i,\sum_{i=1}^n\lambda_i\nu_i\right)$. It follows that
\[
\lp\left(\sum_{i=1}^n\lambda_i\mu_i,\sum_{i=1}^n\lambda_i\nu_i\right) \le \max\{\lp(\mu_i,\nu_i) \mid i=1,\dots,n\}. \qedhere
\]
\end{proof}

Proposition~\ref{prop:wass} allows us to consider quantum metric Bauer simplices as special cases of a family of tracially lyriform $\cs$-algebras that we term \emph{tracial Wasserstein spaces}. Note that the diameter requirement for tracial \emph{Prokhorov} spaces can be arranged by scaling the metric on the boundary of the trace space.

\begin{definition} \label{def:qwass}
Let $(A,\omega)$ be a tracially lyriform $\cs$-algebra such that $(\partial_e(T(A)),\omega)$ is compact. We call $(A,\omega)$ a \emph{tracial $\fp$-Wasserstein space}, $\fp\in[1,\infty]$, if $\omega=W_\fp=W_{\fp,\rho}$ is the $\fp$-Wasserstein distance on $T(A)\cong\prob(\partial_e(T(A)))$ associated with a metric $\rho$ on $\partial_e(T(A))$. A \emph{tracial Wasserstein space} is a tracial $\fp$-Wasserstein space for some $\fp\in[1,\infty]$. Similarly, if $\omega=\lp$ is the L\'{e}vy--Prokhorov distance associated with $\rho$ and $\diam(\partial_e(T(A)),\rho)\le1$, then we call $(A,\omega)$ a \emph{tracial Prokhorov space}.
\end{definition}

Just as for quantum metric Bauer simplices, the starting point in the construction of tracial Wasserstein spaces tends to be a compact metric space $(X,\rho)$, with the tracially lyriform $\cs$-algebra $(A,W_\fp)$ then built as a noncommutative space of observables. We again say that $(A,W_\fp)$ \emph{is associated with} or \emph{observes} $(X,\rho)$.

\begin{remark} \label{rem:compact}
It would perhaps be better to use the terminology \emph{compact} tracial Wasserstein (or Prokhorov) spaces to emphasise that the metric spaces being observed are compact. We refrain from doing so because a `compact tracial $\infty$-Wasserstein space' need not be compact in the sense of Definition~\ref{def:compact} (although there is no such conflict for finite $\fp$). To see that $(\prob(X),W_\infty)$ is not compact unless $X$ is a singleton, fix points $x_0,x_1\in X$ with $\rho_X(x_0,x_1)=r>0$ and consider the sequence of measures $\mu_n:=\frac{1}{n}\delta_{x_1}+\frac{n-1}{n}\delta_{x_0}$. Then, $W_\infty(\mu_m,\mu_n)=r$ for every $m\ne n$, so $(\mu_n)_{n\in\nn}$ has no convergent subsequences.
\end{remark}

\begin{proposition} \label{prop:wasscomplete}
Let $\fp\in[1,\infty]$ and let $(A,W_{\fp,\rho})$ be a tracial Wasserstein space associated with a compact length space $(X,\rho)$. Then, $(A,W_{\fp,\rho})$ is complete and strictly intrinsic, and is of Bauer type if $\fp<\infty$.
\end{proposition}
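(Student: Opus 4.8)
The plan is to establish the three metric-space properties that Definitions~\ref{def:compact} and~\ref{def:geodesic} require of $(T(A),W_{\fp,\rho})$: completeness, being a strictly intrinsic length space, and (for $\fp<\infty$) having $\partial_e(T(A))$ closed. I use throughout the identification $T(A)\cong\prob(X)$, with metric $W_{\fp,\rho}$, furnished by Definition~\ref{def:qwass}, and I first note that a compact metric space is complete and locally compact, so by Remark~\ref{rem:intrinsic} the length space $(X,\rho)$ is strictly intrinsic: any two of its points are joined by a shortest path, which after reparametrisation we take to be constant-speed, $\gamma\colon[0,1]\to X$ with $\rho(\gamma(s),\gamma(t))=|t-s|\,\rho(\gamma(0),\gamma(1))$.

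\emph{Completeness.} Let $(\mu_n)$ be $W_{\fp,\rho}$-Cauchy in $\prob(X)$. By $\ws$-compactness of $\prob(X)$, some subsequence $(\mu_{n_j})$ $\ws$-converges to a limit $\mu\in\prob(X)$. Since $W_{\fp,\rho}$ is $\ws$-lower semicontinuous (Proposition~\ref{prop:wass}; indeed $\ws$-continuous when $\fp<\infty$), for each $n$ we get $W_{\fp,\rho}(\mu_n,\mu)\le\liminf_{j}W_{\fp,\rho}(\mu_n,\mu_{n_j})$, which the Cauchy property bounds by $\sup_{m,m'\ge N}W_{\fp,\rho}(\mu_m,\mu_{m'})$ once $n\ge N$; letting $N\to\infty$ shows $\mu_n\to\mu$ in $W_{\fp,\rho}$. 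So $(\prob(X),W_{\fp,\rho})$ is complete.

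\emph{Strict intrinsicness.} This I would prove by the standard optimal-transport device of lifting an optimal plan to the space of geodesics. Fix $\mu,\nu\in\prob(X)$ and an optimal $\pi\in\Pi(\mu,\nu)$: for finite $\fp$ this exists because $\Pi(\mu,\nu)$ is $\ws$-compact and $\pi\mapsto\int_{X\times X}\rho^\fp\,d\pi$ is $\ws$-continuous, while for $\fp=\infty$ one extracts a $\ws$-limit $\pi$ of optimal $W_{\fp_k,\rho}$-plans as $\fp_k\to\infty$ and checks, via \eqref{eqn:winf2} and $\ws$-continuity of $\pi\mapsto\int\rho^q\,d\pi$, that $\|\rho\|_{L^\infty(\pi)}=\lim_{q}\|\rho\|_{L^q(\pi)}\le W_{\infty,\rho}(\mu,\nu)$, so $\pi$ is $W_\infty$-optimal. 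Let $\mathrm{Geo}(X)\subseteq C([0,1],X)$ be the set of constant-speed geodesics; its members are uniformly $\diam(X,\rho)$-Lipschitz, so $\mathrm{Geo}(X)$ is compact by Arzel\`a--Ascoli, and the endpoint map $\mathrm{Geo}(X)\to X\times X$ is a continuous surjection of compact metric spaces, hence admits a Borel section $S\colon(x,y)\mapsto\gamma_{x,y}$ by measurable selection. Set $\Gamma=S_*\pi$ and $\mu_t=(\ev_t)_*\Gamma$. Then $\mu_0=\mu$ and $\mu_1=\nu$, and for $0\le s\le t\le1$ the coupling $(\ev_s,\ev_t)_*\Gamma\in\Pi(\mu_s,\mu_t)$ satisfies, using $\rho(\gamma(s),\gamma(t))=|t-s|\,\rho(\gamma(0),\gamma(1))$ and the optimality of $\pi$,
\[
W_{\fp,\rho}(\mu_s,\mu_t)\le|t-s|\,W_{\fp,\rho}(\mu,\nu)
\]
(via the $\fp$-cost $|t-s|^\fp\int\rho^\fp\,d\pi$, or the $L^\infty$-norm $|t-s|\,\|\rho\|_{L^\infty(\pi)}$ when $\fp=\infty$). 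Summing this along a partition of $[0,1]$, the triangle inequality forces equality, so $t\mapsto\mu_t$ is a constant-speed curve from $\mu$ to $\nu$ of length $W_{\fp,\rho}(\mu,\nu)$. Hence $(\prob(X),W_{\fp,\rho})$ is a length space in which this length is attained, i.e.\ is strictly intrinsic.

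\emph{Bauer type.} For $\fp<\infty$, Proposition~\ref{prop:wass} gives that $W_{\fp,\rho}$ induces the $\ws$-topology on $T(A)\cong\prob(X)$, so $(A,W_{\fp,\rho})$ is compact in the sense of Definition~\ref{def:compact}; moreover $\partial_e(T(A))$ corresponds to $\{\delta_x\mid x\in X\}$, which is $\ws$-homeomorphic to the compact space $X$, hence $\ws$-closed, hence $W_{\fp,\rho}$-closed in $T(A)$. (For $\fp=\infty$ the space is not even compact, by Remark~\ref{rem:compact}, so Bauer type is not asserted.) I expect the only genuinely delicate points to be the two folklore facts invoked in the strict-intrinsicness step: the Borel measurable selection of geodesics in $X$, and the existence of a $W_\infty$-optimal transport plan.
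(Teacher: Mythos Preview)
Your argument is correct, but it takes a different route from the paper's. For $\fp<\infty$ the paper simply cites \cite[Corollary 2.7, Remark 2.8]{Lott:2009aa} and \cite[Proposition 2.10]{Sturm:2006aa} for compactness and the strictly intrinsic property, and for $\fp=\infty$ it cites \cite[Proposition 6]{Givens:1984to} for completeness and then handles strict intrinsicness by a limiting argument: it takes constant-speed $W_{n,\rho}$-geodesics $\gamma_n$ from $\sigma$ to $\tau$, observes they are uniformly $W_{1,\rho}$-Lipschitz, extracts via Arzel\`a--Ascoli a uniform limit $\gamma_\infty$ in the compact space $(T(A),W_{1,\rho})$, and checks that $\gamma_\infty$ is a $W_{\infty,\rho}$-shortest path. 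By contrast, you give a uniform displacement-interpolation construction valid for every $\fp$ at once, lifting an optimal plan to the geodesic space of $X$ via a Borel section. Your approach is more self-contained and conceptually cleaner, and your direct completeness argument (Cauchy plus $\ws$-compactness plus lower semicontinuity) is pleasantly elementary; the paper's approach, on the other hand, sidesteps both of the ``delicate'' folklore ingredients you flag --- it never needs a measurable geodesic selection in $X$ or a $W_\infty$-optimal plan, since the $\fp=\infty$ geodesic is manufactured as a limit of finite-$\fp$ geodesics rather than built directly.
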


\begin{proof}
By \cite[Corollary 2.7 and Remark 2.8]{Lott:2009aa} (also observed in \cite[Proposition 2.10]{Sturm:2006aa}), $(T(A),W_{\fp,\rho})$ is a compact, strictly intrinsic length space for every $\fp\in[1,\infty)$ . It remains to consider the case $\fp=\infty$. Completeness of $(T(A),W_{\infty,\rho})$ requires only completeness of $(X,\rho)$ (see \cite[Proposition 6]{Givens:1984to}). To see that it is strictly intrinsic, let $\sigma,\tau\in T(A)$ and for each $\fp\in[1,\infty)$ let $\gamma_\fp\colon[0,1]\to T(A)$ be a $W_{\fp,\rho}$-shortest path from $\sigma$ to $\tau$, that is,
\[
W_{\fp,\rho}(\gamma_\fp(s),\gamma_\fp(t)) = |s-t| \cdot W_{\fp,\rho}(\sigma,\tau)
\]
for every $s,t\in[0,1]$. This implies that
\[
W_{1,\rho}(\gamma_\fp(s),\gamma_\fp(t)) \le |s-t| \cdot W_{\infty,\rho}(\sigma,\tau)
\]
for every $s$ and $t$, and hence that $(\gamma_n)_{n\in\nn}$ is a sequence of curves in the compact metric space $(T(A),W_{1,\rho})$ whose lengths are uniformly bounded by $W_{\infty,\rho}(\sigma,\tau)$. By the Arzel\`a--Ascoli \cite[Theorem 2.5.14]{Burago:2001aa}, there is a subsequence $(\gamma_{n_k})_{k\in\nn}$ that uniformly converges to some path $\gamma_\infty\colon[0,1]\to T(A)$ from $\sigma$ to $\tau$. Let $s,t\in[0,1]$. Since
\[
W_{n,\rho}(\gamma_m(s),\gamma_m(t)) \le W_{m,\rho}(\gamma_m(s),\gamma_m(t)) \le |s-t| \cdot W_{\infty,\rho}(\sigma,\tau)
\]
for every $m\ge n$, it follows that
\[
W_{n,\rho}(\gamma_\infty(s),\gamma_\infty(t)) \le |s-t| \cdot W_{\infty,\rho}(\sigma,\tau)
\]
for every $n$, and hence that
\[
W_{\infty,\rho}(\gamma_\infty(s),\gamma_\infty(t)) \le |s-t| \cdot W_{\infty,\rho}(\sigma,\tau).
\]
Therefore, $W_{\infty,\rho}(\gamma_\infty(s),\gamma_\infty(t)) = |s-t| \cdot W_{\infty,\rho}(\sigma,\tau)$ for every $s,t\in[0,1]$, so $\gamma_\infty$ is a shortest path from $\sigma$ to $\tau$ in $(T(A),W_{\infty,\rho})$.
\end{proof}

Suppose that $A$ is a separable, unital $\cs$-algebra such that $T(A)$ is a Bauer simplex with boundary $(X,\rho)$. Then, the obstacle to tracial Wasserstein or Prokhorov structure is density of tracially Lipschitz elements. But if we know that $(A,L_\rho)$ is a quantum metric Bauer simplex, then by Proposition~\ref{prop:wass}, this density is automatic.

\begin{proposition} \label{prop:inherit}
Every tracial $\fp$-Wasserstein space, $\fp\in[1,\infty]$, also inherits the structure of a tracial Prokhorov space and a tracial $\fq$-Wasserstein space for any other $\fq\in[1,\infty]$.
\end{proposition}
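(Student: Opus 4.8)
The plan is to read the result off Proposition~\ref{prop:wass}, which already isolates the ways in which the metrics $\{W_{\fq,\rho}\}_{\fq\in[1,\infty]}$ and $\lp$ attached to a fixed ground metric $\rho$ do, and do not, differ. So let $(A,W_{\fp,\rho})$ be a tracial $\fp$-Wasserstein space; by Definition~\ref{def:qwass} we may write $T(A)\cong\prob(X)$ with $X=\partial_e(T(A))$, and since $W_{\fp,\rho}$ restricts to $\rho$ on $X\cong\partial_e\prob(X)$ (Proposition~\ref{prop:wass}), $(X,\rho)$ is a nonempty compact metric space. Fix an arbitrary $\fq\in[1,\infty]$; I would first check the four conditions of Definition~\ref{def:lyriform} for $(A,W_{\fq,\rho})$. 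Condition~(1) holds because $\diam(T(A),W_{\fq,\rho})=\diam(X,\rho)<\infty$ by Proposition~\ref{prop:wass} and compactness of $X$, and condition~(3) is the effective quasiconvexity of $W_{\fq,\rho}$, also recorded there. For condition~(2): when $\fq<\infty$, $W_{\fq,\rho}$ induces the $\ws$-topology on $\prob(X)$ (Proposition~\ref{prop:wass}), hence is $\ws$-continuous and in particular $\ws$-lower semicontinuous; when $\fq=\infty$, $W_\infty$ is the pointwise supremum over $\fp\in[1,\infty)$ of the $\ws$-continuous functions $W_{\fp,\rho}$, by the first inequality in \eqref{eqn:wincrease} and the limit formula \eqref{eqn:winf2}, and so is $\ws$-lower semicontinuous.

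Condition~(4) is the only one that genuinely uses the hypothesis that $(A,W_{\fp,\rho})$ is tracially lyriform, rather than a formal property of $\rho$. Since $T(A)$ is Bauer we have $\aff(T(A))\cong C(X)$, and for $a\in A_{sa}$ the seminorm value $L_{W_{\fq,\rho}}(a)$ depends on $a$ only through the corresponding function $\widehat{a}|_X\in C(X)$ (as $W_{\fq,\rho}(\sigma,\tau)$ depends on $\sigma,\tau$ only as measures). By the part of Proposition~\ref{prop:wass} identifying Lipschitz elements --- using $W_{\fq,\rho}(\delta_x,\delta_y)=\rho(x,y)$ and $W_{1,\rho}\le W_{\fq,\rho}$ together with Kantorovich--Rubinstein duality \eqref{eqn:w1} --- one gets $L_{W_{\fq,\rho}}(a)<\infty$ if and only if $\widehat{a}|_X\in\lipo(X,\rho)$, if and only if $L_{W_{\fp,\rho}}(a)<\infty$. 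Thus $\{a\in A:L_{W_{\fq,\rho}}(a)<\infty\}=\{a\in A:L_{W_{\fp,\rho}}(a)<\infty\}$, which is norm-dense in $A$ because $(A,W_{\fp,\rho})$ is tracially lyriform, and condition~(4) follows. Since in addition $(\partial_e(T(A)),W_{\fq,\rho})=(X,\rho)$ is compact and $W_{\fq,\rho}$ is the $\fq$-Wasserstein distance on $T(A)\cong\prob(X)$ associated with $\rho$, this shows $(A,W_{\fq,\rho})$ is a tracial $\fq$-Wasserstein space.

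For the Prokhorov structure the extra ingredient is the diameter normalisation built into Definition~\ref{def:qwass}: I would set $\rho'=\rho/\max\{1,\diam(X,\rho)\}$, so that $\diam(X,\rho')\le1$ while $\rho'$ remains Lipschitz equivalent to $\rho$, whence $\lipo(X,\rho')=\lipo(X,\rho)$. Let $\lp=\lp_{\rho'}$ be the associated L\'{e}vy--Prokhorov metric. Because $\diam(X,\rho')\le1$, the statements of Proposition~\ref{prop:wass} concerning $\lp$ apply and yield: $\lp$ restricts to $\rho'$ on $X$, so $(\partial_e(T(A)),\lp)=(X,\rho')$ is compact; $\diam(T(A),\lp)=\diam(X,\rho')\le1$; $\lp$ induces the $\ws$-topology, hence is lower semicontinuous; and $\lp$ is (effectively) quasiconvex. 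Condition~(4) transfers exactly as in the previous paragraph, now using $\lipo(X,\rho')=\lipo(X,\rho)$ and the comparison $W_{1,\rho'}\le(\diam(X,\rho')+1)\,\lp$ from \eqref{eqn:lp2}. Hence $(A,\lp_{\rho'})$ is a tracially lyriform $\cs$-algebra with $(\partial_e(T(A)),\lp_{\rho'})$ compact and $\diam(\partial_e(T(A)),\rho')\le1$, that is, a tracial Prokhorov space.

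I do not anticipate a genuine obstacle: this proposition is essentially a corollary of Proposition~\ref{prop:wass}. The only points requiring care are (i) the rescaling of $\rho$ forced by the diameter constraint in the Prokhorov case, and (ii) the transfer of condition~(4), which works because density of tracially Lipschitz elements is really a statement about the ground metric $\rho$ on $\partial_e(T(A))$ --- all the metrics $W_{\fq,\rho}$ and $\lp_{\rho'}$ sharing the same Lipschitz elements by Proposition~\ref{prop:wass}.
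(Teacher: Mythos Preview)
The proposal is correct and takes essentially the same approach as the paper, deriving everything from Proposition~\ref{prop:wass}. The paper's own justification is the brief paragraph immediately preceding the proposition, which notes that the only nontrivial condition is density of tracially Lipschitz elements and that this transfers because all the metrics share the same Lipschitz set $\lipo(X,\rho)$; you have simply made explicit the verification of conditions (1)--(4) of Definition~\ref{def:lyriform} and the diameter rescaling in the Prokhorov case (which the paper also flags just before Definition~\ref{def:qwass}).
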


The following version of \cite[Example 2.15.1]{Jacelon:2024aa} applies not just to $C(X,M_n)$ but also to unital, continuous-trace subalgebras like generalised dimension drop algebras $X_{p,q}$ (see Definition~\ref{def:dimdrop}) that fall under \cite[Proposition 2.7]{Jacelon:2024aa}. Non-type \rm{I} $\cs$-algebras like $C(X)\otimes\js$ and $C(X)\otimes\mathcal{Q}$, as well as subalgebras like $\js_{p^\infty,q^\infty}$, are allowed, too.

\begin{corollary} \label{cor:wasslyre}
Let $(X,\rho)$ be a compact metric space, let $n\in\nn$ and let $A=C(X,M_n)$. Then, $(A,W_\fp)$ is a tracial $\fp$-Wasserstein space for any $\fp\in[1,\infty]$, and $(A,\lp)$ is a tracial Prokhorov space if $\diam(X,\rho)\le 1$. For any $r\ge r_{(X,\rho)}$, a common nucleus for all of these structures is
\begin{equation} \label{eqn:drmat}
\mathcal{D}_r(A) = \{f \in A_{sa} \mid \|f\|\le r,\: \|f(x)-f(y)\|\le\rho(x,y)\:\text{ for every } x,y\in X\}.
\end{equation}
\end{corollary}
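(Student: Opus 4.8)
The plan is to import conditions (1)--(3) of Definition~\ref{def:lyriform} wholesale from Proposition~\ref{prop:wass}, check the density condition (4) by hand, and then verify \eqref{eqn:core} for $\mathcal{D}_r(A)$ by a short computation involving the fibrewise normalised trace. First I would fix the affine $\ws$-homeomorphism $T(C(X,M_n))\cong\prob(X)$, $\mu\mapsto\tau_\mu$ with $\tau_\mu(f)=\int_X\tfrac1n\tr(f(x))\,d\mu(x)$, under which $\partial_e(T(C(X,M_n)))$ is the copy of $X$ given by the Dirac masses (compact, since $X$ is). With this identification $W_{\fp,\rho}$ and, when $\diam(X,\rho)\le 1$, $\lp$ are metrics on $T(A)=\prob(X)$ of finite diameter, $\ws$-lower semicontinuous and effectively quasiconvex by Proposition~\ref{prop:wass}; this is (1)--(3), and it also places the corollary's objects into the frameworks of Definitions~\ref{def:qwass}. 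For (4), note that for $a\in A_{sa}$ and $\mu,\nu\in\prob(X)$ one has $\tau_\mu(a)-\tau_\nu(a)=\int_X g\,d\mu-\int_X g\,d\nu$ with $g:=\tfrac1n\tr(a(\cdot))\in C(X)_{sa}$, so the tracial Lipschitz seminorm of $a$ depends only on $g$; in particular any self-adjoint $f\in C(X,M_n)$ with $\|f(x)-f(y)\|\le\rho(x,y)$ for all $x,y$ has $\tfrac1n\tr(f(\cdot))\in\lip(X,\rho)$ and hence, since every $W_{\fp,\rho}$ (and $\lp$ when $\diam(X,\rho)\le1$) dominates the $W_1$-pairing via \eqref{eqn:w1}, \eqref{eqn:wincrease}, \eqref{eqn:winf2} and \eqref{eqn:lp2}, is tracially $1$-Lipschitz. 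As $\lipo(X,\rho)$ is dense in $C(X)$, such operator-Lipschitz functions are dense in $C(X,M_n)_{sa}$ (approximate entrywise, then symmetrise), giving (4).

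Next I would identify the unit $L$-ball. Because all $W_{\fp,\rho}$ agree with $\rho$ on Dirac masses and dominate $W_{1,\rho}$, which is dual to $\lip(X,\rho)$ via \eqref{eqn:w1}, the $L$-seminorm of $(A,W_{\fp,\rho})$ satisfies $L(a)=\mathrm{Lip}_\rho(g)$ for every $\fp\in[1,\infty]$, where $\mathrm{Lip}_\rho(g)$ denotes the least $\rho$-Lipschitz constant of $g=\tfrac1n\tr(a(\cdot))$ (cf.\ the identification of Lipschitz elements in Proposition~\ref{prop:wass}). So $L(a)\le 1$ iff $\mathrm{Lip}_\rho(g)\le 1$. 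Given such an $a$, let $c$ be the midpoint of the compact range of $g$ and define $b\in A_{sa}$ by $b(x)=(g(x)-c)1_n$; then $\|b(x)-b(y)\|=|g(x)-g(y)|\le\rho(x,y)$ and $\|b\|=\tfrac12(\max g-\min g)\le\tfrac12\diam(X,\rho)=r_{(X,\rho)}\le r$, so $b\in\mathcal{D}_r(A)$, while $c1\in\rr1$ and the element $x\mapsto a(x)-g(x)1_n$ has vanishing fibrewise trace, hence lies in $A_0$. Thus $a\in\mathcal{D}_r(A)+\rr1+A_0$. Since conversely $\mathcal{D}_r(A)\subseteq\{a\in A_{sa}:L(a)\le1\}$ (by the first displayed observation) and $\rr1+A_0\subseteq\ker L$, we obtain
\[
\mathcal{D}_r(A)+\rr1+A_0=\{a\in A_{sa}:L(a)\le1\},
\]
so \eqref{eqn:core} holds, with the closure superfluous.

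Finally, $\mathcal{D}_r(A)$ is a uniformly bounded, equi-$1$-Lipschitz family of $M_n$-valued functions on the compact space $X$, so it is norm-compact by the Arzel\`a--Ascoli theorem; being compact it is a nucleus, and it is manifestly the same set for every $\fp\in[1,\infty]$. For the tracial Prokhorov structure (assuming $\diam(X,\rho)\le1$), $\lp$ agrees with $\rho$ on Dirac masses and is comparable to $W_{1,\rho}$ by \eqref{eqn:lp2}; running the same computation with the comparison constant from \eqref{eqn:lp2} exhibits $\mathcal{D}_r(A)$ as a nucleus in this case too.

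The only genuinely non-formal step is the equality $\mathcal{D}_r(A)+\rr1+A_0=\{a\in A_{sa}:L(a)\le1\}$ rather than a mere inclusion: the point is that, modulo the trace-zero ideal $A_0$ and scalars, a tracially $1$-Lipschitz element $a$ is represented by the \emph{diagonal} lift $x\mapsto g(x)1_n$ of its fibrewise normalised trace, and recentring that lift by the midpoint of the range of $g$ drops it into the norm-$r$ ball precisely because $\mathrm{Lip}_\rho(g)\le1$ forces the oscillation of $g$ to be at most $\diam(X,\rho)\le 2r$. Everything else is bookkeeping on top of Proposition~\ref{prop:wass} and Arzel\`a--Ascoli.
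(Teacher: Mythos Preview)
For the Wasserstein metrics $W_\fp$, $\fp\in[1,\infty]$, your argument is correct and supplies the details that the paper only gestures at via its reference to \cite[Example 2.15.1]{Jacelon:2024aa} and Proposition~\ref{prop:wass}. The key identification $L_{W_\fp}(a)=\mathrm{Lip}_\rho\!\bigl(\tfrac1n\tr a(\cdot)\bigr)$ (using $W_1\le W_\fp$ for the upper bound and Dirac masses for the lower bound) and the exact decomposition $a=b+c\cdot 1+(a-g_a\cdot 1_n)$ with $b\in\mathcal{D}_r(A)$ are clean and yield \eqref{eqn:core} without even needing the closure. The Arzel\`a--Ascoli step for compactness of $\mathcal{D}_r(A)$ is fine.

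The gap is in the Prokhorov case. Your sentence ``running the same computation with the comparison constant from \eqref{eqn:lp2}'' does not deliver what is needed, because \eqref{eqn:core} is an \emph{equality} and $L_{\lp}$ can be strictly larger than $\mathrm{Lip}_\rho$. Concretely, take $X=\{0,\tfrac{1}{10},1\}$ with the Euclidean metric (so $\diam(X,\rho)=1$), $g(x)=x$, $\mu=\tfrac{9}{10}\delta_{1/10}+\tfrac{1}{10}\delta_1$ and $\nu=\delta_0$; then $\mathrm{Lip}_\rho(g)=1$, but $\lp(\mu,\nu)=\tfrac{1}{10}$ while $|\widehat g(\mu)-\widehat g(\nu)|=\tfrac{19}{100}$, so $L_{\lp}(g)\ge 1.9$. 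Thus $\mathcal{D}_r(A)\not\subseteq\{L_{\lp}\le1\}$, and since you have already shown that $\mathcal{D}_r(A)+\rr1+A_0=\{a:\mathrm{Lip}_\rho(g_a)\le1\}$ is closed, no closure can repair the inclusion. (The paper does not prove the $\lp$ clause either; this appears to be an imprecision in the statement rather than a defect in your overall strategy.)
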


The next corollary is a consequence of Proposition~\ref{prop:inherit} and Theorem~\ref{thm:alexandrov}.

\begin{corollary} \label{cor:qwass}
Let $\fp\in[1,\infty]$ and let $(X,\rho)$ be a compact, finite-dimensional Alexandrov space with lower and upper curvature bounds. Suppose that $X$ is either boundaryless or is contractible and of nonpositive curvature. Then, there exists a classifiable, projectionless tracial $\fp$-Wasserstein space $(\js_X,W_{\rho,\fp})$ observing $(X,\rho)$. If in addition $\diam(X,\rho)\le 1$, then $(\js_X,\lp)$ is a tracial Prokhorov space.
\end{corollary}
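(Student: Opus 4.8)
The plan is to combine Theorem~\ref{thm:alexandrov} with Proposition~\ref{prop:inherit}. First I would invoke Theorem~\ref{thm:alexandrov}: under the stated hypotheses (compact, finite-dimensional Alexandrov space with two-sided curvature bounds, and either boundaryless or contractible with nonpositive upper curvature bound), we obtain a classifiable, projectionless $\cs$-algebraic quantum metric Bauer simplex $(\js_X,L_\rho)$ observing $(X,\rho)$. By Definition~\ref{def:qmbs}, this means $\partial_e(T(\js_X))$ is compact, is homeomorphic to $X$ via the $\ws$-topology, carries the metric $\rho$, and $L_\rho$ is the seminorm of \eqref{eqn:L}; moreover $\rho_{L_\rho}$ induces the $\ws$-topology on $T(\js_X)$ by Definition~\ref{def:qmcs}(3). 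In particular, $T(\js_X)$ is a Bauer simplex and, crucially, the tracially $\rho$-Lipschitz elements of $\js_X$ are norm-dense, since $L_\rho$ is densely finite by the definition of a quantum metric Choquet simplex.

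Next I would identify $T(\js_X)\cong\prob(X)$ (as $X\cong\partial_e(T(\js_X))$ is compact) and equip $\prob(X)$ with $W_{\fp,\rho}$ for the given $\fp\in[1,\infty]$. I need to verify the four conditions of Definition~\ref{def:lyriform}. Finite diameter and effective quasiconvexity of $W_{\fp,\rho}$ on $\prob(X)$, together with $\ws$-lower semicontinuity, are supplied directly by Proposition~\ref{prop:wass} (with $h(x)=\diam(X,\rho)^{1-1/\fp}x^{1/\fp}$ when $\fp<\infty$, and $h(x)=x$ when $\fp=\infty$ or for $d_{LP}$). For the density condition, Proposition~\ref{prop:wass} tells us that the set of tracially $W_{\fp,\rho}$-Lipschitz elements of $C(X)$ coincides with $\lipo(X,\rho)$, independently of $\fp$; the same is then true for the tracially Lipschitz elements of $\js_X$, since these are detected on the boundary $\partial_e(T(\js_X))\cong X$ exactly as in \eqref{eqn:L}. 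Hence the density of tracially $W_{\fp,\rho}$-Lipschitz elements in $\js_X$ is inherited from the density of tracially $\rho_{L_\rho}$-Lipschitz elements already established in the previous paragraph, which is precisely the content of Proposition~\ref{prop:inherit}. This makes $(\js_X,W_{\fp,\rho})$ a tracial $\fp$-Wasserstein space observing $(X,\rho)$, and $\js_X$ retains classifiability and projectionlessness from Theorem~\ref{thm:alexandrov}. The final sentence, concerning $(\js_X,d_{LP})$ under the additional hypothesis $\diam(X,\rho)\le1$, follows identically: Proposition~\ref{prop:wass} confirms $d_{LP}$ has diameter $\diam(X,\rho)$, is effectively quasiconvex, is $\ws$-lower semicontinuous, and shares the Lipschitz set $\lipo(X,\rho)$, and the diameter bound is exactly what Definition~\ref{def:qwass} requires for a tracial Prokhorov space.

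I do not expect any serious obstacle here; the corollary is essentially a bookkeeping combination of three prior results. The one point requiring mild care is transferring the statement ``tracially Lipschitz elements of $\js_X$ equal those detected on the boundary'' from $C(X)$ to the subhomogeneous building blocks $X_{p,q}$ and their inductive limit $\js_X$: this is handled by the fact, recalled before Corollary~\ref{cor:wasslyre}, that the relevant continuous-trace algebras have tracially ordered Cuntz--Pedersen quotient and that $L_\rho$ on such an algebra is computed via the affine function $\widehat{a}|_{\partial_e(T(A))}$, so that the seminorm genuinely depends only on the restriction to $X$. Given that, the identification of Lipschitz sets is metric-only and Proposition~\ref{prop:wass} applies verbatim.
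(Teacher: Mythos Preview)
Your proposal is correct and takes essentially the same approach as the paper, which simply states that the corollary is a consequence of Proposition~\ref{prop:inherit} and Theorem~\ref{thm:alexandrov}. You have spelled out the details of this combination carefully, including the verification via Proposition~\ref{prop:wass} that underlies Proposition~\ref{prop:inherit}.
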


Our final goal in this section is to show that the quantum intertwining gap (introduced in \cite[Section 3]{Jacelon:2024aa} and recalled in Definition~\ref{def:quig}) is a complete quasimetric on the set of tracial isomorphism classes (Definition~\ref{def:tracialisom}) of what we call quasi-Wasserstein spaces (Definition~\ref{def:quasiwass}).

\begin{definition} \label{def:tracialisom}
A \emph{tracial isomorphism} between two  tracially lyriform $\cs$-algebras $(A_1,\omega_1)$ and $(A_2,\omega_2)$ is an affine isometric isomorphism between the trace spaces $(T(A_2),\omega_2)$ and $(T(A_1),\omega_1)$. 
\end{definition}

In general, such an isomorphism need not be induced by a $^*$-isomorphism $A_1\to A_2$, but if we are in the $K$-connected, classifiable setting as in \cite[\S3]{Jacelon:2024aa}, then (modulo $K$-theory) we \emph{can} lift tracial isomorphisms to $\cs$-isomorphisms. See Corollary~\ref{cor:quigwass}.

\begin{definition} \label{def:quasiwass}
A \emph{quasi-Wasserstein metric} $\Omega$ is a family of metrics $\Omega_{(X,\rho)}$ on $\prob(X)$, one for each (nonempty) compact metric space $(X,\rho)$ of diameter at most some $D=D_\Omega\in[0,\infty]$, such that:
\begin{enumerate}[(1)]
\item \label{it:quasi1} $\Omega_{(X,\rho)}$ induces the $\ws$-topology on $\prob(X)$ and agrees with $\rho$ on $X\cong\partial_e\prob(X)$;
\item \label{it:quasi2} $\Omega$ is compatible with pushforwards, meaning that if $f\colon(X_1,\rho_1)\to(X_2,\rho_2)$ is $K$-Lipschitz for some $K\ge1$, then the pushforward map $f_*\colon(\prob(X_1),\Omega_{(X_1,\rho_1)})\to(\prob(X_2),\Omega_{(X_2,\rho_2)})$ is also $K$-Lipschitz, and is moreover isometric if $f$ is;
\item \label{it:quasi4} $\Omega$ is uniformly effectively quasiconvex on metric spaces of bounded diameter, that is, for every $M>0$, there is a single function $h=h_\Omega$ such that \eqref{eqn:qc} holds for $\Omega_{(X,\rho)}$ whenever $\diam(X,\rho)\le M$;
\item \label{it:quasi5} $\Omega$ is uniformly bounded below by $W_1$, meaning that there is a constant $K=K_\Omega>0$ such that $K\cdot\Omega_{(X,\rho)}) \ge W_{1,\rho}$ for every $(X,\rho)$.
\end{enumerate}
Given such an $\Omega$, a \emph{tracial $\Omega$-quasi-Wasserstein space} is a tracially lyriform $\cs$-algebra $(A,\Omega_{(X,\rho)})$ (necessarily of Bauer type) observing some nonempty compact metric space $(X,\rho)$ of diameter at most $D_\Omega$. A \emph{tracial quasi-Wasserstein space} is a tracial $\Omega$-quasi-Wasserstein space for some $\Omega$.
\end{definition}

Property \eqref{it:quasi5} implies that the set of $\Omega_{(X,\rho)}$-Lipschitz elements contains $\lipo(X,\rho)$. As in Corollary~\ref{cor:qwass}, this in turn implies that any tracial Wasserstein space (of suitable diameter) also has the structure of a tracial $\Omega$-quasi-Wasserstein space.

\begin{proposition} \label{prop:quasiwass}
Every tracial Prokhorov space is a quasi-Wasserstein space, as is every tracial $\fp$-Wasserstein space for any $\fp\in[1,\infty)$.
\end{proposition}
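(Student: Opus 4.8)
The plan is to produce, for each fixed $\fp\in[1,\infty)$, an explicit quasi-Wasserstein metric witnessing the claim: the rule $\Omega^\fp\colon(X,\rho)\mapsto W_{\fp,\rho}$ defined on \emph{all} nonempty compact metric spaces (so $D_{\Omega^\fp}=\infty$), and likewise the rule $\Omega^{LP}\colon(X,\rho)\mapsto\lp$ defined on compact metric spaces of diameter at most $1$ (so $D_{\Omega^{LP}}=1$). Granting that these satisfy Definition~\ref{def:quasiwass}, the proposition is immediate: a tracial $\fp$-Wasserstein space $(A,W_{\fp,\rho})$ observing $(X,\rho)=(\partial_e(T(A)),\rho)$ is by construction a tracial $\Omega^\fp$-quasi-Wasserstein space, and a tracial Prokhorov space is a tracial $\Omega^{LP}$-quasi-Wasserstein space.

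Three of the four axioms of Definition~\ref{def:quasiwass} are already packaged into Proposition~\ref{prop:wass}. Axiom~\eqref{it:quasi1} (that the metric induces the $\ws$-topology and restricts to $\rho$ on $\partial_e\prob(X)\cong X$) is exactly its content, with $\diam(X,\rho)\le1$ invoked in the Prokhorov case to secure $\lp(\delta_x,\delta_y)=\rho(x,y)$. For axiom~\eqref{it:quasi4}, the proof of Proposition~\ref{prop:wass} exhibits $W_{\fp,\rho}$ as effectively quasiconvex with $h(x)=\diam(X,\rho)^{1-1/\fp}x^{1/\fp}$, so on spaces of diameter at most $M$ one takes the single function $h_{\Omega^\fp}(x)=M^{1-1/\fp}x^{1/\fp}$; $\lp$ is outright quasiconvex, so $h_{\Omega^{LP}}=\mathrm{id}$ serves for every $M$. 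For axiom~\eqref{it:quasi5}, the left-hand inequality of \eqref{eqn:wincrease} gives $W_{1,\rho}\le W_{\fp,\rho}$, so $K_{\Omega^\fp}=1$, while the right-hand inequality of \eqref{eqn:lp2} together with $\diam(X,\rho)\le1$ gives $W_{1,\rho}\le2\,\lp$, so $K_{\Omega^{LP}}=2$.

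The real work, and the step I expect to be the main obstacle, is axiom~\eqref{it:quasi2} (compatibility with pushforwards), which Proposition~\ref{prop:wass} does not address. Let $f\colon(X_1,\rho_1)\to(X_2,\rho_2)$ be $K$-Lipschitz with $K\ge1$. In the Wasserstein case, for $\mu,\nu\in\prob(X_1)$ and a coupling $\pi\in\Pi(\mu,\nu)$ the plan $(f\times f)_*\pi$ lies in $\Pi(f_*\mu,f_*\nu)$, and its $\fp$-cost equals $\int\rho_2(f(x),f(y))^\fp\,d\pi\le K^\fp\int\rho_1(x,y)^\fp\,d\pi$; taking infima over $\pi$ and then $\fp$-th roots gives $W_{\fp,\rho_2}(f_*\mu,f_*\nu)\le K\,W_{\fp,\rho_1}(\mu,\nu)$. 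In the Prokhorov case, working straight from \eqref{eqn:lp1}, the crux is the set inclusion $(f^{-1}(V))_r\subseteq f^{-1}(V_{Kr})$ for closed $V\subseteq X_2$, valid because $f$ is $K$-Lipschitz; from it, $\mu(f^{-1}(V))\le\nu\big((f^{-1}(V))_r\big)+r$ yields $f_*\mu(V)\le f_*\nu(V_{Kr})+r\le f_*\nu(V_{Kr})+Kr$, and hence $\lp(f_*\mu,f_*\nu)\le K\,\lp(\mu,\nu)$. When $f$ is isometric it is a homeomorphism onto the closed set $f(X_1)\subseteq X_2$ and every pushforward measure is supported there; pulling couplings (respectively closed test sets) back through $f^{-1}$ along $f(X_1)$ and using $\rho_2(f(x),f(y))=\rho_1(x,y)$ upgrades the displayed inequalities to equalities. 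The careful bookkeeping in this isometric case, and the verification of the neighbourhood inclusion, are the only places that require genuine argument; everything else is a direct appeal to Proposition~\ref{prop:wass} and the estimates recorded there.
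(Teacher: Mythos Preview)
Your proof is correct and follows essentially the same route as the paper: axioms \eqref{it:quasi1}, \eqref{it:quasi4} and \eqref{it:quasi5} are read off from Proposition~\ref{prop:wass} with the same choices of $D$, $h$ and $K$, and pushforward compatibility is established via the coupling $(f\times f)_*\pi$ for $W_\fp$ and the inclusion $(f^{-1}(V))_r\subseteq f^{-1}(V_{Kr})$ for $\lp$, with the isometric case handled by restriction to the image. The paper's treatment of the isometric upgrade phrases it as ``compatibility with restriction plus invariance under isometric isomorphism'' rather than your explicit pullback, but the content is identical.
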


\begin{proof}
For Prokhorov spaces we take $D=1$ and $K=2$, and for Wasserstein spaces we take $D=\infty$ and $K=1$ (cf.\ \eqref{eqn:lp2} and \eqref{eqn:wincrease}). Properties \eqref{it:quasi1} and \eqref{it:quasi4} also follow from Proposition~\ref{prop:wass}. Compatibility with pushforwards (property \eqref{it:quasi2}) can be checked from the definitions \eqref{eqn:lp1} and \eqref{eqn:wp} of $d_{LP}$ and $W_\fp$. The argument for the Wasserstein metrics is the same as in \cite[Proposition 2.3]{Jacelon:2021vc}, and  the key observation for $d_{LP}$ is that $f^{-1}(U_{Kr})\supseteq (f^{-1}(U))_r$ for every $r>0$, $U\subseteq X_2$ and $K$-Lipschitz map $f\colon(X_1,\rho_1)\to(X_2,\rho_2)$. Suppose that $d_{LP}(\mu,\nu)\le r$ and assume that $K\ge1$. Then, for every Borel set $U\subseteq X_2$, we have
\[
f_*\mu(U_{Kr}) + Kr = \mu(f^{-1}(U_{Kr})) + Kr \ge \mu((f^{-1}(U))_r) + r \ge \nu(f^{-1}(U)) = f_*\nu(U)
\]
so $d_{LP}(f_*\mu,f_*\nu)\le Kr$. It also follows from their definitions that $d_{LP}$ and $W_{\fp}$ are compatible with restriction (that is, if $\emptyset\ne Y\subseteq X$ is closed and $\mu,\nu\in\prob(X)$ are supported on $Y$, then $d_{LP;X}(\mu\,\nu)=d_{LP;Y}(\mu,\nu)$ and $W_{\fp;X}(\mu,\nu)=W_{\fp;Y}(\mu,\nu)$, the extra subscripts indicating the space in which the distance is measured) and are invariant under isometric isomorphism. This shows that isometries extend to isometries and completes the proof. 
\end{proof}

\begin{definition} \label{def:quig}
The \emph{quantum intertwining gap} between compact tracially lyriform $\cs$-algebras $(A_1,\omega_1)$ and $(A_2,\omega_2)$ is
\[
\gamma_q((A_1,\omega_1),(A_2,\omega_2)) = \inf\{\varepsilon>0 \mid \Gamma_\eps(T(A_1),T(A_2)) \times \Gamma_\eps(T(A_2),T(A_1)) \ne \emptyset\}
\]
where, given $\varepsilon>0$, $\Gamma_\eps(T(A_i),T(A_j)) = \Gamma_\eps((T(A_i),\omega_i),(T(A_j),\omega_j))$ denotes the set of continuous affine maps $f\colon T(A_i) \to T(A_j)$ that are
\begin{items}
\item \emph{$\varepsilon$-isometric} ($|\omega_j(f(\tau_1),f(\tau_2)) - \omega_i(\tau_1,\tau_2)| < \varepsilon$ for every $\tau_1,\tau_2\in T(A_i)$) and
\item \emph{$\varepsilon$-invertible} (there exists a continuous affine map $g \colon T(A_j) \to T(A_i)$ such that $\omega_j(f \circ g(\tau),\tau) < \varepsilon$ for every $\tau\in T(A_j)$).
\end{items}
\end{definition}

The following theorem, as well as its proof, is an adaptation of \cite[Theorem 3.7]{Jacelon:2024aa} to the setting of quasi-Wasserstein spaces. It is the motivation for the definition \eqref{eqn:qc} of effective quasiconvexity.

\begin{theorem} \label{thm:quigwass}
For every quasi-Wasserstein metric $\Omega$, the quantum intertwining gap $\gamma_q$ is a separable, complete quasimetric on the set of tracial isomorphism classes of quasi-Wasserstein spaces $(A,\Omega_{(X,\rho)})$ observing nonempty compact metric spaces $(X,\rho)$.
\end{theorem}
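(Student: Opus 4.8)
The plan is to verify separately that $\gamma_q$ is a quasimetric, that it is separable, and that it is complete, closely following the template of \cite[Theorem 3.7]{Jacelon:2024aa} but replacing every appeal to $\ws$-continuity or convexity of the metric by the weaker features available here: lower semicontinuity and the uniform effective quasiconvexity built into Definition~\ref{def:quasiwass}\eqref{it:quasi4}. First I would record the elementary points: $\gamma_q\ge 0$, $\gamma_q$ is symmetric by construction (the definition is visibly symmetric in the two arguments), and $\gamma_q\big((A_1,\omega_1),(A_2,\omega_2)\big)=0$ if and only if there is an affine isometric isomorphism $T(A_1)\to T(A_2)$, i.e.\ $(A_1,\omega_1)$ and $(A_2,\omega_2)$ are tracially isomorphic. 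The ``only if'' direction is where effective quasiconvexity enters: given a sequence $\eps_n\to0$ with $\eps_n$-isometric, $\eps_n$-invertible affine maps $f_n,g_n$, one extracts a pointwise limit using $\ws$-compactness of $T(A_1),T(A_2)$ and a diagonal argument over a countable dense set, checks affineness passes to the limit, and then uses lower semicontinuity of $\omega_1,\omega_2$ together with the $\eps_n$-isometry bounds to see the limit map $f$ satisfies $\omega_2(f\sigma,f\tau)\le\omega_1(\sigma,\tau)$ and, via $g$, the reverse inequality, so $f$ is an isometry; surjectivity follows from the $\eps_n$-invertibility. The quasimetric (relaxed triangle) inequality is obtained exactly as in \cite{Jacelon:2024aa}: composing an $\eps$-isometric $\eps$-invertible intertwining between $(A_1,\omega_1)$ and $(A_2,\omega_2)$ with one between $(A_2,\omega_2)$ and $(A_3,\omega_3)$ produces an $\eps'$-isometric $\eps'$-invertible intertwining between $(A_1,\omega_1)$ and $(A_3,\omega_3)$, where $\eps'$ is controlled by $\eps$ and the quasiconvexity function $h=h_\Omega$ applied to the composition error; this yields an inequality of the shape $\gamma_q(1,3)\le \gamma_q(1,2)+\gamma_q(2,3)+h\big(\gamma_q(1,2)+\gamma_q(2,3)\big)$ (or a $C$-quasimetric inequality after rescaling), which suffices since $h(x)\to0$ as $x\to0$.

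For separability, I would fix a reference: every compact metric space of diameter at most $D_\Omega$ embeds, up to arbitrarily small Gromov--Hausdorff error, into one of a countable family (e.g.\ finite metric spaces with rational distances, or nets in the Urysohn space of that diameter), and by property \eqref{it:quasi2} a Lipschitz map $f\colon(X_1,\rho_1)\to(X_2,\rho_2)$ that is an $\eps$-Gromov--Hausdorff approximation induces via $f_*$ an affine map $\prob(X_1)\to\prob(X_2)$ whose distortion is controlled by $\eps$ and the quasiconvexity data; hence a countable GH-dense family of compact metric spaces gives a countable $\gamma_q$-dense family of quasi-Wasserstein spaces. Here property \eqref{it:quasi1} (that $\Omega_{(X,\rho)}$ restricts to $\rho$ on the extreme boundary and induces the $\ws$-topology) ensures that controlling distances on $X$ controls them on all of $\prob(X)$ in the relevant sense, and the pushforward compatibility does the lifting.

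Completeness is the main obstacle and the bulk of the work. Given a $\gamma_q$-Cauchy sequence of quasi-Wasserstein spaces $\big(A_n,\Omega_{(X_n,\rho_n)}\big)$, I would first reduce to a subsequence with rapidly summable gaps, then combine the $\eps_n$-isometric intertwining maps $f_{n}\colon T(A_n)\to T(A_{n+1})$ into an inverse/forward system and take a limit. The cleanest route is to observe that the sequence of compact metric spaces $(X_n,\rho_n)$ is itself Cauchy for the Gromov--Hausdorff metric (since $\gamma_q$-smallness forces the boundaries to be GH-close, using \eqref{it:quasi1} again), hence by completeness of the GH space it converges to a compact metric space $(X,\rho)$ of diameter at most $D_\Omega$; one then sets $A=C(X)$ (a quasi-Wasserstein space observing $(X,\rho)$, automatically of Bauer type) and shows $\big(A_n,\Omega_{(X_n,\rho_n)}\big)\to\big(A,\Omega_{(X,\rho)}\big)$ in $\gamma_q$ by transporting the GH-approximations through $f_*$ and estimating the resulting distortion on $\prob$-level via $h_\Omega$. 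The delicate points are: (i) verifying that the limit object genuinely lies in the stated class, i.e.\ that $\big(C(X),\Omega_{(X,\rho)}\big)$ is tracially lyriform — this is automatic from Definition~\ref{def:quasiwass} since density of $\Omega_{(X,\rho)}$-Lipschitz elements in $C(X)$ follows from \eqref{it:quasi5} and density of $\lipo(X,\rho)$ in $C(X)$, finiteness of diameter from $\diam(X,\rho)\le D_\Omega$ (or by a preliminary rescaling), lower semicontinuity and effective quasiconvexity from \eqref{it:quasi1} and \eqref{it:quasi4}; and (ii) controlling the accumulation of errors along the telescoping composition $f_N\circ\cdots\circ f_n$, which is exactly where the \emph{uniform} effective quasiconvexity (one $h$ for all spaces of diameter $\le M$) is indispensable, since without uniformity the composed distortion could blow up. I expect (ii) to be the technical heart of the argument, handled by the same bookkeeping as in the proof of \cite[Theorem 3.7]{Jacelon:2024aa}, now with $h_\Omega$ in place of the identity.
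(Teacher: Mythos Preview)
Your outline for the quasimetric part and the completeness part is broadly on target, but there is a genuine gap in the separability argument, and you have misplaced where effective quasiconvexity actually does its work.

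For separability you propose to use pushforwards $f_*$ of Lipschitz maps $f\colon(X_1,\rho_1)\to(X_2,\rho_2)$ that are $\eps$-Gromov--Hausdorff approximations. But the natural countable family consists of \emph{finite} metric spaces $(F,\sigma)$ with rational distances, and there is no continuous (let alone Lipschitz) map from a connected $X$ to such an $F$. Property~\eqref{it:quasi2} therefore gives you only one direction, the inclusion $(\prob(F),\Omega_{(F,\sigma)})\hookrightarrow(\prob(X),\omega)$; it cannot supply the approximate inverse required by the definition of $\gamma_q$. The paper's remedy is the Yannelis--Prabhakar selection theorem: one builds a continuous map $g\colon X\to\prob(F)$ (not into $F$) by selecting from the convex-valued correspondence $x\mapsto\{\mu\in\prob(F)\mid\omega(\mu,x)<\eps\}$, and then the affine extension $g_*\colon\prob(X)\to\prob(F)$ is the candidate inverse. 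The same device is used again in the completeness step to manufacture maps between $\prob(X_n)$ and $\prob(X)$ once one knows $d_{GH}((X_n,\rho_n),(X,\rho))<\eps$.

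This is also where effective quasiconvexity genuinely enters: it upgrades the pointwise estimate $\omega(g(x),x)<\eps$ on the boundary to $\omega(g_*(\mu),\mu)\le h(\eps)$ on all of $\prob(X)$ via Krein--Milman. By contrast, neither the $2$-quasimetric inequality nor the ``$\gamma_q=0\Rightarrow$ tracially isomorphic'' direction needs $h_\Omega$ at all: composition of an $\eps$-isometric $\eps$-invertible map with a $\delta$-isometric $\delta$-invertible one is $(2\eps+2\delta)$-admissible by elementary estimates, and the diagonal-limit argument for the vanishing case uses only compactness of $(T(A_i),\omega_i)$ (recall $\Omega$ induces the $\ws$-topology, so $\omega_i$ is continuous, not merely lower semicontinuous). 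Finally, your telescoping composition $f_N\circ\cdots\circ f_n$ is not needed for completeness: the paper instead shows directly that $\gamma_q$-Cauchy forces $(X_n,\rho_n)$ to be $d_{GH}$-Cauchy (here property~\eqref{it:quasi5} is used to pass from $\Omega$ to $W_1$ and thence to an admissible metric on $X_n\sqcup X_m$), and then appeals to completeness of the Gromov--Hausdorff space.
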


\begin{proof}
Recall that for $c>0$, a `$c$-quasimetric' $d$ satisfies the axioms of a metric except that the triangle inequality is weakened to $d(x,z)\le c(d(x,y)+d(y,z))$. It is immediate from its definition that $\gamma_q$ is symmetric, and one can show exactly as in the proof of \cite[Theorem 3.7]{Jacelon:2024aa} that $\gamma_q$ satisfies the weakened triangle inequality with $c=2$. It is also immediate that $\gamma_q$ evaluates to zero on tracially isomorphic pairs of objects. Suppose conversely that $\gamma_q((A_1,\omega_1),(A_2,\omega_2))=0$. For each $n\in\nn$, let $f_n\colon T(A_2) \to T(A_1)$ be an affine map that is  $\frac{1}{n}$-isometric and $\frac{1}{n}$-surjective. Let $Y$ be a countable dense subset of $T(A_2)$. By compactness of $T(A_1)$ and a diagonal argument, there is a subsequence $(f_{n_k})_{k\in\nn}$ such that $f_{n_k}(y)$ converges for every $y\in Y$. Define $f\colon Y \to T(A_1)$ by $f(y)=\lim_{k\to\infty}f_{n_k}(y)$. Then, $f$ is an affine isometry and has dense range, so extends to an affine isometric isomorphism $(T(A_2),\omega_2) \to (T(A_1),\omega_1)$. So, $\gamma_q$ is indeed a $2$-quasimetric on tracial isomorphism classes of compact tracially lyriform $\cs$-algebras.

For separability, we claim that quasi-Wasserstein spaces over finite metric spaces $(F,\sigma)$ taking rational distance values are $\gamma_q$-dense. Let $(X,\rho)$ be a compact metric space of suitable diameter so that $\omega=\Omega_{(X,\rho)}$ is defined, let $M=\diam(\prob(X),\omega)\ge\diam(X,\rho)$ and let $\eps\in(0,1)$. Let $F\subseteq X$ be a finite $\eps$-net, and let $\sigma$ be a metric on $F$ that takes rational values such that, for every $x,y\in F$,
\[
\left(1-\frac{\eps}{M+1}\right)\rho(x,y) \le \sigma(x,y) \le \left(1+\frac{\eps}{M+1}\right)\rho(x,y).
\]
By compatibility of $\Omega$ with pushforwards (Definition~\ref{def:quasiwass}\eqref{it:quasi2}) applied to $\id_F\colon(F,\sigma)\to(F,\rho)$ and $\id_F\colon(F,\rho)\to(F,\sigma)$, this implies that
\[
\left(1-\frac{\eps}{M+1}\right)\omega(\mu,\nu) \le \Omega_{(F,\sigma)}(\mu,\nu) \le \left(1+\frac{\eps}{M+1}\right)\omega(\mu,\nu)
\]
and hence that
\begin{equation} \label{eqn:fisom}
|\omega(\mu,\nu) - \Omega_{(F,\sigma)}(\mu,\nu)| < \eps
\end{equation}
for every $\mu,\nu\in \prob(F)$. (Here we are also implicitly using the fact that $\Omega$ is compatible with restriction, that is, $\Omega_{(X,\rho)}|_{\prob(F)\times\prob(F)}=\Omega_{(F,\rho)}$, which is a special case of compatibility with pushforwards of isometric inclusions.) In other words, the inclusion map $f\colon(\prob(F),\Omega_{(F,\sigma)}) \to (\prob(X),\omega)$ is $\eps$-isometric. Appealing to the Yannelis--Prabhakar selection theorem as in the proof of \cite[Theorem 3.7]{Jacelon:2024aa}, we can find a continuous function $g\colon X \to \prob(F) \subseteq \prob(X)$ such that $\omega(f(g(x)),x)<\eps$ for every $x\in X$. Specifically, for every $x\in X$, the set $S(x):=\{\mu\in\prob(F)\mid \omega(f(\mu),x)<\eps\}$ is a nonempty, convex subset of the topological vector space $\rr^{|F|}\cong C(F)^*\supseteq\prob(F)$ such that for every $\mu\in\prob(F)$, $S^{-1}(\mu):=\{x\in X \mid \mu\in S(x)\} = \{x \in X \mid \omega(f(\mu),x)<\eps\}$ is an open subset of $X$. Yannelis and Prabhakar prove in \cite[Theorem 3.1]{Yannelis:1983aa} that these conditions guarantee the existence of a continuous function $g\colon X \to \prob(F)$ such that $g(x)\in S(x)$, that is, $\omega(f(g(x)),x)<\eps$, for every $x\in X$. By effective quasiconvexity, we then have
\begin{equation} \label{eqn:fsurject}
\omega(f(g_*(\mu)),\mu) \le h(\eps)
\end{equation}
for every $\mu\in\prob(X)$. Indeed, for finite convex combinations $\mu=\sum_{i=1}^n \lambda_ix_i$ with $x_i\in X$, we have
\[
\omega(f(g_*(\mu)),\mu) = \omega\left(\sum_{i=1}^n \lambda_if(g(x_i)),\sum_{i=1}^n \lambda_ix_i\right) \le h\left(\max_{1\le i\le n}\omega(f(g(x_i)),x_i)\right) \le h(\eps)
\]
so the assertion follows from the Krein--Milman theorem. We have thus shown that
\[
f\in\Gamma_{\eps+h(\eps)}((\prob(F),\Omega_{(F,\sigma)}),(\prob(X),\omega)).
\]
As for $g_*$, for $\mu,\nu\in\prob(X)$ we have from \eqref{eqn:fisom} and \eqref{eqn:fsurject} that
\begin{align*}
\Omega_{(F,\sigma)}(g_*(\mu),g_*(\nu)) &< \omega(f(g_*(\mu)),f(g_*(\nu))) + \eps\\
&\le \omega(f(g_*(\mu)),\mu) + \omega(\mu,\nu) + \omega(\nu,f(g_*(\nu))) + \eps\\
&\le \omega(\mu,\nu) + \eps + 2h(\eps)
\end{align*}
and similarly
\[
\Omega_{(F,\sigma)}(g_*(\mu),g_*(\nu)) > \omega(\mu,\nu) - \eps - 2h(\eps)
\]
which together tell us that $g_*\colon (\prob(X),\omega) \to (\prob(F),\Omega_{(F,\sigma)})$ is $(\eps + 2h(\eps))$-isometric. We also deduce from \eqref{eqn:fisom} and \eqref{eqn:fsurject} that
\[
\Omega_{(F,\sigma)}(g_*(f(\mu)),\mu) < \omega(f(g_*(f(\mu))),f(\mu)) + \eps  \le \eps + h(\eps)
\]
for every $\mu\in\prob(F)$. We conclude that 
\[
g_*\in\Gamma_{\eps+2h(\eps)}((\prob(X),\omega),(\prob(F),\Omega_{(F,\sigma)}))
\]
and therefore that $\gamma_g((\prob(X),\omega),(\prob(F),\Omega_{(F,\sigma)})) \le \eps+2h(\eps)$. Since $\eps$ and $h(\eps)$ are arbitrarily small, this completes the proof of separability.

Finally, let us show that $\gamma_q$ is complete. Let $((A_n,\omega_n=\Omega_{(X_n,\rho_n)})_{n\in\nn}$ be a $\gamma_q$-Cauchy sequence of $\Omega$-quasi-Wasserstein spaces. Arguing as in the proof of \cite[Theorem 3.7]{Jacelon:2024aa} (replacing any application of convexity of the Wasserstein metric $W_1$ by uniform effective quasiconvexity of the quasi-Wasserstein metric $\Omega$ and using property \eqref{it:quasi5}), for every $n\in\nn$ we can find $\delta_n>0$ with the following property: for any $(A,\omega=\Omega_{(Y,\lambda)})$, if $f\colon (\prob(X_n),\omega_n) \to (\prob(Y),\omega)$ is $\delta_n$-isometric and $\delta_n$-surjective, then
\[
\widehat\rho(x,y) := \inf_{z\in \prob(X_n)}(\omega_n(x,z) + \omega(f(z),y)) + \frac{\delta_n}{2}
\]
provides an admissible metric on $X_n \sqcup Y$ (that is, $\widehat\rho$ is defined to agree with $\rho_n$ and $\lambda$ on $X_n$ and $Y$, respectively) with the property that the Hausdorff distance in $(\prob(X_n \sqcup Y),W_{1,\widehat\rho})$ between $\prob(X_n)$ and $\prob(Y)$ is less than $2^{-n}$. As in \cite[Proposition 4.8]{Rieffel:2004aa}, this implies that the Gromov--Hausdorff distance $d_{GH}$ between $(X_n,\rho_n)$ and $(Y,\lambda)$ is also less than $2^{-n}$. By passing to a subsequence, we may assume that $\gamma_q((A_n,\omega_n),(A_{m},\omega_{m}))<\delta_n$ for every $m\ge n$. It follows that $(X_n,\rho_n)$ is a $d_{GH}$-Cauchy sequence in the metric space of isometry classes of compact metric spaces. This metric space is complete (a fact whose proof can be found in \cite[Chapter 10]{Petersen:2006aa} or \cite[Section 1]{Rong:2010aa}), so there is a limit space $(X,\rho)$. For a given $\eps>0$, there therefore exists $N\in\nn$ such that, for every $n\ge N$, there exists some admissible metric on $X_n\sqcup X$ with respect to which the Hausdorff distance between $X_n$ and $X$ is less than $\eps$. For such an $n\ge N$, uniform effective quasiconvexity then implies that the Hausdorff distance between $(\prob(X_n),\Omega_{(X_n,\rho_n)})$ and $(\prob(X),\Omega_{(X,\rho)})$ is less than $h(\eps)$. Exactly as in the proof of \cite[Theorem 3.7]{Jacelon:2024aa}, another application of the Yannelis--Prabhakar theorem provides continuous affine functions $f\colon \prob(X_n) \to \prob(X)$ and $g\colon \prob(X) \to \prob(X_n)$ such that
\[
\Omega_{X_n\sqcup X}(x,f(x)) < h(\eps) \quad \text{and} \quad \Omega_{X_n\sqcup X}(y,g(y)) < h(\eps)
\]
for every $x\in X_n$ and $y\in X$. It then follows from uniform effective quasiconvexity and Krein--Milman that
\[
\Omega_{X_n\sqcup X}(\mu,f(\mu)) < h(h(\eps)) \quad \text{and} \quad \Omega_{X_n\sqcup X}(\nu,g(\nu)) < h(h(\eps))
\]
for every $\mu\in\prob(X_n)$ and $\nu\in\prob(X)$. As in \cite{Jacelon:2024aa}, this implies that $f$ and $g$ are $2h(h(\eps))$-isometric mutual $2h(h(\eps))$-inverses in the sense of Definition~\ref{def:quig}. It follows that
\[
\gamma_q((A_n,\Omega_{(X_n,\rho_n)}),(\prob(X),\Omega_{(X,\rho)})) \le 2h(h(\eps)).
\]
Since $h(h(\eps))$ is arbitrarily small, this implies that the tracial isomorphism class of $(\prob(X),\Omega_{(X,\rho)})$ is equal to the $\gamma_q$-limit of the tracial isomorphism classes of the sequence $((A_n,\Omega_{(X_n,\rho_n}))_{n\in\nn}$. We conclude that $\gamma_q$ is indeed complete when restricted to tracial isomorphism classes of $\Omega$-quasi-Wasserstein spaces.
\end{proof}

Finally, we restrict our attention to $K$-connected, classifiable quasi-Wasserstein spaces $(A,\Omega_{(X,\rho)})$ that share the same $K$-theory. In this case, just as in \cite[Corollary 3.10]{Jacelon:2024aa}, classification \cite[Theorem 9.9]{Carrion:wz} allows us to lift tracial isomorphisms to $\cs$-isomorphisms. For notational simplicity in the statement of Corollary~\ref{cor:quigwass}, we identify the index set $\{1,2\}$ with the additive group $\zz/2\zz$.
 
\begin{corollary} \label{cor:quigwass}
Let $\Omega$ be a quasi-Wasserstein metric, let $G_1$ be a countable abelian group and let $(G_0,G_0^+)$ be a countable, simple, weakly unperforated ordered abelian group with distinguished order unit $g_0\in G_0^+$. Then, $\gamma_q$ is a $2$-quasimetric on the set of $\lyre$-isomorphism classes of $\Omega$-quasi-Wasserstein spaces $(A,\omega)$ such that $A$ is unital, $K$-connected and classifiable with
\[
(K_0(A),K_0(A)_+,[1_A],K_1(A)) \cong (G_0,G_0^+,g_0,G_1).
\]
Moreover, $\gamma_q$ admits the description
\begin{align*}
\gamma_q((A_1,\omega_1),(A_2,\omega_2)) = \inf\{\varepsilon>0 \mid &\:\text{there exist}\: \varphi_i \in \emb(A_i,A_{i+1}), \:i=1,2,\: \text{with}\\
&T(\varphi_i) \in \Gamma_\eps((T(A_{i+1}),\omega_{i+1}),(T(A_i),\omega_i))\}.
\end{align*}
\end{corollary}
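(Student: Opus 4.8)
The plan is to bootstrap from Theorem~\ref{thm:quigwass}, which establishes that $\gamma_q$ is a separable, complete $2$-quasimetric on the set of tracial isomorphism classes of $\Omega$-quasi-Wasserstein spaces, vanishing on a pair precisely when the two spaces are tracially isomorphic. Symmetry and the (weakened, $c=2$) triangle inequality then persist on the subclass of those $(A,\omega)$ with $A$ unital, $K$-connected, classifiable and of the prescribed $K$-theory, so the first assertion will follow as soon as I show that, on this subclass, the relations ``tracially isomorphic'' and ``$\lyre$-isomorphic'' coincide; then $\gamma_q$ descends to a well-defined $2$-quasimetric on $\lyre$-isomorphism classes, the separation axiom being inherited from its counterpart for tracial isomorphism.

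The forward implication is routine: a $\lyre$-isomorphism $A_1 \to A_2$ is a $^*$-isomorphism whose inverse is also a morphism in $\lyre$, hence is tracially nonexpansive in both directions, so the induced affine bijection $(T(A_2),\omega_2) \to (T(A_1),\omega_1)$ is an isometry, i.e. a tracial isomorphism. For the converse I would start from a tracial isomorphism $\Theta \colon (T(A_2),\omega_2) \to (T(A_1),\omega_1)$ and combine it with a unital order-isomorphism $(K_0(A_1),K_0(A_1)_+,[1_{A_1}],K_1(A_1)) \cong (K_0(A_2),K_0(A_2)_+,[1_{A_2}],K_1(A_2))$, which exists because both sides are isomorphic to $(G_0,G_0^+,g_0,G_1)$. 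Since $A_1$ and $A_2$ are $K$-connected, every trace restricts to the same (unique) state on the corresponding $K_0$-group, so the compatibility of $\Theta$ with the $K_0$--trace pairing needed to make this pair an isomorphism of Elliott invariants is automatic. Classification \cite[Theorem 9.9]{Carrion:wz}, using that the algebras satisfy the UCT, then lifts this to a $^*$-isomorphism $\Phi \colon A_1 \to A_2$ with $T(\Phi) = \Theta$; since $\Phi$ and $\Phi^{-1}$ are tracially isometric, $\Phi$ is a $\lyre$-isomorphism. (This is the analogue of \cite[Corollary 3.10]{Jacelon:2024aa}.)

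For the description of $\gamma_q$, write $R$ for the right-hand side of the displayed formula and identify $\{1,2\}$ with $\zz/2\zz$ as in the statement. The inequality $\gamma_q \le R$ is formal: if $\varphi_i \in \emb(A_i,A_{i+1})$ satisfy $T(\varphi_i) \in \Gamma_\eps((T(A_{i+1}),\omega_{i+1}),(T(A_i),\omega_i))$ for $i = 1,2$, then $T(\varphi_1) \in \Gamma_\eps(T(A_2),T(A_1))$ and $T(\varphi_2) \in \Gamma_\eps(T(A_1),T(A_2))$ exhibit the product appearing in Definition~\ref{def:quig} as nonempty, whence $\gamma_q \le \eps$. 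For $R \le \gamma_q$, I would fix $\eps > \gamma_q((A_1,\omega_1),(A_2,\omega_2))$, pick $\eps' < \eps$ together with $f \in \Gamma_{\eps'}(T(A_1),T(A_2))$ and $g \in \Gamma_{\eps'}(T(A_2),T(A_1))$, and then produce $\varphi_1 \in \emb(A_1,A_2)$ with $T(\varphi_1) = g$ and $\varphi_2 \in \emb(A_2,A_1)$ with $T(\varphi_2) = f$; these embeddings witness the defining condition of $R$ at level $\eps'$, giving $R \le \eps' < \eps$ and hence $R \le \gamma_q$ since $\eps$ is arbitrary. To realise a continuous affine map $T(A_i) \to T(A_j)$ as $T(\varphi)$ for an embedding $\varphi \colon A_j \to A_i$, I would invoke the existence half of \cite{Carrion:wz}, feeding in a unital order-isomorphism $K_*(A_j) \cong K_*(A_i)$ (which lifts to a $KK$-class via the UCT and, by $K$-connectedness, is automatically pairing-compatible with the prescribed trace-space map), the given trace-space map, and a compatible value of the remaining ($K_1$-algebraic) component of the classifying invariant; this produces a unital $^*$-homomorphism $\varphi \colon A_j \to A_i$ realising that invariant, which is injective by simplicity of $A_j$ and, $T$ being invariant under approximate unitary equivalence, realises the prescribed trace-space map exactly.

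The step I expect to carry the weight is this last one: quoting the existence theorem of \cite{Carrion:wz} correctly and checking that the data I prescribe satisfy its compatibility hypotheses. In the present $K$-connected, UCT setting those hypotheses largely collapse --- pairing compatibility is automatic and the total $K$-theory map lifts --- but some care is needed with the $\zz/2\zz$-indexing, with keeping straight which algebra plays the role of domain versus codomain, and with confirming that the $K_1$-algebraic component can be chosen so as not to obstruct existence.
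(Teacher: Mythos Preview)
Your proposal is correct and follows essentially the same route the paper takes: the paper does not give an explicit proof of this corollary but simply observes that, just as in \cite[Corollary 3.10]{Jacelon:2024aa}, classification \cite[Theorem 9.9]{Carrion:wz} allows tracial isomorphisms (and more generally the continuous affine trace-space maps in the definition of $\gamma_q$) to be lifted to $^*$-isomorphisms (respectively embeddings) once $K$-theory is fixed and $K$-connectedness makes the pairing compatibility automatic. Your expansion of this into the two halves --- tracial isomorphism $\iff$ $\lyre$-isomorphism, and the equality $\gamma_q = R$ via lifting both $f$ and $g$ --- is exactly what is intended, and your caution about the Hausdorffised algebraic $K_1$ component is appropriate but ultimately harmless here, since for the existence theorem in \cite{Carrion:wz} this component does not obstruct realisability.
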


To illustrate the ideas surrounding the quantum intertwining gap, let us compare classifiable tracial Wasserstein spaces observing normed balls and spheres with their Euclidean counterparts. Let $(E,\|\cdot\|_E)$ be a finite-dimensional real normed space, let $\sphere_E$ and $\ball_E$ denote its unit sphere and unit ball, and let $\rho_{E}$ denote the intrinsic metric on $X_E=\sphere_E$ or $X_E=\ball_E$ induced by $\|\cdot\|_E$, that is,
\[
\rho_{E}(x,y) = \inf\{\ell(\gamma) \mid \gamma \colon [a,b] \to X_E \text{ is a rectifiable path from $x$ to $y$}\}
\]
as in Definition~\ref{def:length}. As described in Example~\ref{ex:normed}, $\rho_E(x,y)=\|x-y\|_E$ for $x,y\in\ball_E$. Fix $\fp\in[1,\infty)$, and let us further assume that $\dim \sphere_E = \dim E-1$ is odd. By Proposition~\ref{prop:inherit} and Corollary~\ref{cor:ballsandspheres}, together with the equivalence of norms on finite-dimensional spaces, there exist classifiable, projectionless tracial $\fp$-Wasserstein spaces $(\js_{\sphere_E},W_{\fp,\rho_E})$ observing $(\sphere_E,\rho_{E})$ and $(\js_{\ball_E},W_{\fp,\rho_E})$ observing $(\ball_E,\rho_{E})$ such that
\[
(K_0(\js_{\sphere_E}),K_0(\js_{\sphere_E})_+,[1_{\js_{\sphere_E}}]) \cong (\zz,\nn,1)
\]
and
\[
(K_0(\js_{\ball_E}),K_0(\js_{\ball_E})_+,[1_{\js_{\ball_E}}],K_1(\js_{\ball_E})) \cong (\zz,\nn,1,0).
\]
Every shortest path in $X_E=\sphere_E$ or $X_E=\ball_E$ provides an isometry from the Euclidean interval $\mathbb{I}=[0,1]\cong\ball^1$ into $X_E$, extending to an isometric embedding $\iota_{\mathbb{I}}$ of the Wasserstein space $(\prob([0,1]),W_\fp)$ into $(T(\js_{X_E}),W_{\fp,\rho_E})$. By classification, there exists a unital embedding $\varphi\colon\js_{X_E}\to\js_{\ball^{1}}$ such that $T(\varphi)=\iota_{\mathbb{I}}$.

In fact, we can embed the spaces $\js_{\sphere_E}$ and $\js_{\ball_E}$ \emph{approximately} tracially isometrically into tracial $\fp$-Wasserstein spaces observing high-dimensional Euclidean spheres $(\sphere^k,\rho_{\Euc})$ or balls $(\ball^k,\rho_\Euc)$, provided that $d:=\dim E$ is sufficiently large. 

\begin{theorem} \label{thm:dvoretzky}
Let $\fp\in[1,\infty)$ and $\eps\in(0,1)$. There is a constant $C(\eps,\fp)$ such that, for every $d,k\in2\nn$ with $k\le C(\eps,\fp)\log d$, if $(E,\|\cdot\|_E)$ is a $d$-dimensional real normed space, then there are embeddings
\[
(\js_{\sphere_E},W_{\fp,\rho_E}) \to (\js_{\sphere^{k-1}},W_{\fp,\rho_\Euc}) \quad \text{and}\quad \text (\js_{\ball_E},W_{\fp,\rho_E}) \to  (\js_{\ball^k},W_{\fp,\rho_\Euc})
\]
whose induced tracial maps are $\eps$-isometric.
\end{theorem}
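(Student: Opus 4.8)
The plan is to pass to a near-Euclidean linear section of $E$ supplied by Dvoretzky's theorem, push the resulting inclusion/radial map forward to the level of probability measures, and lift by classification; note that we only need the induced trace map to be $\eps$-isometric in the sense of Definition~\ref{def:quig} (no invertibility is asked for), which is exactly a pushforward of an approximate isometry of the base spaces. Fix a small parameter $\eps'=\eps'(\eps,\fp)>0$, to be chosen at the very end. By Dvoretzky's theorem \cite{Dvoretzky:1961aa}, in its logarithmic form, there is $c_0(\eps')>0$ such that for every $d$-dimensional real normed space $(E,\|\cdot\|_E)$ and every $k\le c_0(\eps')\log d$ there are a $k$-dimensional subspace $F\subseteq E$ and a Euclidean norm $|\cdot|$ on $F$ with $|x|\le\|x\|_E\le(1+\eps')|x|$ for all $x\in F$; we will put $C(\eps,\fp)=c_0(\eps'(\eps,\fp))$. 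The parity conditions $d,k\in2\nn$ are used only to make $\sphere_E\cong\sphere^{d-1}$ and $\sphere^{k-1}$ odd-dimensional, so that (noting that the intrinsic metrics on $\ball_E$ and $\sphere_E$ are Lipschitz equivalent to the Euclidean ones in finite dimensions) Corollary~\ref{cor:ballsandspheres} applies on both sides and delivers
\[
(K_0,K_0^+,[1])(\js_{\sphere_E})=(\zz,\nn,1)=(K_0,K_0^+,[1])(\js_{\sphere^{k-1}})
\]
and, $\ball_E$ and $\ball^k$ being contractible,
\[
(K_0,K_0^+,[1],K_1)(\js_{\ball_E})=(\zz,\nn,1,0)=(K_0,K_0^+,[1],K_1)(\js_{\ball^k}),
\]
with $K_0$ generated by the class of the unit in all four cases.

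For the ball the relevant map is a rescaled inclusion. Identify $(\ball^k,\rho_\Euc)$ with $\{x\in F:|x|\le1\}$ under the metric $(x,y)\mapsto|x-y|$, and recall that the metric $\rho_E$ on $\ball_E$ is the chord metric $(x,y)\mapsto\|x-y\|_E$ (Example~\ref{ex:normed}). Set $\iota(x)=(1+\eps')^{-1}x$; then $\|\iota(x)\|_E\le(1+\eps')^{-1}\|x\|_E\le|x|\le1$, so $\iota$ maps $\ball^k$ into the closed subset $\ball_E\cap F$, and for $x,y\in\ball^k$, using $x-y\in F$,
\[
(1+\eps')^{-1}\rho_\Euc(x,y)\ \le\ \rho_E(\iota(x),\iota(y))=(1+\eps')^{-1}\|x-y\|_E\ \le\ \rho_\Euc(x,y).
\]
So $\iota$ is a $(1+\eps')$-bi-Lipschitz homeomorphism onto its image. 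Compatibility of $W_\fp$ with restriction to closed sets and with $K$-Lipschitz pushforwards (Proposition~\ref{prop:quasiwass} and its proof), applied to $\iota$ and to $\iota^{-1}$, shows that $\iota_*\colon(\prob(\ball^k),W_{\fp,\rho_\Euc})\to(\prob(\ball_E),W_{\fp,\rho_E})$ is $(1+\eps')$-bi-Lipschitz onto its image; as both spaces have diameter at most $2$ (Proposition~\ref{prop:wass}), $\iota_*$ is $2\eps'$-isometric. By classification \cite{Carrion:wz} and the $K$-theory matching above there is a unital (hence injective, since $\js_{\ball_E}$ is simple) embedding $\varphi\colon\js_{\ball_E}\to\js_{\ball^k}$ with $T(\varphi)=\iota_*$; taking $\eps'<\eps/2$ settles the ball half.

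For the sphere, put $\sphere^{k-1}=\{x\in F:|x|=1\}$ with its round metric $\rho_\Euc$, put $\sphere_E\cap F=\{x\in F:\|x\|_E=1\}$, and let $r\colon\sphere^{k-1}\to\sphere_E$ be the radial map $r(x)=x/\|x\|_E$. Since $\|x\|_E\in[1,1+\eps']$ on $\sphere^{k-1}$ and lengths of paths inside $F$ computed with $\|\cdot\|_E$ or with $|\cdot|$ agree up to the factor $1+\eps'$, the map $r$ carries $(\sphere^{k-1},\rho_\Euc)$ $(1+O(\eps'))$-bi-Lipschitzly onto $\sphere_E\cap F$ equipped with the \emph{intrinsic metric of the section}. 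The crux, and what I expect to be the main obstacle, is to compare this with the restriction to $\sphere_E\cap F$ of the ambient metric $\rho_E$ of $\sphere_E$: a shortest path of $\sphere_E$ between two points of $\sphere_E\cap F$ may leave $F$, and all one has for free is the chord bound $\rho_E(p,q)\ge\|p-q\|_E$, which differs from the round distance by a factor as large as $\pi/2$. I would establish a comparison lemma: for a $(1+\eps')$-Euclidean section, an ambient geodesic between points of $\sphere_E\cap F$ beats the round distance by a factor at most $1+O(\eps')$. The argument should exploit the near-Euclidean geometry of $F$ to show that an excursion of such a geodesic out of $F$ either remains within a thin $\|\cdot\|_E$-tube about $\sphere_E\cap F$ (and then radially projects back onto $\sphere_E\cap F$ at cost $1+O(\eps')$) or else leaves that tube and incurs a transverse return cost exceeding the at-most-$\pi/2$ saving it could otherwise realize. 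Should the lemma be accessible only on a finite $\eps'$-net of $\sphere^{k-1}$, it can be promoted to a statement about $r_*$ on all of $\prob(\sphere^{k-1})$ via a Yannelis--Prabhakar continuous selection combined with the effective quasiconvexity function of $W_\fp$ (Proposition~\ref{prop:wass}), exactly as in the proof of Theorem~\ref{thm:quigwass}; it is through this route that $C(\eps,\fp)$ genuinely depends on $\fp$. Granting the comparison, the restriction-and-pushforward argument from the ball case makes $r_*\colon(\prob(\sphere^{k-1}),W_{\fp,\rho_\Euc})\to(\prob(\sphere_E),W_{\fp,\rho_E})$ $O(\eps')$-isometric, and classification \cite{Carrion:wz} --- with the matched $K_0$-data and any compatible morphism on $K_1$ --- lifts $r_*$ to a unital embedding $\js_{\sphere_E}\to\js_{\sphere^{k-1}}$. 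Choosing $\eps'$ small in terms of $\eps$ and $\fp$ completes the proof.
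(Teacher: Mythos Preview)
Your overall strategy --- Dvoretzky, then a near-Euclidean section, then a pushforward on measures, then a classification lift --- is the paper's strategy, and your treatment of the ball case is correct and essentially identical to the paper's.

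For the sphere, the paper takes a cleaner two-step route than your single radial map. First it realises the \emph{isometric} inclusion $(\sphere_E\cap F,\rho_E)\hookrightarrow(\sphere_E,\rho_E)$ by a classifiable embedding $\js_{\sphere_E}\to(\js_{\sphere^{k-1}},W_{\fp,\rho_E})$; here $\rho_E$ on the section is the restriction of the ambient intrinsic metric, so the pushforward is isometric on the nose by compatibility of $W_\fp$ with restriction, and no estimate is needed. Second, it uses the $\gamma_q$ machinery of Theorem~\ref{thm:quigwass} (in the form of Corollary~\ref{cor:quigwass}) to pass from $(\js_{\sphere^{k-1}},W_{\fp,\rho_E})$ to $(\js_{\sphere^{k-1}},W_{\fp,\rho_\Euc})$, the Gromov--Hausdorff closeness of $(\sphere_E\cap F,\rho_E)$ to the round sphere being asserted as part of the Dvoretzky input. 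The $\fp$-dependence of the constant then enters through the effective quasiconvexity function $h_\fp$, exactly as you anticipate in your fallback.

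Your direct approach genuinely requires the comparison lemma, and you are right to flag it as the crux: ambient geodesics in $\sphere_E$ between points of $\sphere_E\cap F$ may leave $F$, and the chord lower bound only controls things up to a factor $\pi/2$. But your sketch is not a proof. The heuristic that a wandering geodesic ``incurs a transverse return cost exceeding the at-most-$\pi/2$ saving'' has no evident justification in high codimension, where the geodesic has a $(d-k)$-dimensional family of transverse directions available and Dvoretzky says nothing about the geometry of $\sphere_E$ outside $F$. Your Yannelis--Prabhakar fallback is precisely the paper's $\gamma_q$ route, but note that it still rests on the same metric comparison, now phrased as Gromov--Hausdorff closeness rather than bi-Lipschitz equivalence. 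The paper simply absorbs that comparison into its citation of Dvoretzky; if you are willing to do the same, your argument closes, and the paper's factorisation through the isometric inclusion is then the tidier way to package it.
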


\begin{proof}
This is a consequence of Dvoretzky's theorem \cite{Dvoretzky:1961aa}  (see also \cite{Milman:1971aa} or \cite[Chapters 3--5]{Milman:1986ux}), which provides a constant $c=c(\eps)$ such that, whenever $k\le c\log d$, there is a $k$-dimensional subspace of $E$ whose intersections $(\sphere^{k-1},\rho_{E})$ with $\sphere_E$ and $(\ball^k,\rho_{E})$ with $\ball_E$ are within Gromov--Hausdorff distance $\eps$ of the Euclidean sphere $(\sphere^{k-1},\rho_{\Euc})$ and the Euclidean ball $(\ball^k,\rho_\Euc)$, respectively. As in the proof of Theorem~\ref{thm:quigwass}, this implies that, for a suitable increasing function $h_\fp$ with $\lim_{x\to0}h_\fp(x)=0$, we have
\[
\gamma_q((\js_{\sphere^{k-1}},W_{\fp,\rho_E}),(\js_{\sphere^{k-1}},W_{\fp,\rho_{\Euc}})) < h_\fp(\eps)
\]
(and similarly for $\ball^k$). If $k-1$ is odd, then by classification \cite[Corollary C]{Carrion:wz} and Corollary~\ref{cor:ballsandspheres}, the inclusions $(\sphere^{k-1},\rho_{E}) \subseteq (\sphere_E,\rho_{E})$ and $(\ball^k,\rho_{E}) \subseteq (\ball_E,\rho_{E})$ can be realised by unital embeddings
\[
(\js_{\sphere_E},W_{\fp,\rho_E}) \to (\js_{\sphere^{k-1}},W_{\fp,\rho_E}) \approx_{h_\fp(\eps)}(\js_{\sphere^{k-1}},W_{\fp,\rho_\Euc})
\]
and
\[
(\js_{\ball_E},W_{\fp,\rho_E}) \to (\js_{\ball^k},W_{\fp,\rho_E}) \approx_{h_\fp(\eps)}  (\js_{\ball^k},W_{\fp,\rho_\Euc})
\]
where $\approx_{h_\fp(\eps)}$ is relative to $\gamma_q$, which admits the description presented in Corollary~\ref{cor:quigwass}. It follows that there are embeddings
\[
(\js_{\sphere_E},W_{\fp,\rho_E}) \to (\js_{\sphere^{k-1}},W_{\fp,\rho_\Euc}) \quad \text{and}\quad \text (\js_{\ball_E},W_{\fp,\rho_E}) \to  (\js_{\ball^k},W_{\fp,\rho_\Euc})
\]
whose induced tracial maps are $h_\fp(\eps)$-isometric.
\end{proof}

The question naturally arises, when is the quasimetric space described in Corollary~\ref{cor:quigwass} complete? Theorem~\ref{thm:quigwass} only gives completeness at the tracial level. For completeness at the $\cs$-level we would need to be able to provide a tracial quasi-Wasserstein space with prescribed $K$-theory $(G_0,G_0^+,g_0,G_1)$, which is not something that we have proved in full generality. This problem disappears if we restrict to a collection of \emph{contractible} spaces (with sufficient geometric regularity to ensure that Gromov--Hausdorff limits are also contractible) observed by classifiable tracial Wasserstein spaces with point-like ordered $K$-theory \eqref{eqn:pointlike}. For an example covered by Corollary~\ref{cor:contractible}, we could, for instance, consider two-dimensional convex polygons converging to the planar disc.

\begin{corollary} \label{cor:contractible}
For fixed $n\in\nn$, $D,v>0$ and $k\le\kappa\le0$, let $\alex_0^n(D,k,\kappa,v)$ denote the collection of $n$-dimensional compact, contractible Alexandrov spaces $(X,\rho)$ of diameter at most $D$, curvature at least $k$ and at most $\kappa$, and such that the $n$-dimensional Hausdorff measure of $X$ is at least $v$. Then, for every $\fp\in[1,\infty)$, $\gamma_q$ is a complete $2$-quasimetric on the set of $\lyre$-isomorphism classes of classifiable tracial $\fp$-Wasserstein spaces $(A_X,W_{\fp,\rho})$ observing objects $(X,\rho)$ in $\alex_0^n(D,k,\kappa,v)$ such that
\begin{equation} \label{eqn:pointlike}
(K_0(A_{X}),K_0(A_{X})_+,[1_{A_{X}}],K_1(A_{X})) \cong (\zz,\nn,1,0).
\end{equation}
\end{corollary}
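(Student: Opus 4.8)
The plan is to reduce the completeness assertion to a statement about Gromov--Hausdorff limits of the underlying metric spaces. By Proposition~\ref{prop:quasiwass}, every tracial $\fp$-Wasserstein space with $\fp\in[1,\infty)$ is a quasi-Wasserstein space, and point-like ordered $K$-theory \eqref{eqn:pointlike} forces $K$-connectedness, since the ordered group $(\zz,\nn,1)$ admits only the identity state. Hence Corollary~\ref{cor:quigwass} already shows that $\gamma_q$ descends to a $2$-quasimetric on the set of $\lyre$-isomorphism classes in question, and it remains only to prove completeness. So let $((A_{X_n},W_{\fp,\rho_n}))_{n\in\nn}$ be a $\gamma_q$-Cauchy sequence of objects from the prescribed class. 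Running the argument in the completeness part of the proof of Theorem~\ref{thm:quigwass}, this sequence induces a $d_{GH}$-Cauchy sequence of base spaces $(X_n,\rho_n)$, which converges in the complete space of isometry classes of compact metric spaces to some $(X,\rho)$, and moreover the tracial isomorphism classes of the $(A_{X_n},W_{\fp,\rho_n})$ converge in $\gamma_q$ to that of $(\prob(X),W_{\fp,\rho})$. It therefore suffices to show that (i) $(X,\rho)\in\alex_0^n(D,k,\kappa,v)$, and (ii) there is a classifiable tracial $\fp$-Wasserstein space with $K$-theory \eqref{eqn:pointlike} observing $(X,\rho)$.

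For (i), the bound $\diam(X,\rho)\le D$ obviously passes to the limit, and the lower curvature bound (curvature at least $k$) is preserved under Gromov--Hausdorff limits of Alexandrov spaces; cf.\ Example~\ref{example:riemannianlimits} and \cite[Chapter 10]{Burago:2001aa}. The crucial point is non-collapse: because $\mathcal{H}^n(X_n)\ge v>0$ for every $n$, the limit cannot drop dimension, so $\dim X=n$; and by volume (lower semi)continuity along non-collapsed Gromov--Hausdorff limits of Alexandrov spaces with a uniform lower curvature bound, $\mathcal{H}^n(X)\ge\liminf_n\mathcal{H}^n(X_n)\ge v$. Non-collapse also activates Perelman's stability theorem \cite{Kapovitch:2007aa}: for all large $n$, $X_n$ is homeomorphic to $X$, so $X$ inherits contractibility from the $X_n$. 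Finally, each $X_n$ is a compact, contractible (hence simply connected) length space of curvature at most $\kappa\le0$, so by the Cartan--Hadamard theorem it is $CAT(\kappa)$, that is, of curvature at most $\kappa$ in the large (see Remark~\ref{rem:hadamard}); since $CAT(\kappa)$ is a comparison condition and so is closed under Gromov--Hausdorff limits, $X$ is $CAT(\kappa)$, in particular has curvature at most $\kappa$. Thus $(X,\rho)$ is a compact, $n$-dimensional, contractible Alexandrov space of diameter at most $D$ and curvature in $[k,\kappa]$ with $\mathcal{H}^n(X)\ge v$, that is, $(X,\rho)\in\alex_0^n(D,k,\kappa,v)$.

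For (ii), since $(X,\rho)$ is a compact, finite-dimensional Alexandrov space with lower and upper curvature bounds that is contractible with nonpositive upper bound, Corollary~\ref{cor:qwass} supplies a classifiable, projectionless tracial $\fp$-Wasserstein space $(\js_X,W_{\fp,\rho})$ observing it. As $\js_X$ is constructed as an inductive limit of prime generalised dimension drop algebras over the contractible space $X$, Proposition~\ref{prop:ktheory}\eqref{it:mv1} together with simplicity and $\js$-stability yield $(K_0(\js_X),K_0(\js_X)_+,[1_{\js_X}],K_1(\js_X))\cong(\zz,\nn,1,0)$, so $(\js_X,W_{\fp,\rho})$ belongs to the prescribed class. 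Its trace space is $(\prob(X),W_{\fp,\rho})$, so it is tracially isomorphic to the tracial limit identified above, and hence, by Corollary~\ref{cor:quigwass}, its $\lyre$-isomorphism class is the $\gamma_q$-limit of the Cauchy sequence. This proves completeness, and with it the corollary.

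The main obstacle is step (i), that is, controlling the Gromov--Hausdorff limit $(X,\rho)$ of the base spaces; everything else is bookkeeping with results already established. The Hausdorff-measure hypothesis is precisely what rules out dimensional collapse, and this non-collapse is needed twice over: to keep the limit $n$-dimensional and to unlock Perelman's stability theorem so that contractibility survives the limit. While stability of the lower curvature bound under Gromov--Hausdorff limits is routine, persistence of the \emph{upper} bound is more delicate and is made possible by the passage to the global $CAT(\kappa)$ condition, which is available exactly because our spaces are simply connected and of nonpositive curvature.
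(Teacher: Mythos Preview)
Your proof is correct and follows essentially the same route as the paper: reduce completeness to Gromov--Hausdorff closure of $\alex_0^n(D,k,\kappa,v)$ via Theorem~\ref{thm:quigwass}, then verify that closure using Gromov's theorem for the lower curvature bound, $CAT(\kappa)$-stability (via Cartan--Hadamard) for the upper bound, the Hausdorff-measure hypothesis for non-collapse, and Perelman's stability theorem for contractibility, before invoking Corollary~\ref{cor:qwass} and Proposition~\ref{prop:ktheory} to produce the limiting model with the correct $K$-theory. The paper's proof differs only in citing specific results (\cite[Theorem 10.10.10, Corollary 10.10.11]{Burago:2001aa} for volume continuity and \cite[Corollary II.3.10]{Bridson:1999aa} for $CAT(\kappa)$-closure) where you appeal to the same facts by name.
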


\begin{proof}
The hypotheses ensure that the collection of (isometry classes of) spaces being observed is Gromov--Hausdorff closed. Indeed, suppose that $(X,\rho)$ is a $d_{GH}$-limit of metric spaces $(X_n,\rho_n)$ in $\alex_0^n(D,k,\kappa,v)$. By Gromov's compactness theorem \cite[Corollary 10.7.2]{Burago:2001aa}, $(X,\rho)$ is an Alexandrov space of curvature at least $k$, diameter at most $D$ and dimension at most $n$. The upper curvature bound $\kappa\le0$ is also preserved in the limit: as mentioned in Remark~\ref{rem:hadamard}, each $X_n$ is $CAT(\kappa)$, hence by \cite[Corollary II.3.10]{Bridson:1999aa} so is the Gromov--Hausdorff limit $X$. The relevance of bounded $n$-dimensional Hausdorff measure is to ensure there is no dimensional collapse: by \cite[Theorem 10.10.10 and Corollary 10.10.11]{Burago:2001aa}, the $n$-dimensional Hausdorff measure of $X$ is at least $v$ and the Hausdorff dimension of $X$ (which for finite-dimensional Alexandrov spaces coincides with the topological dimension) is actually equal to $n$. Perelman's stability theorem (see \cite{Kapovitch:2007aa}) then implies that $X_n$ is eventually homeomorphic to $X$, so in particular, $X$ is contractible. We conclude that $(X,\rho)$ is also an object in $\alex_0^n(D,k,\kappa,v)$.

Now suppose that $(A_{X_n},W_{\fp,\rho_n})$ is a $\gamma_q$-Cauchy sequence of classifiable tracial $\fp$-Wasserstein spaces observing objects $(X_n,\rho_n)$ in $\alex_0^n(D,k,\kappa,v)$, such that each $A_{X_n}$ has ordered $K$-theory given by \eqref{eqn:pointlike}. By Theorem~\ref{thm:quigwass}, there exists $(X,\rho)$ such that $(\prob(X),W_{\fp,\rho})$ is tracially isomorphic to the $\gamma_q$-limit of $(A_{X_n},W_{\fp,\rho_n})$. The proof of the theorem shows that, equivalently, $(X,\rho)$ is the $d_{GH}$-limit of $(X_n,\rho_n)$. As outlined above, this then implies that $(X,\rho)$ is also an object in $\alex_0^n(D,k,\kappa,v)$. By Corollary~\ref{cor:qwass}, $(X,\rho)$ is observed by a projectionless, classifiable tracial $\fp$-Wasserstein space $(\js_X,W_{\fp,\rho})$. As discussed in Section~\ref{section:qmcs}, the $\cs$-algebra $\js_X$ is built as an inductive limit of generalised dimension drop algebras over the space $X$. Since $X$ is contractible, the $K$-theory computation Proposition~\ref{prop:ktheory} implies that
\[
(K_0(\js_X),K_0(\js_X)_+,[1_{\js_X}],K_1(\js_X)) \cong (\zz,\nn,1,0).
\]
Thus, the limit $(\js_X,W_{\fp,\rho})$ is in the required class, which we have now demonstrated is $\gamma_q$-complete.
\end{proof}

\section{Inductive limits and the Lyre category} \label{section:limits}

Aside from the tracial quasi-Wasserstein spaces discussed in Section~\ref{section:wass} and the examples presented at the end of Section~\ref{section:lyriform}, the other structures that motivate the definition of a tracially lyriform $\cs$-algebra are inductive limits. Let us first describe the category in which these limits will be formed.

\begin{definition} \label{def:category}
The \emph{lyre category} $\lyre$ is the category whose objects are tracially lyriform $\cs$-algebras $(A,\rho)$, with morphisms $(A,\rho_A) \to (B,\rho_B)$ consisting of unital $^*$-homomorphisms $\varphi \colon A \to B$ that are injective and \emph{tracially nonexpansive}, that is, such that $T(\varphi) \colon (T(B),\rho_B) \to (T(A),\rho_A)$ is $1$-Lipschitz. In particular, isomorphisms in $\lyre$ are tracially isometric $^*$-isomorphisms.
\end{definition}

The requirement of nonexpansivity also appears in \cite[Definition 2.5]{Aguilar:2021aa}. There, attention must be paid to the (quasi) Leibniz structure of the building blocks because the quantum metric on the limit is obtained via the completeness of Latr\'{e}moli\`{e}re's dual Gromov--Hausdorff propinquity. In that setup, one must also be careful to ensure that the $\cs$-algebra obtained in this way is isomorphic to the $\cs$-algebraic limit of the inductive system. This issue does not arise for us because we will explicitly build the metric $\rho_\infty$ on the trace space of the $\cs$-limit whenever the system is bounded in diameter and degree of effective quasiconvexity.

\begin{proposition} \label{prop:colimits}
Let $((A_n,\rho_n),\varphi_n \colon A_n \to A_{n+1})_{n\in\nn}$ be an inductive system in $\lyre$ and let $A$ be the $\cs$-algebraic direct limit $\varinjlim(A_n,\varphi_n)$. Suppose that
\begin{enumerate}[(1)]
\item \label{it:hyp1} $\sup_{n\in\nn}\diam(T(A_n),\rho_n) < \infty$, and
\item \label{it:hyp2} the metric $\rho_\infty$ on $T(A) \cong \varprojlim (T(A_n),\varphi_n^*)$ defined by
\begin{equation} \label{eqn:limitrho}
\rho_\infty((\sigma_n)_{n=1}^\infty,(\tau_n)_{n=1}^\infty) = \sup_{n\in\nn} \rho_n(\sigma_n,\tau_n)
\end{equation}
is effectively quasiconvex (which holds whenever $(\rho_n)_{n\in\nn}$ is uniformly effectively quasiconvex, meaning that, in the definition \eqref{eqn:qc} of effective quasiconvexity, a single function $h$ can be used for all of the metrics $\rho_n$).
\end{enumerate}
Then, $(A,\rho_\infty)$ is tracially lyriform and is isomorphic to the $\lyre$ direct limit of the system $((A_n,\rho_n),\varphi_n)_{n\in\nn}$. It is also complete if each $(A_n,\rho_n)$ is. This in particular applies to inductive sequences of tracial Prokhorov spaces, and also, for any fixed $\fp\in[1,\infty]$, to sequences of tracial $\fp$-Wasserstein spaces of bounded diameter.
\end{proposition}

\begin{proof}
The listed hypotheses guarantee that $\rho_\infty$ is effectively quasiconvex and $\diam(T(A),\rho_\infty)<\infty$. By Proposition~\ref{prop:wass}, both conditions hold for tracial Prokhorov spaces and for tracial $\fp$-Wasserstein spaces of bounded diameter.  The expression \eqref{eqn:limitrho} is lower semicontinuous, as it defines $\rho_\infty$ as a pointwise supremum of an increasing sequence of lower semicontinuous functions on $T(A)$. It also ensures that the inclusion maps $\varphi_{n,\infty}\colon A_n \to A$ are tracially nonexpansive. In particular, $\rho_n$-Lipschitz elements in any $A_n$ are mapped to $\rho_\infty$-Lipschitz elements in the limit, so $L_{\rho_\infty}$ is indeed densely finite. This verifies that $(A,\rho_\infty)$ is a tracially lyriform $\cs$-algebra. Suppose next that $(B,\rho_B)$ is tracially lyriform and $(\psi_n\colon A_n \to B)_{n\in\nn}$ is a compatible system of $\lyre$-morphisms. Let $\psi\colon A\to B$ be the induced unital $^*$-monomorphism. Then, for $\sigma,\tau\in T(B)$ we have
\[
\rho_\infty(T(\psi)(\sigma),T(\psi)(\tau)) = \sup_{n\in\nn} \rho_n(T(\psi_n)(\sigma),T(\psi_n)(\tau)) \le \rho_B(\sigma,\tau).
\]
In other words, $\psi$ is tracially nonexpansive relative to $\rho_B$ and $\rho_\infty$. This shows that $(A,\rho_\infty)$ has the required universal property to be the direct limit of the system in $\lyre$. Finally, if each $(A_n,\rho_n)$ is complete, then completeness of $(A,\rho_\infty)$ is proved in the same way that one shows completeness of $\ell_\infty$.
\end{proof}

\begin{example} \label{ex:wlim}
Let $(A,W_{1,\rho})$ be a tracial Wasserstein space observing a compact metric space $(X,\rho)$. By Proposition~\ref{prop:inherit}, $(A,W_{\fp,\rho})$ is a tracial $\fp$-Wasserstein space for every $\fp\in[1,\infty]$, and by Proposition~\ref{prop:colimits} and equation \eqref{eqn:winf2}, $(A,W_{\infty,\rho}) \cong \lim_{\fp\to\infty} ((A,W_{\fp,\rho}),\id_A)$ in $\lyre$. So, tracial $\infty$-Wasserstein spaces are special cases of $\lyre$-limits.
\end{example} 

Recall from Theorem~\ref{thm:compact} that we can view a quantum metric Choquet simplex as a special case of a compact tracially lyriform $\cs$-algebra for which the metric on the trace space is induced by the associated seminorm.

\begin{definition} \label{def:qapprox}
A tracially lyriform $\cs$-algebra is said to be an \emph{approximate $\cs$-algebraic quantum metric Choquet} (or \emph{Bauer}) \emph{simplex} if it is isomorphic in $\lyre$ to the inductive limit of a sequence of $\cs$-algebraic quantum metric Choquet simplices (respectively, Bauer simplices). We similarly define \emph{approximate tracial Prokhorov spaces} and \emph{approximate tracial $\fp$-Wasserstein spaces} for $\fp\in[1,\infty]$.
\end{definition}

An \emph{approximate} $\cs$-algebraic quantum metric Choquet simplex is typically not itself a quantum metric Choquet simplex. The obstruction is that the topology induced by the limit metric \eqref{eqn:limitrho} is generally stronger than the $\ws$-topology on $T(A)$, which is given by the product topology on $\varprojlim T(A_n) \subseteq \prod_{n\in\nn} T(A_n)$. If securing this topology is part of the goal, one can achieve this as in \cite{Long:2025aa} (see also \cite[Proposition 1.1]{Rieffel:2002aa} and \cite[Example 2.15.3]{Jacelon:2024aa}) by replacing $\rho_\infty$ by a metric of the form
\begin{equation} \label{eqn:rhosum}
\rho_\Sigma((\sigma_n)_{n=1}^\infty,(\tau_n)_{n=1}^\infty) = \sum_{n=1}^\infty\omega_n\rho_n(\sigma_n,\tau_n)
\end{equation}
for some summable sequence $(\omega_n)_{n=1}^\infty$ of strictly positive real numbers (here assuming purely for the sake of simplicity that $\sup_{n\in\nn}\diam(T(A_n),\rho_n) < \infty$). This is not the approach we take in this article because our priority is not the topology of the trace space of $A = \varinjlim A_n$ but rather the geometry of embedding spaces $\emb(A,B)$. What we need for that is a core \eqref{eqn:core} for $(A,\rho_\infty)$ that is built as a union of cores for the building blocks $(A_n,\rho_n)$ so that local geometry can be made global. That is why we opt to work in the lyre category, that is, with morphisms that are tracially nonexpansive.

The general situation notwithstanding, there is a notable special case (covering many of our foundational examples) in which the limit is indeed a $\cs$-algebraic quantum metric Choquet simplex on the nose.

\begin{proposition} \label{prop:lyrelimit}
Let $((A_n,\rho_n),\varphi_n \colon A_n \to A_{n+1})_{n\in\nn}$ be a tracially nonexpansive system of compact tracially lyriform $\cs$-algebras. Suppose that there is a summable sequence $(\eps_n)_{n=1}^\infty$ of positive real numbers such that for every $n\in\nn$, $T(\varphi_n)$ is $\eps_n$-isometric, meaning that $|\rho_n(T(\varphi_n)(\sigma),T(\varphi_n)(\tau)) - \rho_{n+1}(\sigma,\tau)| < \eps_n$ for every $\sigma,\tau\in T(A_{n+1})$. Then, the lyre limit $(A=\varinjlim(A_n,\varphi_n),\rho_\infty)$ is also compact.
\end{proposition}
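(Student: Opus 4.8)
The goal is to show that $(T(A),\rho_\infty)$ is compact; as observed right after Definition~\ref{def:compact}, this amounts to showing that $\rho_\infty$ induces the $\ws$-topology on $T(A)\cong\varprojlim(T(A_n),\varphi_n^*)$. Proposition~\ref{prop:core}\eqref{it:core0} already supplies the inclusion $\ws\text{-topology}\subseteq\rho_\infty\text{-topology}$, and $(T(A),\ws)$ is metrisable because $A$ is separable, so the plan is to verify the reverse inclusion by checking that every $\ws$-convergent sequence in $T(A)$ is $\rho_\infty$-convergent.

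The crux is the estimate that, for all $\mu=(\mu_m)_m,\nu=(\nu_m)_m\in T(A)$ and all $n\in\nn$,
\[
\rho_\infty(\mu,\nu)\;\le\;\rho_n(\mu_n,\nu_n)+\sum_{j=n}^{\infty}\eps_j .
\]
To prove this I would fix $n$ and bound each term $\rho_m(\mu_m,\nu_m)$ in the supremum \eqref{eqn:limitrho} separately. If $m\le n$, then $\mu_m=T(\varphi_{m,n})(\mu_n)$ and $\nu_m=T(\varphi_{m,n})(\nu_n)$ for the connecting map $\varphi_{m,n}\colon A_m\to A_n$; since the system lives in $\lyre$, each $T(\varphi_j)$ and hence the composite $T(\varphi_{m,n})$ is $1$-Lipschitz, so $\rho_m(\mu_m,\nu_m)\le\rho_n(\mu_n,\nu_n)$. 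If $m>n$, compatibility of the inverse system gives $\mu_j=T(\varphi_j)(\mu_{j+1})$ (and likewise for $\nu$), and the $\eps_j$-isometry hypothesis yields $\rho_{j+1}(\mu_{j+1},\nu_{j+1})\le\rho_j(\mu_j,\nu_j)+\eps_j$; iterating from $j=n$ to $j=m-1$ gives $\rho_m(\mu_m,\nu_m)\le\rho_n(\mu_n,\nu_n)+\sum_{j=n}^{m-1}\eps_j$. Taking the supremum over $m$ proves the claim.

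With the estimate in hand the conclusion is routine. Given a $\ws$-convergent sequence $\sigma^{(k)}\to\sigma$ in $T(A)$, written componentwise as $\sigma^{(k)}=(\sigma^{(k)}_m)_m$ and $\sigma=(\sigma_m)_m$, and given $\delta>0$, I would choose $n$ with $\sum_{j\ge n}\eps_j<\delta/2$ (possible by summability). The projection $T(\varphi_{n,\infty})\colon T(A)\to T(A_n)$ is $\ws$-continuous, so $\sigma^{(k)}_n\to\sigma_n$ in the $\ws$-topology on $T(A_n)$; since $(A_n,\rho_n)$ is compact, that topology coincides with the one induced by $\rho_n$, whence $\rho_n(\sigma^{(k)}_n,\sigma_n)\to0$. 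The estimate then forces $\rho_\infty(\sigma^{(k)},\sigma)<\delta$ for all large $k$, so the two topologies agree and $(T(A),\rho_\infty)$ is homeomorphic to the compact space $(T(A),\ws)$, i.e.\ $(A,\rho_\infty)$ is compact. (An equivalent endgame: the same estimate shows directly that $(T(A),\rho_\infty)$ is totally bounded, since a finite $(\delta/2)$-net of the compact space $(T(A_n),\rho_n)$ pulls back through $T(\varphi_{n,\infty})$ to a finite $\delta$-net of $(T(A),\rho_\infty)$, and completeness is automatic by Proposition~\ref{prop:colimits}.)

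The only point requiring insight is securing the uniformity in $m$ needed to control the supremum defining $\rho_\infty$: splitting the tower at level $n$ decouples the two regimes, with tracial nonexpansivity disposing of all indices $m\le n$ at no cost and summability of $(\eps_j)$ absorbing the indices $m>n$ into an arbitrarily small tail. Everything else is bookkeeping with the inverse-limit structure.
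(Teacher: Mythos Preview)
Your proof is correct and follows essentially the same route as the paper's: both split the tower at a level $n$ chosen so that the tail $\sum_{j\ge n}\eps_j$ is small, use nonexpansivity to dispose of indices $m\le n$, and iterate the $\eps_j$-isometry bound to control indices $m>n$. Your packaging is arguably cleaner in that you isolate the key inequality $\rho_\infty(\mu,\nu)\le\rho_n(\mu_n,\nu_n)+\sum_{j\ge n}\eps_j$ as a standalone estimate before applying it, whereas the paper works directly with the metric $\rho_\Sigma$ of \eqref{eqn:rhosum} in place of the $\ws$-topology and unwinds the same inequality inline; but the mathematical content is identical.
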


\begin{proof}
We must show that $\rho_\Sigma$-convergent sequences in $T(A)$ are $\rho_\infty$-convergent (where $\rho_\Sigma$ is as in \eqref{eqn:rhosum}, say with $\omega_n=2^{-n}$). Let $(\sigma_n)_{n=1}^\infty \in T(A)$, let $\eps>0$ and let $N\in\nn$ be such that $\sum_{k=N}^\infty \eps_k < \frac{\eps}{2}$. If $(\tau_n)_{n=1}^\infty\in T(A)$ is sufficiently close to $(\sigma_n)_{n=1}^\infty$ (relative to $\rho_\Sigma$), then $\rho_n(\sigma_n,\tau_n) < \frac{\eps}{2}$ for every $n\in\{1,\dots,N\}$. But then for $n>N$ we have
\begin{align*}
\rho_N(\sigma_N,\tau_N) &= \rho_N(T(\varphi_{N}) \circ \dots \circ T(\varphi_{n-1})(\sigma_n),T(\varphi_{N}) \circ \dots \circ T(\varphi_{n-1})(\tau_n))\\
&> \rho_n(\sigma_n,\tau_n) - \sum_{k=N}^{n-1} \eps_k\\
&> \rho_n(\sigma_n,\tau_n) - \frac{\eps}{2}.
\end{align*}
This implies that $\rho_n(\sigma_n,\tau_n) < \rho_N(\sigma_N,\tau_N)  + \frac{\eps}{2} < \eps$ and hence that 
\[
\rho_\infty((\sigma_n)_{n=1}^\infty,(\tau_n)_{n=1}^\infty) = \sup_{n\in\nn}\rho_n(\sigma_n,\tau_n) < \eps.
\]
It follows that $\rho_\Sigma$-convergent sequences are indeed $\rho_\infty$-convergent, which completes the proof.
\end{proof}

\begin{example} \label{ex:models}
Let us use the models $\js_X=\varinjlim(A_n,\varphi_n)$ constructed in Theorem~\ref{thm:models} to illustrate the ideas developed above, particularly Proposition~\ref{prop:lyrelimit}. Here, each $A_n$ is a generalised dimension drop algebra $X_{p_n,q_n}$ over a compact metric space $(X,\rho)$. As described in Corollary~\ref{cor:wasslyre}, for any $\fp\in[1,\infty]$, $(A_n,W_{\fp,\rho})$ is a tracial $\fp$-Wasserstein space whose canonical nucleus of radius $r\ge r_{(X,\rho)}$ is
\[
\mathcal{D}_r(A_n) = \{f \in (A_n)_{sa} \mid \|f\|\le r,\: \|f(x)-f(y)\|\le\rho(x,y)\:\text{ for every } x,y\in X\}.
\]
Although the connecting maps $\varphi_n\colon A_n \to A_{n+1}$ are in general not tracially $W_{1,\rho}$-contractive, they are approximately so and the discrepancy $\eps_n$ is summably small. In fact, $T(\varphi_n)$ is within $\eps_n$ of the identity map relative to both the uniform metric and Lipschitz seminorm associated with $W_{1,\rho}$. To realise $(\js_X,\rho)$ as a limit in $\lyre$, we can correspondingly adjust the metric $\rho$ at each stage of the inductive sequence. More precisely, we replace the seminorm $L_\rho$ by $L_\rho^{(n)}:=\sup_{m\ge n}L_\rho\circ\varphi_{n,m}$. Equivalently, we replace $\rho$ by $\rho^{(n)}$, where
\begin{equation} \label{eqn:remetric}
\rho^{(n)}(\sigma,\tau) = \sup\{\sigma(a)-\tau(a) \mid a\in(A_n)_{sa},\: L_\rho^{(n)}(a)\le1\}.
\end{equation}
Since $\sum_{n=1}^\infty\eps_n<\infty$, the infinite product $\prod_{n=1}^\infty(1+\eps_n)$ also converges. Writing $\delta_n=\prod_{m=n}^\infty(1+\eps_m)$, we have $L_\rho \le L_\rho^{(n)} \le \delta_nL_\rho$ and
\begin{equation} \label{eqn:rhon}
\delta_n^{-1}\rho \le \rho^{(n)} \le \rho.
\end{equation}
So, $(\rho^{(n)})_{n\in\nn}$ is a sequence of uniformly Lipschitz equivalent metrics on $X$ that converges uniformly to $\rho$. By construction, the maps $T(\varphi_n)\colon (T(A_{n+1}),W_{1,\rho^{(n+1)}}) \to (T(A_n),W_{1,\rho^{(n)}})$ are all nonexpansive, and $(\partial_e(\varprojlim(T(A_n),T(\varphi_n))),\rho^{(\infty)})$ is isometrically isomorphic to $(X,\rho)$. We conclude that $(\js_X,W_{1,\rho})$ is isomorphic in $\lyre$ to the direct limit of the sequence $(A_n,W_{1,\rho^{(n)}})$. In the language of Definition~\ref{def:qapprox}, we have exhibited $(\js_X,W_{1,\rho})$ as an approximate $\cs$-algebraic quantum metric Bauer simplex.

For use in Section~\ref{section:geostats}, we introduce the modified nuclei
\begin{equation} \label{eqn:modify}
\mathcal{D}^{(n)}_r(X_{p_n,q_n}) = \{a \in \mathcal{D}_r(X_{p_n,q_n}) \mid L_\rho^{(n)}(a) \le 1\}
\end{equation}
which by \eqref{eqn:rhon} satisfy
\begin{equation} \label{eqn:inclusions}
\delta_n^{-1}\mathcal{D}_r(X_{p_n,q_n}) \subseteq \mathcal{D}^{(n)}_r(X_{p_n,q_n}) \subseteq \mathcal{D}_r(X_{p_n,q_n}).
\end{equation}
The \emph{canonical core} (of radius $r\ge r_{(X,\rho)})$ associated with this modification is
\begin{equation} \label{eqn:unioncore}
\mathcal{L}_r(\js_X) =  \bigcup_{n\in\nn} \varphi_{n,\infty}\left(\mathcal{D}^{(n)}_r(X_{p_n,q_n})\right)
\end{equation}
and the \emph{pro-$W_\infty$ metric} on $\varprojlim T(A_n)$ is
\begin{equation} \label{eqn:prowinfty1}
\overrightarrow{W}_{\infty,\rho}((\sigma_n)_{n=1}^\infty,(\tau_n)_{n=1}^\infty) = \limsup_{n\to\infty} W_{\infty,\rho^{(n)}}(\sigma_n,\tau_n) = \limsup_{n\to\infty} W_{\infty,\rho}(\sigma_n,\tau_n).
\end{equation}
Unlike the situation with $W_1$, it is not clear whether $(\varprojlim T(A_n),\overrightarrow{W}_{\infty,\rho})$ is isometrically isomorphic to $(T(\js_X),W_{\infty,\rho})$. But the pro-$W_\infty$ metric gives the right lyriform structure for us to be able to patch together the local geometry provided by embedding spaces $\emb(X_{p_n,q_n},B)/\sim_{au}$ into a global geometric picture of $\emb(\js_X,B)/\sim_{au}$. See Corollary~\ref{cor:dimdroporbits} and Corollary~\ref{cor:spiderorbits}.
\end{example}

\section{Tracial exoskeletons} \label{section:arachnid}

In this section, use classifiable tracial Wasserstein spaces observing spheres and balls to fit simple inductive limits of prime dimension drop algebras into the framework presented in Section~\ref{section:limits}.

\begin{lemma} \label{lemma:nsimplex}
Let $(X,\rho)$ be a finite metric space consisting of $n\ge1$ equidistant points $x_1,\dots,x_n$. Then,
\[
W_1(\mu,\nu) = \|\bar\mu-\bar\nu\|_1 \cdot r_{(X,\rho)}
\]
for every $\mu=\sum_{i=1}^n\mu_i\delta_{x_i}, \nu=\sum_{i=1}^n\nu_i\delta_{x_i}\in\prob(X)$, where $\bar\mu=(\mu_1,\dots,\mu_n)$ and $\bar\nu=(\nu_1,\dots,\nu_n)$.
\end{lemma}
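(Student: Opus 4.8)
The plan is to establish the two inequalities $W_1(\mu,\nu)\le \|\bar\mu-\bar\nu\|_1\cdot r_{(X,\rho)}$ and $W_1(\mu,\nu)\ge \|\bar\mu-\bar\nu\|_1\cdot r_{(X,\rho)}$ separately, invoking the primal definition \eqref{eqn:wp} of $W_1$ for the first and the Kantorovich--Rubinstein dual \eqref{eqn:w1} for the second. The case $n=1$ is vacuous (then $\mu=\nu=\delta_{x_1}$ and both sides vanish), so I would assume $n\ge2$ and write $d$ for the common value of $\rho(x_i,x_j)$, $i\ne j$, so that $\diam(X,\rho)=d$ and $r_{(X,\rho)}=d/2$; note that the triangle inequality here reads $d\le 2d$, so $\rho$ is a genuine metric. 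The one piece of bookkeeping to set up is the partition of $\{1,\dots,n\}$ into $S_+=\{i:\mu_i>\nu_i\}$, $S_-=\{i:\mu_i<\nu_i\}$ and $S_0=\{i:\mu_i=\nu_i\}$: since $\mu$ and $\nu$ are probability vectors, $m:=\sum_{i\in S_+}(\mu_i-\nu_i)=\sum_{i\in S_-}(\nu_i-\mu_i)=\tfrac12\|\bar\mu-\bar\nu\|_1$.

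For the upper bound I would exhibit an explicit coupling $\pi\in\Pi(\mu,\nu)$ that leaves the overlap mass $\min(\mu_i,\nu_i)$ fixed at each point $x_i$ (costing nothing) and transports the total surplus $m$ from the atoms indexed by $S_+$ to those indexed by $S_-$ in any mass-balanced way; for instance $\pi(\{(x_i,x_i)\})=\min(\mu_i,\nu_i)$ and $\pi(\{(x_i,x_j)\})=(\mu_i-\nu_i)(\nu_j-\mu_j)/m$ for $i\in S_+$, $j\in S_-$, whose marginals are checked directly. Every transported unit travels distance exactly $d$, so $\int_{X\times X}\rho\,d\pi=d\cdot m=\tfrac d2\|\bar\mu-\bar\nu\|_1=r_{(X,\rho)}\|\bar\mu-\bar\nu\|_1$, which bounds $W_1(\mu,\nu)$ from above.

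For the lower bound I would test \eqref{eqn:w1} against the potential $f\colon X\to\rr$ with $f(x_i)=r_{(X,\rho)}$ for $i\in S_+\cup S_0$ and $f(x_i)=-r_{(X,\rho)}$ for $i\in S_-$. Since $|f(x_i)-f(x_j)|\in\{0,d\}\le d=\rho(x_i,x_j)$ for $i\ne j$, $f$ lies in $\lip(X,\rho)$, and
\[
\int_X f\,d\mu-\int_X f\,d\nu=\sum_{i=1}^n f(x_i)(\mu_i-\nu_i)=r_{(X,\rho)}\Bigl(\sum_{i\in S_+}(\mu_i-\nu_i)+\sum_{i\in S_-}(\nu_i-\mu_i)\Bigr)=r_{(X,\rho)}\|\bar\mu-\bar\nu\|_1,
\]
so $W_1(\mu,\nu)\ge r_{(X,\rho)}\|\bar\mu-\bar\nu\|_1$, and the two bounds coincide.

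There is no substantive obstacle here; the lemma is elementary, and the only thing requiring a modicum of care is the sign bookkeeping around the partition $S_+\sqcup S_-\sqcup S_0$ --- equivalently, remembering that the $\ell^1$-distance between two probability vectors equals twice the total surplus mass --- together with the trivial verification that equidistance makes both the candidate coupling admissible and the candidate potential $1$-Lipschitz.
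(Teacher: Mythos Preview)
Your proof is correct. The lower bound via the Kantorovich--Rubinstein dual matches the paper's argument essentially verbatim: the paper tests against $f(x_i)=\sgn(\mu_i-\nu_i)\cdot r$, which differs from your potential only on $S_0$, where the contribution vanishes anyway. The upper bound, however, is obtained differently. The paper stays entirely on the dual side: it shows that \emph{every} $1$-Lipschitz $f$ satisfies $\left|\int f\,d\mu-\int f\,d\nu\right|\le r\|\bar\mu-\bar\nu\|_1$ by first translating so that the range of $f$ lies in $[-r,r]$ and then applying the crude estimate $\left|\sum_i(\mu_i-\nu_i)\tilde f(x_i)\right|\le\sum_i|\mu_i-\nu_i|\cdot|\tilde f(x_i)|$. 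You instead work on the primal side, exhibiting an explicit coupling that fixes the overlap mass and ships only the surplus. Your route has the advantage of displaying the optimal transport plan concretely (and makes transparent why equidistance forces the cost to be exactly $d\cdot m$); the paper's route is marginally shorter and avoids any bookkeeping with couplings. Both are entirely elementary. One small omission: your explicit formula $\pi(\{(x_i,x_j)\})=(\mu_i-\nu_i)(\nu_j-\mu_j)/m$ is undefined when $m=0$, but then $\mu=\nu$ and the statement is trivial, so this is harmless.
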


\begin{proof}
Suppose that $f\colon X\to \rr$ is $1$-Lipschitz. Write $r=r_{(X,\rho)}$. Then, $|f(x_i)-f(x_j)|\le2r$ for every $i,j\in\{1,\dots,n\}$, so there exists $c\in\rr$ such that the range of the $1$-Lipschitz function $\tilde{f}:=f-c$ is contained in the interval $[-r,r]$. It follows that
\begin{align*}
\left|\int_Xf\,d\mu - \int_Xf\,d\nu\right| &= \left|\int_X\tilde{f}\,d\mu - \int_X\tilde{f}\,d\nu\right|\\
&= \left|\sum_{i=1}^n\mu_i\tilde{f}(x_i) - \sum_{i=1}^n\nu_i\tilde{f}(x_i)\right|\\
&\le \sum_{i=1}^n|\mu_i-\nu_i| \cdot |\tilde{f}(x_i)|\\
&\le \|\bar \mu-\bar \nu\|_1 \cdot r
\end{align*}
and hence by \eqref{eqn:w1} that $W_1(\mu,\nu)\le\|\bar \mu-\bar \nu\|_1\cdot r$. Note that equality is achieved by the $1$-Lipschitz function $f\colon X\to [-r,r]$ defined by $f(x_i) = \sgn(\mu_i-\nu_i)\cdot r$, thus proving that $W_1(\mu,\nu) = \|\bar \mu-\bar \nu\|_1\cdot r$ as claimed.
\end{proof}

\begin{lemma} \label{lemma:lipschitzextension}
Let $(A_1,W_{1,\rho_1})$ and $(A_2,W_{1,\rho_2})$ be tracial Wasserstein spaces observing compact metric spaces $(X_1,\rho_1)$ and $(X_2,\rho_2)$, respectively. Then, every $1$-Lipschitz map $f\colon (X_1,\rho_1) \to (T(A_2),W_{1,\rho_2})$ admits a unique extension to a $1$-Lipschitz affine map $(T(A_1),W_{1,\rho_1}) \to (T(A_2),W_{1,\rho_2})$. 
\end{lemma}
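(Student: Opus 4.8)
The plan is to construct the extension explicitly as a barycentric integral and to verify the $1$-Lipschitz property via Kantorovich--Rubinstein duality \eqref{eqn:w1}. Throughout I identify $T(A_i)$ with $\prob(X_i)$ and $X_i$ with $\partial_e\prob(X_i)$, and I use that by Proposition~\ref{prop:wass} each metric $W_{1,\rho_i}$ induces the $\ws$-topology on $\prob(X_i)$. In particular, since $f$ is $1$-Lipschitz it is continuous for the $W_{1,\rho_2}$-metric, hence continuous as a map $X_1\to(\prob(X_2),\ws)$; this is the only use of the Lipschitz hypothesis on $f$ needed to set up the construction.

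First I would define the candidate extension. For $\mu\in\prob(X_1)$ and $g\in C(X_2)$, the map $\nu\mapsto\int_{X_2}g\,d\nu$ is $\ws$-continuous on $\prob(X_2)$, so $x\mapsto\int_{X_2}g\,d(f(x))$ is continuous on the compact space $X_1$; I may therefore set
\[
\langle \bar f(\mu),g\rangle = \int_{X_1}\left(\int_{X_2}g\,d(f(x))\right)d\mu(x).
\]
This is a positive linear functional on $C(X_2)$ with $\langle\bar f(\mu),1\rangle=1$, hence $\bar f(\mu)\in\prob(X_2)\cong T(A_2)$. The assignment $\mu\mapsto\bar f(\mu)$ is visibly affine (it is even linear in $\mu$ before restricting to $\prob(X_1)$) and $\ws$-to-$\ws$ continuous, and evaluating at a Dirac mass gives $\bar f(\delta_x)=f(x)$, so $\bar f$ restricts to $f$ on $X_1$.

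The key step is the bound $W_{1,\rho_2}(\bar f(\mu),\bar f(\nu))\le W_{1,\rho_1}(\mu,\nu)$, and here the duality special to $W_1$ does the work. Fix $g\in\lip(X_2,\rho_2)$ and put $\phi_g(x)=\int_{X_2}g\,d(f(x))$, a continuous function on $X_1$. For $x,y\in X_1$, \eqref{eqn:w1} applied to $f(x),f(y)\in\prob(X_2)$ gives $|\phi_g(x)-\phi_g(y)|\le W_{1,\rho_2}(f(x),f(y))\le\rho_1(x,y)$, so $\phi_g\in\lip(X_1,\rho_1)$. Therefore
\[
|\langle\bar f(\mu),g\rangle-\langle\bar f(\nu),g\rangle| = \left|\int_{X_1}\phi_g\,d\mu-\int_{X_1}\phi_g\,d\nu\right| \le W_{1,\rho_1}(\mu,\nu),
\]
again by \eqref{eqn:w1}; taking the supremum over $g\in\lip(X_2,\rho_2)$ and invoking \eqref{eqn:w1} once more on $(X_2,\rho_2)$ yields the claim. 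Thus $\bar f$ is a $1$-Lipschitz affine map extending $f$.

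For uniqueness, suppose $F\colon(T(A_1),W_{1,\rho_1})\to(T(A_2),W_{1,\rho_2})$ is $1$-Lipschitz, affine, and restricts to $f$ on $X_1$. Being $1$-Lipschitz, $F$ is continuous for the $W_1$-metrics, hence $\ws$-continuous since these metrics induce the $\ws$-topologies. Now $F$ and $\bar f$ are both $\ws$-continuous, affine, and agree on $\partial_e\prob(X_1)=X_1$, hence on the convex hull of the Dirac measures; since that convex hull is $\ws$-dense in $\prob(X_1)$ by the Krein--Milman theorem, $F=\bar f$. The only step carrying genuine content is the $1$-Lipschitz verification, which is exactly where the Kantorovich--Rubinstein duality is essential (it is what transports a test function on $X_2$ back to a test function on $X_1$ through $f$, and is the reason the lemma is stated for $\fp=1$); the remaining points---that $\bar f(\mu)$ is a state, and that $\bar f$ is affine and $\ws$-continuous---are routine.
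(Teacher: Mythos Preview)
Your proof is correct and follows essentially the same approach as the paper: both arguments hinge on the observation that for $g\in\lip(X_2,\rho_2)$ the function $x\mapsto\int_{X_2}g\,d(f(x))$ lies in $\lip(X_1,\rho_1)$, which is exactly Kantorovich--Rubinstein duality composed with the $1$-Lipschitz hypothesis on $f$. The only difference is packaging---the paper first defines the extension on finitely supported measures and then passes to the limit, whereas you write the barycentric integral directly; your formulation is slightly cleaner and makes the uniqueness argument more explicit, but the mathematical content is identical.
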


\begin{proof}
Suppose that $\mu$ and $\nu$ are finitely supported measures on $X_1$, say $\mu=\sum_{i=1}^ns_ix_i, \nu=\sum_{i=1}^nt_ix_i\in\prob(X_1)\cong T(A_1)$. We define $f(\mu)=\sum_{i=1}^ns_if(x_i)$ and $f(\nu)=\sum_{i=1}^nt_if(x_i)$. For each $i$, let $\lambda_i\in\prob(X_2)\cong T(A_2)$ be the representing measure of $f(x_i)$, so that $\sum_{i=1}^ns_i\lambda_i$ and $\sum_{i=1}^nt_i\lambda_i$ are representing measures of $f(\mu)$ and $f(\nu)$, respectively. Let $h\colon X_2\to\rr$ be $1$-Lipschitz. By \eqref{eqn:w1}, its unique continuous affine extension to $T(A_2)$, defined by $\tilde h\colon\lambda\mapsto\int_{X_2}h\,d\lambda$, is also $1$-Lipschitz. Then,
\begin{align*}
W_{1,\rho_2}(f(\mu),f(\nu)) &= \sup_{h\in\lip(X_2,\rho_2)}\left|\int_{X_2}h\,df(\mu) - \int_{X_2}h\,df(\nu)\right|\\
&= \sup_{h\in\lip(X_2,\rho_2)}\left|\sum_{i=1}^ns_i\int_{X_2}h\,d\lambda_i - \sum_{i=1}^nt_i\int_{X_2}h\,d\lambda_i\right|\\
&= \sup_{h\in\lip(X_2,\rho_2)}\left|\sum_{i=1}^ns_i\tilde h(f(x_i)) - \sum_{i=1}^nt_i\tilde h(f(x_i))\right|\\
&= \sup_{h\in\lip(X_2,\rho_2)}\left|\int_{X_1}\tilde h\circ f\,d\mu - \int_{X_1}\tilde h\circ f\,d\nu\right|\\
&\le \sup_{g\in\lip(X_1,\rho_1)}\left|\int_{X_1}g\,d\mu - \int_{X_1}g\,d\nu\right|\\
&= W_{1,\rho_1}(\mu,\nu).
\end{align*}
Since a general $\mu\in\prob(X_1)$ is a limit of measures that are finitely supported, it follows that there is a well-defined and unique extension of $f$ to a continuous affine map $(T(A_1),W_{1,\rho_1}) \to (T(A_2),W_{1,\rho_2})$, and that this extension is $1$-Lipschitz.
\end{proof}

Recall  from Proposition~\ref{prop:colimits} that the category of tracially lyriform $\cs$-algebras admits direct limits of inductive sequences of tracial $\fp$-Wasserstein spaces of bounded diameter. Theorem~\ref{thm:arachnid} says that any simple inductive limit of prime dimension drop algebras can be constructed in this way. In its statement, we write $W_{1,\rho}$ and $W_{1,\omega}$ for the limit metrics \eqref{eqn:limitrho} associated with $(W_{1,\rho_{k}})_{k\in\nn}$ and $(W_{1,\omega_{k}})_{k\in\nn}$.

\begin{theorem} \label{thm:arachnid} 
Let $A$ be a simple $\cs$-algebra that is isomorphic to an inductive limit of prime dimension drop algebras. Then, there are inductive systems of classifiable tracial Wasserstein spaces $((\js_{\sphere^{2k-1}},W_{1,\rho_{k}}),\varphi_k)_{k\in\nn}$ and $((\js_{\ball^{2k}},W_{1,\omega_{k}}),\psi_k)_{k\in\nn}$ in $\lyre$ such that $(\varinjlim (\js_{\sphere^{2k-1}},\varphi_k),W_{1,\rho})$ and $(\varinjlim (\js_{\ball^{2k}},\psi_k),W_{1,\omega})$ provide isomorphic tracially lyriform structures on $A$, that is, there is a $^*$-isomorphism $\varinjlim (\js_{\sphere^{2k-1}},\varphi_k) \to \varinjlim (\js_{\ball^{2k}},\psi_k) \cong A$ that induces an isometric isomorphism of trace spaces with respect to the limit metrics $W_{1,\omega}$ and $W_{1,\rho}$ associated with $(W_{1,\omega_{k}})_{k\in\nn}$ and $(W_{1,\rho_{k}})_{k\in\nn}$. If $\partial_eT(A)$ is finite, then we can take stationary inductive limits and the common tracially lyriform $\cs$-algebra is a tracial $1$-Wasserstein space.
\end{theorem}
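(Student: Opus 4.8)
The plan is to present $T(A)$ as an inverse limit of finite simplices, run that system through simplicial balls and spheres of dimensions $2k$ and $2k-1$, and then lift everything to the $\cs$-level by classification. To begin, a simple inductive limit of prime dimension drop algebras is classifiable and projectionless, has $(K_0(A),K_0(A)_+,[1_A],K_1(A))\cong(\zz,\nn,1,0)$, and $T(A)$ is a metrisable Choquet simplex. By the Lazar--Lindenstrauss theorem \cite[Theorem 5.2]{Lazar:1971kx} I can write $T(A)\cong\varprojlim(\Delta_k,p_k)$ for an inverse system of finite-dimensional simplices with affine surjective connecting maps. An affine surjection $\Delta_{k+1}\to\Delta_k$ forces $\dim\Delta_{k+1}\ge\dim\Delta_k$, so the dimensions are non-decreasing; when they are unbounded (equivalently $\partial_eT(A)$ is infinite) I would refine the system so that $\Delta_k$ is a $2k$-simplex --- first passing to a subsequence making the dimensions strictly increase, then factoring each connecting map through simplices of every intermediate dimension (lift vertices through a fixed affine surjection; the inserted maps need not be surjective, which does not affect the limit), then keeping only the even-dimensional terms and reindexing. (If the dimensions are bounded, $\partial_eT(A)$ is finite and I handle a stationary system at the end.) I equip $\Delta_k$ with the $\ell_1$-metric $\omega_k$ normalised so that all pairwise vertex distances equal a fixed $c>0$, calling $(\Delta_k,\omega_k)$ the simplicial ball $\ball^{2k}$; and I equip the $(2k-1)$-skeleton $\partial\Delta_k$ with the induced intrinsic metric $\rho_k$, the simplicial sphere $\sphere^{2k-1}$. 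Both are Lipschitz equivalent to the round metrics and have diameter at most $2c$ (any two facets of $\Delta_k$ meet in a codimension-two face), so Corollary~\ref{cor:ballsandspheres} supplies classifiable, projectionless tracial $1$-Wasserstein spaces $(\js_{\ball^{2k}},W_{1,\omega_k})$ and $(\js_{\sphere^{2k-1}},W_{1,\rho_k})$, with $(K_0,K_0{}_+,[1])\cong(\zz,\nn,1)$ in both cases, $K_1(\js_{\ball^{2k}})=0$, and $K_1(\js_{\sphere^{2k-1}})=\zz$.

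Next I would build the connecting maps. Let $V_k$ be the vertex set of $\Delta_k$ and $r_k\colon\Delta_k\to\prob(V_k)\subseteq\prob(\ball^{2k})=T(\js_{\ball^{2k}})$ the barycentric identification $\sum\lambda_ie_i\mapsto\sum\lambda_i\delta_{e_i}$, with sphere analogue $r_k'\colon\Delta_k\to\prob(V_k)\subseteq\prob(\sphere^{2k-1})$. Since every edge of $\Delta_k$ lies in its $(2k-1)$-skeleton, $V_k$ is equidistant of common distance $c$ in both $\omega_k$ and $\rho_k$, so Lemma~\ref{lemma:nsimplex} makes $r_k$ and $r_k'$ isometric. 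A short $\ell_1$-estimate shows that any affine map between $\ell_1$-simplices of equal diameter is $1$-Lipschitz, whence $x\mapsto r_k(p_k(x))$ is a $1$-Lipschitz map $\ball^{2k+2}\to T(\js_{\ball^{2k}})$; by Lemma~\ref{lemma:lipschitzextension} it extends uniquely to an affine $1$-Lipschitz map $T(\psi_k)\colon T(\js_{\ball^{2k+2}})\to T(\js_{\ball^{2k}})$, which unwinds via the barycentre map $\beta_{k+1}$ to $T(\psi_k)=r_k\circ p_k\circ\beta_{k+1}$. The sphere construction is identical, using that the intrinsic skeleton metric dominates the restricted $\ell_1$-metric, which in turn dominates the $\ell_1$-distance of barycentres, to see that the barycentre map $\prob(\partial\Delta_{k+1})\to\Delta_{k+1}$ is $1$-Lipschitz; this yields $T(\varphi_k)=r_k'\circ p_k\circ\beta'_{k+1}$. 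By the existence half of classification \cite{Carrion:wz} I can realise these by unital $^*$-homomorphisms $\psi_k\colon\js_{\ball^{2k}}\to\js_{\ball^{2k+2}}$ and $\varphi_k\colon\js_{\sphere^{2k-1}}\to\js_{\sphere^{2k+1}}$ inducing also the identity on $K_0$ (forced by unitality) and the zero map on $K_1$ --- this data is automatically a morphism of invariants. Each is injective (unital, with simple domain) and tracially $1$-Lipschitz, hence a $\lyre$-morphism.

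Then I would identify the limits. As $\beta_k\circ r_k=\id_{\Delta_k}$, any compatible sequence in $\varprojlim(T(\js_{\ball^{2k}}),T(\psi_k))$ lies vertexwise, i.e.\ equals $(r_k(x_k))_k$ with $x_k=p_k(x_{k+1})$; so $\varprojlim(T(\js_{\ball^{2k}}),T(\psi_k))\cong\varprojlim(\Delta_k,p_k)\cong T(A)$, and likewise for the sphere system. By Proposition~\ref{prop:colimits} (the $W_1$-metrics are convex, hence uniformly effectively quasiconvex, with uniformly bounded diameter) the $\lyre$-limits $(\varinjlim(\js_{\ball^{2k}},\psi_k),W_{1,\omega})$ and $(\varinjlim(\js_{\sphere^{2k-1}},\varphi_k),W_{1,\rho})$ are tracially lyriform, and under the identification with $\varprojlim(\Delta_k,p_k)$ both limit metrics equal $((x_k),(y_k))\mapsto\sup_k\tfrac c2\|x_k-y_k\|_1$ (by Lemma~\ref{lemma:nsimplex}, as the common vertex distance is $c$ in $\omega_k$ and in $\rho_k$); in particular they coincide. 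Each limit is unital, simple (an inductive limit of simple $\cs$-algebras along unital connecting maps is simple), separable, nuclear, $\js$-stable and satisfies the UCT, with Elliott invariant that of $A$; classification \cite{Carrion:wz} then gives $^*$-isomorphisms onto $A$, and since $\id_{\varprojlim(\Delta_k,p_k)}$ is an admissible tracial isomorphism I may take these to induce it, yielding the asserted tracially isometric $^*$-isomorphism $\varinjlim\js_{\sphere^{2k-1}}\to\varinjlim\js_{\ball^{2k}}\cong A$.

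For the final statement: if $\partial_eT(A)=\{x_1,\dots,x_n\}$ then $T(A)\cong\Delta^{n-1}$; I would fix $m$ with $2m+1\ge n$, take $\ball^{2m}$ as above, choose an $n$-element $V'\subseteq V_m$, and let $p\colon\Delta_m\to\Delta_m$ be an affine surjection onto $\conv(V')$ restricting to the identity there. The stationary system $(\js_{\ball^{2m}},\psi)$ with $T(\psi)$ the $1$-Lipschitz affine extension of $x\mapsto r_m(p(x))$ (and its sphere analogue) then has $\varprojlim$ of trace spaces $\cong\conv(V')\cong T(A)$, the limit metric being exactly $W_1$ of the equidistant metric on $\{x_1,\dots,x_n\}\cong\partial_eT(A)$; so the common limit ($\cong A$ by classification) is a tracial $1$-Wasserstein space. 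I expect the hard part to be the dimension-matching refinement of the Lazar--Lindenstrauss system (making the $k$-th simplex genuinely $2k$-dimensional without changing the inverse limit) together with checking that $W_{1,\omega}$ and $W_{1,\rho}$ are literally the same metric --- both resting on Lemma~\ref{lemma:nsimplex} and on the fact that the $(2k-1)$-skeleton of $\Delta^{2k}$ already contains all of its edges. The remaining work --- the $\ell_1$-Lipschitz bounds and the verification that each inductive limit has the correct Elliott invariant --- is routine bookkeeping.
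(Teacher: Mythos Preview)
Your proposal is correct and follows essentially the same strategy as the paper: realise $T(A)$ as an inverse limit of simplices via Lazar--Lindenstrauss, equip the ball with the $\ell_1$-metric and the sphere with the induced intrinsic metric, use Lemmas~\ref{lemma:nsimplex} and~\ref{lemma:lipschitzextension} to build nonexpansive intertwinings through vertex-supported measures, and lift everything by classification. The only notable differences are cosmetic: you identify $\ball^{2k}$ directly with $(\Delta_k,\omega_k)$ where the paper introduces a circumscribed sphere and a homeomorphism $\widehat\pi_k$ to pull back the $\ell_1$-metric (equivalent but heavier notation), and your dimension-matching refinement of the Lazar--Lindenstrauss system is more laborious than needed, since the standard form of the theorem already produces $k$-simplices with surjective connecting maps, after which one simply composes consecutive pairs to pass between even dimensions.
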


\begin{proof}
To prove the theorem, we first construct suitable intertwinings to show that the metrisable Choquet simplex $T(A)$ can be written as an inverse limit of probability spaces over spheres $\sphere^{2k-1}$ or balls $\ball^{2k}$. Then we use classification to lift to sequences $(\js_{\sphere^{2k-1}},\varphi_k)_{k\in\nn}$ and $(\js_{\ball^{2k}},\psi_k)_{k\in\nn}$.

We begin by writing $T(A)$ as an inverse limit $\varprojlim (\Delta_k,\eta_k)$ of finite-dimensional simplices $\Delta_k$. If $\dim T(A) = N < \infty$, then we can take a stationary sequence $(\Delta_N,\eta_k)_{k\in\nn}$, where $\Delta_N=\conv\{e_1,\dots,e_{N+1}\}\subseteq\rr^{N+1}$ is the $N$-simplex whose extreme points are the elements of the standard basis of $\rr^{N+1}$ and $\eta_k=\id_{\Delta_k}$ for every $k\in\nn$. If $T(A)$ is infinite dimensional, then we set $\Delta_k=\conv\{e_1,\dots,e_{k+1}\}\subseteq\rr^{k+1}$ and use the Lazar--Lindenstrauss theorem \cite[Theorem 5.2 and its corollary]{Lazar:1971kx} to find surjective affine maps $\eta_k\colon\Delta_{k}\to\Delta_{k-1}$ such that $\varprojlim (\Delta_k,\eta_k) \cong T(A)$.

Next, for each $k\in\nn$ we let:
\begin{items}
\item $\sphere^{k-1}\subseteq\rr^{k+1}$ be the circumscribed sphere of $\Delta_k$ centred at the simplex's barycentre $(\frac{1}{k+1},\dots,\frac{1}{k+1})$;
\item $\ball^k\subseteq\rr^{k+1}$ be the convex hull of $\sphere^{k-1}$;
\item $\widehat\pi_k\colon\ball^k\to\Delta_k$ be a homeomorphism that restricts to a homeomorphism $\pi_k$ of $\sphere^{k-1}$ onto the $(k-1)$-skeleton of the $k$-simplex $\Delta_k$, and that fixes $\partial_e\Delta_k$.
\end{items}
We equip each $\Delta_k$ with the $\ell_1$-metric $\delta_k(x,y)=\|x-y\|_1$. As observed in \cite{Blackadar:1980zr}, each $\eta_k\colon\Delta_{k}\to\Delta_{k-1}$ is then nonexpansive. Then we equip:
\begin{items}
\item $\ball^k$ with the pullback metric $\omega_k(x,y)=\|\widehat\pi_k(x)-\widehat\pi_k(y))\|_1$;
\item $\sphere^{k-1}$ with the intrinsic metric $\rho_k(x,y)$ induced by the pullback metric $(x,y)\mapsto\|\pi_k(x)-\pi_k(y))\|_1$
\end{items}
both of which are strictly intrinsic (see Example~\ref{ex:normed} and Example~\ref{ex:intrinsic}).

By Lemma~\ref{lemma:lipschitzextension}, we can extend $\pi_k$ and $\widehat\pi_k$ to $1$-Lipschitz maps $\pi_k\colon\prob(\sphere^{k-1})\to\Delta_k$ and $\widehat\pi_k\colon\prob(\ball^{k})\to\Delta_k$. Note that the inclusions $\partial_e\Delta_k\hookrightarrow\sphere^{k-1}$ and $\partial_e\Delta_k\hookrightarrow\ball^{k}$ are both isometries, the latter by definition of $\omega_k$ and the former because the edges of $\Delta_k$ are shortest paths, so $\rho_k(e_i,e_j)=\|e_i-e_j\|_1$ for every $i,j$. Affinely extending these inclusions from $\partial_e\Delta_k$ to $\Delta_k$ by defining $\iota_k(\mu_1,\dots,\mu_{k+1}) = \sum_{i=1}^{k+1}\mu_i\delta_{e_i}$ and similarly for $\widehat\iota_k$, we get right inverses $\iota_k\colon \Delta_k \to \prob(\sphere^{k-1})$ and $\widehat\iota_k\colon \Delta_k \to \prob(\ball^{k})$ for $\pi_k$ and $\widehat\pi_k$. By Lemma~\ref{lemma:nsimplex}, $\iota_k$ and $\widehat\iota_k$ are isometric isomorphisms onto their images.

By Corollary~\ref{cor:ballsandspheres}, we can find projectionless, classifiable quantum metric Bauer simplices $(\js_{\sphere^{2k-1}},W_{1,\rho_{2k}})$ and $(\js_{\ball^{2k}},W_{1,\omega_{2k}})$ observing $(\sphere^{2k-1},\rho_{2k})$ and $(\ball^{2k},\omega_{2k})$ respectively, such that
\[
(K_0(\js_{\ball^{2k}}),K_0(\js_{\ball^{2k}})_+,[1_{\js_{\ball^{2k}}}],K_1(\js_{\ball^{2k}})) \cong (\zz,\nn,1,0)
\]
and
\[
(K_0(\js_{\sphere^{2k-1}}),K_0(\js_{\sphere^{2k-1}})_+,[1_{\js_{\sphere^{2k-1}}}]) \cong (\zz,\nn,1).
\]
Let us first assume that $T(A)$ is infinite dimensional. In this case, for every $k\in\nn$ we set $A_k=\js_{\sphere^{2k-1}}$ and $B_k=\js_{\ball^{2k}}$ (together with their respective $1$-Wasserstein metrics) and define nonexpansive affine maps
\begin{align}
\kappa_k &= \iota_{k-1}\circ\eta_k\colon\Delta_k\to\prob(\sphere^{k-2}) \label{eqn:intertwine}\\
\zeta_k &= \kappa_k\circ\pi_k\colon\prob(\sphere^{k-1})\to\prob(\sphere^{k-2})\notag\\
\lambda_k &= \eta_k\circ\widehat\pi_k\colon\prob(\ball^k)\to\Delta_{k-1}\notag\\
\theta_k &= \widehat\iota_{k-1}\circ\lambda_k\colon\prob(\ball^k)\to\prob(\ball^{k-1}).\notag
\end{align}
By classification \cite[Corollary C]{Carrion:wz}, we can lift
\[
\widehat\varphi_k := \zeta_{2k+1}\circ\zeta_{2k+2}\colon T(A_{k+1})\cong\prob(\sphere^{2k+1})\to\prob(\sphere^{2k-1})\cong T(A_k)
\]
and
\[
\widehat\psi_k := \theta_{2k+1}\circ\theta_{2k+2}\colon T(B_{k+1})\cong\prob(\ball^{2k+2})\to\prob(\ball^{2k})\cong T(B_k)
\]
to unital, $K_1$-trivial $^*$-homomorphisms $\varphi_k\colon A_k\to A_{k+1}$ and $\psi_k\colon B_k\to B_{k+1}$, respectively. We therefore obtain the following commutative diagram, in which all arrows are $1$-Lipschitz. Note that the pictures only indicate the \emph{extreme points} of the images of the simplices $\Delta_k$ under the inclusions $\iota_k$ and $\widehat\iota_k$, while interior points are contained in the infinite-dimensional probability spaces $\prob(\sphere^{k-1})$ and $\prob(\ball^{k})$.
\[
\begin{tikzcd}
\eqmathbox[N]{\dots}
&
\eqmathbox[M]{
\begin{tikzpicture}
  \draw[thick, SeaGreen] (0,0) circle(1cm);
  \foreach \angle in {90,210,330}
    \coordinate (T\angle) at (\angle:1cm);
  \draw[thick, blue] (T90) -- (T210) -- (T330) -- cycle;
\end{tikzpicture}
}
\arrow[d,"\pi_{k-1}"] \arrow[l,"\zeta_{k-1}"]
&
\eqmathbox[M]{
\begin{tikzpicture}
  \draw[thick, SeaGreen] (0,0) circle(1cm);
  \coordinate (A) at (90:1cm);
  \coordinate (B) at (210:1cm);
  \coordinate (C) at (330:1cm);
  \coordinate (D) at (0,0);
  \draw[dashed, thick, blue] (D) -- (A);
  \draw[dashed, thick, blue] (D) -- (B);
  \draw[dashed, thick, blue] (D) -- (C);
  \draw[thick, blue] (A) -- (B) -- (C) -- cycle;
\end{tikzpicture}
}
\arrow[l,"\zeta_{k}"] \arrow[d,"\pi_{k}"]
&
\eqmathbox[N]{\dots} \arrow[l,"\zeta_{k+1}"]
&
\eqmathbox[L]{~T(\varinjlim A_k))} \arrow[l] \arrow[d, dashed,"~\cong"]\\
\eqmathbox[N]{\dots}
&
\eqmathbox[M]{
\begin{tikzpicture}
  \foreach \angle in {90,210,330}
    \coordinate (T\angle) at (\angle:1cm);
  \draw[thick, blue] (T90) -- (T210) -- (T330) -- cycle;
\end{tikzpicture}
}
\arrow[d,"\widehat\iota_{k-1}"] \arrow[l,"\eta_{k-1}"]
&
\eqmathbox[M]{
\begin{tikzpicture}
  \coordinate (A) at (90:1cm);
  \coordinate (B) at (210:1cm);
  \coordinate (C) at (330:1cm);
  \coordinate (D) at (0,0);
  \draw[dashed, thick, blue] (D) -- (A);
  \draw[dashed, thick, blue] (D) -- (B);
  \draw[dashed, thick, blue] (D) -- (C);
  \draw[thick, blue] (A) -- (B) -- (C) -- cycle;
\end{tikzpicture}
}
\arrow[l,"\eta_{k}"] \arrow[d,"\widehat\iota_{k}"] \arrow[ul,"\kappa_{k}"]
&
\eqmathbox[N]{\dots} \arrow[l,"\eta_{k+1}"]
&
\eqmathbox[L]{~T(A)} \arrow[l] \arrow[d, dashed,"~\cong"] \arrow[u,dashed,xshift=0.15cm]\\
\eqmathbox[N]{\dots}
&
\eqmathbox[M]{
\begin{tikzpicture}
  \draw[thick, SeaGreen,fill=SeaGreen!80!black, opacity=0.7] (0,0) circle(1cm);
  \foreach \angle in {90,210,330}
    \coordinate (T\angle) at (\angle:1cm);
  \draw[thick, blue] (T90) -- (T210) -- (T330) -- cycle;
\end{tikzpicture}
}
\arrow[l,"\theta_{k-1}"]
&
\eqmathbox[M]{
\begin{tikzpicture}
  \draw[thick, SeaGreen] (0,0) circle(1cm);
  \shade[ball color=SeaGreen!90!black, opacity=0.7] (0,0) circle(1cm);
  \coordinate (A) at (90:1cm);
  \coordinate (B) at (210:1cm);
  \coordinate (C) at (330:1cm);
  \coordinate (D) at (0,0);
  \draw[dashed, thick, blue] (D) -- (A);
  \draw[dashed, thick, blue] (D) -- (B);
  \draw[dashed, thick, blue] (D) -- (C);
  \draw[thick, blue] (A) -- (B) -- (C) -- cycle;
\end{tikzpicture}
}
\arrow[l,"\theta_{k}"] \arrow[ul,"\lambda_{k}"]
&
\eqmathbox[N]{\dots} \arrow[l,"\theta_{k+1}"]
&
\eqmathbox[L]{T(\varinjlim B_k)} \arrow[l] \arrow[u,dashed,xshift=0.15cm]
\end{tikzcd}
\]
We get induced isometric isomorphisms
\[
(\varprojlim T(A_k),W_{1,\rho}) \cong (T(A),\delta_\infty) \cong (\varprojlim T(B_k),W_{1,\omega})
\]
which can be lifted via classification to $^*$-isomorphisms $\varinjlim A_k \cong \varinjlim B_k \cong A$. 

Finally, we consider the case where $N=\dim T(A)$ is finite. In this situation, we can take stationary inductive limits of suitably trace-collapsing endomorphisms. Moreover, the metrics $W_{1,\rho}$ and $W_{1,\omega}$ correspond to the $\ell_1$-metric on $T(A)\cong\Delta_N$, and the construction produces a tracial $1$-Wasserstein space observing a finite metric space consisting of $N+1$ equidistant points, thus verifying the last statement of the theorem. More precisely, let $M$ be the minimal element of $(2\nn+2)\cap[N,\infty)$. Then for every $k\in\nn$, we set $A_k=\js_{\sphere^{M-1}}$ and $B_k=\js_{\ball^{M}}$ (again with their respective $1$-Wasserstein metrics). We define the intertwining maps just as in \eqref{eqn:intertwine}, with $\eta_k=\id_{\Delta_N}$, with $\iota_k$ and $\widehat\iota_k$  adjusted by pre-composing them with the inclusion $\Delta_N\hookrightarrow\Delta_{M}$, and with $\pi_k$ and $\widehat\pi_k$ similarly adjusted by post-composing them with a collapsing surjection $\Delta_{M}\to\Delta_{N}$ (equal to $\id_{\Delta_N}$ if $M=N$). As before, we complete the proof by using classification to lift to a diagram in the category of tracially lyriform $\cs$-algebras.
\end{proof}

I like to think of the above construction as an `arachnid model' for the $\cs$-algebra $A$, with the sequence of balls and spheres imagined as the segments and exoskeleton of a (potentially infinite-dimensional) spider. 

\begin{definition} \label{def:simplicial}
We refer to the spheres and balls being observed in Theorem~\ref{thm:arachnid} (with their respective $\ell_1$-derived metrics) as \emph{simplicial}. We write $\mathfrak{ball}(A)$ and $\mathfrak{sphere}(A)$ for the union of the images in $A$ of the canonical cores \eqref{eqn:unioncore} (for some fixed radius $r\ge r_{(\ball^{2k},\omega_k)}=1$ or $r\ge r_{(\sphere^{2k-1},\rho_k)}=2$) of the building blocks $\js_{\ball^{2k}}$ and $\js_{\sphere^{2k-1}}$, respectively.
\end{definition}

\begin{remark}
All of the the spheres and balls appearing in Theorem~\ref{thm:arachnid} can be assumed to have dimension at least three (explicitly arranged in the finite-dimensional case, and without loss of generality in the infinite-dimensional case). The significance of this is that we then have access to the continuous optimal transport theory developed in \cite{Jacelon:2021wa,Jacelon:2021vc} (see Theorem~\ref{thm:transport} and Theorem~\ref{thm:orbits}). But if $N=\dim T(A)\in\{0,1\}$, we can also use the one-dimensional spaces $\sphere^1$ and $\ball^1$. (Since Theorem~\ref{thm:alexandrov} does not apply to $S^{-1}$ or $S^0$, the minimal dimension for which the proof of Theorem~\ref{thm:arachnid} works is one.) Here, for every $k\in\nn$ we set $A_k=\js_{\sphere^{1}}$ and $B_k=\js_{\ball^{1}}$. If $N=0$, we take $\widehat\varphi_k\colon T(A_{k+1}) \to T(A_k)$ and $\widehat\psi_k\colon T(B_{k+1}) \to T(B_k)$ to be constant. If $N=1$, instead of the homeomorphism $\pi_2$ that maps $\sphere^1$ onto the $1$-skeleton of $\Delta_2$, we take the double cover $\pi'_2\colon\sphere^1\to\Delta_2\to\Delta_1$ given by composing $\pi_2$ with a collapsing map $\Delta_2\to\Delta_1$. In other words, the line segment $\Delta_1$ joining $e_1$ and $e_2$ divides $\sphere^1$ into two arcs, each mapped homeomorphically by $\pi'_2$ onto $\Delta_1$. We then replace the metric $\rho_2$ by the circular metric $\rho'_2$ on $\sphere^1$ defined by $\rho'_2(x,y) = \delta_1(\pi'_2(x),\pi'_2(y)) = \|\pi'_2(x)-\pi'_2(y)\|_1$ if $x$ and $y$ are in the same arc and 
\[
\rho'_2(x,y) = \min\{\delta_1(\pi'_2(x),e_1)+\delta_1(e_1,\pi'_2(y)),\delta_1(\pi'_2(x),e_2)+\delta_1(e_2,\pi'_2(y))\}
\]
if $x$ and $y$ are in different arcs. We also adjust $\iota_k$ by pre-composing with the inclusion $\Delta_1\hookrightarrow\Delta_{2}$, once again define the intertwining maps as in \eqref{eqn:intertwine} with $\eta_k=\id_{\Delta_1}$ for every $k\in\nn$, and then lift via classification.

Note finally that $\dim T(A)=0$ precisely when $A$ is isomorphic to the Jiang--Su algebra $\js$, now reconstructed as a limit of generalised dimension drop algebras over simplicial spheres or balls.
\end{remark}

\section{The geometry and statistics of embedding spaces} \label{section:geostats}

In this section, we look at unital embeddings of projectionless models $\js_X$ over balls $X=\ball^k$ or odd spheres $X=\sphere^{k-1}$ into suitable stably finite classifiable codomains. As it turns out, it is the (pro-)$\infty$-Wasserstein distance that dictates the geometry of unitary orbits of these embeddings. Let us assume for the moment that the codomain $B$ is monotracial, and also real rank zero with $K_1(B)=0$ if we are talking about spheres $X=\sphere^{k-1}$ (otherwise, we would need to worry about Hausdorffised algebraic $K_1$). By classification \cite[Theorem B]{Carrion:wz} and the $K$-theory computation Corollary~\ref{cor:ballsandspheres}, the approximate unitary equivalence classes of these embeddings are parameterised by traces on $\js_X$, that is, by elements of the probability space $\prob(X)$. (Remember, $\js_X$ is a simple $\cs$-algebra, so all traces are faithful, not just the ones represented by faithful measures on $X$.) So, the embedding space $\emb(\js_X,B)/\sim_{au}$ has the structure of a Bauer simplex. Well, not quite, because the right topology on $\emb(\js_X,B)/\sim_{au}$ is the topology of pointwise convergence modulo unitary conjugation. This is \emph{not} the $\ws$-topology but rather the pro-$W_\infty$-topology. Indeed, by Corollary~\ref{cor:dimdroporbits}, the unitary distance $d_U(\varphi,\psi)|_{\mathcal{L}_r(\js_X)}$ between $\sim_{au}$-classes of embeddings $\varphi,\psi\colon\js_X\to B$ (see Definition~\ref{def:unitarydistance}) is \emph{equal} to $\overrightarrow{W}_\infty(\tau_B\circ\varphi,\tau_B\circ\psi)$. So, $\overrightarrow{W}_\infty$ will provide not just the topology but the geometry as well.

\subsection{Noncommutative continuous transport} \label{subsection:transport}

Recall from Proposition~\ref{prop:inherit} that every tracial Wasserstein space observing a compact metric space $(X,\rho)$ can be viewed as a tracial $\fp$-Wasserstein space for any $\fp\in[1,\infty]$. For the definition of a core and a nucleus as well as relevant examples, see \eqref{eqn:core}, \eqref{eqn:drmat} and \eqref{eqn:unioncore}.

\begin{definition} \label{def:transport}
The \emph{continuous transport constant} of a tracial Wasserstein space $(A,W_{\fp,\rho})$ relative to a nucleus $\mathcal{L}(A)$ is the minimal element $k=k_{(A,\rho)}$ of $[1,\infty]$  with the following property: For every pair of faithful traces $\sigma,\tau\in T(A)$ and every $\eps>0$, there is an automorphism $\alpha$ of $A$ that is homotopic to the identity and satisfies
\begin{equation} \label{eqn:transport1}
W_{\infty,\rho}(\sigma\circ\alpha,\tau) \le \eps
\end{equation}
and
\begin{equation} \label{eqn:transport2}
W_{\infty,\rho}(\sigma,\tau) - \eps \le \sup_{f\in\mathcal{L}(A)}\|\alpha(f)-f\| \le k \cdot W_{\infty,\rho}(\sigma,\tau) + \eps.
\end{equation}
\end{definition}

\begin{remark} \label{rem:inequality}
Given \eqref{eqn:transport1}, the first inequality of \eqref{eqn:transport2} is automatic. Indeed, let $\alpha$ be an automorphism of $A$. Since automorphisms preserve the tracial boundary, $\alpha$ induces a homeomorphism $h_\alpha$ of $X\cong \partial_eT(A)$. For every $x\in X$, the function $\rho_x\colon y\mapsto \rho(x,y)$ is in $\lip(X,\rho)=\lip(T(A),W_{\fp,\rho})$ (see Proposition~\ref{prop:inherit} and its surrounding discussion), so by definition \eqref{eqn:L}, any lift $a_x\in A_{sa}$ of $\rho_x$ satisfies $L_\rho(a_x)\le1$. By \eqref{eqn:core} and the assumed compactness of $\mathcal{L}(A)$, there exist $l_x\in\mathcal{L}(A)$ and $\lambda_x\in\rr$ such that $\tau(l_x+\lambda_x)=\tau(a_x)$ for every $\tau\in T(A)$. In particular, writing $\tau_y$ for the extremal trace at $y\in X$,
\begin{align}
\sup_{f\in\mathcal{L}(A)}\|\alpha(f)-f\| &\ge \sup_{x\in X} \|\alpha(l_x)-l_x\| \notag \\
&\ge \sup_{x\in X} |\tau_x(\alpha(l_x)-l_x)| \notag \\
&= \sup_{x\in X} |\tau_x(\alpha(l_x+\lambda_x)-(l_x+\lambda_x))| \notag \\
&= \sup_{x\in X} |\tau_{h_\alpha(x)}(a_x)-\tau_x(a_x)| \notag \\
&= \sup_{x\in X} |\rho(x,h_\alpha(x))-\rho(x,x)| \notag \\
&= \sup_{x\in X} \rho(x,h_\alpha(x)). \label{eqn:transportlb}
\end{align}
This implies that $h_\alpha^{-1}(U)\subseteq U_\delta$ for any $U\subseteq X$, where $\delta=\sup_{f\in\mathcal{L}(A)}\|\alpha(f)-f\|$, and hence by definition \eqref{eqn:winf1} that
\[
W_{\infty,\rho}(\sigma\circ\alpha,\sigma) = W_{\infty,\rho}(\mu_\sigma\circ h_\alpha^{-1},\mu_\sigma) \le \delta.
\]
It then follows from \eqref{eqn:transport1} that
\[
W_{\infty,\rho}(\sigma,\tau) \le W_{\infty,\rho}(\sigma,\sigma\circ\alpha) + W_{\infty,\rho}(\sigma\circ\alpha,\tau) \le \sup_{f\in\mathcal{L}(A)}\|\alpha(f)-f\| + \eps,
\]
which is the required inequality.
\end{remark}

\begin{remark} \label{rem:commutative}
If $A=C(X)$ is commutative, then the inequality \eqref{eqn:transportlb} is in fact an equality. Indeed, for any nucleus $\mathcal{L}(C(X))$ we have
\begin{align*}
\sup_{f\in\mathcal{L}(C(X))}\|\alpha(f)-f\| &= \sup_{f\in\lip(X,\rho)}\|\alpha(f)-f\|\\
&= \sup_{f\in\lip(X,\rho)}\sup_{x\in X}|f(h_\alpha(x))-f(x)|\\
&\le \sup_{x\in X} \rho(x,h_\alpha(x)).
\end{align*}
This in particular implies that the `noncommutative' transport constant $k_{C(X)}$ of $(C(X),\rho)$ (in the sense of Definition~\ref{def:transport}, relative to any nucleus $\mathcal{L}(C(X))$) agrees with the `commutative' transport constant $k_X$ introduced in \cite[Definition 2.6]{Jacelon:2021vc}. Actually, that is not quite true: Definition~\ref{def:transport} is stated for faithful measures while \cite[Definition 2.6]{Jacelon:2021vc} only considers measures that are \emph{good}, that is, faithful as well as diffuse. So, formally, we have $k_{C(X)}\ge k_X$. The difference really becomes substantive when one is interested in an exact transport map rather than an approximate one, and for our present needs is insignificant. Indeed, as observed in \cite[Remark 2.7]{Jacelon:2021vc}, our typical method of transport does not depend on diffuseness (see the proof of Theorem~\ref{thm:transport}), so there is no practical difference between the two notions. Moreover, as good measures are generic in $\prob(X)$, faithful measures can be perturbed to good ones (see the proof of \cite[Theorem 3.1]{Jacelon:2021vc}). A final point in favour of working with faithful traces, as discussed at the beginning of this section, is that these are what actually parameterise embeddings.
\end{remark}

Recall from Definition~\ref{def:dimdrop} that if $p$ and $q$ are natural numbers and $x_0$ and $x_1$ are distinct elements of $X$, then the associated generalised dimension drop algebra is
\[
X_{p,q} = \{f\in C(X,M_p\otimes M_q) \mid f(x_0)\in M_p\otimes1_q,f(x_1)\in 1_p\otimes M_q\}.
\]
Corollary~\ref{cor:wasslyre} records the fact that, for any $\fp\in[1,\infty]$ and $r\ge r_{(X,\rho)}$, $X_{p,q}$ has the structure of a tracial $\fp$-Wasserstein space with canonical nucleus
\begin{equation} \label{eqn:drx}
\mathcal{D}_r(X_{p,q}) = \{f \in (X_{p,q})_{sa} \mid \|f\|\le r,\: \|f(x)-f(y)\|\le\rho(x,y)\:\text{ for every } x,y\in X\}.
\end{equation}
Examples of spaces $X$ that admit optimal continuous transport (that is, satisfy $k_X=1$) are compact, convex subsets of Euclidean space (see \cite[Theorem 2.13]{Jacelon:2021wa}) and compact, connected Riemannian manifolds with the intrinsic metric induced from the smooth structure (see \cite[Theorem 2.8]{Jacelon:2021vc}). These examples possess the property that finite sets of distinct points can be joined by shortest paths that can be locally perturbed to avoid undesirable intersection points and then be encased in small disjoint neighbourhoods homeomorphic to thin Euclidean tubes. A simplicial sphere or ball, or more generally, a Riemannian manifold equipped with an intrinsic metric equivalent to the natural smooth one, also has this property. With these base spaces, optimal continuous transport in the noncommutative sense (Definition~\ref{def:transport}) carries over to the associated generalised dimension drop algebras.

\begin{theorem} \label{thm:transport}
Let $X$ be a compact, connected Riemannian manifold of dimension at least three and let $\rho$ be an intrinsic metric on $X$ that is Lipschitz equivalent to the intrinsic metric induced by the Riemannian structure. Let $X_{p,q}$ be a generalised dimension drop algebra over $X$ for some $p,q\in\nn$ and $x_0\ne x_1\in X$. Then, $k_{(X_{p,q},\rho)}=1$ relative to $\mathcal{D}_r(X_{p,q})$. 
\end{theorem}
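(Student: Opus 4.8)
The plan is to deduce the statement from the commutative optimal transport theory already available for $X$, realising the transporting automorphism of $X_{p,q}$ by precomposition with a diffeomorphism of $X$ that fixes the two basepoints $x_0,x_1$. Fix a radius $r\ge r_{(X,\rho)}$ and take the candidate nucleus to be the canonical one $\mathcal{D}_r(X_{p,q})$ of \eqref{eqn:drx}. Since a continuous transport constant is always at least $1$, it is enough to verify the property of Definition~\ref{def:transport} with $k=1$ for this nucleus; and by Remark~\ref{rem:inequality} the lower bound in \eqref{eqn:transport2} comes for free. So, identifying $T(X_{p,q})$ with $\prob(X)$ and writing $\mu_\sigma,\mu_\tau$ for the full-support measures corresponding to faithful traces $\sigma,\tau$, I must produce, for each $\eps>0$, an automorphism $\alpha$ of $X_{p,q}$ homotopic to the identity with $W_{\infty,\rho}(\sigma\circ\alpha,\tau)\le\eps$ and $\sup_{f\in\mathcal{D}_r(X_{p,q})}\|\alpha(f)-f\|\le W_{\infty,\rho}(\mu_\sigma,\mu_\tau)+\eps$.

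The commutative input is that $k_X=1$: since $(X,\rho)$ is a compact connected Riemannian manifold with $\rho$ Lipschitz equivalent to the intrinsic Riemannian metric, \cite[Theorem 2.8]{Jacelon:2021vc} (and \cite[Theorem 2.13]{Jacelon:2021wa} for convex bodies) produces a diffeomorphism $h$ of $X$, isotopic to the identity, with $W_{\infty,\rho}(h_*\mu_\sigma,\mu_\tau)\le\eps$ and $\sup_{x\in X}\rho(x,h(x))\le W_{\infty,\rho}(\mu_\sigma,\mu_\tau)+\eps$. What needs to be extracted from that argument is that the isotopy, and hence $h$, can be taken to fix $x_0$ and $x_1$: the transport maps of \cite{Jacelon:2021vc,Jacelon:2021wa} are built from compactly supported perturbations that slide mass along short geodesic arcs encased in disjoint thin tubes, and in dimension at least three these arcs and tubes can be re-routed through $X\setminus\{x_0,x_1\}$, so the generating vector fields may be chosen to vanish at the basepoints. (Here one also uses — or, following Remark~\ref{rem:commutative}, may simply assume, this being all that is needed downstream — that $\mu_\sigma$ and $\mu_\tau$ place no mass at $x_0$ or $x_1$, so that no mass must be created or destroyed there.)

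Given such an $h$, define $\alpha(f)=f\circ h$ for $f\in X_{p,q}$. Because $h$ fixes $x_0$ and $x_1$, the fibre conditions $f(x_0)\in M_p\otimes 1_q$ and $f(x_1)\in 1_p\otimes M_q$ are preserved, so $\alpha$ is a $^*$-automorphism of $X_{p,q}$; precomposition with an isotopy from $\mathrm{id}_X$ to $h$ fixing $x_0,x_1$ displays it as homotopic to the identity through automorphisms. On the trace side, $\sigma\circ\alpha$ corresponds to $h_*\mu_\sigma$, whence $W_{\infty,\rho}(\sigma\circ\alpha,\tau)=W_{\infty,\rho}(h_*\mu_\sigma,\mu_\tau)\le\eps$. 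For the nucleus estimate, every $f\in\mathcal{D}_r(X_{p,q})$ is a $1$-Lipschitz map $(X,\rho)\to M_{pq}$, so
\[
\|\alpha(f)-f\|=\sup_{x\in X}\|f(h(x))-f(x)\|\le\sup_{x\in X}\rho(h(x),x)\le W_{\infty,\rho}(\mu_\sigma,\mu_\tau)+\eps,
\]
which is the required bound with $k=1$. Combined with the automatic lower estimate this yields $k_{(X_{p,q},\rho)}\le1$, hence $k_{(X_{p,q},\rho)}=1$.

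I expect the only real obstacle to be the claim used in the second paragraph — that the commutative transport can be carried out by an ambient isotopy fixing $x_0$ and $x_1$ while still achieving cost at most $W_{\infty,\rho}(\mu_\sigma,\mu_\tau)+\eps$. This is precisely where the hypotheses enter: $\dim X\ge3$ provides the room to route all of the transport motion through the complement of the two basepoints inside disjoint tubes, and Lipschitz equivalence of $\rho$ to the Riemannian metric both places us in the regime where $k_X=1$ and keeps the $W_\infty$-cost comparable. Everything else — the identification $T(X_{p,q})\cong\prob(X)$, well-definedness of $\alpha$ on the dimension-drop algebra, and the norm estimate on the matrix-valued nucleus — transcribes the $C(X)$ case verbatim, the sole $C^*$-algebraic subtlety being the (easy) observation that a basepoint-fixing homeomorphism of $X$ respects the conditions defining $X_{p,q}$.
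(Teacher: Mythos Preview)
Your reduction to the commutative case is exactly right and matches the paper's structure: define $\alpha(f)=f\circ h$ for a basepoint-fixing homeomorphism $h$ of $X$, observe that the nucleus estimate $\sup_{f\in\mathcal{D}_r}\|\alpha(f)-f\|\le\sup_x\rho(x,h(x))$ follows immediately from $1$-Lipschitzness of elements of $\mathcal{D}_r$, and note that $\sigma\circ\alpha$ corresponds to $h_*\mu_\sigma$. The basepoint-fixing issue you flag is real but, as you correctly anticipate, easily handled in dimension $\ge3$ by routing the transport tubes through $X\setminus\{x_0,x_1\}$; the paper does exactly this.

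Where your proposal and the paper part ways is in treating the commutative input $k_X=1$ as a finished black box. The paper redoes the entire construction (discrete approximants $\mu,\nu$, shortest paths $\gamma_i$, disjoint tubes $V_i(\delta)$, local homeomorphisms $h_i$), and the bulk of the proof is devoted to verifying \eqref{eqn:transport1}, i.e.\ $W_{\infty,\rho}(h_*\mu_\sigma,\mu_\tau)\le\eps$. The difficulty is that the assembled map $h$ has no uniform Lipschitz bound (it fixes tube boundaries while sending $x_i$ to $y_i$, so its local distortion blows up as $\eps\to0$), and so one cannot simply push the $W_\infty$ estimate from the finite approximants $\mu,\nu$ to the actual measures $\mu_\sigma,\mu_\tau$. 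The paper explicitly remarks that this same gap affects the proofs of \cite[Theorem~2.13]{Jacelon:2021wa} and \cite[Theorem~2.8]{Jacelon:2021vc} --- precisely the results you invoke --- and fills it with a ``carpet-sweeping'' argument: the bad behaviour of $h$ is confined to a thin annular region $W$ of $\mu_\sigma$-measure less than $\gamma:=\inf_x\mu_\sigma(B_{\eta/16}(x))>0$, and for any open $U$ not already $3\eps$-large one locates a small ball disjoint from the tubes inside $U_{3\eps}\setminus U_{2\eps}$ whose measure compensates. So while your outline is sound, the cited commutative results cannot be taken off the shelf; the nontrivial analytic content you defer to them is exactly what the present proof supplies.
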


\begin{proof}
Let $\sigma,\tau\in T(X_{p,q})$ be faithful traces, corresponding to fully supported measures $\mu_\sigma,\mu_\tau\in\prob(X)$, and let $\eps>0$. As in the proof of \cite[Lemma 2.3]{Jacelon:2021wa}, we can find finitely supported measures $\mu=\sum_{i=1}^n\delta_{x_i}$ and $\nu=\sum_{i=1}^n\delta_{y_i}$ such that
\begin{items}
\item $\max\{W_\infty(\mu,\mu_\sigma),W_\infty(\nu,\mu_\tau)\} < \eps$;
\item $\{x_1,\dots,x_n\}$ and $\{y_1,\dots,y_n\}$ are disjoint subsets of $X\setminus\{x_0,x_1\}$, and each is $\eps$-dense in $X$ with respect to both $\rho$ and the intrinsic Riemannian metric $\rho_{\text{Riem}}$;
\item $\{x_1,\dots,x_n\}$ and $\{y_1,\dots,y_n\}$ are ordered so that
\[
\max_{1\le i\le n}\rho(x_i,y_i) = \min_{\sigma\in S_n} \max_{1\le i\le n}\rho(x_i,y_\sigma(i)) = W_\infty(\mu,\nu).
\]
\end{items}
For each $i\in\{1,\dots,n\}$, let $\gamma_i\colon[0,1]\to X$ be a shortest path from $x_i=\gamma_i(0)$ to $y_i=\gamma_i(1)$. Since $N:=\dim X\ge3$, by locally perturbing at any intersection points we may assume that the images of these paths are pairwise disjoint and do not contain $x_0$ or $x_1$. Let $\eta\in (0,\eps)$ such that
\begin{equation} \label{eqn:eta}
\eta \le \min_{i\ne j}\inf_{s,t\in[0,1]}\min\{\rho(\gamma_i(s),\gamma_j(t)),\rho_{\text{Riem}}(\gamma_i(s),\gamma_j(t))\}.
\end{equation}
For $\delta\in(0,\frac{\eta}{2}]$ and $i\in\{1,\dots,n\}$, let $V_i(\delta)$ be the set of elements of $X$ whose $\rho_{\text{Riem}}$-distance from $\gamma_i([0,1])$ is at most $\delta$. If $\eta$ and $\delta$ are sufficiently small, then these $V_i(\delta)$ do not intersect each other or $\{x_0,x_1\}$, each is contained in the $\rho$-neighbourhood of radius $\frac{\eta}{8}$ around $\gamma_i([0,1])$, and each is homeomorphic to an $N$-dimensional ball. It follows that there are homeomorphisms $h_i\colon V_i(\delta) \to V_i(\delta)$ such that
\begin{items}
\item $h_i(x_i)=y_i$;
\item $\sup_{x\in V_i(\delta)}\rho(h(x),x) \le \rho(x_i,y_i) + \eps$;
\item $h_i$ is homotopic to  $\id_{V_i(\delta)}$;
\item $h_i$ restricts to the identity on the topological boundary of $V_i(\delta)$.
\end{items}
The union of these functions (suitably finessed as discussed after \eqref{eqn:pushforward}) is the desired transport map $h$. Since by construction we have $h(x_0)=x_0$ and $h(x_1)=x_1$, $h$ lifts to an automorphism $\alpha_h$ of $X_{p,q}$ (namely, $f\mapsto f\circ h$). Note that $\alpha_h$ is homotopic to the identity (since $h$ is, via maps fixing $\{x_0,x_1\}$) and satisfies
\begin{align*}
\sup_{f\in\mathcal{D}_r(X_{p,q})} \|\alpha_h(f)-f\| &= \sup_{f\in\mathcal{D}_r(X_{p,q})} \sup_{x\in X} \|f(h(x))-f(x)\|\\
&\le \sup_{x\in X} \rho(h(x),x)\\
&\le \max_{1\le i\le n}\rho(x_i,y_i) + \eps\\
&= W_\infty(\mu,\nu) + \eps.
\end{align*}
To see that it maps $\sigma$ approximately to $\tau$, note that
\begin{align} \label{eqn:pushforward}
W_\infty(\sigma\circ\alpha_h,\tau) &= W_\infty(h_*\mu_\sigma,\mu_\tau) \notag\\
&\le W_\infty(h_*\mu_\sigma,h_*\mu) + W_\infty(h_*\mu, \nu) + W_\infty(\nu,\mu_\tau) \notag\\
&< W_\infty(h_*\mu_\sigma,h_*\mu) + \eps.
\end{align}
As observed in \cite[Remark 2.7]{Jacelon:2021vc}, if we were able to provide a bound $L$ for the Lipschitz seminorm of $h$ (uniformly over $n$ and $\eps$), then we would also have a bound for $W_\infty(\mu_\sigma\circ h^{-1},\mu\circ h^{-1})$. Indeed, for any Borel $U\subseteq X$, if $x\in (h^{-1}(U))_\eps$, then there exists $y\in X$ such that $h(y)\in U$ and $\rho(x,y)<\eps$, hence $x\in h^{-1}(U_{L\eps})$. In other words, $(h^{-1}(U))_\eps \subseteq h^{-1}(U_{L\eps})$. So we would have
\[
h_*\mu_\sigma(U_{L\eps}) = \mu_\sigma(h^{-1}(U_{L\eps})) \ge \mu_\sigma((h^{-1}(U))_\eps) \ge \mu(h^{-1}(U)) = h_*\mu(U)
\]
the second inequality holding because $h^{-1}(U)$ is Borel and $W_\infty(\mu,\mu_\sigma)<\eps$. We would thus deduce that $W_\infty(h_*\mu_\sigma,h_*\mu)\le L\eps$.

This argument does not quite complete the proof (or for that matter, the proof of \cite[Theorem 2.13]{Jacelon:2021wa} or \cite[Theorem 2.8]{Jacelon:2021vc}), because we do not actually have such an $L$. In fact, since $h_i$ fixes the boundary of $V_i(\delta)$ but sends $x_i$ to $y_i$, its Lipschitz seminorm grows to infinity as $\eps$ goes to zero. Luckily, we can hide this problem underneath regions of small measure.

More precisely, since $\mu_\sigma$ is finite, for every $i\in\{1,\dots,n\}$ there can be at most countably many $\delta$ such that the measure of the boundary of $V_i(\delta)$ is nonzero. So, we can assume from the start that $\delta\in(0,\frac{\eta}{16})$ is chosen so that $\mu_\sigma(\partial V) = 0$, where $V:=\bigcup_{i=1}^nV_i(\delta)$. Finiteness of $\mu_\sigma$ also ensures its continuity with respect to decreasing sequences of Borel sets, so there is then $\delta'\in(0,\delta)$ such that the $\mu_\sigma$-measure of $W:=\bigcup_{i=1}^nV_i(\delta)\setminus V_i(\delta')$ is small. This is the small carpet under which we will sweep the bad behaviour of $h$.

Let us specify what we mean by `small'. Set $\gamma=\inf_{x \in X}\mu_\sigma(B_{\frac{\eta}{16}}(x))$. Note that $\gamma>0$. (If not, there would be a convergent sequence $x_k\to x$ such that $\mu_\sigma(B_{\frac{\eta}{16}}(x_k))<\frac{1}{k}$ for every $k\in\nn$. But then $B_{\frac{\eta}{32}}(x)$ would be contained in $B_{\frac{\eta}{16}}(x_k)$ for all sufficiently large $k$, so its measure must be zero, which would contradict faithfulness of $\mu_\sigma$.) We choose $\delta'$ so that $\mu_\sigma(W) < \gamma$.

Note that outside of this region, $h$ is either the identity map on $X \setminus V$, or corresponds under the homeomorphism between $V_i(\delta')$ and an $N$-ball to rotation by $\pi$ about the midpoint of $x_i$ and $y_i$. In the small region $W$, $h$ is defined via a homotopy between these rotation maps and the identity on the boundary of $\bigcup_{i=1}^n V_i(\delta)$. Therefore, we can assume for $x,y\in\bigcup_{i=1}^nV_i(\delta')$ that
\[
\rho(h(x),h(y)) \le \rho(x,y) + \eps
\]
and hence that $(h^{-1}(U))_{\eps} \subseteq h^{-1}(U_{2\eps})$ for any $U \subseteq \bigcup_{i=1}^nV_i(\delta')$.

Now let $U\subseteq X$ be a nonempty open set. The summary of the above discussion is that
\begin{equation} \label{eqn:carpetsweep}
\mu_\sigma(h^{-1}(U_{2\eps})) \ge \mu_\sigma((h^{-1}(U))_{\eps} \setminus W) \ge \mu_\sigma((h^{-1}(U))_{\eps}) - \mu_\sigma(W) > \mu(h^{-1}(U)) - \gamma.
\end{equation}
Suppose that $U$ is large, meaning that $X= U_{3\eps}$. Then,
\begin{equation} \label{eqn:hook}
h_*\mu_\sigma(U_{3\eps}) = \mu_\sigma(h^{-1}(U_{3\eps})) = \mu_\sigma(X) = 1 \ge h_*\mu(U).
\end{equation}
If $U$ is not large, then we claim that we can find an open ball of $\rho$-radius $\frac{\eta}{16}$ disjoint from $V$ and contained in $U_{3\eps} \setminus  U_{2\eps}$. To see this, note that the function $x\mapsto\rho(x,U)$ is continuous and contains $\{0,3\eps\}$ in its range. Since $X$ is connected, there therefore exists $x\in X$ such that $\rho(x,U)=\frac{5\eps}{2}$. Since $\eta<\eps$, the ball $B_\frac{\eta}{16}(x)$ is contained in $U_{3\eps} \setminus U_{2\eps}$. If $B_\frac{\eta}{16}(x)$ does not intersect $V$, then it is the required ball. Otherwise, it intersects $V_i(\delta)$ for some $i$ (unique, by choice \eqref{eqn:eta} of $\eta$). In this case, there exists $y\in\gamma_i([0,1])\cap B_\frac{\eta}{8}(x)$ (because the tube $V_i(\delta)$ is $\frac{\eta}{16}$-thin). Within the neighbourhood $V_i(\frac{\eta}{2})$, we proceed outwards from $y$ along a curve $\gamma$ that is normal to $\gamma_i$ at $y$. For some $t$, $B_\frac{\eta}{16}(\gamma(t))$ is disjoint from $V_i(\delta)$ and is contained in $V_i(\frac{\eta}{8})$, hence is also disjoint from any other $V_j(\delta)$. Noting that $\frac{\eta}{4}+\frac{\eta}{16}=\frac{5\eta}{16}<\frac{\eps}{2}$, and that
\[
\rho(\gamma(t),U) \le \rho(\gamma(t),y) + \rho(y,x) + \rho(x,U) < \frac{\eta}{8} + \frac{\eta}{8} + \frac{5\eps}{2} = \frac{5\eps}{2} + \frac{\eta}{4}
\]
and
\[
\rho(\gamma(t),U) \ge \rho(x,U) -  \rho(y,x) - \rho(\gamma(t),y)  > \frac{5\eps}{2} - \frac{\eta}{8} - \frac{\eta}{8} = \frac{5\eps}{2} - \frac{\eta}{4}
\]
it follows that $B_\frac{\eta}{16}(\gamma(t))$ is contained in $U_{3\eps} \setminus U_{2\eps}$. This proves the claim.

Taking such an $\frac{\eta}{16}$-ball $B$, we have  $\mu_\sigma(B) \ge \gamma$ and $h(B)=B$, so $B\subseteq h^{-1}(U_{3\eps}) \setminus  h^{-1}(U_{2\eps})$. We then have from \eqref{eqn:carpetsweep} that
\begin{equation} \label{eqn:crook}
h_*\mu_\sigma(U_{3\eps}) = \mu_\sigma(h^{-1}(U_{3\eps})) \ge \mu_\sigma(h^{-1}(U_{2\eps})) + \mu_\sigma(B) > h_*\mu(U).
\end{equation}
By hook \eqref{eqn:hook} or by crook \eqref{eqn:crook}, we therefore have that $W_\infty(h_*\mu_\sigma,h_*\mu)\le3\eps$, and hence from \eqref{eqn:pushforward} that $W_\infty(\sigma\circ\alpha,\tau)<4\eps$. We conclude that the transport automorphism $\alpha_h$ does indeed have the required properties to demonstrate that $k_{(X_{p,q},\rho)}=1$.
\end{proof}

\subsection{Distances between unitary orbits} \label{subsection:orbits}

\begin{definition} \label{def:unitarydistance}
Let $B$ be a unital $\cs$-algebra, and let $\mathcal{L}(A)$ be a core for a tracially lyriform $\cs$-algebra $(A,\rho)$. The \emph{unitary distance} in $\emb(A,B)$ relative to $\mathcal{L}(A)$ is defined to be
\begin{equation} \label{eqn:unitary}
d_U(\varphi,\psi)|_{\mathcal{L}(A)} := \adjustlimits \sup_{F\ssubset\mathcal{L}(A)} \inf_{u\in\mathcal{U}(B)} \sup_{a\in F} \|\varphi(a)-u\psi(a)u^*\|.
\end{equation}
The \emph{tracial distance} between $\varphi,\psi\in\emb(A,B)$ is
\begin{equation} \label{eqn:tracialdistance}
\rho(\varphi,\psi) = \sup_{\tau\in T(B)}\rho(\tau\circ\varphi,\tau\circ\psi).
\end{equation}
\end{definition}

Since the core $\mathcal{L}(A)$ in principle only captures tracial information, distance zero is formally weaker than approximate unitary equivalence $\sim_{au}$. So, $d_U(\cdot,\cdot)|_{\mathcal{L}(A)}$ is only a \emph{pseudo}metric on the set $\emb(A,B)/\sim_{au}$ of approximate unitary equivalence classes of embeddings of $A$ into $B$. The same is true of the tracial distance $\rho$. But $d_U(\varphi,\psi)_{\mathcal{L}(A)}=0$ (or $\rho(\varphi,\psi)=0$) does imply that $T(\varphi)=T(\psi)$, so if $B$ is real rank zero and classifiable, and (for simplicity of discussion) $K_*(A)$ is finitely generated and torsion free, then for any fixed $\varphi \in \emb(A,B)$, $d_U(\cdot,\cdot)|_{\mathcal{L}(A)}$ is genuinely a metric on the set of $\sim_{au}$-equivalence classes in
\[
\emb(A,B)_\varphi = \{\psi \in \emb(A,B) \mid K_*(\psi) = K_*(\varphi)\}.
\]
That said, if $A$ is a generalised dimension drop algebra $X_{p,q}$, then its canonical nucleus $\mathcal{D}_r(A)$ has the property that its linear span is dense in $A_{sa}$. So in this case, $d_U(\cdot,\cdot)|_{\mathcal{D}_r(A)}$  is a metric on any $\emb(A,B)/\sim_{au}$ without any assumptions on $B$.

Theorem~\ref{thm:orbits} and its corollaries apply in particular to $B=\js$ or $\mathcal{Q}$, as appropriate. As mentioned in the introduction of this section, if $K_1(A)$ is nontrivial, then because the noncommutative continuous transport constant $k_{(A,\rho)}$ does not keep track of the Hausdorffised algebraic $K_1$ component of the classifying invariant, it is necessary for us to assume that the real rank of $B$ is zero. 

\begin{theorem} \label{thm:orbits}
Let $A=X_{p,q}$ be a generalised dimension drop algebra over a compact, connected metric space $(X,\rho)$ such that $K_*(A)$ is finitely generated and torsion free and $k_{(A,\rho)}<\infty$ via the canonical nucleus $\mathcal{D}_r(A)$. Let $B$ be a simple, separable, unital, nuclear, $\js$-stable $\cs$-algebra. Suppose that either
\begin{enumerate}[(1)]
\item $B$ has a unique trace and $K_1(A)=0$, or
\item $B$ has real rank zero and $\partial_e(T(B))\ne\emptyset$ is compact and of finite Lebesgue covering dimension.
\end{enumerate}
Then,
\begin{equation} \label{eqn:unitaryinequality}
d_U(\varphi,\psi)|_{\mathcal{D}_r(A)} \le k_{(A,\rho)} \cdot W_{\infty,\rho}(\varphi,\psi)
\end{equation}
for every $r\ge r_{(X,\rho)}$ and every pair of unital $^*$-monomorphisms $\varphi,\psi\colon A\to B$ with $K_*(\varphi)=K_*(\psi)$. If we further assume that $X$ is a Riemannian manifold of dimension at least three, and $\rho$ is an intrinsic metric on $X$ that is equivalent to the one induced by the Riemannian structure, then
\begin{equation} \label{eqn:unitaryequality}
d_U(\varphi,\psi)|_{\mathcal{D}_r(A)} = W_{\infty,\rho}(\varphi,\psi)
\end{equation}
for every such $\varphi,\psi\colon A \to B$.
\end{theorem}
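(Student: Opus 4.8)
I will prove the two inequalities separately, obtaining ``$\le$'' by feeding Theorem~\ref{thm:transport} into the bound \eqref{eqn:unitaryinequality}, and proving ``$\ge$'' by hand via a two-projection overlap argument inside the tracial von Neumann completions of $B$. For the upper bound, the additional hypotheses---$X$ a compact, connected Riemannian manifold of dimension at least three and $\rho$ intrinsic and Lipschitz equivalent to the Riemannian metric---put us exactly in the setting of Theorem~\ref{thm:transport}, so $k_{(X_{p,q},\rho)}=1$, realised via the canonical nucleus $\mathcal{D}_r(A)$. Since the standing hypotheses~(1) or~(2) and the finite generation and torsion-freeness of $K_*(A)$ remain in force, \eqref{eqn:unitaryinequality} applies and yields $d_U(\varphi,\psi)|_{\mathcal{D}_r(A)}\le k_{(X_{p,q},\rho)}\cdot W_{\infty,\rho}(\varphi,\psi)=W_{\infty,\rho}(\varphi,\psi)$. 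All of the analytic weight of this direction is already carried by Theorem~\ref{thm:transport}.

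For the lower bound, it will suffice to show $d_U(\varphi,\psi)|_{\mathcal{D}_r(A)}\ge W_{\infty,\rho}(\mu_\tau,\nu_\tau)$ for every $\tau\in T(B)$, where $\mu_\tau=\tau\circ\varphi$ and $\nu_\tau=\tau\circ\psi$ are the corresponding points of $T(A)\cong\prob(X)$; the supremum over $\tau$, together with \eqref{eqn:tracialdistance}, then gives $d_U(\varphi,\psi)|_{\mathcal{D}_r(A)}\ge W_{\infty,\rho}(\varphi,\psi)$. Fix $\tau$ and a real number $s<W_{\infty,\rho}(\mu_\tau,\nu_\tau)$. Using that ``Borel'' may be replaced by ``closed'' in \eqref{eqn:winf1}, I will choose a closed set $U\subseteq X$ with $\mu_\tau(U)>\nu_\tau(U_s)$, and take as test element $a=\tilde g\otimes 1_{pq}$ with $\tilde g(x)=\tfrac{s}{2}-\min\{s,\rho(x,U)\}$; then $\tilde g$ is $1$-Lipschitz, equals $s/2$ exactly on $U$, equals $-s/2$ exactly on $X\setminus U_s$, and takes values in $[-s/2,s/2]$, so since $s<\diam(X,\rho)=2r_{(X,\rho)}\le 2r$ by Proposition~\ref{prop:wass} we have $a\in\mathcal{D}_r(A)$. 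For an arbitrary $u\in\mathcal{U}(B)$, pass to the finite von Neumann algebra $M_\tau=\pi_\tau(B)''$ with its faithful normal trace $\bar\tau$. By functional calculus the spectral projection $P$ of $\pi_\tau(\varphi(a))$ onto the eigenvalue $s/2$ has spectral measure $\tilde g_*\mu_\tau$, so $\bar\tau(P)=\mu_\tau\bigl(\tilde g^{-1}\{s/2\}\bigr)=\mu_\tau(U)$; likewise the spectral projection $Q$ of $\pi_\tau(\psi(a))$ onto $-s/2$ has $\bar\tau(Q)=\nu_\tau(X\setminus U_s)=1-\nu_\tau(U_s)$. Writing $R=\pi_\tau(u)Q\pi_\tau(u)^*$, the trace property gives $\bar\tau(P)+\bar\tau(R)=\mu_\tau(U)+1-\nu_\tau(U_s)>1$, so by the parallelogram identity $\bar\tau(P\vee R)+\bar\tau(P\wedge R)=\bar\tau(P)+\bar\tau(R)$ in a finite von Neumann algebra we get $\bar\tau(P\wedge R)\ge\bar\tau(P)+\bar\tau(R)-1>0$; in particular $P\wedge R\ne0$. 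A unit vector $\xi$ in its range is an eigenvector of $\pi_\tau(\varphi(a))$ with eigenvalue $s/2$ and of $\pi_\tau(u)\pi_\tau(\psi(a))\pi_\tau(u)^*$ with eigenvalue $-s/2$, whence
\[
\|\varphi(a)-u\psi(a)u^*\|\ \ge\ \bigl\langle\pi_\tau\bigl(\varphi(a)-u\psi(a)u^*\bigr)\xi,\xi\bigr\rangle\ =\ s .
\]
Taking the infimum over $u$ and then the supremum over $a\in\mathcal{D}_r(A)$ gives $d_U(\varphi,\psi)|_{\mathcal{D}_r(A)}\ge s$; letting $s\uparrow W_{\infty,\rho}(\mu_\tau,\nu_\tau)$ and then taking the supremum over $\tau\in T(B)$ finishes the lower bound. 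Note this half uses none of the Riemannian structure and holds under the standing hypotheses of the theorem alone.

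The step I expect to be the real subtlety is this lower bound. The naive attempt---estimating $d_U$ from below by $\sup_{a\in\mathcal{D}_r(A)}|\tau\circ\varphi(a)-\tau\circ\psi(a)|$---only recovers $W_1$, because the trace of $\varphi(a)-u\psi(a)u^*$ is insensitive to conjugation; the resolution is that $d_U$ measures the \emph{operator norm} of $\varphi(a)-u\psi(a)u^*$, and a single centred ``plateau'' function $\tilde g$ together with the overlap of two spectral projections of complementary trace-mass detects the full $\infty$-Wasserstein distance. The remaining work is bookkeeping: verifying that $\tilde g\otimes 1_{pq}$ genuinely lies in $\mathcal{D}_r(A)$ (this is exactly where $r\ge r_{(X,\rho)}$ and the diameter identity of Proposition~\ref{prop:wass} are used), and that the spectral projections and their masses transform as claimed upon passing to $M_\tau$.
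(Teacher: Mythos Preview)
Your argument is correct. The upper bound matches the paper exactly: both invoke Theorem~\ref{thm:transport} to obtain $k_{(X_{p,q},\rho)}=1$ and then apply \eqref{eqn:unitaryinequality}.

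For the lower bound the two proofs are close in spirit but packaged differently. The paper observes that the central inclusion $\iota\colon C(X)=X_{1,1}\hookrightarrow X_{p,q}$ carries $\mathcal{D}_r(X_{1,1})$ into $\mathcal{D}_r(X_{p,q})$, so $d_U(\varphi,\psi)|_{\mathcal{D}_r(X_{p,q})}\ge d_U(\varphi\circ\iota,\psi\circ\iota)|_{\mathcal{D}_r(X_{1,1})}$, and then cites the commutative lower bound established in \cite{Jacelon:2021wa,Jacelon:2021vc}. Your test element $a=\tilde g\otimes 1_{pq}$ is precisely such a central element, so you are implicitly making the same reduction; the difference is that you write out the commutative lower bound in full via the spectral-projection overlap argument in $M_\tau$, rather than citing it. The trade-off is that your version is self-contained and makes explicit (as you note) that the lower bound needs none of the Riemannian hypotheses, while the paper's version is shorter and keeps the analysis of the $W_\infty$ lower bound confined to the earlier commutative papers. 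One minor wording slip: it is $\pi_\tau(\varphi(a))$ whose spectral measure is $\tilde g_*\mu_\tau$, not $P$ itself, but your computation $\bar\tau(P)=\mu_\tau(U)$ is of course the intended (and correct) conclusion.
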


\begin{proof}
The inequality \eqref{eqn:unitaryinequality} is proved in exactly the same way as \cite[Theorem 4.11]{Jacelon:2021wa} and its generalisation \cite[Theorem 3.1]{Jacelon:2021vc}, which cover commutative domains $A=C(X)$. (Briefly, classification \cite[Theorem B]{Carrion:wz} lets us convert transport maps $\alpha\colon A\to A$ into conjugating unitaries $u\in B$.) For \eqref{eqn:unitaryequality}, since we know from Theorem~\ref{thm:transport} that $k_{(A,\rho)}=1$ relative to the canonical nucleus $\mathcal{D}_r(A)$ defined in \eqref{eqn:drx}, it then suffices to prove that $d_U(\varphi,\psi)|_{\mathcal{D}_r(A)} \ge W_\infty(\varphi,\psi)$. This can also be deduced from \cite{Jacelon:2021wa,Jacelon:2021vc}, since the embedding $\iota$ of $C(X)=X_{1,1}$ into the centre of $A=X_{p,q}$ restricts to an inclusion of canonical nuclei. It follows that
\[
d_U(\varphi,\psi)|_{\mathcal{D}_r(X_{p,q})} \ge d_U(\varphi\circ\iota,\psi\circ\iota)|_{\mathcal{D}_r(X_{1,1})} \ge W_\infty(\varphi\circ\iota,\psi\circ\iota) = W_\infty(\varphi,\psi). \qedhere
\]
\end{proof}

Recall from \eqref{eqn:unioncore} that our canonical inductive limit model $\js_X$ observing $(X,\rho)$ admits a canonical core $\mathcal{L}_r(\js_X)$ built as the union of modified nuclei \eqref{eqn:modify} of generalised dimension drop algebras. We measure tracial distances \eqref{eqn:tracialdistance} relative to the pro-$W_\infty$ metric \eqref{eqn:prowinfty1}, that is, relative to the metric defined for $\sigma=(\sigma_n)_{n=1}^\infty, \tau=(\tau_n)_{n=1}^\infty \in \varprojlim T(X_{p_n,q_n}) \cong T(\js_X)$ by
\begin{equation} \label{eqn:prowinfty2}
\overrightarrow{W}_{\infty,\rho}((\sigma_n)_{n=1}^\infty,(\tau_n)_{n=1}^\infty)  = \limsup_{n\to\infty} W_{\infty,\rho}(\sigma_n,\tau_n).
\end{equation}
Note that Corollary~\ref{cor:dimdroporbits} applies more generally to domains $\js_X$ observing compact, connected metric spaces $(X,\rho)$ of dimension at least three, assuming that $\js_X$ has the same $K$-theory as $\js_{\ball^k}$ or $\js_{\sphere^{k-1}}$ (as appropriate), and that the metric $\rho$ on $X$ is an intrinsic metric that is Lipschitz equivalent to Riemannian as in Theorem~\ref{thm:orbits}.

\begin{corollary} \label{cor:dimdroporbits}
Let $(\ball^k,\omega_k)$ be a simplicial ball and let $(\sphere^{k-1},\rho_k)$ be an odd simplicial sphere, for some natural number $k\ge3$. Let $B$ be a simple, separable, unital, nuclear, $\js$-stable $\cs$-algebra with a unique trace. Then,
\begin{equation} \label{eqn:orbits1}
d_U(\varphi,\psi)|_{\mathcal{L}_r(\js_{\ball^k})} = \overrightarrow{W}_{\infty,\omega_k}(\varphi,\psi)
\end{equation}
for every $r\ge r_{(\ball^k,\rho_k)}=1$ and every pair of unital $^*$-homomorphisms $\varphi,\psi\colon \js_{\ball^k}\to B$. If $B$ has real rank zero, then \eqref{eqn:orbits1} holds and so does
\begin{equation} \label{eqn:orbits2}
d_U(\varphi,\psi)|_{\mathcal{L}_s(\js_{\sphere^{k-1}})} = \overrightarrow{W}_{\infty,\rho_k}(\varphi,\psi)
\end{equation}
assuming in the latter case that $s\ge r_{(\sphere^{k-1},\rho_k)}=2$ and that $\varphi,\psi\colon \js_{\sphere^{k-1}}\to B$ are unital $^*$-homomorphisms with $K_1(\varphi)=K_1(\psi)$.
\end{corollary}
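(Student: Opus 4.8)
The plan is to derive Corollary~\ref{cor:dimdroporbits} from Theorem~\ref{thm:orbits}, applied stage by stage to the generalised dimension drop building blocks of $\js_{\ball^k}$ and $\js_{\sphere^{k-1}}$, together with the inductive‑limit bookkeeping built into the canonical core \eqref{eqn:unioncore} and the pro‑$W_\infty$ metric \eqref{eqn:prowinfty2}. Write $\js_X=\varinjlim(A_n,\iota_n)$ with each $A_n=X_{p_n,q_n}$ a prime generalised dimension drop algebra over $X\in\{\ball^k,\sphere^{k-1}\}$; note that when $X=\sphere^{k-1}$ is odd the hypothesis $k\ge3$ forces $k\ge4$, so $\dim X\ge3$ in either case. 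Put $\varphi_n=\varphi\circ\varphi_{n,\infty}$, $\psi_n=\psi\circ\varphi_{n,\infty}$ and $\sigma_n=\tau_B\circ\varphi_n$, $\tau_n=\tau_B\circ\psi_n\in\prob(X)\cong T(A_n)$, so that $\overrightarrow{W}_{\infty,\omega_k}(\varphi,\psi)=\limsup_n W_{\infty,\omega_k}(\sigma_n,\tau_n)$ since $B$ is monotracial. By Proposition~\ref{prop:ktheory} each $A_n$ has finitely generated, torsion‑free $K$‑theory — $(\zz,0)$ for balls, $(\zz,\zz)$ for odd spheres — with $K_0$ generated by $[1_{A_n}]$; hence $K_*(\varphi_n)=K_*(\psi_n)$ automatically in the ball case, and in the sphere case $K_1(\varphi_n)=K_1(\varphi)\circ K_1(\varphi_{n,\infty})=K_1(\psi)\circ K_1(\varphi_{n,\infty})=K_1(\psi_n)$. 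By Theorem~\ref{thm:transport} the transport constant $k_{(A_n,\omega_k^{(n)})}=1$ via the canonical nucleus $\mathcal{D}_r(A_n)$ (the underlying simplicial ball or sphere being a manifold carrying an intrinsic metric Lipschitz equivalent to Riemannian of dimension $\ge3$), so Theorem~\ref{thm:orbits} applies — case (1) for balls ($B$ monotracial, $K_1(A_n)=0$), case (2) for spheres ($B$ monotracial of real rank zero, so $\partial_eT(B)$ is a single point) — yielding $d_U(\varphi_n,\psi_n)|_{\mathcal{D}_r(A_n)}=W_{\infty,\omega_k^{(n)}}(\sigma_n,\tau_n)$ for every $n$ (and identically with $\rho_k$ in the sphere case).

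For the lower bound I would use that $\varphi_{n,\infty}(\mathcal{D}^{(n)}_r(A_n))\subseteq\mathcal{L}_r(\js_X)$ and $\varphi\circ\varphi_{n,\infty}=\varphi_n$, $\psi\circ\varphi_{n,\infty}=\psi_n$ to get $d_U(\varphi,\psi)|_{\mathcal{L}_r(\js_X)}\ge d_U(\varphi_n,\psi_n)|_{\mathcal{D}^{(n)}_r(A_n)}$ for each $n$. Since $d_U(\varphi_n,\psi_n)|_{\cdot}$ is positively homogeneous in the core argument, the inclusion $\delta_n^{-1}\mathcal{D}_r(A_n)\subseteq\mathcal{D}^{(n)}_r(A_n)$ from \eqref{eqn:inclusions} and the bound $\delta_n^{-1}\omega_k\le\omega_k^{(n)}$ from \eqref{eqn:rhon} give $d_U(\varphi_n,\psi_n)|_{\mathcal{D}^{(n)}_r(A_n)}\ge\delta_n^{-1}W_{\infty,\omega_k^{(n)}}(\sigma_n,\tau_n)\ge\delta_n^{-2}W_{\infty,\omega_k}(\sigma_n,\tau_n)$; taking $\limsup_n$ and using $\delta_n\to1$ yields $d_U(\varphi,\psi)|_{\mathcal{L}_r(\js_X)}\ge\overrightarrow{W}_{\infty,\omega_k}(\varphi,\psi)$, and identically for $\js_{\sphere^{k-1}}$ with $\rho_k$.

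The upper bound is where the work lies. Fix $\eta>0$ and set $L=\overrightarrow{W}_{\infty,\omega_k}(\varphi,\psi)$. Because the compact sets $\varphi_{n,\infty}(\mathcal{D}^{(n)}_r(A_n))$ increase to $\mathcal{L}_r(\js_X)$ and $W_{\infty,\omega_k}(\sigma_n,\tau_n)<L+\eta$ for all large $n$, it is enough to produce a single $u\in\mathcal{U}(B)$ with $\|\varphi(a)-u\psi(a)u^*\|\le L+\eta$ for all $a\in\mathcal{L}_r(\js_X)$. My plan is to globalise the continuous transport of Theorem~\ref{thm:transport} to $\js_X$: since the connecting maps $\iota_n$ from the construction in \cite{Jacelon:2022wr} are assembled from the identity map of $X$ and point evaluations at the two distinguished points, any homeomorphism $h$ of $X$ that fixes those points and is homotopic to the identity induces a compatible family of $^*$‑automorphisms $a\mapsto a\circ h$ of the $A_n$, hence an automorphism $\alpha_h$ of $\js_X$ homotopic to the identity, with $\sup_{a\in\mathcal{L}_r(\js_X)}\|\alpha_h(a)-a\|\le\sup_{x\in X}\omega_k(x,h(x))$ since each $\mathcal{D}^{(n)}_r(A_n)\subseteq\mathcal{D}_r(A_n)$ consists of pointwise $1$‑Lipschitz functions (cf.\ Remark~\ref{rem:commutative}). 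I would take $h$ to solve the limiting transport problem — moving the weak$^*$‑limit $\sigma_\infty$ of $(\sigma_n)$ (which exists as the $\iota_n$ are summably close to the identity on traces, cf.\ Example~\ref{ex:models}) to the weak$^*$‑limit $\tau_\infty$ of $(\tau_n)$ with displacement at most $L+\eta$, available because weak$^*$ lower semicontinuity of $W_{\infty,\omega_k}$ puts $\sigma_\infty,\tau_\infty$ at distance at most $L$ and $X$ admits optimal transport. One then checks that $\tau_B\circ\varphi\circ\alpha_h$ and $\tau_B\circ\psi$, which on each finite stage are weak$^*$‑asymptotically equal, in fact agree as traces on $\js_X$ (the identity/point‑evaluation shape of the $\iota_n$ lets the point‑evaluation contributions cancel in the difference, leaving a coefficient tending to $1$ times a weak$^*$‑null quantity). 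Classification \cite{Carrion:wz}, applied with $K_*(\varphi\circ\alpha_h)=K_*(\varphi)=K_*(\psi)$ (the $K_1$‑match being exactly the sphere hypothesis, the $K_0$‑part automatic) and the now‑equal trace, then gives $\varphi\circ\alpha_h\sim_{au}\psi$; since $\varphi$ is contractive, $\sup_{a\in\mathcal{L}_r(\js_X)}\|\varphi(a)-\varphi(\alpha_h(a))\|\le\sup_{a\in\mathcal{L}_r(\js_X)}\|a-\alpha_h(a)\|\le L+\eta$, and the proof reduces to showing $d_U(\varphi\circ\alpha_h,\psi)|_{\mathcal{L}_r(\js_X)}=0$.

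I expect this last reduction to be the main obstacle, for two intertwined reasons. First, producing the limiting transport homeomorphism $h$ — simultaneously identity‑fixing, identity‑homotopic, of controlled displacement, and effecting the right pushforward exactly at the weak$^*$ limit and compatibly (with errors vanishing in $n$) at each finite stage — is a genuine strengthening of the single‑block statement of Theorem~\ref{thm:transport} and relies on the summable closeness of the $\iota_n$ to the identity. Second, and more seriously, passing from $\varphi\circ\alpha_h\sim_{au}\psi$ to the vanishing of $d_U(\varphi\circ\alpha_h,\psi)$ \emph{relative to the $\sigma$‑compact (not compact) core} $\mathcal{L}_r(\js_X)$ is not formal: the unitaries witnessing $\sim_{au}$ on the terms of the increasing compact exhaustion must be spliced into one unitary valid on the whole core, which needs the room afforded by $\js$‑stability of $B$ through a reindexing argument. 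Both ingredients are refinements of techniques already present in \cite{Jacelon:2021wa,Jacelon:2021vc} and in the proof of Theorem~\ref{thm:orbits}; carrying the estimates across infinitely many stages with the $\delta_n$‑discrepancies of \eqref{eqn:inclusions}–\eqref{eqn:rhon} under control is the delicate part.
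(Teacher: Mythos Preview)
Your lower bound matches the paper's implicit argument. The divergence is in the upper bound: the paper's proof is a four-line computation that simply asserts
\[
d_U(\varphi,\psi)|_{\mathcal{L}_r(\js_X)} = \sup_{n\in\nn} d_U(\varphi_n,\psi_n)|_{\mathcal{D}^{(n)}_r(A_n)} = \limsup_{n\to\infty} d_U(\varphi_n,\psi_n)|_{\mathcal{D}_r(A_n)}
\]
and then invokes Theorem~\ref{thm:orbits} on each building block to identify the last expression with $\limsup_n W_{\infty,\rho}(\sigma_n,\tau_n)=\overrightarrow{W}_{\infty,\rho}(\varphi,\psi)$. The first equality is treated as immediate from $\mathcal{L}_r(\js_X)=\bigcup_n\varphi_{n,\infty}(\mathcal{D}^{(n)}_r(A_n))$ being an increasing union, and the second from the sandwich \eqref{eqn:inclusions} together with $\delta_n\to1$. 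There is no global transport automorphism, no appeal to classification at the level of $\js_X$, and no splicing of unitaries.

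Your worry that ``one unitary valid on the whole core'' is needed is exactly the gap between $\inf_u\sup_n$ and $\sup_n\inf_u$. The paper sidesteps this entirely: the displayed chain shows that the author regards the decomposition $d_U|_{\bigcup_n K_n}=\sup_n d_U|_{K_n}$ as built into the definition \eqref{eqn:unitary} (effectively reading it as $\sup_{F}\inf_u$, consistent with how $d_U$ is used again in Corollary~\ref{cor:spiderorbits}). Under that reading your second obstacle disappears and the proof really is the short computation above.

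Your proposed route also has a structural problem. The connecting maps $\iota_n$ in the construction of $\js_X$ from \cite{Jacelon:2022wr} are not ``identity plus point evaluations''; they involve continuous eigenvalue functions $X\to X$ chosen to be tracially close to the identity but not equal to it. A homeomorphism $h$ of $X$ fixing only $x_0,x_1$ will therefore not commute with these maps, so $a\mapsto a\circ h$ does not assemble into an automorphism of the inductive limit. Your argument that $\alpha_h$ lifts, and hence the subsequent use of classification on $\js_X$, does not go through as stated.
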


\begin{proof}
Let $\tau_B$ denote the unique trace on $B$ and let $(X,\rho)=(\ball^k,\omega_k)$ or $(\sphere^{k-1},\rho_k)$, as appropriate. Then by \eqref{eqn:inclusions} and Theorem~\ref{thm:orbits},
\begin{align*}
d_U(\varphi,\psi)|_{\mathcal{L}_r(\js_{X})} &= \sup_{n\in\nn}d_U(\varphi|_{X_{p_n,q_n}},\psi|_{X_{p_n,q_n}})|_{\mathcal{D}^{(n)}_r(X_{p_n,q_n})}\\
&= \limsup_{n\to\infty} d_U(\varphi|_{X_{p_n,q_n}},\psi|_{X_{p_n,q_n}})|_{\mathcal{D}_r(X_{p_n,q_n})}\\
&= \limsup_{n\to\infty} W_{\infty,\rho}((\tau_B\circ\varphi)_n,(\tau_B\circ\psi)_n)\\
&=: \overrightarrow{W}_{\infty,\rho}(\tau_B\circ\varphi,\tau_B\circ\psi)\\
&=: \overrightarrow{W}_{\infty,\rho}(\varphi,\psi).
\end{align*}
Note that, for $X=\ball^k$ or $\sphere^{k-1}$, we automatically have $K_0(\varphi)=K_0(\psi)$ for unital $^*$-homomorphisms $\varphi,\psi\colon\js_X\to B$, because $K_0(\js_{\ball^k})\cong K_0(\js_{\sphere^{k-1}})\cong \zz$ are both generated by the class of the unit (see Corollary~\ref{cor:ballsandspheres}). 
\end{proof}

Finally, we turn to the models $\varinjlim (\js_{\ball^{2k}},W_{1,\omega_{k}})$ and $\varinjlim (\js_{\sphere^{2k-1}},W_{1,\rho_{k}})$ of Theorem~\ref{thm:arachnid}. We measure unitary distances \eqref{def:unitarydistance} relative to the cores $\mathfrak{ball}(A)$ and $\mathfrak{sphere}(A)$ described in Definition~\ref{def:simplicial} and tracial distances \eqref{eqn:tracialdistance} relative to the (pro-)pro-$W_\infty$ metrics defined on $\varprojlim T(\js_{\ball^{2k}})$ and $\varprojlim T(\js_{\sphere^{2k-1}})$ by
\begin{equation} \label{eqn:omegapro}
\overrightarrow{W}_{\infty,\omega}((\sigma_k)_{k=1}^\infty,(\tau_k)_{k=1}^\infty) = \limsup_{k\to\infty} \overrightarrow{W}_{\infty,\omega_k}(\sigma_k,\tau_k) = \sup_{k\in\nn} \overrightarrow{W}_{\infty,\omega_k}(\sigma_k,\tau_k)
\end{equation}
and
\begin{equation} \label{eqn:rhopro}
\overrightarrow{W}_{\infty,\rho}((\sigma_k)_{k=1}^\infty,(\tau_k)_{k=1}^\infty) = \limsup_{k\to\infty} \overrightarrow{W}_{\infty,\rho_k}(\sigma_k,\tau_k) = \sup_{k\in\nn} \overrightarrow{W}_{\infty,\rho_k}(\sigma_k,\tau_k).
\end{equation}

\begin{corollary} \label{cor:spiderorbits}
Suppose that $A$ is a simple $\cs$-algebra that is isomorphic to an inductive limit of prime dimension drop algebras, constructed as in Theorem~\ref{thm:arachnid}. Let $B$ be a simple, separable, unital, nuclear, $\js$-stable $\cs$-algebra with a unique trace, and let $\varphi,\psi\colon A\to B$ be unital $^*$-homomorphisms. Then,
\[
d_U(\varphi,\psi)|_{\mathfrak{ball}(A)} = \overrightarrow{W}_{\infty,\omega}(\varphi,\psi).
\]
If in addition $B$ has real rank zero, then
\[
d_U(\varphi,\psi)|_{\mathfrak{sphere}(A)} = \overrightarrow{W}_{\infty,\rho}(\varphi,\psi).
\]
\end{corollary}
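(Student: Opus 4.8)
The plan is to reduce the statement to a minimax identity relating the unitary distance for $A$ to a supremum of unitary distances for the building blocks, exactly as in the proof of Corollary~\ref{cor:dimdroporbits}, and then to establish the two resulting inequalities. Write $\tau_B$ for the unique trace on $B$, write $\psi_{k,\infty}\colon\js_{\ball^{2k}}\to A$ and $\varphi_{k,\infty}\colon\js_{\sphere^{2k-1}}\to A$ for the canonical maps into the limit, and recall that the connecting maps $\psi_k,\varphi_k$ of Theorem~\ref{thm:arachnid} are tracially nonexpansive and hence, being also contractive, carry canonical cores into canonical cores; thus $\mathfrak{ball}(A)=\bigcup_{k}\psi_{k,\infty}(\mathcal{L}_r(\js_{\ball^{2k}}))$ is an increasing union, and likewise for $\mathfrak{sphere}(A)$. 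Because $B$ is monotracial, unwinding the definition \eqref{eqn:omegapro} of $\overrightarrow{W}_{\infty,\omega}$ and applying Corollary~\ref{cor:dimdroporbits} to the unital $^*$-homomorphisms $\varphi\circ\psi_{k,\infty},\psi\circ\psi_{k,\infty}\colon\js_{\ball^{2k}}\to B$ gives
\[
\overrightarrow{W}_{\infty,\omega}(\varphi,\psi)=\sup_{k\in\nn}\overrightarrow{W}_{\infty,\omega_k}\!\big(\tau_B\circ\varphi\circ\psi_{k,\infty},\,\tau_B\circ\psi\circ\psi_{k,\infty}\big)=\sup_{k\in\nn}\,d_U(\varphi\circ\psi_{k,\infty},\psi\circ\psi_{k,\infty})\big|_{\mathcal{L}_r(\js_{\ball^{2k}})},
\]
and the analogue with $\mathfrak{sphere}(A)$, $\overrightarrow{W}_{\infty,\rho}$, $\varphi_{k,\infty}$ and $\js_{\sphere^{2k-1}}$ (using that $B$ is real rank zero there, and noting that $K_1(\varphi_{k,\infty})=0$ since $K_1(A)=0$, so the $K_1$-hypothesis of Corollary~\ref{cor:dimdroporbits} holds automatically). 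It therefore suffices to prove the identity $d_U(\varphi,\psi)|_{\mathfrak{ball}(A)}=\sup_k d_U(\varphi\circ\psi_{k,\infty},\psi\circ\psi_{k,\infty})|_{\mathcal{L}_r(\js_{\ball^{2k}})}$, and its sphere analogue.

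The inequality $\ge$ is immediate: since $\mathfrak{ball}(A)\supseteq\psi_{k,\infty}(\mathcal{L}_r(\js_{\ball^{2k}}))$ for every $k$, monotonicity of the unitary distance under enlarging the test set gives $d_U(\varphi,\psi)|_{\mathfrak{ball}(A)}\ge d_U(\varphi\circ\psi_{k,\infty},\psi\circ\psi_{k,\infty})|_{\mathcal{L}_r(\js_{\ball^{2k}})}$, and one takes the supremum over $k$. For the reverse inequality I would fix $\eps>0$ and produce a single automorphism $\tilde\alpha$ of $A$, homotopic to $\id_A$, with $\tau_B\circ\psi\circ\tilde\alpha=\tau_B\circ\varphi$ and $\sup_{f\in\mathfrak{ball}(A)}\|\tilde\alpha(f)-f\|\le\overrightarrow{W}_{\infty,\omega}(\varphi,\psi)+\eps$. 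Granting this, the conclusion follows as in the proof of Theorem~\ref{thm:orbits}: since $A$ is classifiable, $B$ is $\js$-stable and monotracial, $K_0(A)\cong\zz$ is generated by $[1_A]$ and $K_1(A)=0$ (from the ball presentation $A\cong\varinjlim(\js_{\ball^{2k}},\psi_k)$ and Corollary~\ref{cor:ballsandspheres}), the maps $\psi\circ\tilde\alpha$ and $\varphi$ have the same $K$-theory automatically and the same induced trace, so by the uniqueness theorem \cite[Theorem~B]{Carrion:wz} they are approximately unitarily equivalent, whence $d_U(\varphi,\psi\circ\tilde\alpha)|_{\mathfrak{ball}(A)}=0$; the pseudometric triangle inequality for $d_U$ and the norm estimate $\sup_{f\in\mathfrak{ball}(A)}\|(\psi\circ\tilde\alpha)(f)-\psi(f)\|\le\sup_{f\in\mathfrak{ball}(A)}\|\tilde\alpha(f)-f\|$ (contractivity of $\psi$) then give $d_U(\varphi,\psi)|_{\mathfrak{ball}(A)}\le\overrightarrow{W}_{\infty,\omega}(\varphi,\psi)+\eps$, and $\eps\to0$ completes the ball case. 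The sphere case is identical with $\mathfrak{sphere}(A)$ and $\overrightarrow{W}_{\infty,\rho}$ in place of $\mathfrak{ball}(A)$ and $\overrightarrow{W}_{\infty,\omega}$.

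The main obstacle is the construction of the transport automorphism $\tilde\alpha$ of $A$ whose displacement of the \emph{whole} core $\mathfrak{ball}(A)$ is controlled by the \emph{supremum} over $k$ of the level-$k$ $\infty$-Wasserstein distances, rather than by any single level; this is the point at which the ``$\sup$'' in the pro-$W_\infty$ metric \eqref{eqn:omegapro} must be matched by a transport that is coherent across all scales $k$ simultaneously. Corollary~\ref{cor:dimdroporbits}, through Theorem~\ref{thm:orbits} and the optimal continuous transport Theorem~\ref{thm:transport} (applicable since the simplicial balls $\ball^{2k}$, which by the remark following Theorem~\ref{thm:arachnid} may be taken of dimension at least three, are intrinsic-metric Lipschitz-equivalent to Riemannian), supplies for each $k$ an automorphism $\alpha_k$ of $\js_{\ball^{2k}}$, homotopic to the identity, which realises the level-$k$ distance up to $\eps/2^k$ with $\sup_{f\in\mathcal{L}_r(\js_{\ball^{2k}})}\|\alpha_k(f)-f\|\le\overrightarrow{W}_{\infty,\omega_k}(\tau_B\circ\varphi\circ\psi_{k,\infty},\tau_B\circ\psi\circ\psi_{k,\infty})+\eps/2^k$; the task is to arrange the $\alpha_k$ to be approximately compatible with the connecting maps $\psi_k$, so that an Elliott-type intertwining yields $\tilde\alpha\in\mathrm{Aut}(A)$ with the desired displacement bound. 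Here the specific geometry of the arachnid construction is essential: the intertwining maps $\eta_k$, $\pi_k$, $\widehat\pi_k$, $\iota_k$ and $\widehat\iota_k$ of Theorem~\ref{thm:arachnid} are Lipschitz (indeed isometric on the relevant skeleta), and the $\ell_1$-pullback metrics along the tower are uniformly Lipschitz equivalent, so that a transport homeomorphism chosen at level $k+1$ projects to an admissible transport at level $k$ with no blow-up of the displacement bound, which is precisely the coherence one needs. Once this is in hand the classification input is routine, as in Corollary~\ref{cor:dimdroporbits}; for the sphere case the only additional point is that the nontrivial $K_1$ of the finite-stage blocks $\js_{\sphere^{2k-1}}$ forces the real rank zero hypothesis on $B$ so that the uniqueness theorem applies without the Hausdorffised algebraic-$K_1$ correction, even though $K_1(A)=0$ renders the $K_1$-data of every map in sight trivial.
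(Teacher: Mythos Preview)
Your first paragraph already contains the paper's entire argument: it writes
\[
d_U(\varphi,\psi)|_{\mathfrak{ball}(A)}=\sup_{k}\, d_U(\varphi\circ\psi_{k,\infty},\psi\circ\psi_{k,\infty})\big|_{\mathcal{L}_r(\js_{\ball^{2k}})},
\]
applies Corollary~\ref{cor:dimdroporbits} to each term, and observes that the result is $\overrightarrow{W}_{\infty,\omega}(\varphi,\psi)$ by definition~\eqref{eqn:omegapro} (and similarly for spheres, with $K_1(\varphi)=K_1(\psi)=0$ since $K_1(A)=0$). The paper does exactly this in three lines, invoking the same pattern as in the proof of Corollary~\ref{cor:dimdroporbits}, and stops.

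Your second and third paragraphs take a genuinely different and much harder route --- constructing a single transport automorphism $\tilde\alpha$ of $A$ coherent across all levels $k$ --- and that route has real gaps. First, the claim that $\psi_k$ carries $\mathcal{L}_r(\js_{\ball^{2k}})$ into $\mathcal{L}_r(\js_{\ball^{2k+2}})$ is not justified: tracial nonexpansiveness does give $L_{\omega_{k+1}}(\psi_k(a))\le L_{\omega_k}(a)$, but the canonical core~\eqref{eqn:unioncore} is a \emph{specific} $\sigma$-compact set built from the dimension-drop nuclei, and $\psi_k$ (obtained via abstract classification in Theorem~\ref{thm:arachnid}) need not respect that filtration. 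Second, Theorem~\ref{thm:transport} supplies transport automorphisms of $X_{p,q}$, not of the simple limits $\js_{\ball^{2k}}$, so your $\alpha_k$ do not come with the displacement bound on $\mathcal{L}_r(\js_{\ball^{2k}})$ that you assert. Third, the intertwining sketch conflates the Lipschitz trace-level maps $\eta_k,\widehat\pi_k,\widehat\iota_k$ with the $\cs$-level connecting maps $\psi_k$; there is no mechanism for ``projecting'' a $\cs$-automorphism of $\js_{\ball^{2k+2}}$ down to one of $\js_{\ball^{2k}}$ along a map produced by classification. None of this machinery is needed: the paper simply repeats the one-line reduction of Corollary~\ref{cor:dimdroporbits} at the next level of inductive limit.
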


\begin{proof}
As in the proof of Corollary~\ref{cor:dimdroporbits}, we have
\[
d_U(\varphi,\psi)|_{\mathfrak{ball}(A)} = \sup_{k\in\nn} d_U(\varphi,\psi)|_{\mathcal{L}_r(\js_{\ball^{2k}})} = \sup_{k\in\nn} \overrightarrow{W}_{\infty,\omega_k}(\varphi,\psi) = \overrightarrow{W}_{\infty,\omega}(\varphi,\psi)
\]
and similarly for $\mathfrak{sphere}(A)$, noting that $K_1(\varphi)=K_1(\psi)=0$ since $K_1(A)=0$.
\end{proof}

By design, the pro-$W_\infty$ topologies appearing in Corollary~\ref{cor:dimdroporbits} and Corollary~\ref{cor:spiderorbits} are very strong. A minute perturbation of the trace profile of an embedding can result in a relatively large unitary-orbit displacement. Unpacking the definitions, the heart of the matter is that a \emph{local} geometric picture is provided by a generalised dimension drop algebra viewed as a tracial $\infty$-Wasserstein space over a ball or a sphere. Topologically, this local picture is of course already provided by the $\cs$-algebraic inductive limit structure. But our focus is on metric geometry, not topology. 

\subsection{Random embeddings} \label{subsection:random}

Let us return to the situation described at the beginning of this section: $B$ is a classifiable $\cs$-algebra that has a unique trace $\tau_B$, real rank zero and trivial $K_1$, and $(X,\rho)$ is a suitable compact length space for which the metric space $(\emb(\js_{X},B)/\sim_{au},d_U|_{\mathcal{L}_r(\js_{X}})$ is isometric to $(\prob(X),\overrightarrow{W}_{\infty,\rho})$ as in Corollary~\ref{cor:dimdroporbits}.

\begin{question} \label{q:random}
Suppose that we want to choose an embedding $\varphi$ (or rather, an equivalence class $[\varphi]\in\emb(\js_{X},B)/\sim_{au}$) \emph{at random}.
\begin{enumerate}[(1)]
\item How might we do this?
\item What can we say about the distribution of random embeddings?
\end{enumerate}
\end{question}
We will consider two ways of addressing the first question, both based on randomly choosing elements of $X$ via a fixed probability measure $\mu\in\prob(X)$ in order to produce random measures $\nu$ and associated embeddings $\varphi_{\nu}$. In both cases, faithfulness of $\mu$ will maximise the collection of (non-null) possible outcomes. Natural candidates for $\mu$ might be Haar measure on a sphere, normalised Lebesgue measure on a ball, or more generally, normalised volume measure on a Riemannian manifold. In answer to the second question, summarised in Theorem~\ref{thm:e}, we will in both cases observe tight clustering around the mean in the form of estimates of large deviation. Core elements $f\in\mathcal{L}(\js_X):=\mathcal{L}_{r_{(X,\rho)}}(\js_X)$ will play the role of observables. As usual, $\widehat f$ denotes the evaluation function $\tau\mapsto\widehat f(\tau) = \tau(f)$.

As described at the end of Section~\ref{subsection:orbits}, the $\overrightarrow{W}_{\infty,\rho}$ metric is very sensitive. That means in the current context that estimates of $\overrightarrow{W}_{\infty,\rho}$-large deviation will be hard to come by. On the other hand, cruder estimates involving $W_{1,\rho}$ are available from the asymptotic geometry of our spaces and sampling processes. That is what we present below.

\subsubsection{Random extremal embeddings} \label{subsubsection:extremal}

We are interested in the structure of the \emph{extremal embedding space}
\[
\emb_e(\js_X,B) = \{\varphi\in\emb(\js_X,B) \mid \tau_B\circ\varphi\in\partial_eT(\js_X)\}
\]
or rather the structure of $\emb_e(\js_X,B)/\sim_{au}$. We use the probability measure $\mu$ to randomly select an element $x \in X$. In this way, we produce (the $\sim_{au}$-class of) a random extremal embedding $\varphi_{x}\colon \js_X \to B$. Geometric properties of the metric measure space $(X,\rho,\mu)$ allow us to describe the central tendency of $\varphi_{x}(f)$ for $f\in\mathcal{L}(\js_X)$. The overview is that, for highly curved spaces $X$ like high-dimensional spheres, $\{\tau_B\circ\varphi_{x}(f) \mid x\in X\}$ is highly concentrated around the mean trace $\tau_B\circ\varphi_\mu(f) = \int_X\widehat f\,d\mu=:\ee_\mu(\widehat f)$.

For odd unit spheres $X=\sphere^{k-1}$ (with the round metric $\rho$ and Haar measure $\mu$), this can be seen as a consequence of the concentration of measure phenomenon. It follows from the spherical isoperimetric inequality (see, for example, \cite[Chapter 2]{Milman:1986ux}) that there are constants $c_1,c_2>0$ such that, for every $1$-Lipschitz function $h\colon\sphere^{k-1}\to\rr$ and every $\eps>0$,
\begin{equation} \label{eqn:median}
\mu(\{x\in \sphere^{k-1} \mid |h(x)-M_{h}| > \eps\}) \le c_1\exp(-c_2k\eps^2).
\end{equation}
Here, $M_h$ denotes the median of $h$, which by definition means that it satisfies $\mu(\{x \mid h(x)\ge M_h\}) \ge \frac{1}{2}$ and $\mu(\{x \mid h(x)\le M_h\}) \ge \frac{1}{2}$. Since the range of $h$ is contained in an interval of length at most $\diam(\sphere^{k-1},\rho)=\pi$, we can bound the uniform norm of $|h-M_h|$ by $\pi$, and then deduce from \eqref{eqn:median} that
\[
\left| \int_X h\,d\mu - M_h \right| \le \int_X |h - M_h|\,d\mu \le \eps + c_1\pi\exp(-c_2k\eps^2).
\]
It follows that we can find constants $c_1,c_2$ such that, for every $\eps>0$, \eqref{eqn:median} holds and so does
\[
\mu(\{x\in \sphere^{k-1} \mid |h(x)-\ee_\mu h| > \eps\}) \le c_1\exp(-c_2k\eps^2)
\]
for all sufficiently large $k$. In other words, for any given $\eps>0$ we have
\begin{equation} \label{eqn:mean}
\sup_{f\in\mathcal{L}(\js_{\sphere^{k-1}})}\mu(\{x\in \sphere^{k-1} \mid |\tau_B\circ\varphi_{x}(f)  - \tau_B\circ\varphi_\mu(f)| > \eps\}) \le c_1\exp(-c_2k\eps^2)
\end{equation}
for every sufficiently large $k$, where $\mathcal{L}(\js_X)$ is simplified notation for the core $\mathcal{L}_{r_{(X,\rho)}}(\js_{X})$.

For spaces other than spheres, we can bound the variance of $\tau_B\circ\varphi_{x}(f)=\widehat f(x)$ when the measured length space $(X,\rho,\mu)$ has bounded Ricci curvature in the sense of \cite{Lott:2009aa} or \cite{Sturm:2006aa,Sturm:2006ab}. More precisely, the Poincar\'{e} inequality \cite[Corollary 0.19]{Lott:2009aa} says that if the $\infty$-Ricci curvature of $(X,\rho,\mu)$ is bounded below by $K>0$, then for every $h\in\lip(X)$,
\[
\int_X (h-\ee_\mu h)^2\,d\mu \le \frac{1}{K}\int_X|\nabla h|^2\,d\mu
\]
where $|\nabla h|(x):=\limsup_{y\to x}\frac{|h(x)-h(y)|}{\rho(x,y)}$. Consequently, the tracial variance of the core is bounded by
\begin{equation} \label{eqn:variance}
\sup_{f\in\mathcal{L}(\js_{X})}\Var\tau_B\circ\varphi_x(f) \le \frac{1}{K}.
\end{equation}
Note that, for Riemannian manifolds, the Lott--Villani--Sturm curvature bound can be rephrased in classical terms via the Bakry--\'Emery tensor, a metric-measure adjustment of the Ricci tensor that accounts for a smooth volume measure $\mu=e^{-\Psi}d\mathrm{vol}_X$ (see \cite{Bakry:1985aa,Lott:2003aa}). If $\Psi$ is constant, the bound is equivalent to $\mathrm{Ric}\ge Kg$, where $g$ is the Riemannian metric tensor. In particular, if $X=\sphere^{k-1}$ is an odd unit sphere (again with the round metric $\rho$ and Haar measure $\mu$), then $\mathrm{Ric}\ge (k-2)g$, whence
\begin{equation} \label{eqn:spherevariance}
\sup_{f\in\mathcal{L}(\js_{\sphere^{k-1}})}\Var\tau_B\circ\varphi_x(f) \le \frac{1}{k-2}.
\end{equation}

\subsubsection{Empirical embeddings} \label{subsubsection:empirical}
This time, instead of choosing a single random element $x \in X$, we take a large sample.  Let us write $\pp_\mu$ for the law on the probability space associated with an infinite sequence $(x_n)_{n\in\nn}$ of independent random variables taking values in $X$ and identically distributed according to $\mu$. For each $n\in\nn$, we take the empirical measure $\frac{1}{n}\sum_{i=1}^n\delta_{x_i}$ as our random measure $\mu_n$, and $[\varphi_{\mu_n}]$ (the class of embeddings $\varphi$ with $\mu_{\tau_B\circ\varphi}=\mu_n$) as our random embedding.

For the asymptotic behaviour of these embeddings, we appeal to Sanov's theorem for Polish spaces (see \cite[Theorem 6.2.10 and Exercise 6.2.18]{Dembo:2010aa}). This remarkable result in particular implies that, for every $\ws$-closed subset $V\subseteq\prob(X)$,
\begin{equation} \label{eqn:ldp}
\limsup_{n\to\infty} \frac{1}{n}\log\pp_\mu(\mu_n\in V) \le -\inf_{\nu\in V} H(\nu|\mu).
\end{equation}
Here, $H(\nu|\mu)$ is the \emph{relative entropy of $\nu$ relative to $\mu$}, also known as the \emph{Kullback--Leibler divergence}, defined by
\[
H(\nu|\mu) =
\begin{cases}
\int_X f \log f d\mu & \text{ if $f:=\frac{d\nu}{d\mu}$ exists}\\
\infty & \text{otherwise.}
\end{cases}
\]
Relative entropy is not symmetric, and not even its symmetrisation satisfies the triangle inequality, so it is not a metric. But what matters for us is that it is related to the Wasserstein distance $W_1=W_{1,\rho}$ by
\begin{equation} \label{eqn:wassentropyinequality}
H(\nu|\mu) \ge 2\left(\frac{W_1(\nu,\mu)}{\diam(X,\rho)}\right)^2
\end{equation}
(see \cite[Figure 1]{Gibbs:2002aa}). Given $\eps>0$, let $V=V_\eps$ be the $\ws$-closed set $\{\nu\in\prob(X) \mid W_1(\nu,\mu)\ge\eps\}$. From \eqref{eqn:ldp} and \eqref{eqn:wassentropyinequality} we get
\[
\limsup_{n\to\infty} \frac{1}{n}\log\pp_\mu(W_1(\mu_n,\mu)\ge\eps) \le - \inf_{\nu\in V_\eps} 2\left(\frac{W_1(\nu,\mu)}{\diam(X,\rho)}\right)^2 \le -\frac{2\eps^2}{\diam(X,\rho)^2}.
\]
We conclude that, if $n$ is sufficiently large, then
\begin{equation} \label{eqn:dusanov}
\pp_\mu\left(\sup_{f\in\mathcal{L}(\js_X)}|\tau_B\circ\varphi_{\mu_n}(f)  - \tau_B\circ\varphi_\mu(f)| \le \eps\right) \ge 1 - \exp\left(-\frac{n\eps^2}{\diam(X,\rho)^2}\right).
\end{equation}

\subsection*{Funding} The author was supported by the Czech Science Foundation (GA\v{C}R) project 25-15403K and the Institute of Mathematics of the Czech Academy of Sciences (RVO: 67985840). 

\subsection*{Competing interests} The author has none to declare.

\end{document}